\documentclass[letterpaper,11pt,oneside,reqno]{amsart}
\usepackage[T1]{fontenc}

\usepackage{amsfonts,amsmath, amssymb,amsthm,amscd}
\usepackage{bm}
\usepackage{mathrsfs}
\usepackage{yfonts}
\usepackage{verbatim}
\usepackage{hyperref}
\usepackage{graphicx}
\usepackage[utf8]{inputenc}
\usepackage{latexsym}
\usepackage{upgreek}
\usepackage{relsize}
\usepackage{xcolor}
\usepackage{mathdots}
\usepackage{mathtools}
\usepackage{dsfont}
\usepackage[left=3cm, right=3cm, top=3cm, bottom=3cm]{geometry}
\usepackage{tikz}
\usepackage{lmodern}
\usepackage{stmaryrd}
\usepackage{lipsum}
\usepackage{subcaption}
\usepackage{amsmath}
\usepackage{tikz-cd}
\usepackage{subcaption}
\usetikzlibrary{patterns}

\numberwithin{equation}{section}
\renewcommand{\emph}[1]{\textsf{\textit{#1}}}
\usepackage[width=.9\textwidth]{caption}

\makeatletter
\renewcommand\section{\@startsection{section}{1}%
	\z@{1.5\linespacing\@plus.5\linespacing}{1\linespacing}%
	{\large\normalfont\scshape\centering}}
\makeatother

\def \Gav {\Ga^{-1}}
\def \be{\begin{equation}}
\def \ee{\end{equation}}

\newcommand{\EE}{\ensuremath{\mathbb{E}}}

\newcommand{\PP}{\ensuremath{\mathbb{P}}}
\newcommand{\R}{\ensuremath{\mathbb{R}}}
\newcommand{\C}{\ensuremath{\mathbb{C}}}

\newcommand{\Res}{\operatorname{Res}}

\newcommand{\la}{\lambda}
\newcommand{\ga}{\gamma}
\newcommand{\ta}{\theta}

\renewcommand{\leq}{\leqslant}
\renewcommand{\geq}{\geqslant}

\newcommand{\Geo}{\mathrm{Geo}}
\newcommand{\Gammainv}{\mathrm{Gamma}^{-1}}

\DeclareMathOperator{\Schur}{Schur}
\newcommand{\Whit}{\mathrm{Whit}}

\DeclareMathOperator{\Ga}{Gamma}
\DeclareMathOperator{\Bt}{Beta}
\DeclareMathOperator{\KPZ}{KPZ}
\DeclareMathOperator{\BM}{BM}
\DeclareMathOperator{\Sign}{Sign}
\DeclareMathOperator{\stat}{stat}

\DeclarePairedDelimiter\abs{\lvert}{\rvert} 

\def \d {\mathsf{d}}
\def \a {\alpha}

\def \la{\lambda}
\def \bl {\boldsymbol\lambda}

\newcommand{\id}{\mathrm{id}}
\def \RR {\mathbb{R}}
\def \ZZ{\mathbb{Z}}

\def \NN{\mathbb{N}}

\DeclareMathOperator{\LG}{LG}
\DeclareMathOperator{\Law}{Law}
\def \bL {\mathbf{L}}

\def \PP {\mathbb{P}}
\def \P{\mathsf{P}}
\def \U{\mathsf{U}}

\newcommand{\dd}{\,\mathrm d}

\newtheorem{theorem}{Theorem}[section]

\newtheorem{lemma}[theorem]{Lemma}
\newtheorem{proposition}[theorem]{Proposition}
\newtheorem{corollary}[theorem]{Corollary}

\theoremstyle{definition}
\newtheorem{remark}[theorem]{Remark}

\theoremstyle{definition}

\theoremstyle{definition}
\newtheorem{definition}[theorem]{Definition}

\theoremstyle{definition}

\title[Exponential functionals of  random walks]{Exponential functionals of two random walks and stationary measures of discrete growth processes}

\begin{document}

\author[Z.~Ouyang]{Zikun Ouyang}
\address{Z.~Ouyang, CEREMADE, CNRS, Université Paris-Dauphine, Université PSL, 75016 PARIS, FRANCE and Laboratoire de Physique de l'Ecole Normale Supérieure, Ecole Normale Supérieure, PSL University, CNRS, Sorbonne Université, Université Paris-Cité, 24 rue Lhomond, 75005 PARIS}
\email{ouyang@ceremade.dauphine.fr}

\begin{abstract} 
We study random walks with geometric or log-gamma increments, reweighted by certain exponential functionals. We establish precise asymptotics for all moments of exponential functionals, revealing a phase transition as the drifts of the walks vary. To describe the large time limit of reweighted random walks, we introduce a discrete $T$ transformation on random walk paths  and establish Girsanov relations. As a corollary, we show that the stationary measures of geometric last passage percolation and the log-gamma polymer on a strip converge to their half-space counterparts as the strip width tends to infinity. In the log-gamma case, this provides an alternative proof of a recent result in \cite{BrycWesolowskiLG}. 
\end{abstract}

\maketitle
\section{Introduction and main results}
\subsection{Preface}
The characterization of the stationary measures for models in the Kardar--Parisi--Zhang (KPZ) universality class in different geometries has a long history.  
%
Recent years have witnessed breakthroughs for integrable models in finite domain and half-space \cite{Corwin}. Stationary measures  of the open KPZ equation on an interval are first studied in  \cite{CorwinKnizel} and can be   characterized by their multi-point Laplace transform in the fan region, based on earlier works on the stationary measures of open ASEP  \cite{DerridaEvansHakimPasquier,UchiyamaSasamotoWadati,BrycWesolowski,BrycWesolowskiASEP}.  
 
\bigskip 
The result of \cite{CorwinKnizel} was later reformulated in terms of exponential functionals of two Brownian motions \cite{BrycKuznetsovWangWesolowski,BarraquandLeDoussal}. 
In particular, using the idea from physics of integrating out the zero mode,  the formulation in \cite{BarraquandLeDoussal} predicts candidate stationary measures for the open KPZ equation for all boundary parameters.  The prediction was made rigorous at the discrete level through integrable polymers: 
remarkably, for all choices of boundary parameters, the stationary measures of the geometric last passage percolation (GLPP) and the log-gamma polymer on a strip can be described in terms of a pair of random walks reweighted by certain exponential functionals \cite{BarraquandCorwinYang}. 
For the open KPZ equation, a direct proof of stationarity for these measures  was later obtained in \cite{DunlapGuRosati}   based on stochastic analysis.   

\bigskip 
Exponential functionals of Brownian motion are ubiquitous in
probability theory, see  \cite{ComtetMonthusYor,Yor,MatsumotoYorI,MatsumotoYorII,YorPassion} and references therein. 
In this paper, we focus on their
connections to the stationary measures of integrable models in
the KPZ universality class.

\bigskip 
The first observation in this direction is in \cite{BarraquandLeDoussal}, where  the authors notice that the stationary measure of the open KPZ equation is closely related to a family of measures studied in \cite{HariyaYor} which arises in a  Matsumoto--Yor  relation between exponential functionals of Brownian motions with different drifts   \cite{MatsumotoYor_Dufresne}.  
In particular, \cite{HariyaYor} establishes the existence of large time limits of these measures. It turns out that a phase transition emerges, and that limiting measures are given by the $T$ transformed Brownian motions introduced in \cite{DonatiMartinMatsumotoYor}.  
In the  KPZ context, 
this limit corresponds to sending the size of the interval to infinity.

\bigskip 
Based on results in \cite{HariyaYor}, \cite{BarraquandLeDoussal} conjectured stationary measures for the KPZ equation in half-space. The prediction was confirmed by considering the intermediate disorder limit for discrete polymer models. The stationary measures of the log-gamma polymer in half-space are constructed in \cite{BarraquandCorwin} using symmetry properties of the half-space Whittaker process \cite{OConnellSeppalainenZygouras,BarraquandBorodinCorwin}. 
Remarkably, the stationary measures admit a two-row polymer representation. This structure  is  closely related to the celebrated geometric Pitman's transform  and  has appeared in the study of Brownian queues and semi-discrete polymers \cite{OConnellYor_BrownianBurke,OConnellYor}.   

\bigskip 
Motivated by the extensive study of exponential functionals of Brownian motion and their connection to the stationary measures of the KPZ equation, the goal of this paper is to better understand their discrete counterparts at both zero and positive temperature, based on fruitful developments in the study of stationary measures of discrete growth processes in the KPZ universality class. 
In particular,  we study the limiting processes associated with exponential functionals of a pair of geometric or log-gamma random walk. To describe the limit, we introduce a  discrete  $T$ transform, which is a slight variant of the discrete (geometric) Pitman's transform and satisfies natural algebraic properties  analogous to those of its continuous counterpart \cite{DonatiMartinMatsumotoYor}.  As an application, we prove that as the strip width tends to infinity, the stationary measures of GLPP and the log-gamma polymer on a strip converge to their appropriate half-space counterparts depending on the boundary parameters.  For the log-gamma polymer, this convergence was recently established in \cite{BrycWesolowskiLG} using different methods. We discuss the connections  between the two in Section \ref{sec:HY for LG}.

\bigskip 
The rest of the introduction is structured as follows. In Section \ref{subsubsec:The Hariya Yor theorem}, we review the Hariya--Yor theorems in the context of  
the open KPZ equation.  In Section \ref{subsuc:GLPP} and Section \ref{subsec:Log gamma}, we formulate the stationary measures of GLPP and  log-gamma polymer  
in terms of exponentially reweighted or $T$ transformed random walks  and present our main results, which can be seen as discrete versions of the Hariya--Yor theorem at zero and positive temperature respectively. In Section \ref{subsec:Relations to the literature}, we explain the connection of our results to the literature, with a focus on a two-layer point of view and the different convergences between various aforementioned stationary measures. In Section \ref{subsec:outline and proof ideas}, we sketch the proof idea and  outline  the rest of the paper.

\subsection{The Hariya--Yor theorem}\label{subsubsec:The Hariya Yor theorem}

For $L>0$, the open KPZ equation on the interval $[0,L]$ (respectively, the half-space KPZ equation on $\RR_{\geq 0}$) with Neumann boundary conditions formally writes 
\be\label{eq:open KPZ eq}
\begin{cases} 
 \partial_t h=\frac12\partial_{xx}h +\frac12(\partial_xh)^2+\xi,
  \qquad
 \text{ for } x\in (0,L)  \quad (\text{resp. }x\in\RR_{>0}),\\
 \partial_xh(t,0)=u,\\
 \partial_xh(t,L)=-v \quad
   (\text{resp. } \varnothing),
\end{cases}
\ee
where $\xi=\xi(x,t)$ is the space time white noise. The precise meaning of its solution via the  Hopf-Cole transform is established in \cite{CorwinShen}  (we adapt the parameter conventions in \cite{CorwinKnizel} here). 
The law of a random continuous  function $h_0:[0,L]\ (\text{resp. } \RR_{\geq 0})\to \RR$ with $h_0(0)=0$ is said to be a stationary if,  for \eqref{eq:open KPZ eq} with initial data  $h(0,\cdot)\overset{d}{=}h_0$, it follows that $h(t,\cdot)-h(t,0)\overset{d}{=}h_0$ for all $t\geq0$.  

Let $\mathcal{C}=C(\RR_{\geq 0},\RR)$ be the space of real-valued continuous functions over $\RR_{\geq 0}$, endowed with the compact uniform topology. For a  path
$X \in \mathcal{C}$ and $x,z\geq 0$,  the  exponential functional associated to $X$ and the $T$ transformed path \cite{DonatiMartinMatsumotoYor} are defined respectively by 
\be\label{eq:one layer exponential functional and T transform}
 A_x(X)=\int_0^x e^{2X(s)}\dd s, \quad   
  T^z(X)(x)=X(x)-\log\left(1+zA_x(X)\right).
\ee
They enter the stationary measures of \eqref{eq:open KPZ eq} as follows. Throughout 
$\PP_{\BM}^b$ denotes the law of a Brownian motion starting from zero, with drift $b$ and diffusion coefficient $\frac{1}{2}$, and we write $\PP_{\BM,L}^b$ for its restriction to $[0,L]$. 

\begin{proposition}\cite{CorwinKnizel,BarraquandLeDoussal,DunlapGuRosati}
\label{prop:Stationary measure open KPZ}
For all $u,v\in\RR$ and $L>0$, let $X_L^{u,v} $ be the process on $C( [0,L],\RR)$ whose distribution   $\mathsf P_L^{u,v}$  is defined through its Radon-Nikodym derivative with respect to $\PP_{\BM,L}^v$ by 
\begin{equation}\label{eq:reweighted brownian motion}
 \frac{\dd\mathsf P_L^{u,v}}{\dd\PP_{\BM,L}^{v}}(X)
 =\frac{1}{\mathsf Z_L^{u,v}}A_L(X)^{-(u+v)}, 
\end{equation}
where  $ \mathsf Z_L^{u,v}  $ is the normalization constant and is finite   \cite{HariyaYor}. Then  the process 
\begin{equation}\label{eq:stationary measure of open kpz}
 h_L^{u,v}=W_L-X_L^{u,v},  
\end{equation}
where $W_L\sim\PP_{\BM,L}^0$ is  independent of $X_L^{u,v} $, 
 is stationary for the open KPZ equation \eqref{eq:open KPZ eq}.  
\end{proposition}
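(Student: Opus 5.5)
The most direct route is to pass to the limit from a discrete model whose stationary measure is already known in the same two-walk form. The natural choice is the log-gamma polymer on a strip of width $N$, where \cite{BarraquandCorwinYang} identifies the stationary measure as $W^{(N)}-X^{(N)}$. Here $W^{(N)}$ is a log-gamma random walk. The walk $X^{(N)}$ is an independent log-gamma walk reweighted by the discrete exponential functional $\bigl(\sum_{k\le N} e^{2X^{(N)}_k}\bigr)^{-(u+v)}$, up to boundary terms. Under the intermediate disorder scaling (gamma parameters $\tfrac12\sqrt N+\dots$, boundary parameters shifted by $u/\sqrt N$, $v/\sqrt N$, and lattice spacing $L/N$), the point-to-line log-gamma partition function on the strip converges to the solution of the open KPZ equation \eqref{eq:open KPZ eq} with Neumann parameters $u,v$. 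The convergence is via the Hopf--Cole formulation of \cite{CorwinShen}, using the known convergence of half-space and strip polymers to the multiplicative stochastic heat equation with Robin boundary conditions. Stationarity then passes to the limit: if the discrete initial profiles converge in law to $h_L^{u,v}$ and stationarity holds at every $N$, then $h(t,\cdot)-h(t,0)\overset{d}{=}h_L^{u,v}$ for all $t\geq0$.

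The key steps, in order, are the following.

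First, invoke the discrete stationarity from \cite{BarraquandCorwinYang} for each $N$.

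Second, show that the reweighted walk converges to $\mathsf P_L^{u,v}$. By Donsker, the centered log-gamma increments make $W^{(N)}$ and the reference walk of $X^{(N)}$ converge to $\PP_{\BM,L}^0$ and $\PP_{\BM,L}^v$. The drift $v$ comes from the $O(1/\sqrt N)$ asymmetry of the gamma parameters. The Riemann sums $\tfrac{L}{N}\sum_k e^{2X_k}$ converge jointly to $A_L(X)$. Convergence of the reweighted law then requires uniform integrability of $A_L^{(N)}(X)^{-(u+v)}$ under the reference walk. When $u+v>0$ this means controlling negative moments of the discrete functional near zero. Since $A\geq \tfrac LN e^{2X_1}\cdot(\text{const})$ is too weak, I would instead bound $A^{(N)}\geq c\,e^{2\min_{k\le \delta N}X_k}$ and use Gaussian-type tail bounds for the running minimum. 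When $u+v<0$ I need positive moments, which follow from exponential moments of $\max X$. With uniform integrability, $\mathsf Z^{(N)}\to\mathsf Z_L^{u,v}$, which is finite and positive by \cite{HariyaYor}, and the Radon--Nikodym densities converge.

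Third, prove continuity of the dynamics: the open KPZ solution started from convergent initial data is the limit of the discrete polymer started from the matching discrete data, uniformly enough to transfer equality in law at each fixed $t$.

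The main obstacle is the third step, namely a convergence theorem from the strip log-gamma polymer to the open KPZ equation for general random initial data and all $u,v\in\RR$, including negative boundary parameters. If this is unavailable, I would switch to the alternative of \cite{DunlapGuRosati}. That approach verifies stationarity directly for the continuum equation by regularizing the noise and computing the generator against the explicit density in \eqref{eq:reweighted brownian motion}, using Itô calculus for $A_x(X)$ and integration by parts in Wiener space to handle the boundary terms. For $u,v>0$ with $u+v$ small one can alternatively cite \cite{CorwinKnizel} via open ASEP and extend to all $u,v$ by analytic continuation in the parameters of the Laplace transforms.
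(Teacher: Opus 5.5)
\textbf{The decisive gap is your third step.} A space--time convergence theorem taking the log-gamma polymer on a strip to the open KPZ equation \eqref{eq:open KPZ eq} is not available. It is \cite[Conjecture 4.2]{BarraquandCorwinYang}, as recalled in Section \ref{subsubsec:On different convergences}. Intermediate-disorder convergence is known in half-space \cite{Wu,Parekh,BarraquandCorwin}, but not on the strip, so the sentence in your overview asserting known convergence of strip polymers is false.

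On the strip, what is known is only convergence of the stationary measures themselves \cite[Proposition 4.5]{BarraquandCorwinYang}, which is your second step. That alone says nothing about the continuum dynamics, so stationarity at each $N$ cannot be transferred to the limit.

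This is exactly why the paper gives no proof of its own and imports the statement from \cite{CorwinKnizel,BarraquandLeDoussal,DunlapGuRosati}. The only argument it supplies, in Section \ref{subsubsec:A two-layer formulation of the Hariya--Yor theorem}, identifies $W_L-X_L^{u,v}$ with the first marginal of the two-layer measure $\PP_{\KPZ,L}^{u,v}$.

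Your fallbacks do not close the gap.
\begin{itemize}
\item Invoking \cite{DunlapGuRosati} is citing the result, and your sketch omits the real difficulty. For instance, with spatially mollified noise the solution is smooth in space at positive times, so the Brownian-type law of $h_L^{u,v}$ cannot be exactly invariant. Everything then hinges on controlling that defect as the regularization is removed.
\item Analytic continuation in $(u,v)$ of Laplace transforms, starting from the range covered by \cite{CorwinKnizel}, cannot propagate stationarity. Stationarity concerns the measure together with the dynamics. You would need analyticity in $(u,v)$ of the time-$t$ law of the solution, which nothing provides.
\end{itemize}

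\textbf{A second, independent error is your description of the discrete measure.} By Proposition \ref{prop:Stationary measure strip LG}, the strip stationary measure is the first marginal of two independent log-gamma walks jointly reweighted by $e^{(u+v)(L_2(M)-\log\mathscr{A}_M(L_2,L_1))}$. Here $\mathscr{A}_M(L_2,L_1)=\sum_{j} e^{L_2(j)-L_1(j-1)}$ couples the two walks with shifted indices.

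It does not split as $W^{(N)}-X^{(N)}$ with $W^{(N)}$ an independent log-gamma walk and $X^{(N)}$ reweighted by $(\sum_k e^{2X_k})^{-(u+v)}$. Under $X=(L_2-L_1)/2$, $W=(L_1+L_2)/2$, the increments of $X$ and $W$ are not independent because log-gamma increments are not Gaussian (Kac--Bernstein). The paper stresses this point in Section \ref{subsubsec:A two-layer formulation of the Hariya--Yor theorem}.

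Your Donsker and uniform-integrability argument must therefore be run at the two-layer level, showing that $\PP_{\LG,N}^{\a,u,v}$ rescales to $\PP_{\KPZ,L}^{u,v}$. Only in the continuum does the Cameron--Martin computation \eqref{eq:change of variables KPZ} produce the independent decomposition $W_L-X_L^{u,v}$. That step is repairable, and this convergence of stationary measures is the content of \cite[Proposition 4.5]{BarraquandCorwinYang}. The dynamical convergence in your third step is not repairable with current knowledge.
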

For $\ta>0$, we  denote the probability distribution on $\RR_{\geq0}$ with density 
  $\frac{1}{\Gamma(\ta)}x^{-\ta-1}e^{-x^{-1}}\d x$ by   $\Gav(\ta)$.
Throughout  by writing $Y\sim \Gav(0)$, we mean   that 
\be\label{eq:convention inverse gamma}
 \frac{1}{Y}=0  \quad  a.s.
\ee

\begin{proposition}\cite[Theorem 1.4]{BarraquandCorwin}  
\label{prop:Stationary measure half space KPZ}
For $u\in\RR$ and $v\leq\min\{0,u\}$, define a process $X_\infty^{u,v}$ by 
\begin{equation}
 X_\infty^{u,v}=  T^{\frac{1}{Y}}(B), 
\end{equation}
where $B\sim \PP_{\BM}^{-v}$ and $Y\sim\Gav(u- v)$ are independent. Then the process 
\begin{equation}\label{eq:stationary measure of half space kpz}
 h_\infty^{u,v}=W-X_\infty^{u,v},
\end{equation}
where $W\sim\PP_{\BM}^0$ is independent of $X_\infty^{u,v} $, 
 is stationary for the   KPZ equation in half-space \eqref{eq:open KPZ eq}.
\end{proposition}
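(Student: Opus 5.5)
Since this statement is cited from \cite[Theorem 1.4]{BarraquandCorwin} and the paper's program is to obtain such continuum results as limits of discrete ones, I would prove it by a discrete-to-continuum route rather than by working with the KPZ equation directly.

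The discrete model is the half-space log-gamma polymer. For it, \cite{BarraquandCorwin} constructs stationary measures with a two-layer structure. The bottom layer is a log-gamma random walk. The top layer is obtained from it by a geometric Pitman-type transform, with an extra boundary weight that is inverse-gamma distributed. The plan has three steps.

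\textbf{Step 1 (discrete stationarity).} I would take the half-space log-gamma polymer with bulk parameter $N$ large and boundary parameters shifted by $u/\sqrt N$ and $v/\sqrt N$. For it I would write the candidate stationary increment process as $W^{(N)}-X^{(N)}$, where $X^{(N)}$ is a discrete $T$ transform of a log-gamma walk $B^{(N)}$:
\[
X^{(N)}(k)=B^{(N)}(k)-\log\Bigl(1+\tfrac{1}{Y}\textstyle\sum_{j\le k}e^{2B^{(N)}(j)}\Bigr),
\]
up to the exact discrete conventions, with $Y\sim\Gav(u-v)$ independent. Stationarity under one step of the polymer partition-function recursion would be checked through a Burke-type property of the log-gamma variables. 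Concretely, I would verify that the update of $(W,X)$ along a down-right path preserves the joint law. This reduces to the local identity: if $a\sim\Gav(\alpha+\beta)$ and $b\sim\Gav(\alpha+\gamma)$-type variables are independent, then the output pair of the standard log-gamma queue map is again independent with the same marginals. The boundary site needs the analogous identity with the diagonal weight, and this is precisely where $v\le\min\{0,u\}$ and the $\Gav(u-v)$ variable enter.

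\textbf{Step 2 (intermediate disorder scaling).} Under diffusive scaling $x=k/N$ and $\log$-weights centered by $\psi(N)$, the log-gamma walks converge to Brownian motions with drift $-v$ and diffusion $\tfrac12$. The discrete Riemann sums converge to $A_x(B)$. Because $Y$ is scaled to be fixed, the discrete $T$ transform converges to $T^{1/Y}(B)$ uniformly on compacts. Separately, the half-space log-gamma partition functions converge, after rescaling, to the Hopf--Cole solution of half-space KPZ with Neumann parameter $u$. I would take this as known from the intermediate disorder results for half-space polymers, as in \cite{CorwinShen} and related work.

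\textbf{Step 3 (passage to the limit).} Stationarity is a closed condition under weak convergence, provided the solution map is continuous in the initial data for the compact uniform topology, modulo growth at infinity. The limit of the discrete stationary measures is therefore stationary for \eqref{eq:open KPZ eq} on $\RR_{\ge0}$. When $v=u$ we have $Y\sim\Gav(0)$, so $1/Y=0$ by \eqref{eq:convention inverse gamma} and $X=B$. I would treat this boundary case either as a direct Brownian case or by monotone limit.

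\textbf{Main obstacle.} I expect Step 3 to be hardest, specifically the tightness and growth control needed for continuity of the KPZ solution map on an unbounded half-line. The initial data here has linear drift $-v\ge0$, and the convergence must be made uniform in the polymer endpoint. As a fallback, I would first prove stationarity on $[0,L]$ via Proposition \ref{prop:Stationary measure open KPZ}. I would then send $L\to\infty$ using the Hariya--Yor convergence $\mathsf P_L^{u,v}\to\Law(T^{1/Y}(B))$ on compacts, combined with locality of the KPZ dynamics.
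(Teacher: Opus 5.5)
Your overall route (stationarity for the half-space log-gamma polymer, then an intermediate-disorder limit) is the route of \cite{BarraquandCorwin} that the paper cites. But your Step 1, which carries the real content, does not work as proposed.

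First, the discrete candidate is misidentified. The half-space log-gamma stationary measure is the first layer $[\mathscr T^{1/Y}(L_2,L_1)]_1(n)=L_2(n)+\log\bigl(1+Y^{-1}\mathscr A_n(L_1,L_2)\bigr)$ with $L_1\sim\PP^{\alpha+v}_{\LG}$ and $L_2\sim\PP^{\alpha-v}_{\LG}$ (Proposition \ref{prop:Stationary measure half space LG}). It is not ``an independent $W^{(N)}$ minus a one-walk $T$ transform''. If you set $W^{(N)}=(L_1+L_2)/2$ and $B^{(N)}=(L_1-L_2)/2$, these are dependent (Kac--Bernstein, see Section \ref{subsubsec:A two-layer formulation of the Hariya--Yor theorem}). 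Moreover $\mathscr A_n(L_1,L_2)$ is not a functional of $B^{(N)}$ alone, since $L_1(j)-L_2(j-1)=2B^{(N)}(j)+L_2(j)-L_2(j-1)$. The independent form $W-X$ only emerges in the Gaussian limit.

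Second, and more seriously, a site-by-site Burke identity that maps independent inputs to independent outputs with the same marginals can only propagate \emph{product} laws along down-right paths. Here the law is a product only in the degenerate cases $v=u$ (where $1/Y=0$) and $u+v=0$ (where $\mathscr N^{u,v}\equiv 1$). Otherwise, by Proposition \ref{prop:Girsanov LG}, even the two-layer law has the path-dependent density $\mathscr N^{u,v}_N$ with respect to a pair of independent walks, and the first-layer increments are not independent. Also, the polymer dynamics acts only on the first layer, so there is no given ``update of $(W,X)$'' to test; the evolution of the hidden layer would have to be constructed.

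The missing idea is global. One realizes the initial data as the top row of a two-row polymer with independent weights and corner weight $\Gav(u-v)$ (compare Lemma \ref{lem:two row LPP time and two layer HY process} at zero temperature). One then glues this to the half-space model and shows that the increments have the same law at every height; \cite{BarraquandCorwin} obtain this from symmetry of half-space Whittaker measures. So the $\Gav(u-v)$ variable enters as this corner weight, not through a local identity at a diagonal site. If you take Proposition \ref{prop:Stationary measure half space LG} as input, your Steps 2--3 become the \cite{BarraquandCorwin} argument.

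Two further points. Your scaling is off. Take bulk parameter $\alpha\to\infty$ and measure space in units of $\alpha$ lattice steps. A parameter shift $\delta$ then produces a drift of about $\delta\,\psi_1(\alpha)\,x\alpha\approx\delta x$. So the boundary parameters must be $\alpha+u$ and $\alpha+v$ with $u,v$ fixed; shifts of order $u/\sqrt N$ send both the drifts and the boundary parameter to $0$. The intermediate-disorder input is the half-space one, namely \cite{Wu} with the extensions in \cite{Parekh,BarraquandCorwin}, not \cite{CorwinShen}. As you anticipate, it is needed for random initial data whose height grows linearly.

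Your fallback is a genuinely different route and is viable in principle. Since $v\le\min\{0,u\}$ gives $v_\star=v$, Corollary \ref{cor:HY KPZ} yields $X^{u,v}_L\Rightarrow X^{u,v}_\infty$. Proposition \ref{prop:Stationary measure open KPZ}, valid for all $u,v$ by \cite{DunlapGuRosati}, gives stationarity on $[0,L]$. It still requires two things you do not supply:
\begin{enumerate}
\item a locality estimate showing that the open SHE on $[0,L]$ converges to the half-space SHE near the origin as $L\to\infty$;
\item growth bounds on $h^{u,v}_L$ over all of $[0,L]$, uniform in $L$.
\end{enumerate}
Convergence on compacts from \cite{HariyaYor} is not enough for the second point.
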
 

\begin{figure}
    \centering 
     \begin{tikzpicture}[scale=1.2]
     \fill[blue,opacity=0.1] (0,0) rectangle (2,2);
\fill[green,opacity=0.1]
    (-1.8,-1.8)--(-1.8,2)--(0,2)--(0,0)--cycle;
\fill[red,opacity=0.12]
    (-1.8,-1.8)--(0,0)--(2,0)--(2,-1.8)--cycle;
\draw[ultra thick, -stealth] (0,0) -- (2,0) node[anchor=north]{$u$};
\draw[ultra thick, -stealth] (0,0) -- (0,2) node[anchor=east]{$v$};
\draw[ultra thick] (0,0) --  (-1.8,-1.8) ;
\node[blue!70] at (1,1) {MC};
\node[green!70] at (-1.3,0.7) {LD};
\node[red!70] at (0.7,-1.3) {HD};
\fill[red] (0,0) circle (2pt);
\draw[dotted,thick] (-1.8,1.8)--(1.8,-1.8);
\node[gray]  at (1.7,-1.2){fan} ; 
\fill[pattern=north east lines,pattern color=black,opacity=0.2]
    (-1.8,1.8)--(-1.8,2)--(2,2)
    --(2,-1.8)--(1.8,-1.8)--cycle;
\end{tikzpicture}
    \caption{The phase diagram is separated by three lines: $\{u>v=0\}$, $\{u=v<0\}$ and $\{v>u=0\}$. The region $\{u+v>0\}$ is called the fan region.}
    \label{fig:phase diagram}
\end{figure}

A crucial observation in \cite{BarraquandLeDoussal} is that the process $X^{u,v}_L$ has been studied in a different context, where it appears in a Matsumoto--Yor relation between exponential functionals of Brownian motions with different drifts   \cite{MatsumotoYor_Dufresne}. In particular, 
\cite[Theorem 2.2]{HariyaYor} studies the full asymptotics of  moments of affine extensions of the exponential functional, which we refer to as the  partition function   
\be\label{eq:generalized partition function for KPZ}
\EE^{v}_{ \BM,L }
\left[\left(a+\xi A_L(B)\right)^{-(u+v)}\right] 
\ee
for $a\geq 0$ and $\xi>0$. 
We do not review the exact asymptotics here, but comment on the phase transitions they exhibit. The $(u,v)$ plane is divided into three phases (maximal current, low density and high density)  by three lines meeting at a triple point, see the phase diagram in Figure  \ref{fig:phase diagram}.  The asymptotics of the partition function vary  across these seven parts.  We give the exact asymptotics in the discrete setting, see Theorem \ref{thm:Geo main thm} and  Theorem \ref{thm:LG main thm}. 

\begin{remark}
When $a=0$ and $\xi=1$, the partition function \eqref{eq:generalized partition function for KPZ} is simply the moment of the integral of a geometric Brownian motion. In the maximal current phase $u>0,v>0$, the negative moments admit a remarkable integral representation \cite[Proposition 5]{DonatiMartinMatsumotoYor_Moments},
\be\label{eq:integral expression for moments of geometric Brownian motion}
\EE^{v}_{\BM,L}\left[\left(A_{L}(B) \right)^{-(u+v)} \right]= \frac{1}{\Gamma(u+v)}\int_{i\RR}\frac{\d z}{2\pi i} \frac{\Gamma(u\pm z)\Gamma(v\pm z)}{2\Gamma(\pm 2z)} e^{L(z^2-v^2)}, 
\ee 
with the parameter correspondence $\mu=2v$, $m=u+v$  and $t=L/2$,
see also \cite[Eq. (70)]{BarraquandLeDoussal}. Here and throughout, we use the shorthand notation
$
\Gamma(z_1\pm z_2):=\Gamma(z_1+z_2)\Gamma(z_1-z_2) 
$ for $z_1,z_2\in\C$.

The discrete analogues of \eqref{eq:integral expression for moments of geometric Brownian motion} for geometric and log-gamma random walks established in \cite{Barraquand} provide an important starting input for our analysis, see discussions in Section \ref{subsec:outline and proof ideas}. The formula \eqref{eq:integral expression for moments of geometric Brownian motion} is analytically continued to the fan region  $u+v>0$ in the works \cite{CorwinKnizel, BrycKuznetsovWangWesolowski,BrycKuznetsov} with consideration on the characterization of the stationary measures of the open KPZ equation \eqref{eq:stationary measure of open kpz}. 
We establish discrete analogues in Propositions \ref{prop:general formula for q_M^s for all phases} and \ref{prop:general formula for Q_M^s for all phases LG} and Corollary \ref{cor:analytic continuation for partition function LG}. 
Finally, the analytic continuation of   \eqref{eq:integral expression for moments of geometric Brownian motion} to all $u,v\in\RR$ is established in \cite[Eq. (4.16)]{Barraquand}. 
\end{remark}

The  partition function \eqref{eq:generalized partition function for KPZ} contains rich information of the process  $X_{L}^{u,v}$. In particular, 
\cite[Theorem 1.3]{HariyaYor} shows that the limit of the process $X_L^{u,v} $ coincides with the $T$ transformed Brownian motions, see also the reformulation in \cite[Section 5]{BarraquandLeDoussal}. Throughout we denote 
\be\label{eq:v star}
v_{\star}:=\min\{0,u,v\}.
\ee  

\begin{corollary}[\cite{HariyaYor}]
\label{cor:HY KPZ}
    As $L\to\infty$, there holds weak convergence in $\mathcal C$:
$$X_L^{u,v}\Longrightarrow X_{\infty}^{u,v_{\star}}.$$
\end{corollary}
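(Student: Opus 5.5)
To prove Corollary \ref{cor:HY KPZ}, I would first reduce weak convergence of $X_L^{u,v}$ on $\mathcal C$ to convergence of its restrictions to $[0,x]$ for each fixed $x>0$. This suffices because the topology is that of uniform convergence on compacts. Fix $x<L$ and use the Markov property of $B\sim\PP_{\BM}^v$ at time $x$. We can write $A_L(B)=A_x(B)+e^{2B(x)}A_{L-x}(\tilde B)$, where $\tilde B=B(x+\cdot)-B(x)$ is an independent copy of $\PP^{v}_{\BM}$. Then for bounded continuous $F$ on $C([0,x],\RR)$,
\[
\mathsf E_L^{u,v}[F(X|_{[0,x]})]=\frac{1}{\mathsf Z_L^{u,v}}\EE^{v}_{\BM}\Bigl[F(B|_{[0,x]})\,e^{-2(u+v)B(x)}\,\Phi_{L-x}\bigl(e^{-2B(x)}A_x(B)\bigr)\Bigr],
\]
with
\[
\Phi_{M}(a):=\EE^{v}_{\BM,M}\bigl[(a+A_M(\tilde B))^{-(u+v)}\bigr].
\]
This is exactly the partition function \eqref{eq:generalized partition function for KPZ} with $\xi=1$, and $\mathsf Z_L^{u,v}=\Phi_L(0)$. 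The density of the restricted law with respect to $\PP^v_{\BM,x}$ is therefore $e^{-2(u+v)B(x)}\Phi_{L-x}(e^{-2B(x)}A_x)/\Phi_L(0)$.

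The key input is the full asymptotics of \eqref{eq:generalized partition function for KPZ} from \cite[Theorem 2.2]{HariyaYor}. We need them in the form $\Phi_M(a)\sim c(M)\,\psi(a)$, with the same prefactor $c(M)$ for $a>0$ and for $a=0$ (possibly with a modified constant at $a=0$), and with an explicit limiting profile $\psi$ that changes across the phases of Figure \ref{fig:phase diagram}. We also need the ratio $c(L-x)/c(L)$ to converge to an explicit factor $e^{\kappa x}$. The limit density is then
\[
D_x(B)=\lim_{L\to\infty}\frac{\Phi_{L-x}(e^{-2B(x)}A_x)}{\Phi_L(0)}\,e^{-2(u+v)B(x)},
\]
and this convergence holds pointwise. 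To upgrade it to weak convergence, I would use Scheffé's lemma. Since the densities are nonnegative and each integrates to $1$, it suffices to check that $\EE^v_{\BM}[D_x]=1$, i.e. that no mass is lost in the limit. The alternative is a domination argument based on the monotonicity of $\Phi_M$ in $a$, which is decreasing when $u+v>0$.

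Then I would identify $D_x$ case by case. In the maximal current phase $u,v>0$, I expect $\psi(a)\propto a^{-u}$, so that $D_x\propto e^{-2vB(x)}A_x(B)^{-u}$ up to an $e^{\kappa x}$ factor. Via a Girsanov change of drift from $v$ to $-v$, this should match the law of $T^{1/Y}(B')$, with $B'\sim\PP^{0}_{\BM}$ and $Y\sim\Gav(u)$, i.e. $X_\infty^{u,0}$. The matching comes from integrating out $Y$: the identity $\EE[(1+A/Y)^{-\cdot}]$ combined with the Girsanov density of $T^z$ from \cite{DonatiMartinMatsumotoYor} produces powers of $A_x$. In the high density phase $u<\min(v,0)$, I expect the profile to be constant in $a$ to leading order, with the dominant contribution coming from $A_M$ large. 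After Girsanov this should give drift $-u$ together with $Y\sim\Gav(0)$, i.e. $X^{u,u}_\infty=B'$, consistent with $v_\star=u$ and \eqref{eq:convention inverse gamma}. In the low density phase $v<\min(u,0)$, $A_M$ stays bounded and $\Phi_M(a)\to\EE[(a+A_\infty)^{-(u+v)}]$. By Dufresne's identity, $A_\infty\sim\Gav(-v)$ up to scaling, and the resulting density should reproduce $T^{1/Y}(B)$ with $Y\sim\Gav(u-v)$ and drift $-v$. The boundary lines and the triple point work the same way, with polynomial corrections in $M$ that cancel in the ratio.

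The main obstacle is twofold. First, we need the uniformity of the asymptotics of $\Phi_M(a)$ in $a$ over the relevant range, since $e^{-2B(x)}A_x$ is random. Second, for $u+v<0$ the map $a\mapsto\Phi_M(a)$ is increasing and unbounded, so we must justify exchanging the limit with the expectation. I would handle both through the explicit asymptotics in \cite{HariyaYor} together with the mass-conservation (Scheffé) check. Concretely, I would verify that $\EE^v_{\BM}[D_x]=1$ by a direct Girsanov computation using the martingale property of the $T$-transform densities.
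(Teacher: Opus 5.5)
Your architecture is the right one. It is the continuous counterpart of how the paper proves Corollaries \ref{cor:HY theorem Geo} and \ref{cor:HY theorem LG}; for this statement the paper itself only cites \cite{HariyaYor}. The steps are: the Markov decomposition of $A_L$ at time $x$ (analogue of Propositions \ref{prop:decomposition Geo} and \ref{prop:decomposition LG}), the asymptotics of $\Phi_M(a)$, pointwise convergence of the density on $[0,x]$, and identification of the limit through Proposition \ref{prop:Grisanov KPZ}. Your treatment of $u<\min\{0,v\}$ is also correct: the profile is constant in $a$, giving a Cameron--Martin change of drift from $v$ to $-u$.

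\textbf{The gap: the maximal current phase.} The identification you sketch for $u,v>0$ would fail. The profile there is not $a^{-u}$. The continuum analogue of \eqref{eq:asymp of K LG MC all} is $\Phi_M(a)\sim C\,M^{-3/2}e^{-v^2M}\,\mathscr N^{u,0}(a,1)$ for some constant $C=C(u,v)>0$. Here $\mathscr N^{u,0}(a,1)=\EE[(1+a/Y)^{-u}]$ with $Y\sim\Gav(u)$ is bounded, equals $1$ at $a=0$, and behaves like $a^{-u}\log a/\Gamma(u)$ as $a\to\infty$. A pure power cannot be right in any case. It is infinite at $a=0$. Moreover, $e^{-2vB(x)+\kappa x}A_x(B)^{-u}$ is not a $\PP^v_{\BM}$-martingale in $x$: after Cameron--Martin it becomes a non-constant finite-variation process under $\PP^0_{\BM}$. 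So it does not even define a projective family of laws on $[0,x]$. With the correct profile and the homogeneity $\mathscr N^{u,0}(ca,c\xi)=c^{-u}\mathscr N^{u,0}(a,\xi)$ you get
\[
D_x(B)=e^{-2vB(x)+v^2x}\,\mathscr N^{u,0}\bigl(A_x(B),e^{2B(x)}\bigr).
\]
This is a Cameron--Martin change of drift from $v$ to $0$ (not to $-v$), followed by the density $\mathscr N^{u,0}_x$ of Proposition \ref{prop:Grisanov KPZ}. That is what identifies $X^{u,0}_\infty$, and no powers of $A_x$ appear. The same profile $\mathscr N^{u,0}(a,1)$ is needed on the half-line $u>0=v$, where there is no drift change.

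\textbf{Smaller points.} You have swapped the names of the two boundary phases relative to Figure \ref{fig:phase diagram}: the paper calls $v<\min\{0,u\}$ high density.
\begin{itemize}
\item In that case, Dufresne's identity with diffusion coefficient $\frac12$ gives exactly $A_\infty(B)\sim\Gav(-2v)$. The shape parameter must be $-2v$, not $-v$ ``up to scaling''. It is precisely this that yields $\EE[(a+A_\infty)^{-(u+v)}]=\frac{\Gamma(u-v)}{\Gamma(-2v)}\mathscr N^{u,v}(a,1)$, and hence $D_x=\mathscr N^{u,v}_x(B)$ exactly.
\item The two obstacles you list are not real. Scheffé only needs convergence of the density for each fixed path, that is, for each fixed realized value of $e^{-2B(x)}A_x(B)$. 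So you need no uniformity in $a$ and no interchange of limit and expectation, even when $u+v<0$.
\item The total-mass-one check is just Proposition \ref{prop:Grisanov KPZ} with $f\equiv1$, combined with Cameron--Martin where a drift change occurs.
\end{itemize}
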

In view of Propositions \ref{prop:Stationary measure open KPZ} and \ref{prop:Stationary measure half space KPZ}, Corollary \ref{cor:HY KPZ}   implies that the stationary measure of the open KPZ equation converges to the corresponding half-space stationary measure, according to the $(u,v)$ in the phase diagram (Figure \ref{fig:phase diagram}).  
The  limiting process $X_{\infty}^{u,v}$ has been studied in \cite{DonatiMartinMatsumotoYor},  where they show that the $T$ transformed process is absolutely continuous with respect to Brownian motions with an explicit Radon-Nikodym derivative that we define below.

\begin{definition}
    For $ v\leq \min\{u,0\}$ and $ a\geq0, \xi>0$, let us define a function 
    \be\label{eq:function N KPZ}
\mathscr N^{u,v}(  a,\xi) =\EE\left[\left(\frac{a}{Y} +\xi\right)^{-(u+v)}\right], 
\ee
where the expectation is taken over an inverse gamma random variable $Y\sim\Gammainv(u-v)$. In particular,  we have 
\be\label{eq:special case function N LG}
\mathscr N^{u,v}( 0,\xi)= \xi^{-(u+v)}, \qquad \mathscr N^{u,u}(  a,\xi) =\xi^{-2u},
\ee
in view of convention \eqref{eq:convention inverse gamma}.
\end{definition}
The function $\mathscr{N}$ can be expressed in terms of the Whittaker function \cite[Remark 3.2]{HariyaYor}. When $v\in(-u,0]$, it  also admits a useful expression as  an integral of the K-Bessel function, see Section \ref{subsubsec:whittaker preliminary}.

\begin{proposition}[{\cite[Theorem 1.5]{DonatiMartinMatsumotoYor}}]
\label{prop:Grisanov KPZ}
Let $u\in\RR, v\leq \min\{u,0\}$ and $x\geq0$. For every bounded measurable
functional $f$ on $C([0,x],\RR)$,
\begin{equation} \label{eq:KPZ Girsanov}
 \EE\Bigl[f\bigl(X_\infty^{u,v}|_{[0,x]}\bigr)\Bigr]
 =\EE^{v}_{ \BM} \Bigl[f\bigl(B|_{[0,x]}\bigr)
             \mathscr N_x^{u,v}(B)\Bigr], 
\end{equation}  
where for a path $X\in\mathcal{C}$ and $x\geq 0$, we denote its restriction to $[0,x]$  by $X |_{[0,x]}$ and  
 $$
\mathscr N_x^{u,v}(X)
 =\mathscr N^{u,v}\bigl(A_x(X),e^{2X(x)}\bigr). 
 $$
\end{proposition}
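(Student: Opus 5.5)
Conditioning on $Y$ reduces the statement to a Girsanov identity for $T^z$ with a deterministic $z\ge0$, which then gets mixed over $Y\sim\Gav(u-v)$.

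\textbf{Step 1: fixed $z$.} Let $B$ be a Brownian motion with drift $-v$ and diffusion coefficient $\tfrac12$ (so $dB=-v\,ds+dW/\sqrt2$), and put
\[
X=T^z(B)=B-\log\bigl(1+zA(B)\bigr),\qquad A=A_\cdot(B).
\]
Since $dA=e^{2B}ds$, Itô's formula gives
\[
dX=dB-\frac{z e^{2B}}{1+zA}\,ds .
\]
The drift correction satisfies $\frac{z e^{2B}}{1+zA}=z e^{2X}(1+zA)$.

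The key algebraic identity of \cite{DonatiMartinMatsumotoYor} is
\[
A_x(X)=\frac{A_x(B)}{1+zA_x(B)}.
\]
To check it, note that $e^{2X}=e^{2B}/(1+zA)^2$ is exactly $\frac{d}{ds}\bigl[A/(1+zA)\bigr]$. Consequently $1+zA=(1-zA_x(X))^{-1}$, and
\[
dX=dB-\frac{z e^{2X}}{1-zA(X)}\,ds .
\]
So $X$ is a diffusion whose drift is a functional of its own past. The drift is defined as long as $zA(X)<1$, which holds since $A(X)<1/z$ automatically.

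\textbf{Step 2: density on $[0,x]$.} By Girsanov, relative to a Brownian motion $\beta$ with drift $-v$, the law of $X|_{[0,x]}$ under fixed $z$ has density $1_{\{zA_x(\beta)<1\}}$ times
\[
\exp\Bigl(-2\!\int_0^x\!\phi\,(d\beta+v\,ds)-\int_0^x\!\phi^2\,ds\Bigr),\qquad \phi=\frac{z e^{2\beta}}{1-zA(\beta)},
\]
where the factor $2$ comes from diffusion coefficient $\tfrac12$.

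Rather than evaluate this stochastic integral directly, I will guess a closed form: $(1-zA_x)^{\alpha}e^{\gamma\beta_x}$ for suitable exponents. Note that $d\log(1-zA)=-\phi\,ds$. Comparing with Itô's formula for $\log(1-zA)$ applied to a function of $\beta$, the density should come out as
\[
(1-zA_x(\beta))^{-1+?}\times\dots
\]
I will settle the exponents by direct computation.

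The cleaner route is probably the time-reversal/Markov approach of \cite{DonatiMartinMatsumotoYor}: check that $M_x=F(A_x(\beta),e^{2\beta_x})$ is a martingale and that the tilted process solves the mixed SDE. The expected outcome, with $w=u-v$, is that after mixing over $z=1/Y$ the fixed-$z$ density averaged against the law of $1/Y$ restricted to $\{1/Y<1/A_x(\beta)\}$ equals $\mathscr N^{u,v}(A_x,e^{2\beta_x})$.

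\textbf{Step 3: mixing and identification.} Because $1/Y$ has density proportional to $z^{w-1}e^{-z}$, I will change variables $z=t/(a+t\xi)$ with $a=A_x(\beta)$ and $\xi=e^{2\beta_x}$. This is the inverse of the map $A\mapsto A/(1+zA)$ from Step 1. The aim is to turn the mixed density into $\EE[(a/Y+\xi)^{-(u+v)}]$.

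To avoid errors in exponents, the most robust plan is to verify the claim at the level of generators:
\begin{enumerate}
\item Show that $\mathscr N^{u,v}(a,\xi)$ is space–time harmonic for the pair $(A_x(B),e^{2B_x})$ under drift $-v$. This means $(L+e^{2b}\partial_a)\mathscr N=0$, which can be checked termwise inside the expectation over $Y$ using the Gamma density.
\item Conclude that the $h$-transform with $h=\mathscr N$ gives a process with drift $-v+\partial_b\log\mathscr N$.
\item Check that $X_\infty^{u,v}$ is Markov in its own filtration, with exactly this drift. This uses a Bayes computation: the conditional law of $1/Y$ given $\mathcal F^X_x$ is Gamma$(w)$ tilted by $(1-zA_x(X))^{\cdot}$ on $\{zA_x(X)<1\}$.
\end{enumerate}
The identity $\mathscr N^{u,v}(0,1)=1$ gives the correct normalization at $x=0$.

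\textbf{Main obstacle.} The filtering step in item 3 is the hard part: computing the conditional law of $Y$ given the path of $X$, and showing the resulting drift matches $\partial_b\log\mathscr N$. Uniqueness for the resulting SDE should then follow from local Lipschitz bounds together with the positivity of $\mathscr N$.
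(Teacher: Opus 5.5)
Your Step 1 and the Girsanov exponent in Step 2 are correct. However, the proposal stops exactly where the content of the proposition lies: the density is never identified, and neither of your two routes is carried to the end.

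The ansatz $(1-zA_x)^{\alpha}e^{\gamma\beta_x}$ has the wrong form. The relation $\dd\log(1-zA)=-\phi\,\dd s$ only handles the $\dd s$-part of the exponent, not $\int\phi\,\dd\beta$. What is missing is It\^o's formula for $\phi$ itself. Since $\dd\langle\beta\rangle=\tfrac12\dd s$, one finds $\dd\phi=2\phi\,\dd\beta+(\phi+\phi^2)\,\dd s$, so the exponent collapses to $\phi_0-\phi_x+(1-2v)\int_0^x\phi\,\dd s$. With $\int_0^x\phi\,\dd s=-\log(1-zA_x)$, the fixed-$z$ density relative to $\PP^{-v}_{\BM}$ is
\[
e^{z}\exp\Bigl(-\frac{z\xi}{1-za}\Bigr)(1-za)^{2v-1}\mathbf{1}_{\{za<1\}},\qquad a=A_x(\beta),\quad \xi=e^{2\beta_x}.
\]
The factor $\exp(-z\xi/(1-za))$, absent from your ansatz, is what generates the $\xi$-dependence of $\mathscr N^{u,v}$. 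Mixing against the $\Ga(u-v)$ density of $1/Y$, the $e^{z}$ cancels $e^{-z}$. The substitution $z=g/(\xi+ag)$ (your $z=t/(a+t\xi)$ interchanges $a$ and $\xi$) then gives
\[
\begin{aligned}
\frac{1}{\Gamma(u-v)}\int_0^{1/a}z^{u-v-1}e^{-\frac{z\xi}{1-za}}(1-za)^{2v-1}\,\dd z
&=\frac{\xi^{2v}}{\Gamma(u-v)}\int_0^\infty g^{u-v-1}e^{-g}(\xi+ag)^{-(u+v)}\,\dd g\\
&=\xi^{2v}\,\mathscr N^{u,v}(a,\xi).
\end{aligned}
\]

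So the mixed density relative to your drift-$(-v)$ reference is $\xi^{2v}\mathscr N^{u,v}$, not $\mathscr N^{u,v}$ as your ``expected outcome'' asserts. The statement is relative to $\PP^{v}_{\BM}$, and it is the Cameron--Martin factor $\dd\PP^{-v}_{\BM}/\dd\PP^{v}_{\BM}=e^{-4v\beta_x}=\xi^{-2v}$ on $\mathcal F_x$ that removes $\xi^{2v}$. The same slip affects item 1 of Step 3: $\mathscr N^{u,v}_x(B)$ is a martingale under drift $v$, not $-v$. For $u=v$ it equals $e^{-4vB_x}$, which is not a $\PP^{-v}_{\BM}$-martingale unless $v=0$.

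With these computations your direct route becomes a complete proof. You also need a localization at $\inf\{s: zA_s\ge 1-1/n\ \text{or}\ |\beta_s|\ge n\}$ to justify Girsanov up to the boundary $zA=1$. The argument is uniform in $v$, including $v=0$, and the case $u=v$ is Cameron--Martin alone. The generator/filtering fallback is unnecessary. As written it leaves its self-declared hard step open: the conditional law of $1/Y$ given $\mathcal F^X_x$, together with uniqueness in law for a singular path-dependent SDE.

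For comparison, the paper takes this proposition from \cite{DonatiMartinMatsumotoYor} without proof. Its proofs of the discrete analogues (Propositions \ref{prop:Girsanov Geo} and \ref{prop:Girsanov LG}) suggest a stochastic-calculus-free route for $v<0$:
\begin{itemize}
\item trade $Y\sim\Gav(u-v)$ for $Y_*\sim\Gav(-2v)$ at the cost of the weight $\frac{\Gamma(-2v)}{\Gamma(u-v)}Y_*^{-(u+v)}$;
\item use the Matsumoto--Yor identity $(T^{1/Y_*}(B),Y_*)\overset{d}{=}(B',A_\infty(B'))$ with $B'\sim\PP^{v}_{\BM}$;
\item condition on $\mathcal F_x$ through $A_\infty(B')=A_x(B')+e^{2B'(x)}A_\infty\bigl(B'(x+\cdot)-B'(x)\bigr)$.
\end{itemize}
That argument does not reach $v=0$, which the paper handles separately in the discrete setting via the infinite two-layer Schur/Whittaker processes. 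The completed Girsanov computation covers $v=0$ with no extra work.
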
 
 We establish discrete analogues of  Proposition \ref{prop:Grisanov KPZ}, see Propositions \ref{prop:Girsanov Geo} and   \ref{prop:Girsanov LG}.

\subsection{Stationary measures of geometric last passage percolation}
\label{subsuc:GLPP}
\subsubsection{The models} 
\label{subsubsec:GLPP models definition}
For $q\in(0,1]$, let $\Geo(q)$ denote the law of the random variable $Y$ such that 
$\PP(Y=j)=(1-q)q^j, j\geq 0$.  Throughout  by writing  $Y\sim\Geo(1)$ we mean that 
\be\label{eq:convention Geo}
Y=+\infty  \quad  a.s.
\ee

\begin{figure}
\centering
\begin{subfigure}[b]{0.48\textwidth}
    \centering
    \begin{tikzpicture}[scale=0.9]
        \begin{scope}
            \clip (-0.1,-0.1) -- (4.9,-0.1) -- (8.3,3.3) -- (3.3,3.3) -- cycle;
            \draw[gray] (0,0) grid (8.3,3.3);
        \end{scope}
        \draw[red,very thick]
            (2,0) node[below,black,font=\small]{$(i,1)$} -- (2,2) -- (4,2) -- (4,3) -- (7,3)
            node[above,text=black,font=\small] {$(n,m)$};
    \end{tikzpicture} 
    \caption{Model on a strip}
\label{fig:polymer picture on a strip}
\end{subfigure}
\hfill
\begin{subfigure}[b]{0.48\textwidth}
    \centering
    \begin{tikzpicture}[scale=0.9]
        \begin{scope}
            \clip (-0.1,-0.1) -- (8.2,-0.1) -- (8.3,3.3) -- (3.3,3.3) -- cycle;
            \draw[gray] (0,0) grid (8.3,3.3);
        \end{scope}
        \draw[red,very thick]
            (3,0) node[below,black,font=\small]{$(i,1)$}
            -- (3,1) -- (5,1) -- (5,3)  -- (7,3)
            node[above,black,font=\small] {$(n,m)$};
    \end{tikzpicture} 
    \caption{Model in half-space}
\label{fig:polymer picture in half-space}
\end{subfigure}
\caption{An admissible path is highlighted in red for each model.}
\end{figure}

\begin{definition}[GLPP on a strip \cite{BarraquandCorwinYang}]\label{def:Strip GLPP}
Let $a,r,s\in\RR$ be parameters such that
\be\label{eq:parameter domain geo}
a,ar,as\in(0,1).
\ee  
Consider independent random variables $(w_{i,j})_{1\leq j\leq i\leq j+N}$ such that for $  j<i<j+N$, $w_{i,j}\sim\Geo(a^2)$,  $w_{j,j}\sim\Geo(ar)$, and $w_{j+N,j}\sim\Geo(as)$. For a given initial condition $G_{0}:
\llbracket 0, N \rrbracket
\rightarrow\RR$ with $G_0(0)=0$, define for $1\leq m\leq n \leq m+N$,
$$
G(n,m)=\max_{1\leq i\leq N} \left\{  G_0(i)+\max_{\pi:(i,1)\rightarrow(n,m)}\sum_{(i,j)\in\pi}w_{i,j}  \right\},
$$
where the inner maximum runs over up-right paths $\pi$ from $(i,1)$ to $(n,m)$ in the strip, as shown in Figure \ref{fig:polymer picture on a strip}.
\end{definition}

\begin{definition}[Half-space GLPP \cite{BarraquandCorwin}] 
\label{def:Halfspace GLPP}
   Let $a,r$ be parameters such that $a, ar \in (0,1)$. Consider independent  random variables $(w_{i,j})_{i\geq j\geq 1  }$ such that for $i> j\geq 1$, $w_{i,j}\sim\Geo(a^2)$, and  $w_{j,j}\sim\Geo(ar)$. For a given initial condition $G_{0}:\NN\rightarrow\RR$ with $G_0(0)=0$, define for $  n\geq m \geq 1  $,
$$
G(n,m)=\max_{i\geq 1 } \left\{  G_0(i)+\max_{\pi:(i,1)\rightarrow(n,m)}\sum_{(i,j)\in\pi}w_{i,j}  \right\},
$$
where the inner maximum runs over up-right paths $\pi$ from $(i,1)$ to $(n,m)$ in half-space, as shown in Figure \ref{fig:polymer picture in half-space}.
\end{definition}
In both cases, we may then consider the Markov process $(G_m)_{m\geq0}$ on $\RR^{N}$ (resp. $\RR^{\NN_{\geq 0}}$),  where   for $  k\in \llbracket 1, N \rrbracket$ (resp. $k\geq 1$), 
 $G_m(k):=G(m+k,m)-G(m,m).$  
We say that a probability measure $\PP$ on $\RR^N$ (resp. $\RR^{\NN_{\geq 0}}$) is stationary for the GLPP  on the strip (resp. in  half-space)  if $G_0\sim \PP$ implies $G_m\sim\PP$ for all $m\geq 0$. 

\subsubsection{The stationary measures} 
\label{subsubsec:stationary measures GLPP}
Let $\mathcal P $ denote  the path space  
$$\mathcal P =\{L:\ZZ_{\geq0}\to\R_{\geq0}|\ L(0)=0, L(i)-L(i-1)\geq0, \quad \forall i\geq 1\},$$
endowed with the product topology. 
For $M\geq 0$,  define $\mathcal P_{M}=\{ (L(k))_{0\leq k\leq M}: L\in \mathcal P\}$.

\begin{definition} 
\label{def:functional and T transform Geo}
    Let  $(L_1,L_2)\in\mathcal{P}^2$. Define the  exponential functional associated to $(L_1,L_2)$   
    by $\mathcal{A}_{0}(L_1,L_2) =0$ and for $n\geq1$,
    \be\label{eq:exponential functional Geo}
  \qquad \mathcal{A}_{n}(L_1,L_2)
  =\max_{1\leq j\leq n} \Big(L_1(j)-L_2(j-1)\Big). 
\ee
    We also define $ \mathcal{A}_{\infty}(L_1,L_2)
 =\lim_{n\to\infty}\uparrow\mathcal{A}_{n}(L_1,L_2)
 \in\RR_{\geq0}\cup\{+\infty\}. $
 
 For $z\in\RR_{\geq0} $, define the  $\mathcal{T}$ transformed paths by 
  $  \mathcal{T}^{z}(L_1,L_2)(0)=(0,0)$ and  
\be
\label{eq:T transform Geo}
\mathcal T^z(L_1,L_2)(n)
  =\Big(
 L_1(n)+(\mathcal A_n(L_2,L_1)-z)_+,
 L_2(n)-(\mathcal A_n(L_2,L_1)-z)_+
 \Big)  
\ee 
for $n\geq1$.
We also define  
\be\label{eq:T transform at infinity Geo}
\mathcal{T}^{\infty}=\id_{\mathcal{P}^2}. 
\ee

\end{definition}

\begin{remark}
When $z=0$, we recover the celebrated discrete Pitman's transform \cite{Pitman} 
(see also \cite{BianeBougerolOConnell,Corwin_PolymerInvariance})
\be\label{eq:Pitman transform}\mathcal{T}^0 (L_1,L_2)=\big(L_2 \odot L_1, L_1\otimes L_2\big),\ee
where the transformations $\otimes$ and $\odot$ are defined by 
$$
L_1\odot L_2(k)=\max_{1\leq j \leq k} \Big( L_1(j)+L_2(k)-L_2(j-1) \Big), \ 
 L_1\otimes L_2(k)=\min_{1\leq j \leq k} \Big( L_1(j-1)+L_2(k)-L_2(j) \Big). 
$$
Pitman's transform is related to the Robinson--Schensted--Knuth (RSK) correspondence \cite{DauvergneMihaiBalint} and has played an important role in integrable probability,  see in particular \cite{OConnell_ConditionedRandomWalks,OConnellYor,Oconnell_pathtransform}. 
In view of \eqref{eq:T transform at infinity Geo}, the $\mathcal{T}$ transform can be seen as an interpolation between the identity map and Pitman transform on $\mathcal{P}^2$.  
\end{remark}
The functional $\mathcal{A}$ and transform $\mathcal{T}$ enjoy several deterministic algebraic properties (Proposition  \ref{prop:T properties Geo}) as their (one-layer) continuous  counterparts \eqref{eq:one layer exponential functional and T transform} do \cite[Proposition 2.1]{DonatiMartinMatsumotoYor}. 
In order to use them to describe the stationary measures of GLPP, we need  some notations.
For $q\in (0,1)$, define the geometric random walk  
$\PP_{\Geo}^{q}$ to be the probability distribution on $\mathcal{P}\ni L$ such that $L(i+1)-L(i)\overset{i.i.d.}{\sim}\Geo(q)$ for $i\geq 0$. For $M\geq 0$ we denote $\PP_{\Geo,M}^{q}$ to be the marginal distribution of $(L(k))_{0\leq k\leq M}$. For  $q_1,q_2\in(0,1)$, we 
denote 
$\PP_{\Geo}^{q_1,q_2}$ to be the product measure $\PP_{\Geo}^{q_1}\times\PP_{\Geo}^{q_2}$ and we define $\PP_{\Geo,M}^{q_1,q_2}$ likewise. 
 
\begin{proposition}[{\cite[Theorem 1.3]{BarraquandCorwinYang}}]
\label{prop:Stationary measure strip Geo}
For $M\geq1$ and parameters $a,r,s$ satisfying \eqref{eq:parameter domain geo}, define a probability measure on 
$(L_1,L_2)\in\mathcal P_{M}^2$ 
by
\be\label{eq:Stationary measure strip Geo}
\PP_{\Geo,M}^{a,r,s}(L_1,L_2)
=\frac{(rs)^{-L_2(M)+\mathcal A_M(L_2,L_1)}}{\mathcal K_M^{a,r,s}}
\PP_{\Geo,M}^{as,ar}(L_1,L_2),
\ee
where $\mathcal K_M^{a,r,s}$ is the normalizing constant and is finite.
Then the marginal law of $L_1$ is the (unique) ergodic stationary measure for GLPP on the strip in Definition \ref{def:Strip GLPP}.
\end{proposition}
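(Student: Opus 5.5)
This proposition is cited from \cite[Theorem 1.3]{BarraquandCorwinYang}, and our plan follows the route of that work in the notation of Definition \ref{def:functional and T transform Geo}. There are two claims: the normalizing constant is finite, and the marginal of $L_1$ is stationary.

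\textbf{Finiteness.} The ratio $(rs)^{-L_2(M)+\mathcal A_M(L_2,L_1)}$ is problematic only when $rs<1$; if $rs\le1$ and $\mathcal A_M(L_2,L_1)\ge L_2(M)$ there is nothing to check. We bound $\mathcal A_M(L_2,L_1)\le L_2(M)$, so the exponent $-L_2(M)+\mathcal A_M(L_2,L_1)$ is at most $0$ but can be very negative. When $rs>1$ the weight is therefore at most $1$. When $rs<1$ we use $-L_2(M)+\mathcal A_M \geq -L_2(M)$, so the weight is at most $(rs)^{-L_2(M)}$. Its expectation under $\PP^{ar}_{\Geo}$ is $\prod_{k=1}^M \frac{1-ar}{1-a/s}$, which requires $a/s<1$. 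That condition is not part of \eqref{eq:parameter domain geo}, so this crude bound fails in that regime. In that case we bound more carefully, using the maximum over $j$ together with the $L_1$ increments (rate $as$): the product $(ar)\cdot(as)\cdot(rs)^{-1}=a^2<1$ should make each term in a union bound over $j$ summable. Summing over $j\le M$ then gives finiteness.

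\textbf{Stationarity.} We realize the strip GLPP row by row as a Markov chain: passing from $G_m$ to $G_{m+1}$ applies a queueing/Burke-type map driven by one row of weights $w_{\cdot,m+1}$, with boundary weights $\Geo(ar)$ and $\Geo(as)$ at the two ends. We then construct a two-layer dynamics on $(L_1,L_2)$, with $L_2$ an auxiliary ``service'' layer, and show that \eqref{eq:Stationary measure strip Geo} is invariant for it while its $L_1$-projection coincides with the GLPP update. The key local identity is the geometric Burke property: if $I\sim\Geo(q_1)$ and $J\sim\Geo(q_2)$ are independent with $q_1<q_2$-type ordering, then $(I\wedge J+\text{new},\dots)$ again has product geometric law. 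We apply it site by site, $k=1,\dots,M$, and check that the boundary weight factors $(rs)^{\cdots}$ are exactly reproduced at $k=0$ (rate $ar$) and $k=M$ (rate $as$). This is essentially the matrix-product / two-line queue argument of \cite{BarraquandCorwinYang}.

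\textbf{Uniqueness and ergodicity.} These follow from irreducibility and aperiodicity of the finite-width chain together with tightness, for example via coupling with a common row weight.

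The main obstacle is the boundary bookkeeping in the stationarity step: verifying that the reweighting factor transforms correctly under one row update. We would try to handle it first by a direct computation of the one-step transition on two sites, and only then induct along the strip.
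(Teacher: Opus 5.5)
The paper does not prove this proposition; it is imported from \cite{BarraquandCorwinYang}. Your sketch therefore has to stand on its own, and it has two gaps.

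The first gap is in the finiteness step for $rs<1$. By \eqref{eq:exponential functional developped Geo}, the weight is $(rs)^{-(L_1\otimes L_2)(M)}=\min_{1\leq j\leq M}(rs)^{-x_j}$, where $x_j=L_1(j-1)+L_2(M)-L_2(j)$. Each $x_j$ is a sum of $j-1$ increments $\Geo(as)$ and $M-j$ increments $\Geo(ar)$, so $\EE[(rs)^{-x_j}]<\infty$ requires $a<r$ if $j\geq2$ and $a<s$ if $j<M$. Hence, as soon as $s\leq a$ (exactly the regime where you need the refined bound), the term $j=1$ is already non-integrable and your union bound over $j$ gives $+\infty$. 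If moreover $r\leq a$ (e.g.\ $a=\tfrac12$, $r=s=\tfrac14$), no single $j$ works. The mechanism $a^2<1$ you point to enters through the minimum, i.e.\ through an intersection rather than a union. Since $x_1$ is a function of $L_2$ and $x_M$ of $L_1$, one has $\PP(\min_j x_j\geq k)\leq\PP(x_1\geq k)\,\PP(x_M\geq k)\leq C_M(1+k)^{2M}(a^2rs)^k$. Hence $\mathcal K_M^{a,r,s}\leq\sum_{k\geq0}(rs)^{-k}\PP(\min_j x_j\geq k)<\infty$. Equivalently, bound $\min(A,B)\leq A^{\theta}B^{1-\theta}$ with $\theta\in[0,1]$ chosen so that $a^2<ar(rs)^{-\theta}<1$; such a $\theta$ exists precisely because $ar<1$ and $as<1$.

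The second, more serious gap is that the stationarity paragraph contains no argument for $rs\neq1$.
\begin{itemize}
\item A site-by-site Burke property is the tool for measures with i.i.d.\ increments. That is what \eqref{eq:Stationary measure strip Geo} reduces to when $rs=1$: the weight is then $1$, $L_1$ has i.i.d.\ $\Geo(as)=\Geo(a/r)$ increments, and your argument does go through.
\item For $rs\neq1$, the density $(rs)^{-(L_1\otimes L_2)(M)}$ is a global functional coupling both ends of the strip. You never specify the second-layer update, and the ``boundary bookkeeping'' you postpone is the whole content. A two-site computation followed by induction cannot see it.
\item The missing idea is to localize the interaction. For $rs<1$, reinstate the relative height $l$ of the two layers by setting $\lambda^i=(l+L_1(i),L_2(i))$. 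Summing $(rs)^l$ over $l\geq\mathcal A_M(L_2,L_1)$ produces the factor $(rs)^{\mathcal A_M(L_2,L_1)}/(1-rs)$; this is the computation in the proof of Lemma \ref{lem:K and q geo}.
\item The unsummed measure is the two-layer Schur process, with local interlacing constraints and local boundary weights $r^{\lambda^0_1-\lambda^0_2}$ and $s^{\lambda^M_1-\lambda^M_2}$. This relation \eqref{eq:relation between reweighted rw and two layer Schur process}, not a matrix-product ansatz, is what the paper takes from \cite{BarraquandCorwinYang}, and it is in this localized form that a local verification becomes possible.
\item Since $\sum_l(rs)^l$ diverges for $rs\geq1$, this representation is unavailable on the rest of the domain \eqref{eq:parameter domain geo}. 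A separate extension argument is needed there, e.g.\ analytic continuation of the stationarity identity in $(r,s)\in(0,1/a)^2$. That in turn requires the corrected finiteness bound to hold locally uniformly in complex parameters.
\end{itemize}

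Your uniqueness remark is essentially fine, and tightness is not needed. Taking $w_{m,m}$ large, every nondecreasing integer configuration is reached in one step with positive probability.
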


\begin{proposition}[{\cite[Proposition 3.2]{BarraquandCorwin}}] 
\label{prop:Stationary measure half space Geo}
For parameters $a,r,s$ satisfying \eqref{eq:parameter domain geo} with   $s\geq\max\{1,r\}$, consider the $\mathcal{T}$ transformed process
defined by 
\be\label{eq:HY process Geo}
\mathbb{HY}_{\Geo}^{a,r,s}=\mathcal T^Y(L_2,L_1),
\ee
where $L_1\sim\PP_{\Geo}^{as}$, $L_2\sim\PP_{\Geo}^{a/s}$ and  $Y\sim\Geo(r/s)$ are independent.
Then the marginal law of $[\mathbb{HY}_{\Geo}^{a,r,s}]_1$ is
stationary for the half-space GLPP in Definition \ref{def:Halfspace GLPP}.
\end{proposition}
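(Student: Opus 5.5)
The plan is to obtain stationarity in half-space as a limit of the strip stationarity of Proposition \ref{prop:Stationary measure strip Geo}, letting the width $N\to\infty$. Stationarity of the strip measure is already known, so the work reduces to identifying the limit of the strip measures with the law of $[\mathbb{HY}_{\Geo}^{a,r,s}]_1$.

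\emph{Step 1: locality.} Fix $m\geq 1$ and $K\geq 1$. On the strip, $G_m(k)$ for $k\leq K$ is a last passage value to $(m+k,m)$ from starting points $(i,1)$ with $i\leq m+k$. So it depends only on $G_0(i)$ for $i\leq m+K$ and on weights $w_{i,j}$ with $i\leq m+K$, $j\leq m$. Once $N>m+K$, none of these weights lies on the right boundary $\{i=j+N\}$, and the geometry of admissible paths agrees with half-space. Hence there is a deterministic map $\Phi_{m,K}$, the same for strip and half-space, such that $(G_m(k))_{k\leq K}=\Phi_{m,K}\big((G_0(i))_{i\leq m+K},(w_{i,j})\big)$, with weights of identical law in both models. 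The map $\Phi_{m,K}$ is continuous because the coordinates are discrete. It follows that if the strip stationary laws $\mu_N$ (first marginal of \eqref{eq:Stationary measure strip Geo} with $M=N$) converge in finite-dimensional distributions to a law $\mu_\infty$ on $\RR^{\NN_{\geq0}}$, then $\mu_\infty$ is stationary for the half-space GLPP. Indeed, for each $m,K$ the law of $(G_m(k))_{k\le K}$ is the pushforward of $\mu_N\otimes(\text{weights})$, which equals the $K$-marginal of $\mu_N$; passing to the limit gives the same identity for $\mu_\infty$.

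\emph{Step 2: identifying the limit via a Girsanov relation.} I will first prove a discrete analogue of Proposition \ref{prop:Grisanov KPZ}. For $Y\sim\Geo(r/s)$ independent of $(L_1,L_2)\sim\PP_{\Geo}^{as,a/s}$, the restriction of $\mathcal T^Y(L_2,L_1)$ to $\llbracket0,M\rrbracket$ has an explicit density with respect to $\PP_{\Geo,M}^{as,ar}$. This density is a function $\mathcal N_M$ of $\mathcal A_M(L_2,L_1)$ and the endpoint values. I plan to get it by summing over $Y$ in the event decomposition $\{Y\geq\mathcal A_M\}\cup\{Y<\mathcal A_M\}$, and then using the path algebra of $\mathcal T$ (the discrete analogue of \cite[Proposition 2.1]{DonatiMartinMatsumotoYor}) to invert the transform on each piece.

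Next I will write the $M$-marginal of the strip measure \eqref{eq:Stationary measure strip Geo} with $N>M$. By the Markov property at time $M$, its density with respect to $\PP_{\Geo,M}^{as,ar}$ is
\[
(rs)^{\mathcal A_M-L_2(M)}\cdot\frac{\mathcal K'_{N-M}\big(\mathcal A_M,\,L_1(M)-L_2(M)\big)}{\mathcal K_N^{a,r,s}} .
\]
Here $\mathcal K'$ is a generalized partition function: the expectation of $(rs)^{\max(\cdot,\cdot)}$ over the remaining $N-M$ steps, with the current state as affine parameters. This is the discrete counterpart of \eqref{eq:generalized partition function for KPZ}.

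\emph{Step 3 (main obstacle): asymptotics of the partition functions.} The key estimate is that the ratio $\mathcal K'_{N-M}(\cdot)/\mathcal K_N$ converges pointwise to $\mathcal N_M$ as $N\to\infty$, in the phase $s\geq\max\{1,r\}$. I will get the leading asymptotics of $\mathcal K'_{n}(x,y)$ from the contour integral formulas of \cite{Barraquand} for $\mathcal K$, extended to affine parameters. The leading behaviour is dictated by the dominant pole or saddle: in the phase $s\geq\max\{1,r\}$ it should come from a pole producing exponential growth $\lambda^n$ with an $(x,y)$-dependent prefactor, and that prefactor should match $\mathcal N_M$.

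Pointwise convergence of densities alone gives only vague convergence. To upgrade to weak convergence, I will use Scheffé's lemma, checking that the limiting density integrates to $1$; this holds automatically since it is the law of $\mathcal T^Y(L_2,L_1)$. On the boundary lines $s=1$ or $s=r$ the pole collides with the saddle point, and I expect this to need a separate, more delicate computation. Combining Steps 2 and 3 with Step 1 proves the proposition.
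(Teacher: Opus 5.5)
\textbf{Overall.} The paper does not prove this proposition; it quotes it from \cite{BarraquandCorwin}. So obtaining it as a limit of strip stationary measures is a genuinely different route, and a viable one. Your Step~1 (locality, discrete coordinates, Scheff\'e) is correct. The identification of the limit you need is exactly Corollary~\ref{cor:HY theorem Geo} with $s_\star=s$. The gaps are in the two mechanisms you propose for that identification.

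\textbf{Step 2 does not work as described.} Write $A=L_1$, $S=L_2$. You cannot invert the transform on $\{Y<\mathcal A_M(A,S)\}$. By \eqref{eq:capping Geo} the output only remembers $\mathcal A_M(A,S)\wedge Y=Y$. By \eqref{eq:semi group property Geo}, $\mathcal T^y\mathcal T^{y'}=\mathcal T^y$ for $y'\geq y$, so $\mathcal T^y$ is many-to-one there. (In the log-gamma case \eqref{eq:LG capping} lets you read $\mathscr A_n$ off the output, so $\mathscr T^z$ is injective; at zero temperature this fails.) The output law on that event is a sum over a nontrivial fibre. By \eqref{eq:conservation Geo}, its weights are proportional to $s^{2(\mathcal A_M(A,S)-y)}$. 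Evaluating that sum is the entire content of a Burke-type theorem.

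The missing ingredient is the identity $\mathcal T^{Y_*}(S,A)\overset{d}{=}(A,S)$ of \cite{DraiefMairesseOConnell}, for independent $Y_*\sim\Geo(s^{-2})$. Combined with \eqref{eq:capping Geo}, it upgrades to \eqref{eq:joint dufresne}. For $s>1$ the argument then runs as follows:
\begin{itemize}
\item write the law of $Y$ as that of $Y_*$ reweighted by $\frac{1-r/s}{1-s^{-2}}(rs)^{Y_*}$;
\item replace $Y_*$ by $\mathcal A_\infty(S,A)$;
\item condition on the first $N$ steps via $\mathcal A_\infty(S,A)=\mathcal A_N(S,A)\vee\big(S(N)-A(N)+\mathcal A_\infty(S^{(N)},A^{(N)})\big)$, where the last functional is an independent $\Geo(s^{-2})$.
\end{itemize}
This gives Proposition~\ref{prop:Girsanov Geo}.

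\textbf{Step 3 does not cover the whole range.} The contour formulas of \cite{Barraquand} reach $\mathcal K$ only through Lemma~\ref{lem:K and q geo}, which needs $rs<1$. Since $q^s_M(l)$ is of order $s^l$ as $l\to\infty$, the sum $\sum_l r^l q^s_M(l)$ diverges when $rs\geq1$. For $s>1$ and $1/s\leq r<s$ you therefore have no formula to analyse, and the paper never continues it there.

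Dufresne's identity replaces the contour analysis on the whole range $s>\max\{1,r\}$. Changing the law of $L_2$ from $\PP^{ar}_{\Geo}$ to $\PP^{a/s}_{\Geo}$ gives $\mathcal K_M^{a,r,s}(\alpha,\psi)=\big(\frac{1-ar}{1-a/s}\big)^M\EE^{as,a/s}_{\Geo,M}\big[(rs)^{\alpha\vee(\psi+\mathcal A_M(L_2,L_1))}\big]$. By Lemma~\ref{lem:geo Dufresne}, $\mathcal A_M(L_2,L_1)\uparrow\mathcal A_\infty\sim\Geo(s^{-2})$, so monotone or dominated convergence yields Proposition~\ref{prop:HD Geo}. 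Insert this into Proposition~\ref{prop:decomposition Geo}. The density of the first $k$ steps with respect to $\PP^{as,ar}_{\Geo,k}$ converges to $\big(\frac{1-ar}{1-a/s}\big)^{-k}\mathcal N^{r,s}\big(-(\ell_1\otimes\ell_2)(k),-\ell_1(k)\big)$. After changing the law of $\ell_2$ to $\PP^{a/s}_{\Geo,k}$, this is exactly the density in \eqref{eq:Geo Girsanov}.

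\textbf{The boundary lines need no pole--saddle computation.} The half-space dynamics depend only on $(a,r)$. On every finite window, the law of $[\mathbb{HY}^{a,r,s}_{\Geo}]_1$ is continuous as $s\downarrow\max\{1,r\}$. As $s\downarrow r$, $\PP\big(Y<\mathcal A_k(L_1,L_2)\big)\to0$, so $\mathcal T^Y$ acts as the identity on $\llbracket 0,k\rrbracket$. Your Step~1 locality argument therefore transfers stationarity from $s>\max\{1,r\}$ to $s=\max\{1,r\}$.
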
 

\begin{remark} 
When $s=r$, the marginal process $[\mathbb{HY}_{\Geo}^{a,r,r}]_1$ is  simply a geometric random walk $\PP_{\Geo}^{a/r}$ in view of conventions \eqref{eq:convention Geo} and \eqref{eq:T transform at infinity Geo}. Historically, these types of product measures are the first family of stationary measures found for LPP and polymer models in half-space \cite{BeteaFerrariOccelli,BarraquandKrajenbrinkLeDoussal_productmeasure}. The full family \eqref{eq:HY process Geo} is first found and conjectured    to consist of all extremal stationary measures 
in \cite{BarraquandCorwin}, which is recently proven in \cite{DasSorensenYang}.
\end{remark}

\subsubsection{The results}
For a pair of paths $(L_1,L_2)\in\mathcal{P}^2$, 
consider the affine extensions of the exponential functional for 
$ \a\in\RR\cup\{-\infty\}$, $\psi\in\RR$ and $M\geq 0$: 
\be\label{eq:affine extended exponential functional Geo}
\mathcal A_M^{\alpha,\psi}(L_1,L_2)
 :=\alpha\vee(\mathcal A_M(L_1,L_2)+\psi),
\ee
with the convention that $-\infty\vee b=b$ for all $b\in\RR$. 
For $a,r,s$ satisfying \eqref{eq:parameter domain geo}, 
$M\in\ZZ_{\geq 1}$ and $ \a\in\ZZ\cup\{-\infty\}$, $\psi\in\ZZ$, define the associated  partition function
\be\label{eq:generalized partition function geo}
 \mathcal{K}_{M}^{a,r,s}(\alpha,\psi)
  :=
 \EE^{as,ar}_{\Geo,M}
 \left[(rs)^{-L_2(M)+\mathcal A_M^{\alpha,\psi}(L_2,L_1)}\right],
\ee
When $ \a=\psi=0$, we recover
$\mathcal{A}_{M}^{0,0}=\mathcal{A}_{M}  $
and 
$\mathcal{K}_{M}^{a,r,s}(0,0)
=\mathcal{K}_{M}^{a,r,s}$ in \eqref{eq:Stationary measure strip Geo}.
 
Our first main result gives the full asymptotics of the 
partition function, which is a zero temperature, discrete analogue of 
\cite[Theorem 2.2]{HariyaYor}.  
\begin{theorem}\label{thm:Geo main thm preliminary}
Assume the parameters $a,r,s$ satisfy \eqref{eq:parameter domain geo}. Throughout the paper we denote 
\be\label{eq:s star}
s_\star=\max\{1,r,s\}.
\ee
Then for all $\alpha \in\ZZ\cup\{-\infty\}$ and $\psi\in\ZZ $,
there exist explicit constants 
$  c^{a,r,s}(\alpha,\psi)>0$
such that, as $M\to\infty$,
\be
\mathcal K_M^{a,r,s}(\alpha,\psi)
\sim  c^{a,r,s}(\alpha,\psi)
M^{\chi(r,s)}(\rho(a,r,s))^M,
\ee
where
\be\label{eq:rho and chi Geo}
\rho(a,r,s)
=\frac{(1-ar)(1-as)}{(1-as_\star)(1-a/s_\star)},
\qquad
\chi(r,s)=
\begin{cases}
-3/2,&r,s<1,\\
-1/2,& \max\{r,s\}=1>\min\{r,s\},\\
1,&r=s>1,\\
0,&\text{otherwise}.
\end{cases}
\ee
\end{theorem}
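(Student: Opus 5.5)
Throughout, $\mathcal K_M$ stands for $\mathcal K_M^{a,r,s}(\alpha,\psi)$. The plan is to first obtain an exact contour-integral formula for $\mathcal K_M$ in the maximal current region and then extend it to all parameters. The remark after \eqref{eq:integral expression for moments of geometric Brownian motion} points to the discrete analogue from \cite{Barraquand} as the starting input. The asymptotics then follow from a saddle point analysis of the integrand, with residue contributions appearing in the low and high density phases.

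\textbf{Step 1 (reduction to $\alpha=-\infty,\psi=0$).} Since $\mathcal A^{\alpha,\psi}_M=\alpha\vee(\mathcal A_M+\psi)$, we split according to whether $\mathcal A_M(L_2,L_1)+\psi\geq\alpha$. This writes $\mathcal K_M$ as $(rs)^{\psi}$ times the plain moment, plus a correction. The correction is an expectation of $(rs)^{-L_2(M)}\big((rs)^{\alpha}-(rs)^{\mathcal A_M+\psi}\big)$ restricted to the event $\{\mathcal A_M<\alpha-\psi\}$, which depends on only finitely many values of $\mathcal A_M$. Because $\mathcal A_M$ is integer valued and nondecreasing in $M$, it is more convenient to compute the joint generating quantity
\[
\EE^{as,ar}_{\Geo,M}\big[(rs)^{-L_2(M)}\,x^{\mathcal A_M(L_2,L_1)}\big].
\]
Changing measure, the factor $(rs)^{-L_2(M)}$ turns $L_2\sim\Geo(ar)$ into $\Geo(a/s)$, at the cost of the factor $\big(\frac{1-ar}{1-a/s}\big)^M$ when $a/s<1$. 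The problem becomes the law of $\mathcal A_M(L_2,L_1)=\max_{j}(L_2(j)-L_1(j-1))$ for two independent geometric walks, weighted by $(rs)^{\mathcal A_M}$. This is a ladder-height (queueing/Lindley) quantity, and its generating function has a Wiener--Hopf factorization in a variable $z$. For geometric increments the factorization is rational, which yields an exact formula
\[
\mathcal K_M=\oint \frac{dz}{2\pi i}\,F^{a,r,s}_{\alpha,\psi}(z)\,\Big(\tfrac{(1-az)(1-a/z)}{(1-as)(1-a/s)}\Big)^{-M}\ldots,
\]
where $F$ is a rational function with poles at $z=r,s,1$ and their inverses. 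This formula is the discrete analogue of \eqref{eq:integral expression for moments of geometric Brownian motion} and is the content of Propositions \ref{prop:general formula for q_M^s for all phases} below. We first establish it for $r,s<1$, where everything converges absolutely. We then extend it to all parameters by analytic continuation in $r,s$, deforming the contour and picking up residues at $z=s$ or $z=r$ once these cross the unit circle.

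\textbf{Step 2 (asymptotics by phase).} The $M$-dependent factor $g(z)^M$, with $g(z)=[(1-az)(1-a/z)]^{-1}$ up to constants, has its saddle at $z=1$ on the unit circle. In the maximal current phase $r,s<1$, the integrand vanishes to second order at $z=1$ because of the $\Gamma(\pm2z)$-type factor $(1-z^2)^2$-like numerator. The Gaussian integral then gives $M^{-3/2}\rho^M$ with $\rho=\frac{(1-ar)(1-as)}{(1-a)^2}$. When $\max\{r,s\}=1>\min\{r,s\}$, the pole merges with the saddle, cancelling one order of vanishing and giving $M^{-1/2}$. When $s_\star=\max\{r,s\}>1$ with $r\neq s$, the dominant contribution is the residue at $z=s_\star$, which gives $\rho^M$ with $\rho=\frac{(1-ar)(1-as)}{(1-as_\star)(1-a/s_\star)}$ and no power correction. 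If $r=s>1$, the pole is double and the residue produces a factor $M$, so $\chi=1$. On the boundary $r=s=1$, or when one parameter equals $1$ and the other exceeds it, we check that the pole structure produces exponent $0$, which is consistent with the ``otherwise'' case. The constants $c^{a,r,s}(\alpha,\psi)$ are read off as the Gaussian prefactor or the residue, including the $\alpha,\psi$ dependence through $F$. Positivity is checked directly, since each constant is either a residue of a positive-coefficient expression or a positive Gaussian coefficient.

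\textbf{Main obstacle.} The hard part is Step 1: deriving the exact contour formula with the affine parameters and justifying its continuation across all seven parts of the phase diagram. If the Wiener--Hopf route becomes messy, I would instead invoke the formula from \cite{Barraquand} for $\alpha=\psi=0$ and handle $(\alpha,\psi)$ by a discrete Markov decomposition at the last time the maximum is attained. The second delicate point is the degenerate boundary cases, where poles coincide with the saddle point or with each other. These require careful local expansions rather than a generic saddle point argument.
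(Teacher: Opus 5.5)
Your route is genuinely different from the paper's, and it can be made to work.

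\textbf{The paper's route.} The paper uses contour integrals only in the fan region $rs<1$. There it combines Barraquand's formula (Proposition \ref{prop:barraquands integral formula for q_M^s}), continued in $s$ alone, with Lemma \ref{lem:K and q geo}, and invokes the $r\leftrightarrow s$ time-reversal symmetry of the two-layer Schur process when $r\geq 1$. The boundary-dominated region is handled probabilistically:
\begin{itemize}
\item high density: a change of measure, the Dufresne/Burke identity $\mathcal A_M(L_2,L_1)\uparrow\mathcal A_\infty(L_2,L_1)\sim\Geo(s^{-2})$ (Lemma \ref{lem:geo Dufresne}), and monotone/dominated convergence;
\item low density: path reversal plus the same identity;
\item $r=s>1$: the one-step recursion of Lemma \ref{lem:Geo K recursion}, which writes the increments of $\rho^{-M}\mathcal K_M^{a,r,r}$ through the low-density quantity $\mathcal K_M^{a,r,a}$, followed by Stolz--Ces\`aro.
\end{itemize}

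\textbf{Comparison.} The paper's argument never continues anything into the shock region $rs>1$. It produces the constants directly in the form $\mathcal N^{r,s}(\alpha,\psi)$, which is reused in the Girsanov identity and the Hariya--Yor limit. It also transfers unchanged to the log-gamma case, where continuation across $u+v=0$ is much heavier. Your approach instead gives an exact finite-$M$ formula on the whole parameter domain and a single residue/steepest-descent analysis for all cases. On $r=s>1$ the factor $M$ comes from a double-pole residue rather than Stolz--Ces\`aro. The residues do reproduce the paper's constants, e.g.\ $\frac{1-s^{-2}}{1-r/s}\mathcal N^{r,s}(\alpha,\psi)$ in the high density phase.

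\textbf{What you must make explicit.}
\begin{itemize}
\item The identity theorem needs $\mathcal K_M^{a,r,s}(\alpha,\psi)$ to be analytic in $(r,s)$ on all of $(0,1/a)^2$. This holds: expanding the geometric weights gives an absolutely convergent series of monomials $a^{n_1+n_2}s^{e_1}r^{e_2}$, with $n_i=L_i(M)$ and $-C\leq e_i\leq n_i+C$.
\item You must sum over $l\geq(\alpha-\psi)_+$ in closed form before moving $r$ past $1$, because $\sum_l r^l(z^{l+1}-z^{-l-1})$ diverges on $|z|=1$ for $r\geq 1$.
\item After that, only the poles $z=r^{\pm1}$ and $z=s^{\pm1}$ move, and the line $rs=1$ is harmless. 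There the poles $z=s^{\pm1}$ collide with $z=r^{\mp1}$. The resulting factor $(1-rs)^{-1}$ in the residues is cancelled by the prefactor $1-rs$ of Lemma \ref{lem:K and q geo}.
\end{itemize}

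\textbf{Details to correct.}
\begin{itemize}
\item $z=1$ is a double zero of the integrand, not a pole.
\item On $\max\{r,s\}=1>\min\{r,s\}$ the two merging simple poles remove both orders of vanishing, not one. Removing only one would leave an odd leading term whose Gaussian integral vanishes, giving $M^{-3/2}$ instead of $M^{-1/2}$.
\item The Wiener--Hopf step is unnecessary; for finite $M$ it would in any case only give generating functions in time.
\item The decomposition at the last time the maximum is attained is also unnecessary. Truncating the $l$-sum in Barraquand's formula already handles arbitrary $(\alpha,\psi)$.
\end{itemize}
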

Theorem \ref{thm:Geo main thm preliminary} is a simplified version of Theorem \ref{thm:Geo main thm}, where we specify the constants
$ c^{a,r,s}(\alpha,\psi)$ through the function that we define below. We remark that the function $\rho$ is continuous in $(r,s)$  while the exponent $\chi$ varies when crossing the separation lines in the phase diagram (Figure \ref{fig:phase diagram}), with the parameter correspondence 
$(u,v):=(-\log r,-\log s)$. 

\begin{definition}
    For $r>0$, $s\geq \max\{1,r\}$ and 
    $ \a\in\ZZ\cup\{-\infty\}, \psi\in\ZZ$, let us define a function 
    \be\label{eq:function N Geo}
\mathcal N^{r,s}(\a,\psi)
:=\EE\left[
(rs)^{\max\{\a-Y,\psi\}}
\right],
\ee
where the expectation is taken over a geometric random variable $Y\sim\Geo(\frac{r}{s})$. In particular,  we have 
\be\label{eq:special case function N Geo}
\mathcal N^{r,s}(-\infty,\psi)=(rs)^{\psi}, \qquad \mathcal N^{r,r}(\a,\psi)=r^{2\psi},
\ee
in view of convention \eqref{eq:convention Geo}.
\end{definition}
The function $\mathcal{N}$ can be expressed  as a sum of Schur polynomials, see Section \ref{subsubsec:schur preliminary}. 
For a pair of paths 
$L=(L_1,L_2)\in\mathcal P ^2\cap\ZZ^2$ and $k\geq0$, we denote 
$L|_{\llbracket 0, k \rrbracket }$ to be its restriction to the first $k$ coordinates $(L_1(i),L_2(i))_{0\leq i\leq k}$ and 
\be\label{eq:function N for two paths Geo}
\mathcal N_k^{r,s}(L)
=\mathcal N^{r,s}\Bigl(\mathcal A_k(L_2,L_1),L_2(k)-L_1(k)\Bigr).
\ee
Our second result is a Girsanov relation for the $\mathcal T $-transformed geometric random walks. 
It is a  zero temperature, discrete analogue of Proposition \ref{prop:Grisanov KPZ}.  
 
\begin{proposition} \label{prop:Girsanov Geo}
Assume \eqref{eq:parameter domain geo} and   $s\geq \max\{1,r\}$. Let $A,S,Y$ be
independent with
\be
 A\sim\PP_{\Geo}^{as},
 \qquad
 S\sim\PP_{\Geo}^{a/s},
 \qquad
 Y\sim\Geo(r/s).
\ee
Then for all $N\geq 1$ and any bounded measurable function $F:\mathcal P_N^2\to\RR$, we have 
\be  \label{eq:Geo Girsanov}
 \EE\left[
 F\left(
 \mathcal T^Y(S,A)|_{\llbracket 0, N \rrbracket }
 \right)
 \right] 
 =\EE\left[
 F\left((A,S)\big|_{\llbracket 0, N \rrbracket }\right)
 \mathcal{N}_{N}^{r,s}(A,S)
 \right].
\ee
\end{proposition}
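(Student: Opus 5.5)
The plan is a direct computation of the law of the output path. We condition on $Y=y$, invert the map $(S,A)\mapsto \mathcal T^y(S,A)$ explicitly on $\llbracket 0,N\rrbracket$, and sum the product of geometric weights over all preimages. Integrality holds throughout, since all increments are integer valued.

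Fix $N\geq1$ and a target pair $(u,v)\in\mathcal P_N^2\cap\ZZ^2$. For $y\in\ZZ_{\geq0}$ put $D_n^y:=(\mathcal A_n(A,S)-y)_+$, which is nondecreasing in $n$. By \eqref{eq:T transform Geo}, $\mathcal T^y(S,A)=(S+D^y,\,A-D^y)$. The event $\{\mathcal T^Y(S,A)|_{\llbracket0,N\rrbracket}=(u,v)\}$ therefore forces
\[
S=u-D^y,\qquad A=v+D^y
\]
on $\llbracket0,N\rrbracket$.

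The probability of a given pair of paths factorises as
\[
(1-as)^N(1-a/s)^N(as)^{A(N)}(a/s)^{S(N)}.
\]
Substituting the expressions for $S$ and $A$ gives
\[
(1-as)^N(1-a/s)^N\,(as)^{v(N)}(a/s)^{u(N)}\,s^{2D^y_N}.
\]
This equals $\PP^{as,a/s}_{\Geo,N}(u,v)\cdot s^{2(u(N)-v(N))}\cdot s^{2D_N^y}$. The first factor is exactly the right-hand side weight for $(A,S)|_{\llbracket0,N\rrbracket}=(u,v)$. Hence \eqref{eq:Geo Girsanov} reduces to the identity
\[
\sum_{y\ \mathrm{admissible}}\Bigl(1-\frac rs\Bigr)\Bigl(\frac rs\Bigr)^{y}s^{2(u(N)-v(N)+D^y_N)}
=\mathcal N^{r,s}\bigl(\mathcal A_N(v,u),\,v(N)-u(N)\bigr).
\]

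The key deterministic step, which I expect to be the main obstacle, is to describe the admissible $y$ and the value of $D_N^y$ in terms of $(u,v)$ alone. This is the discrete analogue of \cite[Proposition 2.1]{DonatiMartinMatsumotoYor}, and is presumably part of Proposition \ref{prop:T properties Geo}. I would prove by induction on $n$ a relation of the form
\[
\mathcal A_n(A,S)\vee y \;=\; \mathcal A_n(v,u)\vee\bigl(y+\text{correction}\bigr).
\]
I would also aim for the uniqueness statement: for each $y$ there is at most one preimage, and it exists iff $u-D^y$ and $v+D^y$ have nonnegative increments, where $D^y$ is defined recursively from $(u,v)$ and $y$. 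The recursion uses that $D^y_n-D^y_{n-1}=(A(n)-S(n-1)-y\vee \mathcal A_{n-1})_+$. The expected outcome is the following dichotomy.
\begin{itemize}
\item If $y\ge \mathcal A_N(v,u)$, then $D^y\equiv0$ is the unique preimage. These $y$ contribute $(r/s)^{\lceil\cdot\rceil}$-tail mass times $s^{2(u(N)-v(N))}$.
\item If $y<\mathcal A_N(v,u)$, the preimage exists and $u(N)-v(N)+D^y_N$ becomes an explicit function of $\mathcal A_N(v,u)$ and $y$, giving a factor $(rs)^{\cdot}$.
\end{itemize}
I would first test this on $N=1$ by hand to fix the exact offsets (e.g.\ whether the threshold is $\mathcal A_N(v,u)$ or $v(N)-u(N)+\mathcal A_N(\cdot)$).

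Once the admissible set and $D^y_N$ are identified, the sum over $y$ splits into a finite geometric sum and a tail. Comparing with the definition \eqref{eq:function N Geo}, written as $\EE[(rs)^{\max\{\alpha-Y,\psi\}}]$ with $\alpha=\mathcal A_N(v,u)$ and $\psi=v(N)-u(N)$, should give the identity term by term: the region $\alpha-Y>\psi$ corresponds to the nontrivial preimages, and the complement to $D\equiv0$. The degenerate case $s=r$ is checked separately: there $Y=+\infty$, $\mathcal T^\infty=\id$, and $\mathcal N^{r,r}=r^{2\psi}$ is exactly the Radon--Nikodym factor swapping the drifts $as\leftrightarrow a/s$ in the two coordinates. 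Finally, since both sides of \eqref{eq:Geo Girsanov} are then equal on atoms, summing over $(u,v)$ extends the identity to bounded $F$.
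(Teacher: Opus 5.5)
Your strategy (condition on $Y=y$, invert $\mathcal T^y$ on $\llbracket 0,N\rrbracket$, sum the geometric weights over preimages) can be made to work. However, the deterministic step you single out as the main obstacle is predicted incorrectly, and the reduced identity you write down (one term, with one value $D^y_N$, per admissible $y$) is false.

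Put $\alpha:=\mathcal A_N(v,u)$ and $\psi:=v(N)-u(N)$. The capping identity \eqref{eq:capping Geo}, applied to $L=(S,A)$, says that every preimage satisfies $\mathcal A_n(v,u)=\mathcal A_n(A,S)\wedge y$ for all $n\le N$. Three consequences follow:
\begin{enumerate}
\item For $y<\alpha$ there is \emph{no} preimage, so your second bullet is backwards.
\item For $y>\alpha$ one gets $\mathcal A_N(A,S)=\alpha<y$, so $D^y\equiv0$ and $(S,A)=(u,v)$ is the unique preimage.
\item At the single value $y=\alpha$ the fibre is not a singleton. Already for $N=1$, $\mathcal T^{v(1)}(S,A)(1)=(u(1),v(1))$ holds exactly when $A(1)=v(1)+d$ and $S(1)=u(1)-d$ with $d\in\{0,\dots,u(1)\}$.
\end{enumerate}
So the claim ``at most one preimage for each $y$'' fails, and your forward recursion for $D^y$ is not well-posed. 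To see this, set $\theta_n:=\alpha-v(n)+u(n-1)\ge0$ and let $n_0$ be the first index with $v(n_0)-u(n_0-1)=\alpha$. For $n>n_0$, substituting $A=v+D$ and $S=u-D$ into $D_n=\mathcal A_n(A,S)-\alpha$ gives $D_n=\max\{D_{n-1},\,D_n+D_{n-1}-\theta_n\}$. This forces $D_n=D_{n-1}$ if $D_{n-1}<\theta_n$, has no solution if $D_{n-1}>\theta_n$, and leaves $D_n\ge D_{n-1}$ free if $D_{n-1}=\theta_n$. There is also a sign slip: the ratio of path weights is $s^{2(v(N)-u(N))}s^{2D^y_N}=s^{2\psi}s^{2D^y_N}$, not $s^{2(u(N)-v(N))}s^{2D^y_N}$.

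The missing lemma is that the critical fibre is parametrised by $d=D_N$. Run backwards, the recursion is deterministic: $D_{n-1}=\min\{D_n,\theta_n\}$ (note $\theta_{n_0}=0$), i.e.\ $D_n=\min\{d,\theta_{n+1},\dots,\theta_N\}$. The only remaining constraint is that $u-D$ be nondecreasing. Since $\theta_{n+1}\le\alpha-v(n)+u(n)$, this reduces to $0\le d\le\alpha-\psi$. The identity to prove then becomes
\[
\Bigl(\frac rs\Bigr)^{\alpha+1}s^{2\psi}+\Bigl(1-\frac rs\Bigr)\Bigl(\frac rs\Bigr)^{\alpha}\sum_{d=0}^{\alpha-\psi}s^{2(\psi+d)}=\mathcal N^{r,s}(\alpha,\psi).
\]
It holds by splitting the expectation \eqref{eq:function N Geo} at $Y=\alpha-\psi$ (recall $\alpha\ge\psi$). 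The terms match under $Y=y-\psi$ for $y>\alpha$, and $Y=\alpha-\psi-d$ for $y=\alpha$. So your closing heuristic is right, but all nontrivial preimages come from the one value $y=\alpha$, not from $y<\alpha$.

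Once repaired, your route is elementary and uniform in $s>r$. The case $rs=1$, where $\mathcal N\equiv1$, even recovers the Draief--Mairesse--O'Connell identity \eqref{eq:DMO}. The paper instead splits into two cases. For $s>1$ it changes the law of $Y$ to $\Geo(s^{-2})$ and uses the joint Dufresne relation \eqref{eq:joint dufresne} with the cutting lemma. For $rs<1$ (including $s=1$) it uses the infinite two-layer Schur process and its identification with $\mathbb{HY}_{\Geo}$ via RSK.
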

\begin{remark}\label{rmk:queue interpretation}
For the model in equilibrium, that is when the parameter of the transform has distribution $Y_*\sim\Geo(1/s^2)$ in our notation, 
the joint process $(A,S)$ as well as the transformed process $(D,R):=\mathcal{T}^{Y_*}(S,A)$ have been considered in the literature \cite{DraiefMairesseOConnell}. In that case, their motivation was to study a joint version of Burke's theorem for the FIFO queue, based on ideas from \cite{OConnellYor,KonigOConnellRoch}. Using the queueing terminology, the processes $(A,S,D)$ can be interpreted respectively as the cumulative Arrival time,  Service time and Departure time of the queue.  
\end{remark}
Proposition \ref{prop:Girsanov Geo} is proven in Section \ref{subsec:proof of the theorems Geo}.
In view of Proposition \ref{prop:Stationary measure half space Geo}, it characterizes the marginal distribution of the stationary measure of the GLPP in half-space. As a corollary, we prove that the joint distribution of $(L_1,L_2)$ under $\PP_{\Geo,M}^{ a,r,s}$ converges to the corresponding  Hariya--Yor process, which can be seen as a discrete (two-layer) analogue of Corollary \ref{cor:HY KPZ}.

\begin{corollary}
\label{cor:HY theorem Geo}
Assume the parameters $a,r,s$ satisfy  \eqref{eq:parameter domain geo}. Then as $M\to\infty$, there holds weak convergence in $\mathcal{P}^2$
$$
\PP_{\Geo,M}^{a,r,s}
\Longrightarrow\Law\bigl(\mathbb{HY}_{\Geo}^{a,r,s_\star}\bigr). 
$$ 
\end{corollary}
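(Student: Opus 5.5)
Since $\mathcal P^2$ carries the product topology and the paths are integer valued, weak convergence reduces to convergence of finite-dimensional marginals. So I will prove that for every fixed $N\geq1$ and every bounded $F:\mathcal P_N^2\to\RR$,
\[
\EE^{a,r,s}_{\Geo,M}\bigl[F\bigl((L_1,L_2)|_{\llbracket 0,N\rrbracket}\bigr)\bigr]\longrightarrow \EE\bigl[F\bigl(\mathbb{HY}^{a,r,s_\star}_{\Geo}|_{\llbracket 0,N\rrbracket}\bigr)\bigr].
\]
By Proposition \ref{prop:Girsanov Geo}, applied with $s_\star$ in place of $s$ (legitimate because $s_\star\geq\max\{1,r\}$), the right-hand side equals $\EE[F((A,S)|_{\llbracket 0,N\rrbracket})\mathcal N_N^{r,s_\star}(A,S)]$, where $A\sim\PP^{as_\star}_{\Geo}$ and $S\sim\PP^{a/s_\star}_{\Geo}$. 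I will therefore show that the $N$-marginal of $\PP^{a,r,s}_{\Geo,M}$ has, with respect to $\PP^{as_\star,a/s_\star}_{\Geo,N}$, a density converging pointwise to $\mathcal N_N^{r,s_\star}$. Pointwise convergence is enough: the total masses are all $1$ (both sides are probability measures), so Scheffé's lemma upgrades it to total variation convergence.

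\textbf{Step 1 (Markov decomposition).} Condition on the first $N$ steps. For $k\geq N$, split the maximum as
\[
\mathcal A_k(L_2,L_1)=\max\bigl\{\mathcal A_N(L_2,L_1),\;L_2(N)-L_1(N)+\mathcal A_{k-N}(\tilde L_2,\tilde L_1)\bigr\},
\]
where $\tilde L_i=L_i(N+\cdot)-L_i(N)$ are the shifted increments. These are independent of $\mathcal F_N$ and have the same law $\PP^{as,ar}_{\Geo}$. Also write $L_2(M)=L_2(N)+\tilde L_2(M-N)$. Then, in the notation \eqref{eq:affine extended exponential functional Geo}, the weight factorizes:
\[
(rs)^{-L_2(M)+\mathcal A_M}=(rs)^{-L_2(N)}\,(rs)^{-\tilde L_2(M-N)+\mathcal A^{\alpha,\psi}_{M-N}(\tilde L_2,\tilde L_1)},
\]
with $\alpha=\mathcal A_N(L_2,L_1)$ and $\psi=L_2(N)-L_1(N)$. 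Hence the density of the $N$-marginal with respect to $\PP^{as,ar}_{\Geo,N}$ is
\[
(rs)^{-L_2(N)}\,\frac{\mathcal K^{a,r,s}_{M-N}(\alpha,\psi)}{\mathcal K^{a,r,s}_{M}(0,0)}.
\]

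\textbf{Step 2 (asymptotics).} By Theorem \ref{thm:Geo main thm preliminary}, the ratio converges to
\[
\rho(a,r,s)^{-N}\,\frac{c^{a,r,s}(\alpha,\psi)}{c^{a,r,s}(0,0)}.
\]
The factor $M^{\chi}$ cancels, because $\chi$ depends only on $(r,s)$ and not on $(\alpha,\psi)$. What remains is a purely algebraic identity, to be checked for each path:
\[
(rs)^{-L_2(N)}\rho^{-N}\frac{c(\alpha,\psi)}{c(0,0)}\cdot\frac{\dd\PP^{as,ar}_{\Geo,N}}{\dd\PP^{as_\star,a/s_\star}_{\Geo,N}}=\mathcal N^{r,s_\star}(\alpha,\psi).
\]
The Radon–Nikodym ratio is explicit:
\[
\Bigl(\tfrac{1-as}{1-as_\star}\Bigr)^{N}\Bigl(\tfrac{1-ar}{1-a/s_\star}\Bigr)^{N}\Bigl(\tfrac{s}{s_\star}\Bigr)^{L_1(N)}\bigl(rs_\star\bigr)^{L_2(N)}.
\]
Its $N$-th power factors combine with $\rho^{-N}$ to give $1$. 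The exponential factors collapse to $(s_\star/s)^{\psi}$ times a power of $rs_\star$, via $L_2(N)-L_1(N)=\psi$. So the identity reduces to
\[
c^{a,r,s}(\alpha,\psi)\propto (rs)^{\,\cdot}\,\mathcal N^{r,s_\star}(\alpha,\psi),
\]
with the precise constant read off from Theorem \ref{thm:Geo main thm}.

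\textbf{Main obstacle.} This last identification is where the work lies. One must verify that the explicit constants of Theorem \ref{thm:Geo main thm} have exactly the $\mathcal N$-form, separately in each of the seven regions of the phase diagram. In the high- and low-density regions, the Schur-sum expression for $\mathcal N$ from Section \ref{subsubsec:schur preliminary} should match residue contributions. In the maximal-current region, where $s_\star=1$, the constant should be $\mathcal N^{r,1}$-like up to normalization. I expect the triple point and the critical lines to require the most care.
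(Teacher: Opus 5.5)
Your argument is correct. For $r\geq\max\{1,s\}$ or $s>\max\{1,r\}$ it is exactly the paper's proof: Proposition \ref{prop:decomposition Geo} is your Step~1 with the factor $(rs)^{-L_2(N)}$ absorbed into the arguments of $\mathcal K$, since $-(\ell_1\otimes\ell_2)(N)=\mathcal A_N(\ell_2,\ell_1)-\ell_2(N)$. It is then followed by Theorem \ref{thm:Geo main thm} and Proposition \ref{prop:Girsanov Geo}.

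You differ in the case $r<1$, $rs<1$. There the paper does not use the ratio of partition functions. Instead it proves $\PP^{r,s}_{\Schur,N}\Rightarrow\PP^{r,s_\star}_{\Schur,\infty}$ (Proposition \ref{prop:two layer Schur converges to infinite two layer Schur}) and identifies the increments of the infinite chain with $\mathbb{HY}^{a,r,s_\star}_{\Geo}$ (Corollary \ref{cor:HY equals the increments of infinite two layer Schur process}).

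Your uniform treatment is legitimate because Theorem \ref{thm:Geo main thm} covers that region too. The step you call the main obstacle is routine. The exponential factors collapse to exactly $(s_\star/s)^{\psi}$, with no leftover power of $rs_\star$, so what you need is
\[
\frac{c^{a,r,s}(\alpha,\psi)}{c^{a,r,s}(0,0)}=\Bigl(\frac{s}{s_\star}\Bigr)^{\psi}\mathcal N^{r,s_\star}(\alpha,\psi).
\]
This is read off directly from Theorem \ref{thm:Geo main thm}, using $\mathcal N^{r,s_\star}(0,0)=1$ and \eqref{eq:special case function N Geo}:
\begin{itemize}
\item when $r,s<1$, the constant is $\propto s^{\psi}\mathcal N^{r,1}(\alpha,\psi)$;
\item when $s_\star=s$, it is $\propto\mathcal N^{r,s}(\alpha,\psi)$;
\item when $s_\star=r$, it is $\propto(rs)^{\psi}=(s/r)^{\psi}\mathcal N^{r,r}(\alpha,\psi)$;
\item when $rs=1$, $\mathcal K\equiv 1$ and both sides equal $1$.
\end{itemize}
So no Schur-sum or residue matching is needed, and the critical lines and the triple point are no harder than the open phases.

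What your version buys is a single argument for all regions, dispensing with Proposition \ref{prop:two layer Schur converges to infinite two layer Schur}. It does not really avoid the Schur machinery, though. Proposition \ref{prop:Girsanov Geo} for $rs<1$ is itself proved through Corollary \ref{cor:HY equals the increments of infinite two layer Schur process}, and the fan-region constants come from the same asymptotics of $q^s_M(l)$. The paper's route, in turn, gives the stronger structural statement that the whole two-layer Schur chain, including its initial point $\lambda^0$, converges to the homogeneous infinite chain.
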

Corollary \ref{cor:HY theorem Geo} is proven in Section \ref{subsec:proof of the theorems Geo}.   In view of Propositions \ref{prop:Stationary measure strip Geo} and \ref{prop:Stationary measure half space Geo},  it implies that the stationary measure of the GLPP on a strip converges to the corresponding half-space stationary measure, according to the 
$(r,s)$ in the phase diagram (Figure \ref{fig:phase diagram}). 

Now let us turn to the log-gamma polymer.

\subsection{Stationary measures of log-gamma polymers} 
\label{subsec:Log gamma}
\subsubsection{The models} 
\begin{definition}[Log-gamma polymer on the strip \cite{BarraquandCorwinYang}]\label{def:Strip LG polymer}
Let $\a,u,v$ be parameters such that
\be \label{eq:parameter domain LG}
 \alpha>0,\qquad \alpha+u>0,\qquad \alpha+v>0.
\ee
Consider independent  random variables $(\omega_{i,j})_{1\leq j\leq i\leq j+N}$ such that for $  j<i<j+N$, $\omega_{i,j}\sim\Gammainv(2\a)$,  $\omega_{j,j}\sim\Gammainv(\a+u)$, and $\omega_{j+N,j}\sim\Gammainv(\a+v)$. For a given initial condition 
$H_{0}:\llbracket 0, N \rrbracket \rightarrow\RR$ with $H_0(0)=0$, define for $1\leq m\leq n \leq m+N$,
$$
H(n,m)=\log\left( \sum_{1\leq i\leq N}  e^{  H_0(i)}\sum_{\pi:(i,1)\rightarrow(n,m)}\prod_{(i,j)\in\pi}\omega_{i,j}  \right),
$$ 
where the inner sum runs over up-right paths $\pi$ from $(i,1)$ to $(n,m)$ in the strip.
\end{definition}

\begin{definition}[Half-space log-gamma polymer \cite{BarraquandCorwin}] \label{def:Halfspace LG polymer}
  Let $\a,u$ be parameters such that $\a, \a+u>0$. Consider independent  random variables 
  $(\omega_{i,j})_{i\geq j\geq 1 }$
such that for $i>j $, $\omega_{i,j}\sim\Gammainv(2\a)$ and $\omega_{i,i}\sim\Gammainv(\a+u)$. For a given initial condition $H_{0}: \NN\rightarrow\RR$ with $H_0(0)=0$, define for  $  n\geq m \geq 1  $, 
$$
H(n,m)=\log\left( \sum_{i\geq 1}  e^{  H_0(i)}\sum_{\pi:(i,1)\rightarrow(n,m)}\prod_{(i,j)\in\pi}\omega_{i,j}  \right),
$$ 
where the inner maximum runs over up-right paths $\pi$ from $(i,1)$ to $(n,m)$ in half-space.
\end{definition}
In both cases, we may  then define the Markov process $(H_m)_{m\geq0}$ and its stationarity   in the same way as for the GLPP in Section \ref{subsubsec:GLPP models definition}.

\subsubsection{The stationary measures}
Let $\mathscr{P}$ denote the paths space  
$$
 \mathscr{P}=\{L:\ZZ_{\geq0}\to\RR\mid L(0)=0\}, 
$$
endowed with the product topology. 
For $M\geq 0$, define  $\mathscr P_{M}=\{ (L(k))_{0\leq k\leq M}: L\in \mathscr P\}$.
\begin{definition} 
    Let  $(L_1,L_2)\in\mathscr{P}^2$. Define the  exponential functional associated to $(L_1,L_2)$ 
    by $\mathscr{A}_{0}(L_1,L_2) =0$ and for   $n\geq1$,
    \be\label{eq:exponential functional LG}
  \qquad \mathscr{A}_{n}(L_1,L_2)
  =\sum_{j=1}^n e^{L_1(j)-L_2(j-1)}. 
\ee
    We also define $ \mathscr{A}_{\infty}(L_1,L_2)
 =\lim_{n\to\infty}\mathscr{A}_{n}(L_1,L_2)
 \in\RR_{\geq0}\cup\{+\infty\}. $
 
 For $z\geq 0$,   define the  $\mathscr{T}$ transformed paths 
 $\mathscr{T}^{z}(L_1,L_2)$ by $  \mathscr{T}^{z}(L_1,L_2)(0)=(0,0)$ and for $n\geq1$, 
\be
\label{eq:T transform LG}
\mathscr{T}^{z}(L_1,L_2)(n)=\Bigl(L_1(n)+\log\bigl(1+z\mathscr{A}_{n}(L_2,L_1)\bigr) ,L_2(n)-\log\bigl(1+z\mathscr{A}_{n}(L_2,L_1)\bigr) \Bigr)
\ee
In particular $\mathscr{T}^0=\id_{\mathscr{P}^2}$.
\end{definition}

\begin{remark}
We recover the celebrated discrete geometric Pitman's transform \cite{Kirillov} (see also \cite{NoumiYamada,Corwin_PolymerInvariance}) in the following way: 
\be\label{eq:renormalization to get geometric Pitman transform}
\lim_{z\to\infty}
\Big(\mathscr{T}^z (L_1,L_2)+(-\log z,\log z)\Big)
=\big(L_2 \odot L_1, L_1\otimes L_2\big),
\ee
where  the transformations $\otimes$ and $\odot$ are defined (with a slight abuse of notation) by 
\be\label{eq:geometric Pitman transform} L_1\odot L_2(k)=\log\sum_{j=1 }^{k} e^{   L_1(j)+L_2(k)-L_2(j-1) }, \quad 
 L_1\otimes L_2(k)=-\log\sum_{j=1 }^{k} e^{- \big(L_1(j-1)+L_2(k)-L_2(j)\big) }.
 \ee
Geometric Pitman's transform is related to the geometric RSK  correspondence  and has played an important role in integrable probability,  see in particular \cite{OConnell_ConditionedRandomWalks,OConnellYor,Oconnell_pathtransform,CorwinOConnellSeppalainenZygouras}. 
In view of  \eqref{eq:renormalization to get geometric Pitman transform}, the (appropriately renormalized) $\mathscr{T}$ transform can be seen as an interpolation between the identity map and Pitman transform on $\mathscr{P}^2$.  
\end{remark}

\begin{remark}[Tropicalization]
Let $(L_1,L_2)\in\mathcal{P}^2$. Then for all $M\geq 1 $ and $y\geq 0$, we have 
$$
\frac{1}{\beta}\log\mathscr{A}_M(\beta L_1,\beta L_2)\xrightarrow{\beta\to\infty}
\mathcal{A}_M(L_1,L_2),
\quad 
\frac{1}{\beta}\mathscr{T}^{e^{-\beta y}}(\beta L_1,\beta L_2)\xrightarrow{\beta\to\infty}
\mathcal{T}^{y}(L_1,L_2).
$$
\end{remark}

The functional $\mathscr{A}$ and transform $\mathscr{T}$ enjoy several deterministic algebraic properties (Proposition  \ref{prop:T properties LG}). Probabilistically, they enter the stationary measures of the log-gamma polymers as follows. We need some notations. 
For $\theta>0$, define the log-gamma random walk $\PP_{\LG}^{\theta}$ to be the probability distribution on $\mathscr{P}\ni L$ such that   
$e^{L(i+1)-L(i)}\overset{i.i.d.}{\sim}\Gammainv(\ta)$ for $i\geq 0$.
One can then define $\PP_{\LG,M}^{\theta}, \PP_{\LG}^{\theta_1,\theta_2},\PP_{\LG,M}^{\theta_1,\theta_2}$ analogously to their  geometric random walk counterparts in Section \ref{subsubsec:stationary measures GLPP}.

\begin{proposition}[{\cite[Theorem 1.6]{BarraquandCorwinYang}}]  
\label{prop:Stationary measure strip LG}
For $M\geq 1$ and $\a,u,v$ satisfying \eqref{eq:parameter domain LG}, 
define a probability distribution on $(L_1,L_2)\in\mathscr{P}_M^2$ by 
\be\label{eq:Stationary measure LG}
  \PP_{\LG,M}^{\alpha,u,v} (L_1,L_2)
 =\frac{1}{\mathscr{K}_{M}^{\alpha,u,v}}
 e^{(u+v)\big(L_2(M)-\log\mathscr{A}_{M}(L_2,L_1)\big)}\PP_{\LG,M}^{\alpha+v,\alpha+u}(L_1,L_2).
\ee  
    Then under $\PP_{\LG,M}^{\alpha,u,v}$, the marginal distribution of $L_1$ is the (unique) ergodic stationary measure for the log gamma polymer on a strip in Definition \ref{def:Strip LG polymer}.
\end{proposition}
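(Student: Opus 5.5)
The plan is the two-layer ``local move'' strategy of \cite{BarraquandCorwinYang}. We enlarge the Markov chain $(H_m)$ to a Markov chain on pairs $(L_1,L_2)\in\mathscr P_M^2$ (with $M=N$), in which $L_1$ carries the polymer increments. We then check that the measure \eqref{eq:Stationary measure LG} is invariant for the enlarged chain, and project onto the first coordinate. The starting observation is that the weight can be rewritten as
\[
e^{(u+v)\left(L_2(M)-\log\mathscr A_M(L_2,L_1)\right)}
=\Bigl(\sum_{j=1}^M e^{L_2(j)-L_2(M)-L_1(j-1)}\Bigr)^{-(u+v)} .
\]
This is the $(u+v)$-th negative power of a one-step ``two-row polymer'' partition function. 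Such a function is exactly the kind of quantity that the geometric RSK local moves transform in a controlled way.

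\textbf{Step 1: factor the transition into local moves.} Write one time step $H_m\mapsto H_{m+1}$ as a product of elementary local transfer operators along the down-right path of the strip. There is one bulk $2\times2$ move for each $\omega_{i,j}\sim\Gammainv(2\alpha)$. In addition there is one move at the diagonal boundary, involving $\omega_{j,j}\sim\Gammainv(\alpha+u)$, and one at the far boundary, involving $\omega_{j+N,j}\sim\Gammainv(\alpha+v)$.

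\textbf{Step 2: lift each move to the two layers.} For each bulk move, define a two-layer update by the local geometric RSK rule acting on the increments of $L_1$ and $L_2$ at the affected site. The key input is the log-gamma Burke property (Seppäläinen's lemma): if the increment of $L_1$ is $\Gammainv(\alpha+v)$, the increment of $L_2$ is $\Gammainv(\alpha+u)$, and the weight is $\Gammainv(2\alpha)$, then the output increments are again independent with the same laws. We then check that the reweighting functional above is unchanged by a bulk move. This should hold because geometric RSK preserves the relevant two-row partition function, up to factors that telescope.

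\textbf{Step 3: the boundary moves (main obstacle).} At the two ends the product structure is broken, and the change of $L_2(M)-\log\mathscr A_M$ has to compensate exactly for the mismatch between the parameters $\alpha+u$ and $\alpha+v$ and the boundary weights. I would attack this directly with beta-gamma algebra. At the diagonal end, integrate out the boundary weight and show that the Radon--Nikodym factor $(\cdot)^{-(u+v)}$ is reproduced. At the far end, do the same after a time reversal/symmetry $j\mapsto M-j$, which exchanges the roles of $u$ and $v$. A convenient sanity check is the case $u+v=0$: there the weight disappears and the statement should reduce to stationarity of the product measure $\PP_{\LG}^{\alpha+v}$.

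\textbf{Step 4: normalization, projection and uniqueness.} First show that $\mathscr K_M^{\alpha,u,v}<\infty$ for all parameters allowed by \eqref{eq:parameter domain LG}. For this, use the bounds $e^{L_2(1)}\le\mathscr A_M\le M\max_j e^{L_2(j)-L_1(j-1)}$ and the existence of moments of inverse-gamma variables of order in $(-\theta,\infty)$; the conditions $\alpha+u>0$ and $\alpha+v>0$ are exactly what these bounds require. Invariance of the joint measure then gives stationarity of the $L_1$-marginal. For uniqueness and ergodicity, note that on the finite strip the polymer map is a strict contraction in Hilbert's projective metric, with a positive probability of uniform contraction. This yields a unique stationary law, hence the ergodic one.
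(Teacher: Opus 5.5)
The paper does not prove this statement; it is quoted from \cite[Theorem 1.6]{BarraquandCorwinYang}. Your plan therefore has to stand on its own, and its two load-bearing steps do not hold up.

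\textbf{Step 2: the Burke property is misstated.} Sepp\"al\"ainen's lemma says: if $U\sim\Gammainv(\theta_1)$, $V\sim\Gammainv(\theta_2)$, $W\sim\Gammainv(\theta_1+\theta_2)$ are independent, then $\bigl(W(1+U/V),W(1+V/U),(U^{-1}+V^{-1})^{-1}\bigr)\overset{\mathrm d}{=}(U,V,W)$. With inputs $\Gammainv(\alpha+v)$, $\Gammainv(\alpha+u)$ and weight $\Gammainv(2\alpha)$ this applies only when $u+v=0$, which is exactly the product case of your sanity check.

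The mismatch is really present in the dynamics. After the diagonal move, the vertical input at the first bulk corner is the fresh weight $\omega_{m+1,m+1}\sim\Gammainv(\alpha+u)$. Under \eqref{eq:Stationary measure LG}, however, the horizontal increments are not i.i.d.\ $\Gammainv(\alpha+v)$. No single-corner identity can close this. The global factor $e^{(u+v)(L_1\otimes L_2)(M)}$ must absorb the defect, and your sentence ``this should hold because geometric RSK preserves the relevant two-row partition function, up to factors that telescope'' asserts exactly what has to be proved.

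You also never say what $(L_1,L_2)$ is on the intermediate down-right paths, which contain a vertical step, or how $L_2$ is updated. Only the $L_1$-marginal of the dynamics is prescribed, so constructing that coupling is the heart of the matter. Without it, ``invariance of the joint measure'' is not yet a well-posed statement.

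\textbf{Step 3 and the remaining points.} Step 3 is explicitly deferred. The symmetry you propose for the far end does not do what you need: the reversal exchanging $u$ and $v$ (compare \eqref{eq:time reversed two layer schur process LG}) also swaps the two layers. So it does not conjugate the polymer dynamics acting on $L_1$ into itself.

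For orientation, the only structure from \cite{BarraquandCorwinYang} that this paper uses is \eqref{eq:relation between reweighted rw and two layer Whittaker process}. For $u+v>0$, $\PP_{\LG,M}^{\alpha,u,v}$ is the increment process of a two-layer Whittaker process, whose second layer interacts with the first through the factor $e^{-e^{\lambda_2^j-\lambda_1^{j-1}}}$ in $\Psi_\alpha$. Integrating out the zero mode $\lambda_1^0$, which carries weight $e^{-(u+v)\lambda_1^0}$, produces $\Gamma(u+v)\mathscr A_M^{-(u+v)}$, as in Lemma \ref{lem:Relation K Q}. When $\alpha>v$, the factor $e^{(u+v)L_2(M)}$ simply turns $\PP_{\LG}^{\alpha+u}$ into $\PP_{\LG}^{\alpha-v}$, as in the proof of Proposition \ref{prop:HD LG}. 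The parameter that pairs with $\alpha+v$ in Burke's lemma is therefore $\alpha-v$, not $\alpha+u$. If you build the second layer from this Gibbs picture, note that it exists only for $u+v>0$, so the region $u+v\leq0$ needs a separate argument (e.g.\ continuation in $(u,v)$).

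Finally, in Step 4, moments of $\Gammainv(\theta)$ exist for orders in $(-\infty,\theta)$, not $(-\theta,\infty)$. The bound $\mathscr A_M\geq e^{L_2(1)}$ gives $\mathscr K_M^{\alpha,u,v}<\infty$ for $u+v>0$ only when $v<\alpha$. For $u,v\geq\alpha$, use instead $\mathscr A_M\geq e^{tL_2(1)+(1-t)(L_2(M)-L_1(M-1))}$ with $(u+v)t<\alpha+u$ and $(u+v)(1-t)<\alpha+v$; such a $t\in[0,1]$ exists.
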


\begin{proposition}[{\cite[Theorem 1.8]{BarraquandCorwin}}] 
\label{prop:Stationary measure half space LG}
For parameters $\a,u,v$ satisfying \eqref{eq:parameter domain LG} with $v\leq \min\{0,u\}$,  consider the $\mathscr{T}$ transformed process:
\be\label{eq:HY process LG}
\mathbb{HY}^{\a,u,v}_{\LG}= \mathscr T^{1/Y}(L_2,L_1),
\ee
where $L_1\sim\PP^{\a+v}_{\LG}$, $L_2\sim\PP^{\a-v}_{\LG}$ and $Y\sim\Gammainv(u-v)$ are independent.
Then  the marginal law of $[\mathbb{HY}^{\a,u,v}_{\LG}]_1$  is stationary for the  log-gamma polymer in half-space in Definition \ref{def:Halfspace LG polymer}. 
\end{proposition}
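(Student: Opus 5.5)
One route is to obtain the half-space stationary measure as the $N\to\infty$ limit of the strip stationary measures of Proposition \ref{prop:Stationary measure strip LG}. This is the log-gamma version of the strategy behind Corollary \ref{cor:HY theorem Geo}. I restrict throughout to $v\leq\min\{0,u\}$, so that $v_\star=v$ and the half-space measure is the one in the statement. The plan has three steps.

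\textbf{Step 1 (Girsanov relation for $\mathscr T$).} I would first prove the log-gamma analogue of Proposition \ref{prop:Girsanov Geo}. Let $A\sim\PP^{\a+v}_{\LG}$, $S\sim\PP^{\a-v}_{\LG}$ and $Y\sim\Gammainv(u-v)$ be independent. For every $N$, the law of $\mathscr T^{1/Y}(S,A)|_{\llbracket 0,N\rrbracket}$ should have an explicit density with respect to $(A,S)|_{\llbracket 0,N\rrbracket}$. That density is the positive-temperature version of $\mathcal N_N^{r,s}$, namely
\[
\EE_Y\Bigl[\bigl(\mathscr A_N(S,A)/Y+e^{S(N)-A(N)}\bigr)^{-(u+v)}\Bigr],
\]
up to the appropriate normalisation.

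I would prove this by induction on $N$, one step at a time. Given $(A,S)$ up to time $n$, the conditional law of $Y$ is an explicit tilt of an inverse gamma. The next increments of the transformed pair are obtained from the inverse-gamma increments of $A,S$ via the identity
\[
1+z\mathscr A_{n+1}=(1+z\mathscr A_n)+z\,e^{S(n+1)-A(n)}.
\]
The resulting beta--gamma algebra computation should reproduce the factor above. This uses the deterministic properties in Proposition \ref{prop:T properties LG}.

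\textbf{Step 2 (convergence of strip measures).} By the Markov property of the reference walk, the density of $\PP^{\a,u,v}_{\LG,M}$ restricted to $\llbracket0,k\rrbracket$ is a ratio of affine-extended partition functions. The numerator is $\mathscr K_{M-k}$ with parameters $(\mathscr A_k,\,e^{L_2(k)-L_1(k)})$; the denominator is $\mathscr K_M$.

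Using the exact asymptotics of these partition functions, derived from the contour-integral formulas of \cite{Barraquand}, this ratio converges pointwise as $M\to\infty$ to the density of Step 1. An application of Scheffé's lemma then gives $\PP^{\a,u,v}_{\LG,M}\Rightarrow\Law(\mathbb{HY}^{\a,u,v}_{\LG})$.

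\textbf{Step 3 (passing stationarity to the limit).} Fix a window $\llbracket1,k\rrbracket$. One step of the strip dynamics of width $N$ produces $H_1(1),\dots,H_1(k)$ as a continuous function of two inputs: the initial profile $H_0$, and the weights $\omega$ in a bounded region plus the single far-boundary column. I would couple the strip and half-space weights on their common region. The discrepancy between the two updates then comes only from two sources: contributions $e^{H_0(i)}$ with $i$ large, and the paths touching the right boundary.

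Since $L_1$ is stationary for every strip width $N$, it suffices to show that these contributions are negligible in probability, uniformly in $N$, under $\PP^{\a,u,v}_{\LG,N}$. Weak convergence from Step 2 then yields stationarity of $[\mathbb{HY}^{\a,u,v}_{\LG}]_1$.

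\textbf{Main obstacle.} The tail control in Step 3 is the delicate step. It requires a uniform bound on
\[
\sum_{i>K}e^{L_1(i)}\cdot(\text{point-to-point partition function from }(i,1)),
\]
under the strip measures. I would obtain this from Step 2 together with the explicit drift of $\mathbb{HY}$: the $\mathscr T$-transformed first coordinate grows like $\PP^{\a-v}_{\LG}$, which has negative enough drift to compensate the polymer weights. Uniformity in $N$ would follow from a moment bound via the partition-function asymptotics.

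If this proves too cumbersome, the fallback is the direct approach of \cite{BarraquandCorwin}. There one verifies a two-layer Burke property: a single update maps $(L_1,L_2)$ together with the diagonal weight to a pair with the same law, using the symmetry of half-space Whittaker measures.
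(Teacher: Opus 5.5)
The paper does not prove this statement. It imports it from \cite{BarraquandCorwin}, where stationarity is proved directly in half-space (symmetry of half-space Whittaker processes, two-row polymer representation), and then uses it only to interpret Corollary \ref{cor:HY theorem LG}. You run the logic the other way: strip stationarity (Proposition \ref{prop:Stationary measure strip LG}) plus $\PP^{\a,u,v}_{\LG,M}\Rightarrow\Law(\mathbb{HY}^{\a,u,v}_{\LG})$ gives half-space stationarity. Your Steps 1--2 are exactly Proposition \ref{prop:Girsanov LG} and the density-ratio part of the proof of Corollary \ref{cor:HY theorem LG} (Proposition \ref{prop:decomposition LG} with Theorem \ref{thm:LG main thm}). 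The route is not circular, provided you track the inputs case by case.

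For $v<\min\{0,u\}$, everything rests on Sepp\"al\"ainen's Burke property through Lemma \ref{lem:LG Dufresne} and Proposition \ref{prop:HD LG}. It does not rest on contour integrals, which are unavailable when $u+v\le 0$. For $u=v<0$, the Girsanov identity is trivial since $1/Y=0$, and the asymptotics are Proposition \ref{prop:CE LG}.

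For $v=0<u$, do not use the paper's fan-region argument or its proof of Proposition \ref{prop:Girsanov LG}. Both go through Lemma \ref{lem:HY equals the increments of infinite two layer Whittaker process}, which is taken from \cite{BarraquandCorwin}. Use instead the density ratio with Theorem \ref{thm:LG main thm}(2). Get Step 1 there by letting $v\uparrow 0$ in the $v<0$ identity. Your beta--gamma computation gives $\EE[\mathscr N^{u,0}_{n+1}\mid\mathcal F_n]=\mathscr N^{u,0}_n$, so the limiting density has mass one, and Fatou's lemma identifies the weak limit.

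For $v<0$, Step 1 is much quicker as in the paper: reweight $1/Y\sim\Ga(u-v)$ into $\Ga(-2v)$, apply \eqref{eq:joint dufresne LG}, and condition on $\mathcal F_N$. In your induction, note that $Y$ is independent of $(A,S)$, so the conditioning has to be on the transformed path.

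As for what each route buys: the argument of \cite{BarraquandCorwin} is intrinsic to half-space, needs neither the strip theorem nor asymptotics of $\mathscr K_M$, and delivers the two-row representation. Yours makes half-space stationarity a corollary of the strip theorem and this paper's asymptotic analysis. Those are heavy inputs, but your route avoids half-space Whittaker symmetry altogether.

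Your ``main obstacle'' in Step 3 does not exist. Paths are up-right, so $H(n,m)$ involves only $H_0(i)$ with $i\le n$. Hence $H_1|_{\llbracket 0,K\rrbracket}$ is the same continuous function of $H_0|_{\llbracket 0,K+1\rrbracket}$ and $\omega_{1,1},\dots,\omega_{K+1,1}$, for the half-space model and for every strip with $N\ge K+1$; the boundary weight $\omega_{N+1,1}$ is never reached.

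Coupling the weights, strip stationarity gives $H^{(N)}_1|_{\llbracket 0,K\rrbracket}\overset{d}{=}H^{(N)}_0|_{\llbracket 0,K\rrbracket}$. The continuous mapping theorem, applied to the finite-dimensional convergence of Step 2, transfers this to the half-space chain started from $[\mathbb{HY}^{\a,u,v}_{\LG}]_1$; then induct on $m$. No tail or moment bound is needed.

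That is fortunate, because your drift heuristic is off. For $v<\min\{0,u\}$, $[\mathbb{HY}^{\a,u,v}_{\LG}]_1(n)=L_2(n)+\log\bigl(1+\mathscr A_n(L_1,L_2)/Y\bigr)$ differs from $L_1(n)$ by a tight quantity. So it behaves like $\PP^{\a+v}_{\LG}$, consistent with Proposition \ref{prop:Girsanov LG}, not like $\PP^{\a-v}_{\LG}$.
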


\subsubsection{The results}
For a pair of paths $(L_1,L_2)\in\mathscr{P}^2$, let us define affine extensions of the exponential functional for $ a\geq0$  and $\xi>0$ by 
\be\label{eq:affine extended exponential functional LG}
 \mathscr{A}_{M}^{ a,\xi}(L_1,L_2)
  :=  a+\xi\mathscr{A}_{M}(L_1,L_2).
\ee
For $\a,u,v$ satisfying \eqref{eq:parameter domain LG} and 
$M\in\ZZ_{\geq1}$, define the associated partition function 
\be\label{eq:generalized partition function LG}
 \mathscr{K}_{M}^{\alpha,u,v}(  a,\xi)
 :=\EE_{\LG,M}^{\alpha+v,\alpha+u}
 \left[e^{(u+v)L_2(M)}
 \Big(\mathscr{A}_{M}^{  a,\xi}(L_2,L_1)\Big)^{-(u+v)}\right]. 
\ee
When $ a=0, \xi=1$, we recover 
$\mathscr{A}_{M}^{0,1}=\mathscr{A}_{M}  $ and 
$\mathscr{K}_{M}^{\alpha,u,v}(0,1)
=\mathscr{K}_{M}^{\alpha,u,v}$ in \eqref{eq:Stationary measure LG}.

Our first main result is the full asymptotics of $ \mathscr{K}_{M}^{\alpha,u,v}(  a,\xi)$.

\begin{theorem}\label{thm:thm:LG main thm preliminary}
Assume the parameters $\a, u,v $ satisfy  \eqref{eq:parameter domain LG}. Recall the definition of $v_\star$   in \eqref{eq:v star}. 
Then for all $ a\geq0 $ and $ \xi>0$,  there exists explicit constants
$ C^{\a,u,v}(a,\xi)>0$ such that  as $M\to\infty$, 
    \be
 \mathscr{K}_{M}^{\a,u,v}(a,\xi)
 \sim  C^{\a,u,v}(a,\xi)M^{\chi(u,v)}
 \big(\varrho (\a,u,v)\big)^M, 
\ee
where 
\be\label{eq:rho and chi LG}
\varrho (\a,u,v)
=\frac{\Gamma(\alpha+v_\star)\Gamma(\alpha-v_\star)}
 {\Gamma(\alpha+u)\Gamma(\alpha+v)},
 \qquad
\chi(u,v)=
\begin{cases}
    -3/2, &    u,v>0, \\
    -1/2, &    \max\{ u, v\}>0=\min\{ u, v\}, \\
    1,&   u=v<0\\
    0,& \text{otherwise}. 
\end{cases}
.
\ee
\end{theorem}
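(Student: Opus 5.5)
We will derive the asymptotics from an exact contour-integral formula for $\mathscr K_M^{\alpha,u,v}(a,\xi)$ and then carry out a saddle-point / residue analysis. The starting input is the log-gamma analogue, from \cite{Barraquand}, of the Donati-Martin--Matsumoto--Yor formula \eqref{eq:integral expression for moments of geometric Brownian motion}. In the maximal current phase $u,v>0$ with $a=0$, $\xi=1$, it gives
\[
\mathscr K_M^{\alpha,u,v}(0,1)=\frac{1}{\Gamma(u+v)}\int_{i\RR}\frac{\d z}{2\pi i}\,\frac{\Gamma(u\pm z)\Gamma(v\pm z)}{2\Gamma(\pm 2z)}\left(\frac{\Gamma(\alpha\pm z)}{\Gamma(\alpha+u)\Gamma(\alpha+v)}\right)^{M},
\]
possibly up to harmless normalizations.

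The first step removes the affine parameters $(a,\xi)$. Using $\Gamma(u+v)\,x^{-(u+v)}=\int_0^\infty t^{u+v-1}e^{-tx}\dd t$ with $x=a+\xi\mathscr A_M$, together with scaling in $\xi$ (a shift of $L_2$ by $\log\xi$ multiplies $\mathscr A_M(L_2,L_1)$ by $\xi$), we reduce to a Whittaker-type integral representation. The $(a,\xi)$-dependence then enters only through an explicit factor inside the contour integrand, such as a Bessel-$K$ or Whittaker kernel, or a ratio $\Gamma(u+v\pm\ldots)$ after a Mellin transform in $a$. Next we analytically continue in $(u,v)$ beyond $u,v>0$, keeping $\alpha+u,\alpha+v>0$. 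As $u$ or $v$ crosses $0$, the poles of $\Gamma(u\pm z)$ or $\Gamma(v\pm z)$ at $z=\pm u$ or $\pm v$ cross $i\RR$. We deform the contour back to $i\RR$ and collect residues. This is the analogue of Propositions \ref{prop:general formula for q_M^s for all phases}/\ref{prop:general formula for Q_M^s for all phases LG}, and it yields a formula of the form ``contour integral plus residue terms at $z=\pm u$ and/or $z=\pm v$'' valid in every region of the phase diagram. Each such formula has to be justified by checking that both sides are analytic in the parameters.

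The asymptotics come from this formula. Write $f(z)=\log\Gamma(\alpha+z)+\log\Gamma(\alpha-z)$; it is even. On $i\RR$, $|\Gamma(\alpha+iy)|^2$ is maximized at $y=0$, so the integral is dominated by $z=0$. There the factor $1/\Gamma(\pm2z)\sim -4z^2$ produces a double zero, and Laplace's method gives $M^{-3/2}\,\Gamma(\alpha)^{2M}/(\Gamma(\alpha+u)\Gamma(\alpha+v))^{M}$. This is $\varrho$ with $v_\star=0$ and $\chi=-3/2$. The residue at $z=v$ (when $v<0$) contributes $\bigl(\Gamma(\alpha+v)\Gamma(\alpha-v)\bigr)^M$. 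Since $\Gamma(\alpha+x)\Gamma(\alpha-x)$ is increasing in $|x|$, this residue dominates the $z=0$ integral, which yields $\varrho$ with $v_\star=\min\{u,v\}$ and $\chi=0$.

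The boundary cases need separate treatment:
\begin{itemize}
\item When $\max\{u,v\}>0=\min\{u,v\}$, one pole of $\Gamma(v\pm z)$ sits at $z=0$ and cancels one zero of $1/\Gamma(\pm2z)$. Laplace's method then gives $M^{-1/2}$.
\item When $u=v<0$, the poles at $z=\pm u$ from $\Gamma(u\pm z)\Gamma(v\pm z)$ merge into double poles. The residue involves a derivative hitting the $M$-th power, which produces a factor $M$, so $\chi=1$.
\item When $u=v=0$, the double pole at the origin cancels the double zero, giving $\chi=0$.
\end{itemize}
The constants $C^{\alpha,u,v}(a,\xi)$ are read off as the value of the $(a,\xi)$-dependent prefactor at the dominant point, times the Gaussian constant $\sqrt{\pi/(Mf''(0))}$ raised to the appropriate power.

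I expect the main obstacle to be the first step: obtaining an exact, analytically continuable contour formula for general $(a,\xi)$ with $a>0$. The $a=0$ case follows from \cite{Barraquand}, but for $a>0$ the kernel's analyticity and decay along $i\RR$ must be controlled uniformly, to justify both the residue deformation and dominated convergence in Laplace's method. My first attempt would be to condition on the first step and use the Markov property of the reweighted walk. Alternatively, I would represent $(a+\xi\mathscr A_M)^{-(u+v)}$ through an auxiliary inverse-gamma variable, in the spirit of $\mathscr N^{u,v}$. That would reduce the general case to the $a=0$ formula with a modified boundary parameter, i.e.\ with one extra log-gamma step prepended to $L_1$.
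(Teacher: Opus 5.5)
\textbf{Where your plan matches the paper, and where it breaks.} In the fan region with $u>0$ your plan is essentially the paper's. The paper writes $\mathscr K_M^{\alpha,u,v}(a,\xi)$ as a multiple of $\int_\RR e^{-ul-(a/\xi)e^{-l}}Q_M^v(l)\dd l$ (Lemma \ref{lem:Relation K Q}). It continues Barraquand's contour formula for $Q_M^v$ in $v$ by collecting the residues at $z=\pm(v+k)$ for every $k$ with $v+k<0$, not only $\pm v$, although $k=0$ dominates. Laplace's method at $z=0$ then competes with these exponentially larger residues.

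The gap is the rest of the phase diagram. Both devices you use to bring in $(a,\xi)$ require positivity:
\begin{enumerate}
\item $\Gamma(u+v)x^{-(u+v)}=\int_0^\infty t^{u+v-1}e^{-tx}\dd t$ needs $u+v>0$, and so does a Mellin transform in $a$.
\item The kernel $\int_\RR e^{-ul-(a/\xi)e^{-l}}\Psi_{(z,-z)}(l,0)\dd l$ converges only for $u>|\mathfrak R z|$, because $\Psi_{(z,-z)}(l,0)\approx\Gamma(2z)e^{zl}+\Gamma(-2z)e^{-zl}$ as $l\to+\infty$ (and $\approx l$ at $z=0$). So on $i\RR$ you need $u>0$, and at the residue $z=v<0$ you need $u+v>0$.
\end{enumerate}
Consequently nothing in your argument produces a formula in two places. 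The first is the shock region $u+v\leq 0$, which contains the whole coexistence line $u=v<0$ (where your double-pole computation of $\chi=1$ takes place) and large parts of both density phases. The second is the fan region with $u\leq0$ when $a>0$.

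In those regions, ``continue in $(u,v)$ and collect residues'' is the whole difficulty, not a formality. You would need:
\begin{itemize}
\item real-analyticity of $\mathscr K_M^{\alpha,u,v}(a,\xi)$ in $(u,v)$ on all of $\{\alpha+u>0,\alpha+v>0\}$;
\item an explicit continuation of the $a$-dependent kernel past $u=0$ and past $u+v=0$;
\item control of the colliding poles when $u-v\in\ZZ$.
\end{itemize}
Your recipe of evaluating the $(a,\xi)$-prefactor at the dominant point there evaluates a divergent integral. Note also that the true constants for $u\leq0$, namely $\mathscr N^{v,0}(0,\xi)$ and $\mathscr N^{v,u}(0,\xi)$, do not depend on $a$ at all.

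The paper explicitly declines this continuation; for $a=0$ it was done by Bryc and Weso\l owski by other means. Your suggested fixes do not supply it. Conditioning on the first step, or prepending an inverse-gamma step, only expresses $\mathscr K$ as an average of $\mathscr K$ over random affine parameters (Lemma \ref{lem:cutting LG}, Proposition \ref{prop:decomposition LG}). That does not give its value at a fixed $a$.

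\textbf{The missing idea.} The boundary-dominated region $\min\{u,v\}<0$ needs no contour analysis at all.
\begin{itemize}
\item High density, $v<\min\{0,u\}$: changing the law of $L_2$ from $\PP_{\LG}^{\alpha+u}$ to $\PP_{\LG}^{\alpha-v}$ gives exactly $\mathscr K_M^{\alpha,u,v}(a,\xi)=\bigl(\Gamma(\alpha-v)/\Gamma(\alpha+u)\bigr)^M\EE^{\alpha+v,\alpha-v}_{\LG,M}\bigl[(a+\xi\mathscr A_M(L_2,L_1))^{-(u+v)}\bigr]$. The Dufresne identity of Lemma \ref{lem:LG Dufresne}, a consequence of the Burke property, gives $\mathscr A_M(L_2,L_1)\uparrow\mathscr A_\infty\sim\Gav(-2v)$. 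Monotone or dominated convergence then yields the constant $\frac{\Gamma(u-v)}{\Gamma(-2v)}\mathscr N^{u,v}(a,\xi)$ for every $a\geq0$ and either sign of $u+v$.
\item Low density: the same argument after changing the law of $L_1$ and applying time reversal (Lemma \ref{lem:time reversal LG}).
\item Coexistence line: the factor $M$ comes from the one-step recursion \eqref{eq:LG K recursion}, whose normalized right-hand side converges by the low density argument, followed by Stolz--Ces\`aro. No double pole is involved.
\item Fan region with $u\leq0$: handled by the symmetry \eqref{eq:boundary parameter symmetry LG}, which swaps $u$ and $v$ at the cost of an $a$-dependent correction of order $M^{-3/2}$. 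This is why those constants are $a$-independent.
\end{itemize}

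\textbf{Two smaller slips.}
\begin{itemize}
\item On the axes $\min\{u,v\}=0$, $\Gamma(\pm z)$ has a double pole at $0$, which cancels the double zero of $1/\Gamma(\pm2z)$ completely: $\Gamma(\pm z)/\Gamma(\pm2z)=4\cos\pi z$. Your ``one pole cancels one zero'' would leave an odd integrand, which is impossible since the integrand is even in $z$.
\item The case $u+v=0$ is trivial ($\mathscr K_M=1$). At $u=v=0$ your formula degenerates into $1/\Gamma(0)$ times a non-integrable double pole on $i\RR$, so your claimed cancellation there does not happen.
\end{itemize}
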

Theorem \ref{thm:thm:LG main thm preliminary} is a simplified version of Theorem \ref{thm:LG main thm}, where we specify the constants
$ C^{\a,u,v}(a,\xi)>0$ using the same function $\mathscr{N}^{u,v}(a,\xi)$ defined in \eqref{eq:function N KPZ}. 

Our second result is a Girsanov relation for the $\mathscr{T}$ transformed log-gamma random walks.  For a  pair of paths $L=(L_1,L_2)\in\mathscr{P}^2$ and
$k\in\ZZ_{\geq0}$, we denote  $L|_{\llbracket 0, k \rrbracket }$ its restriction on the first $k$ coordinates $(L_1(i),L_2(i))_{0\leq i\leq k}$ and 
\be 
\label{eq:function N for two paths LG} 
 \mathscr{N}_k^{u,v}(L)
  =\mathscr{N}^{u,v}\left(
 \mathscr{A}_{k}(L_2,L_1),
 e^{L_2(k)-L_1(k)}\right).
\ee

\begin{proposition} \label{prop:Girsanov LG}
Assume \eqref{eq:parameter domain LG} and   $v\leq \min\{0,u\}$. Let $A,S,Y$ be
independent  with
\be
 A\sim\PP_{\LG}^{ \a+v },
 \qquad
 S\sim\PP_{\LG}^{\a-v},
 \qquad
 Y\sim\Gav(u-v).
\ee
Then for all $N\geq 1 $ and all bounded measurable function $F:
\llbracket 0, N \rrbracket \to\RR$, we have 
\be  \label{eq:LG Girsanov}
 \EE\left[
 F\left( \mathscr T ^{1/Y}(S,A)|_{\llbracket 0, N \rrbracket } \right)
 \right] 
 =\EE\left[
 F\left((A,S)\big|_{\llbracket 0, N \rrbracket }\right)
 \mathscr{N}_{N}^{u,v}(A,S)
 \right].
\ee
\end{proposition}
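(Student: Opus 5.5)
We prove \eqref{eq:LG Girsanov} by computing the joint density of $\mathscr T^{1/Y}(S,A)|_{\llbracket 0,N\rrbracket}$ by a change of variables, conditionally on $Y=y$, and then integrating $y$ out. The integrated factor will be exactly $\mathscr N^{u,v}$ evaluated at the path functionals.

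\textbf{Step 1: the map is a bijection.} Fix $z=1/y>0$ and write $(D,R)=\mathscr T^z(S,A)$. Thus
\[
D(n)=S(n)+\log\bigl(1+z\mathscr A_n(A,S)\bigr),\qquad R(n)=A(n)-\log\bigl(1+z\mathscr A_n(A,S)\bigr).
\]
I expect the deterministic identities of Proposition \ref{prop:T properties LG} (the discrete analogue of \cite[Proposition 2.1]{DonatiMartinMatsumotoYor}) to give the following facts.

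\begin{itemize}
\item Writing $z'=z/(1+z\mathscr A_n(A,S))$, one has $1+z\mathscr A_n(A,S)=\bigl(1-z'\mathscr A_n(R,D)\bigr)^{-1}$.
\item Equivalently $\frac1{z}+\mathscr A_n(A,S)$ and $\mathscr A_n(R,D)$ are linked through $\frac1{z'}=\frac1z+\mathscr A_n(A,S)$, with $z'>0$ and $\mathscr A_n(R,D)<1/z$.
\item $D(n)+R(n)=S(n)+A(n)$.
\end{itemize}

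In particular the map $(S,A)|_{\llbracket0,N\rrbracket}\mapsto(D,R)|_{\llbracket0,N\rrbracket}$ is a bijection onto the set $\{\mathscr A_N(R,D)<1/z\}$, with inverse $\mathscr T^{-z}$. Since the map is triangular in $n$, its Jacobian is computable explicitly in the increment coordinates. Alternatively I would avoid computing Jacobians and do an induction on $N$.

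\textbf{Step 2: induction on $N$ via the one-step Markov structure.} Set $y_n:=y+\mathscr A_n(A,S)$, an updated inverse-gamma-type variable. The key one-step claim is a local identity of ``Burke'' type. Suppose $e^{a}\sim\Gav(\a+v)$, $e^{s}\sim\Gav(\a-v)$ and $Y\sim\Gav(u-v)$ are independent. Consider
\[
(d,r,Y'),\qquad Y' = \text{updated variable},
\]
a rational map of $(a,s,Y)$. The claim is that the law of this triple equals the law of $(a,s)$ tilted by a one-step factor, with $Y'$ having a conditional law given by an explicit density. This is a beta–gamma algebra identity, analogous to the one used in \cite{BarraquandCorwin} for the log-gamma stationary measures (and to \cite{DraiefMairesseOConnell} in the geometric case). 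I would verify it by a direct three-variable change of variables, using the gamma densities and $\Gamma$-function normalizations. Iterating the step $N$ times shows that $\mathscr T^{1/Y}(S,A)|_{\llbracket0,N\rrbracket}$ has density, with respect to $\PP^{\a+v,\a-v}_{\LG,N}$ evaluated at $(A,S)$, equal to
\[
\int \text{(prior density of }Y\text{ at }y)\times(\text{product of one-step tilts})\,dy .
\]

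\textbf{Step 3: identify the density with $\mathscr N_N^{u,v}$.} Write the joint density directly. The inverse gamma density of $Y$ at $y$ is $\propto y^{-(u-v)-1}e^{-1/y}$. Substituting $1/z'=y+\mathscr A_N(\cdot)$ and the path densities, the $y$-dependence should collapse to
\[
\EE\Bigl[\bigl(\tfrac{\mathscr A_N(S,A)}{Y}+e^{S(N)-A(N)}\bigr)^{-(u+v)}\Bigr],
\]
up to the factor $e^{(u+v)(\cdots)}$ coming from the ratio of the $\Gav(\a\pm v)$ densities. That expectation is exactly $\mathscr N^{u,v}(\mathscr A_N(S,A),e^{S(N)-A(N)})=\mathscr N_N^{u,v}(A,S)$, evaluated in the variables $(A,S)$ on the right-hand side of \eqref{eq:LG Girsanov}.

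\textbf{Main obstacle and fallback.} The main obstacle is bookkeeping the exponents so that they produce exactly $-(u+v)$ and the correct argument $e^{L_2(k)-L_1(k)}$, and handling the boundary case $v=u$, where $Y\sim\Gav(0)$ and $\mathscr N\equiv\xi^{-2u}$. In that case I would treat $Y\sim\Gav(0)$ separately via convention \eqref{eq:convention inverse gamma}: then $\mathscr T^0=\id$, and the claim reduces to the tilt $e^{2u(S(N)-A(N))}$ converting $\PP^{\a+u,\a-u}_{\LG}$ into itself with the roles of the two layers swapped, which is a direct one-line check. As an alternative route if the direct computation stalls, I would tropicalize-in-reverse the proof of Proposition \ref{prop:Girsanov Geo} or take a scaling limit of \eqref{eq:LG Girsanov}, but the direct beta–gamma change of variables is the primary plan.
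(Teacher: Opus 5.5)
\textbf{Verdict: genuine gap.} Your overall strategy can be made to work: carry an updated queue-type variable alongside the increments, use a one-step beta--gamma (Burke) identity, then integrate that variable out. As written, however, the induction in Step 2 does not close, and that step is where the content of the proof lies.

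\textbf{First problem: the state variable.} Your $y_n=y+\mathscr A_n(A,S)$ does not evolve by a map of $(a_n,s_n,y_{n-1})$ alone, since $y_n-y_{n-1}=e^{A(n)-S(n-1)}$ involves the whole past. The correct variable is $Y_n:=e^{S(n)-A(n)}\bigl(Y+\mathscr A_n(A,S)\bigr)$. With $(D,R)=\mathscr T^{1/Y}(S,A)$ it satisfies
\[
Y_n=e^{s_n}\bigl(1+e^{-a_n}Y_{n-1}\bigr),\qquad e^{d_n}=e^{a_n}\frac{Y_n}{Y_{n-1}},\qquad e^{r_n}=e^{s_n}\frac{Y_{n-1}}{Y_n}.
\]

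\textbf{Second problem: the one-step claim cannot be iterated.} You state it only for an input $Y\sim\Gav(u-v)$ independent of $(a,s)$. After one step, $Y_1$ is neither $\Gav(u-v)$-distributed nor independent of $(d_1,r_1)$, so the claim cannot be fed back into itself. What is needed is a statement valid for an arbitrary input density:
\begin{itemize}
\item The map $(Y_{n-1},e^{a_n},e^{s_n})\mapsto(Y_n,e^{d_n},e^{r_n})$ is an involution of $(0,\infty)^3$ with Jacobian of modulus one in logarithmic coordinates.
\item It satisfies $\frac1{Y_{n-1}}+e^{-a_n}+e^{-s_n}=\frac1{Y_n}+e^{-d_n}+e^{-r_n}$ and $Y_{n-1}^{2v}e^{-(\a+v)a_n-(\a-v)s_n}=Y_n^{2v}e^{-(\a+v)d_n-(\a-v)r_n}$.
\item Hence it preserves $m\otimes\Gav(\a+v)\otimes\Gav(\a-v)$, where $m(\mathrm{d}y)=y^{2v-1}e^{-1/y}\mathrm{d}y$.
\end{itemize}
This is Seppäläinen's Burke property with parameter $-2v$. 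For $v=0$ the measure $m$ is infinite, so it must be used as an identity of $\sigma$-finite measures, not as an identity in law; otherwise the case $v=0<u$ is lost.

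This also shows why Step 3 cannot be done at fixed $y$ by substituting path densities. Under $\mathscr T^{1/y}$ the exponential terms $\sum_n(e^{-a_n}+e^{-s_n})$ differ from $\sum_n(e^{-d_n}+e^{-r_n})$ by $\frac1{Y_N}-\frac1{Y_0}$.

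\textbf{How the bookkeeping then closes.} There is no product of one-step tilts and no extra factor $e^{(u+v)(\cdots)}$. Write $\Gav(u-v)(\mathrm{d}y)=\Gamma(u-v)^{-1}y^{-(u+v)}m(\mathrm{d}y)$. Transport the density $\Gamma(u-v)^{-1}Y_0^{-(u+v)}$ through the $N$ steps, using $Y_0=\mathscr A_N(R,D)+e^{R(N)-D(N)}Y_N$. Then integrate $Y_N$ against $m$; substituting $w=1/y$ gives
\[
\frac1{\Gamma(u-v)}\int_0^\infty(a+\xi y)^{-(u+v)}y^{2v-1}e^{-1/y}\,\mathrm{d}y=\mathscr N^{u,v}(a,\xi),
\]
at $a=\mathscr A_N(R,D)$ and $\xi=e^{R(N)-D(N)}$. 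Since $(D,R)$ has reference law $\PP^{\a+v,\a-v}_{\LG,N}$, this is exactly the right-hand side.

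\textbf{Comparison with the paper.} Completed this way, your route is a finite-horizon version of the paper's first case and treats all $u>v$ uniformly, including $v=0$. The paper instead changes the law of the transform parameter from $\Ga(u-v)$ to $\Ga(-2v)$ and applies the infinite-horizon joint Dufresne identity. It then conditions on $\mathcal F_N$ via $\mathscr A_\infty(S,A)=\mathscr A_N(S,A)+e^{S(N)-A(N)}\mathscr A_\infty(S^{(N)},A^{(N)})$, where $\mathscr A_\infty(S^{(N)},A^{(N)})$ plays the role of your $Y_N$. That argument needs $v<0$, so the paper treats $u+v>0$ (including $v=0$) separately through the infinite two-layer Whittaker process.

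\textbf{Smaller points.} Your separate treatment of $u=v$ is correct, and the paper's written proof does not treat that case explicitly. Two slips to fix:
\begin{itemize}
\item In the case $u=v$, the tilt is $e^{-2u(S(N)-A(N))}$, not $e^{2u(S(N)-A(N))}$.
\item In Step 1, capping gives $1+z\mathscr A_n(A,S)=\bigl(1-z\mathscr A_n(R,D)\bigr)^{-1}$ with $z$, not $z'$.
\end{itemize}
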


Proposition \ref{prop:Girsanov LG} is proven in Section \ref{subsec:proof of the theorems LG}.
In view of Proposition \ref{prop:Stationary measure half space LG}, it characterizes the marginal distribution of the stationary measure of the log gamma polymer in half-space. As a corollary, we prove that the joint distribution of $(L_1,L_2)$ under $\PP_{\LG,M}^{ \a,u,v}$ converges to the corresponding Hariya--Yor process.  

\begin{corollary}
\label{cor:HY theorem LG}
 Assume the parameters satisfy  \eqref{eq:parameter domain LG}. Then  as $M \to \infty$,  there holds weak convergence in $\mathscr P^2$:
$$\PP_{\LG,M}^{\alpha,u,v}\Longrightarrow
\Law\Big(\mathbb{HY}_{\LG}^{u,v_{\star}}\Big).$$ 
\end{corollary}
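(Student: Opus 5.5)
The plan is to prove convergence of finite-dimensional distributions. Since $\mathscr P^2$ carries the product topology, weak convergence is equivalent to convergence of the law of $(L_1,L_2)|_{\llbracket 0,N\rrbracket}$ for each fixed $N$. Fix $N$ and a bounded continuous $F$ on $\mathscr P_N^2$, and take $M>N$.

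The first step is to apply the Markov property at time $N$ under $\PP^{\a+v,\a+u}_{\LG,M}$. Write the increments after time $N$ as a fresh pair $(L_1',L_2')\sim\PP^{\a+v,\a+u}_{\LG,M-N}$, independent of the first $N$ steps. The functional splits as
\[
\mathscr A_M(L_2,L_1)=\mathscr A_N(L_2,L_1)+e^{L_2(N)-L_1(N)}\mathscr A_{M-N}(L_2',L_1'),
\]
and $L_2(M)=L_2(N)+L_2'(M-N)$. Conditioning on the first $N$ steps therefore gives
\[
\EE^{\a,u,v}_{\LG,M}[F]=\frac{1}{\mathscr K_M^{\a,u,v}}\,\EE^{\a+v,\a+u}_{\LG,N}\Big[F\, e^{(u+v)L_2(N)}\,\mathscr K^{\a,u,v}_{M-N}\big(\mathscr A_N(L_2,L_1),e^{L_2(N)-L_1(N)}\big)\Big].
\]
This is exactly why the affine partition function $\mathscr K_M(a,\xi)$ was introduced.

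The second step is to pass to the limit using the precise asymptotics of Theorem \ref{thm:LG main thm}. I expect these to read $\mathscr K_M(a,\xi)\sim C\,\mathscr N^{u,v_\star}(a,\xi)\,M^{\chi}\varrho^M$, possibly with a shifted or modified prefactor in some phases. The ratio $\mathscr K_{M-N}(a,\xi)/\mathscr K_M(0,1)$ then converges pointwise to $\varrho^{-N}\mathscr N^{u,v_\star}(a,\xi)/\mathscr N^{u,v_\star}(0,1)$, since $M^\chi/(M-N)^\chi\to1$. To exchange limit and expectation I will use Scheffé's lemma. Take $F\equiv1$ at level $M$; the left side is then identically $1$, so the prelimit densities are nonnegative with total mass $1$. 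If the pointwise limit also integrates to $1$, Scheffé gives $L^1$ convergence of the densities, and hence convergence for every bounded $F$.

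The third step identifies the limit with the Girsanov density of Proposition \ref{prop:Girsanov LG}, applied with parameters $(u,v_\star)$. The base measure is $\PP^{\a+v,\a+u}_{\LG,N}$, while the Girsanov proposition uses $A\sim\PP^{\a+v_\star}_{\LG}$ and $S\sim\PP^{\a-v_\star}_{\LG}$. The log-gamma densities differ by explicit exponential tilts: the factor $e^{(u+v)L_2(N)}$, the power $\varrho^{-N}$ (a ratio of $\Gamma$ functions), and the homogeneity of $\mathscr N$ in its second argument combine to change the parameters. For example, when $v_\star=v$ we need $\a+u\to\a-v$ on $L_2$, i.e. the density ratio $e^{(u+v)L_2}\Gamma(\a+u)/\Gamma(\a-v)$ per step, which matches $\varrho$. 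When $v_\star=u$ or $v_\star=0$ the tilt acts on both walks, and the scaling $\mathscr N^{u,v}(a,\lambda\xi)$ is used to absorb $e^{cL_1(N)}$ terms. After this reduction, the limit expression equals $\EE[F((A,S)|_{\llbracket0,N\rrbracket})\mathscr N_N^{u,v_\star}(A,S)]$. By Proposition \ref{prop:Girsanov LG} this is the law of $\mathbb{HY}_{\LG}$ restricted to $\llbracket 0,N\rrbracket$, and in particular it has total mass $1$, which supplies the hypothesis of Scheffé.

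The main obstacle is the third step, the careful case-by-case matching of constants and tilts across the phases, together with confirming that the constants $C^{\a,u,v}(a,\xi)$ in Theorem \ref{thm:LG main thm} factor as a fixed constant times $\mathscr N^{u,v_\star}(a,\xi)$, up to the required tilt. On the boundary lines the polynomial corrections $M^\chi$ differ, but they cancel in the ratio, so only the $(a,\xi)$-dependence matters.
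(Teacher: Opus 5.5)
Your argument is correct, but in the fan region with $u>0$ it takes a different route from the paper's. The paper splits into two regimes. For $u\le\min\{0,v\}$ and $v<\min\{u,0\}$ it does what you do: Proposition \ref{prop:decomposition LG}, the asymptotics of Theorem \ref{thm:LG main thm}, a change of reference measure, and Proposition \ref{prop:Girsanov LG}. For $u>0$, $u+v>0$ it never uses $\mathscr K_{M-N}(a,\xi)$. There it shows that the initial law and transition kernels of the two-layer Whittaker chain converge, via the asymptotics of $Q^v_N$, to those of the infinite two-layer Whittaker chain (Proposition \ref{prop:two layer Whittaker converges to infinite two layer Whittaker}), and then identifies the increments with $\mathbb{HY}_{\LG}$ (Lemma \ref{lem:HY equals the increments of infinite two layer Whittaker process}).

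Your uniform treatment is shorter, and the Scheff\'e step makes explicit the exchange of limit and integral that the paper leaves implicit.

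The case matching you flag as the main obstacle closes directly from the explicit constants in Theorem \ref{thm:LG main thm}:
\begin{itemize}
\item For $u>0$, $v\ge 0$ the $(a,\xi)$-dependence is $\xi^{-v}\mathscr N^{u,0}(a,\xi)$. The limiting density is then $e^{uL_2(N)+vL_1(N)}\varrho^{-N}\mathscr N^{u,0}_N(L_1,L_2)$, which tilts both walks to $\PP^{\a}_{\LG}$ and gives Proposition \ref{prop:Girsanov LG} with parameters $(u,0)$.
\item Whenever $v_\star=u$ (including $u=0<v$ and $u+v=0$ with $u\le0$) the dependence is $\xi^{-(u+v)}$. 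The density is then the pure tilt $e^{(u+v)L_1(N)}\varrho^{-N}$, and you reach $\PP^{\a-u}_{\LG}\times\PP^{\a+u}_{\LG}$ without Proposition \ref{prop:Girsanov LG}. This is convenient, since the paper's proof of that proposition does not treat $u=v$ explicitly.
\end{itemize}

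Each route buys something different. Yours does not remove the Whittaker machinery when $u>0$, $v\ge0$. It enters there through the fan-region cases of Theorem \ref{thm:LG main thm}, and through the second case of the proof of Proposition \ref{prop:Girsanov LG}, which itself uses the infinite Whittaker chain. For $v<0<u+v$, however, your argument uses only Dufresne-based inputs (Proposition \ref{prop:HD LG} and the first case of the Girsanov proof), so it bypasses the Whittaker process and the identification lemma whose proof the paper omits. The paper's fan-region route instead gives the extra structural fact that the limit is a homogeneous Markov chain with explicit kernel. It also does not need the precise $(a,\xi)$-dependence of the constants there.
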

Corollary \ref{cor:HY theorem LG} is proven in Section \ref{subsec:proof of the theorems LG}. In view of Propositions \ref{prop:Stationary measure strip LG} and \ref{prop:Stationary measure half space LG},  
taking the first layer marginal of both sides recovers a  result established recently  in \cite[Theorem 1.2]{BrycWesolowskiLG},
which says that the stationary measure of the log-gamma polymer on a strip converges to the corresponding half-space stationary measure, according to the $(u,v)$ in the phase diagram (Figure \ref{fig:phase diagram}). We discuss the relation between this convergence and that in    Corollary \ref{cor:HY KPZ} in Section \ref{subsubsec:On different convergences}.

\subsection{Relation to previous work}
\label{subsec:Relations to the literature}
\subsubsection{A two-layer formulation of the Hariya--Yor theorem}
\label{subsubsec:A two-layer formulation of the Hariya--Yor theorem}
A keen reader might wonder why the description of the  stationary measures of the KPZ equation on an interval (Proposition \ref{prop:Stationary measure open KPZ}) or half-space (Proposition \ref{prop:Stationary measure half space KPZ}) concerns the exponential functional and $T$ transform for only one Brownian motion. We insist in this section that they admit perfectly a two-layer description and this point of view is an important starting point of this paper. 

For a pair of continuous paths $(X_1,X_2)\in\mathcal{C}^2$ and $x,z\in\RR_{\geq 0}$,  
consider the adapted exponential functionals and  define the $\mathbb{T}$ transform by  
    \be\label{eq:exponential functional KPZ}
\mathbb A_x(X_1,X_2)
=\int_0^x e^{X_1(s)-X_2(s)}\dd s, 
\ee 
and 
\be
\mathbb{T}^{z}(X_1,X_2)(x)=\Bigl(X_1(x)+\log\bigl(1+z\mathbb{A}_{x}(X_2,X_1)\bigr), X_2(x)-\log\bigl(1+z\mathbb{A}_{x}(X_2,X_1)\bigr) \Bigr).
\ee
\begin{remark}
    When $z$ is taken as in \eqref{eq:renormalization to get geometric Pitman transform}, the resulting operation is the celebrated continuous Pitman's transform \cite{MatsumotoYor_geoPitman,BianeBougerolOConnell_Geo}. Both the  transform and the functional \eqref{eq:exponential functional KPZ} have played an important role in integrable probability, see in particular \cite{OConnellYor_BrownianBurke,OConnell_Toda}.
\end{remark}

\begin{remark}
     For $X\in\mathcal{C}$ and $x,z\in\RR_{\geq 0}$, the functional $\mathbb{A}$ and the transform $\mathbb{T}$ are related to their one-layer counterparts \eqref{eq:one layer exponential functional and T transform} by 
$$
\mathbb{A}_x(X,-X)=A_x(X), \qquad  \mathbb{T}^z  (-X, X)=\big( -T^z(X),T^z(X) \big).
$$
\end{remark}

Then, the marginal process of $B_1$ under the probability measure 
$$
    \dd\PP_{\KPZ,L}^{u,v} (B_1,B_2)
:=\frac{e^{(u+v)B_2(L)}\mathbb A_L(B_2,B_1)^{-(u+v)}}
{\mathbb K_{\BM,L}^{u,v}}\dd\widehat\PP_{\BM,L}^{-v,-u}(B_1,B_2), 
$$
where $ {\widehat\PP_{\BM}^b}$ denotes  the Brownian motion with drift $b$  and diffusion coefficient $ 1$, is stationary for the open KPZ equation \eqref{eq:open KPZ eq}.  Indeed, by the Cameron--Martin theorem, we have 
\be\label{eq:change of variables KPZ}
\begin{aligned}
\dd\PP_{\KPZ,L}^{u,v} (B_1,B_2)
&=\frac{ \mathbb A_L(B_2,B_1)^{-(u+v)}}
{\mathsf Z_L^{u,v}}\dd\widehat\PP_{\BM,L}^{-v,v}(B_1,B_2)\\
&=\frac{ A_L(X)^{-(u+v)}}
{\mathsf Z_L^{u,v}}\dd\PP_{\BM,L}^{v,0}(X,W)
=\dd\mathsf P_L^{u,v}(X)\d\PP_{\BM,L}^{0}(W),
\end{aligned}
\ee
where the second equality follows from the fact that, under the reference measure $\widehat\PP_{\BM}^{-v,v}$, 
the change of variables
\be\label{eq:paths decomposition}
X=\frac{B_2-B_1}{2},
\qquad
W=\frac{B_1+B_2}{2}
\ee
makes $X$ and $W$ again independent Brownian motions by the Kac-Bernstein theorem. 
Reversing the decomposition \eqref{eq:paths decomposition},  the marginal process $B_1=W-X$ recovers \eqref{eq:stationary measure of open kpz}, see also the same computations in \cite[Section 5]{Corwin}.

The independence  and the fact that $W$ is still a Brownian motion make it essentially suffice to study  the non-trivial part $\dd\mathsf P_L^{u,v}(X)$, as  in Section \ref{subsubsec:The Hariya Yor theorem}. 
Reciprocally, we insist that the decomposition \eqref{eq:paths decomposition} does not keep independence between $X$ and $W$ in general, as the Kac-Bernstein theorem characterizes the Gaussian distribution. In particular in our discrete setting, we study the joint distribution of both non-trivial paths using adapted two-layer exponential functionals and $T$ transform,  in the spirit of \cite{BarraquandCorwin,BarraquandCorwinYang}.

Likewise one checks that the marginal process
$[\mathbb{HY}_{\KPZ}^{u,v}]_1$ of the two-layer Hariya--Yor process 
$$
\mathbb{HY}_{\KPZ}^{u,v}
=\mathbb T^{1/Y}(B_2,B_1),
$$
where $(B_1,B_2)\sim\widehat\PP_{\BM}^{-v,v}$ is independent from
$Y\sim\Gammainv(u-v)$, is stationary for the  
half-space KPZ equation \eqref{eq:open KPZ eq}, and we have the convergence 
$ \PP_{\KPZ,L}^{u,v}\xRightarrow{} \mathbb{HY}_{\KPZ}^{u,v_{\star}}$ as $L\to\infty$.

\subsubsection{On different convergences}
\label{subsubsec:On different convergences}
\begin{figure}
    \centering
    \begin{tikzpicture}[>=stealth]
        \node (openKPZ)  at (0,2) {$\PP_{\KPZ,L}^{u,v}$};
         \node (KPZhalfspace) at (6,2 ) {$\mathbb{HY}_{\KPZ}^{u,v_{\star}}$};
          \node (openLG) at (0,0) {$\PP_{\LG,N}^{u,v}$};
           \node (LGhalfspace) at (6,0 ) {$\mathbb{HY}_{\LG}^{u,v_{\star}}$};
           \draw[->,double,double distance=1.1pt,semithick,draw=orange]  (openKPZ.east) -- node[above] {\cite{HariyaYor,BarraquandLeDoussal} }
  node[below] {\scriptsize $L\to\infty$} (KPZhalfspace);
  \draw[->,double,double distance=1.1pt,semithick,draw=red] (openLG.north) -- node[left] {\cite{BarraquandCorwinYang}}
  (openKPZ.south);
  \draw[->,double,double distance=1.1pt,semithick,draw=red] (LGhalfspace.north) -- node[right] {\cite{BarraquandCorwin}}
  (KPZhalfspace.south);
\draw[->,double,double distance=1.1pt,semithick, draw=orange](openLG.east) -- node[above] {\cite{BrycWesolowskiLG}, Corollary \ref{cor:HY theorem LG}}
  node[below] {\scriptsize $N\to\infty$} (LGhalfspace.west);
 \end{tikzpicture}
    \caption{The large time and intermediate-disorder limits of the stationary measures of the open/half-space log-gamma polymer/KPZ equation.}
    \label{fig:convergences}
\end{figure}

In this section, we discuss the convergences between various stationary measures for discrete or continuous models defined in different geometries, which we encounter in the previous sections. These measures are related through different scaling limits, see Figure \ref{fig:convergences}. 

One direction is the convergence from the log-gamma polymer to the KPZ equation, see the red arrows in Figure \ref{fig:convergences}.   
The intermediate disorder limit is first introduced in \cite{AlbertsKhaninQuastel} to prove  that polymer partition functions converge to the solution of the stochastic heat equation in full space.  Half-space analogues are  studied in  \cite{Wu}, see also extensions in \cite{Parekh,BarraquandCorwin}. In the same spirit, the free energy of the log-gamma polymer on a strip is expected to converge to the open KPZ equation as a space-time process \cite[Conjecture 4.2]{BarraquandCorwinYang}.  However, at the level of stationary measures, the intermediate disorder convergence to their continuum counterparts   has been established for   the log-gamma polymer both on a strip  and in half-space, see \cite[Proposition 4.5]{BarraquandCorwinYang} and \cite{BarraquandCorwin} respectively. 

The other direction concerns the convergence of stationary measures from a finite domain to half-space for the same model, see the orange arrows in Figure \ref{fig:convergences}. For the open KPZ equation, we have explained in Section \ref{subsubsec:The Hariya Yor theorem} that such convergence has been studied by \cite{HariyaYor}, though in a completely different context, as observed in \cite{BarraquandLeDoussal}.  As a consequence,  it is appealing to study the corresponding convergence in the discrete setting to complete the story, see Figure   \ref{fig:convergences}. This amounts one of the   initial motivations for an earlier version of this paper. In particular, 
we believe that this convergence  can be addressed in a more general setting,  where the disorder of the models can be inhomogeneous, and the stationary measures can be indexed by arbitrary down-right paths in the lattice, see \cite[Remark 1.3 and 1.4]{Barraquand}.
During the preparation of the earlier version, the preprint  \cite{BrycWesolowskiLG} came out. In this work  we choose to stay in a simpler setting in order to highlight the connection with exponential  functionals of random walks  in the spirit of \cite{HariyaYor}, and we address the convergence  in a two-layer setting  as discussed in Section \ref{subsubsec:A two-layer formulation of the Hariya--Yor theorem}.

\subsection{Outline and proof ideas}
\label{subsec:outline and proof ideas}
The rest of the paper is devoted to  proving the results outlined in the introduction. 
We treat   the GLPP and log-gamma polymer in Section \ref{sec:HY for GLPP} and Section \ref{sec:HY for LG} respectively. 
Although the analysis for the log-gamma polymer is often technically more involved, the proof strategies for both models remain  completely in    parallel. In the region $\min\{u,v\}<0$, which we refer to as the boundary dominated region, we adapt the approach in  \cite{HariyaYor}.  We show that the desired  asymptotics of the partition function can be obtained using simple probabilistic arguments, including notably a Dufresne's relation and time reversal argument. In the fan region $u+v>0$,  using the integral formulas developed in \cite{Barraquand} and performing an analytic continuation, we show that  under  certain parameter  assumptions, the two-layer Schur/Whittaker process admits a homogeneous Markov process as large-time limit and use it to analyze asymptotics. Of course, on the overlap between the boundary dominated region and the fan region, both approaches yield same conclusions. 
  
\subsection*{Acknowledgments}
We are very grateful to Guillaume Barraquand for suggesting this problem as well as for many insightful discussions related to the topics. 
We thank the organisers of the 4th Simons Math Summer Workshop: Algebraic Methods in Probability for providing a stimulating environment where     part of this work was done and many inspiring discussions took place. In particular, we are grateful to Neil O'Connell for illuminating discussions on queueing theory, and to Wlodek Bryc for valuable discussions around his work \cite{BrycWesolowskiLG}.  
We acknowledge support from Guillaume Barraquand's ANR grant ANR-23-ERCB-0007.

\subsection*{Use of AI}
ChatGPT was used for language polishing purposes: we input complicated individual sentences and seek suggestions for improvement. The mathematical part of this paper, from ideas to writing, from statements to proofs, remains classically human-generated.



\section{Hariya--Yor theorem for geometric last passage percolation}
\label{sec:HY for GLPP}
\subsection{Two-layer  $\mathcal T$ transform and asymptotics of the  partition function} 
The goal of this section is to present the full asymptotics of the  partition function $\mathcal{K}_{M}^{a,r,s}(\alpha,\psi)$ defined in 
\eqref{eq:generalized partition function geo}.
Let us start with proving some useful 
deterministic properties satisfied by the $\mathcal{T} $ transform in Definition \ref{def:functional and T transform Geo}.  
\begin{proposition}[Algebraic properties of $\mathcal{T}$]
\label{prop:T properties Geo}
For $L=(L_1,L_2)\in\mathcal P^2$ and $y\in\RR_{\geq0}$:
\begin{enumerate}
\item $ \mathcal{T}^yL\in\mathcal P^2$. 
\item For all $n\geq0$,
\be\label{eq:capping Geo}
 \mathcal A_n\bigl(
 [\mathcal{T}^yL]_2,[\mathcal{T}^yL]_1
 \bigr)
 =\mathcal A_n(L_2,L_1)\wedge y.
\ee
\item  For all $n\geq0$,
\be 
 [\mathcal{T}^yL]_1(n)+[\mathcal{T}^yL]_2(n)=L_1(n)+L_2(n).
 \label{eq:conservation Geo}
\ee
\item For all $z\geq 0$, 
\be\label{eq:semi group property Geo}
 \mathcal{T}^z\mathcal{T}^yL
 =\mathcal{T}^{y\wedge z}L
 =\mathcal{T}^y\mathcal{T}^zL.
\ee
 In particular, recall the Pitman transform \eqref{eq:Pitman transform}, we have 
\be\label{eq:communicate with Pitman transform Geo}
 [\mathcal{T}^yL]_2\odot [\mathcal{T}^yL]_1=L_2\odot L_1,
\qquad
[\mathcal{T}^yL]_1\otimes [\mathcal{T}^yL]_2=L_1\otimes L_2.
\ee
\end{enumerate}
\end{proposition}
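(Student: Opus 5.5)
Throughout, fix $L=(L_1,L_2)\in\mathcal P^2$ and $y\geq 0$. The plan is to argue directly from Definition \ref{def:functional and T transform Geo}, working with two sequences:
\[
a_n:=\mathcal A_n(L_2,L_1)=\max_{1\leq j\leq n}\bigl(L_2(j)-L_1(j-1)\bigr),\qquad c_n:=(a_n-y)_+ ,
\]
with $a_0=c_0=0$. Then $\mathcal T^yL=(L_1+c,\,L_2-c)$. Both $(a_n)$ and $(c_n)$ are nondecreasing, and $x\mapsto (x-y)_+$ is nondecreasing and $1$-Lipschitz.

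\textbf{Item (3).} This is immediate, since $+c_n$ and $-c_n$ cancel.

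\textbf{Item (1).} The first coordinate $L_1+c$ is nondecreasing because both summands are. For the second coordinate it suffices to show
\[
c_n-c_{n-1}\leq a_n-a_{n-1}\leq L_2(n)-L_2(n-1).
\]
The first inequality is the Lipschitz property. For the second, if $a_n=a_{n-1}$ there is nothing to prove. Otherwise $a_n=L_2(n)-L_1(n-1)$, and the bound follows from $a_{n-1}\geq L_2(n-1)-L_1(n-1)$. For $n\geq 2$ this uses the $j=n-1$ term of $a_{n-1}$ together with $L_1(n-2)\leq L_1(n-1)$. For $n=1$ it holds since $a_0=0=L_2(0)-L_1(0)$. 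Nonnegativity of $L_1+c$ and $L_2-c$ then follows from monotonicity and the zero initial values.

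\textbf{Item (2).} This is the main step. Write $b_n:=\mathcal A_n([\mathcal T^yL]_2,[\mathcal T^yL]_1)$, so that
\[
b_n=\max_{1\leq j\leq n}\bigl(L_2(j)-L_1(j-1)-c_j-c_{j-1}\bigr).
\]
For the upper bound, each term is at most $a_j-c_j=a_j\wedge y\leq a_n\wedge y$, using the identity $a-(a-y)_+=a\wedge y$ and $c_{j-1}\geq 0$. For the lower bound, split into two cases.
\begin{itemize}
\item If $a_n\leq y$, then $c_j=0$ for all $j\leq n$, so $b_n=a_n$.
\item If $a_n>y$, let $k\leq n$ be the first index with $a_k>y$. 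Then $c_{k-1}=0$, and $a_k>a_{k-1}$ forces $a_k=L_2(k)-L_1(k-1)$. The $j=k$ term therefore equals $a_k-c_k=y$, giving $b_n\geq y=a_n\wedge y$.
\end{itemize}
Choosing the first crossing time $k$, so that the $c_{k-1}$ correction vanishes, is the only delicate point.

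\textbf{Item (4).} By item (2), the shift produced by applying $\mathcal T^z$ to $\mathcal T^yL$ is $(a_n\wedge y-z)_+$. The total shift is therefore
\[
(a_n-y)_++(a_n\wedge y-z)_+ .
\]
A case check on the order of $a_n,y,z$ shows this equals $(a_n-y\wedge z)_+$, which is the shift defining $\mathcal T^{y\wedge z}L$. Combined with the form $(L_1+\text{shift},\,L_2-\text{shift})$, this gives $\mathcal T^z\mathcal T^yL=\mathcal T^{y\wedge z}L$, and the second equality in \eqref{eq:semi group property Geo} follows by symmetry in $y,z$.

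Finally, take $z=0$. Then $\mathcal T^0\mathcal T^yL=\mathcal T^0L$, and \eqref{eq:Pitman transform} identifies both sides coordinatewise as Pitman transforms. This yields \eqref{eq:communicate with Pitman transform Geo}.
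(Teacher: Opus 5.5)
Your proof is correct and follows essentially the same route as the paper. For (1) you use the increment bound $0\leq \mathcal A_n(L_2,L_1)-\mathcal A_{n-1}(L_2,L_1)\leq L_2(n)-L_2(n-1)$ together with the $1$-Lipschitz property of $x\mapsto (x-y)_+$; for (2) a termwise upper bound and the first index $k$ with $\mathcal A_k(L_2,L_1)>y$; for (4) the identity $(a-y)_+ + (a\wedge y-z)_+=(a-y\wedge z)_+$ followed by $z=0$. Your observation that the $j=k$ term equals exactly $y$ is what the lower bound in (2) needs; the paper's display at that step reads ``$\leq y$'', which should be ``$=y$''.
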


\begin{proof}
    Let us prove $\mathcal{T}^yL\in\mathcal P^2$.
    By definition, $L_1(0)=L_2(0)=\mathcal A_0(L_2,L_1)=0$, hence 
    $ [\mathcal{T}^yL]_1(0)= [\mathcal{T}^yL]_2(0)=0$. Let $n\geq 1$, since $\mathcal A_n(L_2,L_1)=\mathcal A_{n-1}(L_2,L_1)\vee (L_2(n)-L_1(n-1))$, $\mathcal A_{n-1}(L_2,L_1)\geq L_2(n-1)-L_1(n-1)$, we have  
    $$0\leq \mathcal A_n(L_2,L_1)-\mathcal A_{n-1}(L_2,L_1)\leq L_2(n)-L_2(n-1).$$
    Since $x\mapsto (x-y)_+$ is non decreasing and $1-$Lipschitz, 
    $$
    [\mathcal{T}^yL]_1(n)-[\mathcal{T}^yL]_1(n-1)=L_1(n)-L_1(n-1)+ (\mathcal A_n(L_2,L_1)-y)_+ 
    -(\mathcal A_{n-1}(L_2,L_1)-y)_+\geq0
    $$
    $$
    [\mathcal{T}^yL]_2(n)-[\mathcal{T}^yL]_2(n-1)=L_2(n)-L_2(n-1)- ((\mathcal A_n(L_2,L_1)-y)_+
    -(\mathcal A_{n-1}(L_2,L_1)-y))\geq0.
    $$

For  \eqref{eq:capping Geo}, we first prove that   
$ \mathcal A_n\bigl(
 [\mathcal{T}^yL]_2,[\mathcal{T}^yL]_1
 \bigr)
 \leq \mathcal A_n(L_2,L_1)\wedge y.$ 
Indeed, for all  $0\leq j\leq n$, 
 \begin{multline*}
     \bigl[\mathcal{T}^yL\bigr]_2(j)
-\bigl[\mathcal{T}^yL\bigr]_1(j-1)
=L_2(j)-L_1(j-1)
-\bigl(\mathcal A_j(L_2,L_1)-y\bigr)_+  
-\bigl(\mathcal A_{j-1}(L_2,L_1)-y\bigr)_+ \\
\leq
\mathcal A_j(L_2,L_1)
-\bigl(\mathcal A_j(L_2,L_1)-y\bigr)_+ 
=\mathcal A_j(L_2,L_1)\wedge y  \leq\mathcal A_n(L_2,L_1)\wedge y .
 \end{multline*} 
We then prove the other direction. If $\mathcal A_n(L_2,L_1)\leq y$, then $\mathcal T^yL(j)=L_j$ for all $0\leq j\leq n$.   If $\mathcal A_n(L_2,L_1)> y$, there exists a minimal   $j_0\leq n$ such that $\mathcal A_{j_0}(L_2,L_1)>y$, we have
$$
[\mathcal{T}^yL]_2(j_0)-[\mathcal{T}^yL]_1(j_0-1)=L_2(j_0)- \mathcal A_{j_0}(L_2,L_1)+y-L_1(j_0-1)\leq y.
$$
Hence we have $ \mathcal A_n\bigl(
 [\mathcal{T}^yL]_2,[\mathcal{T}^yL]_1
 \bigr)
 \geq \mathcal A_n(L_2,L_1)\wedge y $ in both cases.

The property \eqref{eq:conservation Geo} follows   directly from definition. Let us prove \eqref{eq:semi group property Geo}.  
\begin{align*}
[\mathcal{T}^z\mathcal{T}^yL]_1(n) &=
[ \mathcal{T}^yL]_1(n)+ 
\big( \mathcal A_n([\mathcal T^yL]_2,[\mathcal T^yL]_1) -z\big)_+ \\
&=L_1(n)+\big( \mathcal A_n(L_2,L_1) -y\big)_+
+\big( \mathcal A_n(L_2,L_1) \wedge y-z\big)_+\\
&= L_1(n)+\big( \mathcal A_n(L_2,L_1) - y\wedge z\big)_+
= [\mathcal{T}^{y\wedge z}L]_1(n),
\end{align*} 
where we use \eqref{eq:capping Geo} in the second equality. By \eqref{eq:conservation Geo}, we also have 
$[\mathcal{T}^z\mathcal{T}^yL]_1 =[\mathcal{T}^{y\wedge z}L]_2$. Finally,  \eqref{eq:communicate with Pitman transform Geo} follows by taking $z=0$ and relation \eqref{eq:T transform Geo}. 
\end{proof}

Recall the extended exponential functional $\mathcal{A}_{M}^{ \a,\psi}(L_1,L_2)$ \eqref{eq:affine extended exponential functional Geo}.
For $  k\geq 0 $  and  $L\in\mathcal{P}$, define $L ^{(k)}\in\mathcal{P}$ to be the translated path
$
  L ^{(k)}(n)=L (k+n)-L (k) 
$
for $n\geq 0. $
\begin{lemma} 
\label{lem:cutting}
For $N\geq 0$, $\psi\in\RR $, $\alpha\in\RR\cup\{-\infty\}$ and $L_1,L_2\in\mathcal P$.  We have 
\be
 L_2(N)-\mathcal A_N^{\alpha,\psi}(L_2,L_1)
 =\min\{L_2(N)-\alpha, (L_1\otimes L_2)(N)-\psi\} 
 \label{eq:exponential functional developped Geo}
\ee
Moreover,  for $0\leq k\leq N$, we have 
\begin{equation} \label{eq:cutting geo} 
 L_2(N)-\mathcal A_N^{\alpha,\psi}(L_2,L_1)
 = L_2^{(k)}(N-k)
  -\mathcal A_{N-k}^{\alpha_k,\psi_k}
    (  L_2^{(k)},  L_1^{(k)}),
\end{equation}
where 
$$
 \alpha_k=-\min\{ L_2(k)-\a, (L_1\otimes L_2)(k)-\psi\},
 \qquad \psi_k=\psi-L_1(k).
$$
\end{lemma}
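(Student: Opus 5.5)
The first identity \eqref{eq:exponential functional developped Geo} is a direct unwinding of the definitions. By \eqref{eq:affine extended exponential functional Geo}, $\mathcal A_N^{\alpha,\psi}(L_2,L_1)=\max\{\alpha,\mathcal A_N(L_2,L_1)+\psi\}$, so
\[
L_2(N)-\mathcal A_N^{\alpha,\psi}(L_2,L_1)=\min\{L_2(N)-\alpha,\ L_2(N)-\mathcal A_N(L_2,L_1)-\psi\}.
\]
It then suffices to check that $L_2(N)-\mathcal A_N(L_2,L_1)=(L_1\otimes L_2)(N)$ for $N\geq 1$. This holds because
\[
L_2(N)-\max_{1\leq j\leq N}\bigl(L_2(j)-L_1(j-1)\bigr)=\min_{1\leq j\leq N}\bigl(L_1(j-1)+L_2(N)-L_2(j)\bigr),
\]
which is exactly the definition of $\otimes$ in \eqref{eq:Pitman transform}. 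The case $N=0$ needs a convention. With $\mathcal A_0=0$ we have $(L_1\otimes L_2)(0)=0$, read as the empty minimum replaced by the value $0$, and the identity still holds with the convention $-\infty\vee b=b$ when $\alpha=-\infty$. I will state this convention explicitly.

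For \eqref{eq:cutting geo}, I will split the maximum defining $\mathcal A_N(L_2,L_1)$ at index $k$:
\[
\mathcal A_N(L_2,L_1)=\max\Bigl\{\mathcal A_k(L_2,L_1),\ \max_{k<j\leq N}\bigl(L_2(j)-L_1(j-1)\bigr)\Bigr\}.
\]
Writing $j=k+i$ with $1\leq i\leq N-k$, each term of the second maximum becomes
\[
L_2(j)-L_1(j-1)=L_2^{(k)}(i)-L_1^{(k)}(i-1)+L_2(k)-L_1(k),
\]
so the second maximum equals $\mathcal A_{N-k}(L_2^{(k)},L_1^{(k)})+L_2(k)-L_1(k)$. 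Since maxima are associative, this gives
\[
\mathcal A_N^{\alpha,\psi}(L_2,L_1)=\max\Bigl\{\alpha,\ \mathcal A_k(L_2,L_1)+\psi,\ \mathcal A_{N-k}(L_2^{(k)},L_1^{(k)})+L_2(k)-L_1(k)+\psi\Bigr\}.
\]

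Next I subtract this from $L_2(N)=L_2^{(k)}(N-k)+L_2(k)$ and factor out $L_2(k)$. The first two terms of the maximum combine into $L_2(k)+\alpha_k$, because by the first part
\[
\max\{\alpha,\mathcal A_k(L_2,L_1)+\psi\}-L_2(k)=-\bigl(L_2(k)-\mathcal A_k^{\alpha,\psi}(L_2,L_1)\bigr)=-\min\{L_2(k)-\alpha,(L_1\otimes L_2)(k)-\psi\}=\alpha_k.
\]
The third term equals $L_2(k)+\mathcal A_{N-k}(L_2^{(k)},L_1^{(k)})+\psi_k$ with $\psi_k=\psi-L_1(k)$. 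Therefore
\[
L_2(N)-\mathcal A_N^{\alpha,\psi}(L_2,L_1)=L_2^{(k)}(N-k)-\max\Bigl\{\alpha_k,\ \mathcal A_{N-k}(L_2^{(k)},L_1^{(k)})+\psi_k\Bigr\},
\]
which is \eqref{eq:cutting geo}.

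There is no real obstacle here, only bookkeeping. The points needing care are the edge cases $k=0$, $k=N$ and $\alpha=-\infty$. When $k=0$, the term $\alpha_0$ equals $\alpha$ because $L_2(0)=0$ and $(L_1\otimes L_2)(0)$ is taken to be $0$, with $-\min\{+\infty,-\psi\}$ interpreted appropriately. When $k=N$, the functional $\mathcal A_0$ vanishes, and the right side reduces to $-\max\{\alpha_N,\psi_N\}$, which should be checked against the first identity. Finally, $\alpha_k$ is finite for $k\geq1$ even when $\alpha=-\infty$, so no $-\infty$ conventions appear after the cut.
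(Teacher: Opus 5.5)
Your proof is correct and follows the paper's argument: split the maximum defining $\mathcal A_N(L_2,L_1)$ at $k$, reindex the tail as $\mathcal A_{N-k}(L_2^{(k)},L_1^{(k)})+L_2(k)-L_1(k)$, and absorb $\alpha\vee(\mathcal A_k(L_2,L_1)+\psi)$ into $L_2(k)+\alpha_k$ using the first identity at time $k$.

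Two small fixes to your edge-case remarks. First, $\alpha_0=\alpha\vee\psi$, not $\alpha$. Second, at $k=N$ (resp. $k=0$) your split replaces an empty maximum by the term $L_2(N)-L_1(N)$ (resp. $\mathcal A_0=0$). This is harmless only because, for $N\geq 1$ and paths in $\mathcal P$, $\mathcal A_N(L_2,L_1)\geq L_2(N)-L_1(N-1)\geq L_2(N)-L_1(N)$ and $\mathcal A_N(L_2,L_1)\geq L_2(1)\geq 0$. This is exactly the check you flagged but did not carry out.
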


\begin{proof}
 The   identity
\eqref{eq:exponential functional developped Geo} follows from the definition of the extended exponential functional \eqref{eq:affine extended exponential functional Geo} and the   Pitman transform \eqref{eq:Pitman transform}.  Hence 
 
\begin{multline*}
    \mathcal A_N^{\alpha,\psi}(L_2,L_1)
=\alpha
\vee\left(\psi+
\bigl( \mathcal A_k(L_2,L_1)\bigr) 
\vee\Bigl(
 L_2(k)-L_1(k)
+\mathcal A_{N-k}(L_2^{(k)},L_1^{(k)})
\Bigr)
\right) \\
=L_2(k)+
\mathcal A_{N-k}^{\alpha_k,\psi_k}
(L_2^{(k)},L_1^{(k)}).
\end{multline*}
where we split the maximum in the exponential functional at $k$ for the first equality  and  use  \eqref{eq:exponential functional developped Geo} for $L_1^{(k)},L^{(k)}_2$ in the second equality.  
\end{proof}

Recall the   partition function $\mathcal{K}_{M}^{a,r,s}(\alpha,\psi)$ defined for 
$\a\in\ZZ\cup\{-\infty\}$ and $\psi\in\ZZ$ in 
\eqref{eq:generalized partition function geo}.
\begin{proposition}
\label{prop:decomposition Geo}
Let $M\geq 1$. For  $0\leq k\leq M$ and  
$l=(\ell_1,\ell_2)\in\mathcal{P}_{k}^2\cap\ZZ^2$, we have 
\be 
\PP_{\Geo,M}^{a,r,s}
\Big((L_1,L_2)|_{[0,k]}=(\ell_1,\ell_2)\Big) 
 =\PP_{\Geo,k}^{as}(\ell_1)\PP_{\Geo,k}^{ar}(\ell_2)
 \frac{\mathcal K_{M-k}^{a,r,s}
   \big(-(\ell_1\otimes\ell_2)(k),-\ell_1(k)\big)}
  {\mathcal{K}_{M}^{a,r,s} }.
 \label{eq:decomposition formula geo}
\ee
\end{proposition}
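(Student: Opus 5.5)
We prove \eqref{eq:decomposition formula geo} by writing the left-hand side as a sum over the continuations of $(\ell_1,\ell_2)$ to time $M$. We then factor both the reference measure and the exponential weight at time $k$: the reference measure factors by the Markov property of random walks, and the weight factors by the cutting identity of Lemma \ref{lem:cutting}. The inputs are the definition \eqref{eq:Stationary measure strip Geo} of $\PP_{\Geo,M}^{a,r,s}$, Lemma \ref{lem:cutting} with $\alpha=\psi=0$, and the independence of increments under $\PP_{\Geo,M}^{as,ar}$.

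\textbf{Step 1: write the probability as a sum.} By \eqref{eq:Stationary measure strip Geo},
\[
\PP_{\Geo,M}^{a,r,s}\big((L_1,L_2)|_{[0,k]}=(\ell_1,\ell_2)\big)=\frac{1}{\mathcal K_M^{a,r,s}}\sum_{(L_1,L_2)} (rs)^{-L_2(M)+\mathcal A_M(L_2,L_1)}\,\PP_{\Geo,M}^{as,ar}(L_1,L_2),
\]
where the sum runs over integer paths in $\mathcal P_M^2$ whose restriction to $\llbracket 0,k\rrbracket$ is $(\ell_1,\ell_2)$. Each such pair is determined uniquely by the shifted pair $(L_1^{(k)},L_2^{(k)})\in\mathcal P_{M-k}^2\cap\ZZ^2$, and every shifted pair arises. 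The increments are i.i.d., with $\Geo(as)$ in the first layer and $\Geo(ar)$ in the second. Hence
\[
\PP_{\Geo,M}^{as,ar}(L_1,L_2)=\PP_{\Geo,k}^{as}(\ell_1)\,\PP_{\Geo,k}^{ar}(\ell_2)\,\PP_{\Geo,M-k}^{as,ar}\big(L_1^{(k)},L_2^{(k)}\big).
\]

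\textbf{Step 2: cut the exponential weight at time $k$.} Since $\mathcal A_M^{0,0}=\mathcal A_M$, the cutting identity \eqref{eq:cutting geo} with $\alpha=\psi=0$ gives
\[
-L_2(M)+\mathcal A_M(L_2,L_1)=-L_2^{(k)}(M-k)+\mathcal A_{M-k}^{\alpha_k,\psi_k}\big(L_2^{(k)},L_1^{(k)}\big),
\]
where $\psi_k=-\ell_1(k)$ and $\alpha_k=-\min\{\ell_2(k),(\ell_1\otimes\ell_2)(k)\}$.

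The only point requiring care is to show that $\alpha_k=-(\ell_1\otimes\ell_2)(k)$, i.e.\ that the minimum is always attained by $(\ell_1\otimes\ell_2)(k)$. The case $k=0$ is trivial. For $k\geq1$, take the $j=1$ term in the definition of $\otimes$ and use $\ell_1(0)=0$ and $\ell_2(1)\geq 0$:
\[
(\ell_1\otimes\ell_2)(k)\leq \ell_1(0)+\ell_2(k)-\ell_2(1)\leq \ell_2(k).
\]
Both $\alpha_k$ and $\psi_k$ are integers, so they lie in the admissible range of \eqref{eq:generalized partition function geo}.

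\textbf{Step 3: recognise the shifted partition function.} Substitute Steps 1 and 2 into the sum. The factor $\PP_{\Geo,k}^{as}(\ell_1)\PP_{\Geo,k}^{ar}(\ell_2)$ comes out of the sum. The remaining sum over $(L_1^{(k)},L_2^{(k)})$ is exactly
\[
\EE_{\Geo,M-k}^{as,ar}\Big[(rs)^{-L_2(M-k)+\mathcal A_{M-k}^{\alpha_k,\psi_k}(L_2,L_1)}\Big]=\mathcal K_{M-k}^{a,r,s}\big(-(\ell_1\otimes\ell_2)(k),-\ell_1(k)\big).
\]
Finiteness of this quantity follows from finiteness of $\mathcal K_M^{a,r,s}$ together with the decomposition itself.

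For the boundary case $k=M$, we read $\mathcal K_0^{a,r,s}(\alpha,\psi)$ as the deterministic value $(rs)^{\alpha\vee\psi}$ from the same definition, using $\mathcal A_0=0$. The identity then reduces directly to Lemma \ref{lem:cutting}.
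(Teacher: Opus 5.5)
Your proposal is correct and follows essentially the same route as the paper. You factor the reference measure at time $k$ using independence of increments (the paper phrases this as conditioning on $\mathcal F_k$), then apply the cutting identity with $\alpha=\psi=0$ to identify the remaining sum as $\mathcal K_{M-k}^{a,r,s}$. Your extra checks fill in details the paper leaves implicit: that $\alpha_k$ reduces to $-(\ell_1\otimes\ell_2)(k)$ because $(\ell_1\otimes\ell_2)(k)\le \ell_2(k)$, the finiteness of $\mathcal K_{M-k}^{a,r,s}$, and the convention for the boundary case $k=M$.
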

\begin{proof}
Let 
$\mathcal F_k=\sigma\left(L_1(i),L_2(i):0\leq i\leq k\right)$.
By Markovianity of the random walk,  the paths $(L_1^{(k)},L_2^{(k)})$ are independent
of $\mathcal F_k$ and have distribution
$\PP_{\Geo,M-k}^{as,ar}$. Applying
\eqref{eq:cutting geo} with $ \a=\psi=0 $ and $N=M$, we have 
\begin{multline*}
\EE_{\Geo,M}^{as,ar}
\left[(rs)^{-L_2(M)+\mathcal A_M (L_2,L_1)} 
| \mathcal F_k\right]  
 =
\EE_{\Geo,M}^{as,ar}
\left[
(rs)^{-L_2^{(k)}(M-k)
  +\mathcal A_{M-k}^{\alpha_k,\psi_k}
    (  L_2^{(k)},  L_1^{(k)})}
|\mathcal F_k\right] \\
=\mathcal K_{M-k}^{a,r,s}
   \big(-(\ell_1\otimes\ell_2)(k),-\ell_1(k)\big).
\end{multline*} 
Together with \eqref{eq:Stationary measure strip Geo} proves 
\eqref{eq:decomposition formula geo}.  
\end{proof}

As $M\to\infty$, the marginal distributions of $\PP_{\Geo,M}^{a,r,s}$ are therefore given by asymptotics of the  partition function, which is the main goal of this section.  
Recall the definition of $\rho=\rho(a,r,s)$ in \eqref{eq:rho and chi Geo} and the function $\mathcal N^{r,s}(\a,\psi)$ in \eqref{eq:function N Geo}.

\begin{theorem}
\label{thm:Geo main thm}
Assume \eqref{eq:parameter domain geo} and  let  $\alpha\in\ZZ\cup\{-\infty\}$, $\psi\in\ZZ$.  As
$M\to\infty$:
\begin{enumerate}

\item if $0<r,s<1$, then
\be\label{eq:asymp of K geo MC all}
 \mathcal{K}_{M}^{a,r,s}(\alpha,\psi)
 \sim 
 \frac{(1-rs)(1-a)^3s^\psi}
 {2\sqrt\pi\,a^{3/2}(1-r)^2(1-s)^2}
 \mathcal N^{r,1}(\alpha,\psi)
 M^{-3/2} \rho ^M;
\ee

\item if $s=1$ and $0<r<1$, then
\be\label{eq:asymp of K geo r axis all}
 \mathcal K_M^{a,r,1}(\alpha,\psi)
 \sim 
 \frac{1-a}{(1-r)\sqrt{\pi a}}
 \mathcal N^{r,1}(\alpha,\psi)
 M^{-1/2} \rho ^M;
\ee

\item if $r=1$ and $0<s<1$, then
\be\label{eq:asymp of K geo s axis all}
 \mathcal K_M^{a,1,s}(\alpha,\psi)
 \sim 
  \frac{1-a}{(1-s)\sqrt{\pi a}}
 \mathcal N^{s,1}(-\infty,\psi)
 M^{-1/2} \rho ^M;
\ee

\item if $s>\max\{1,r\}$  and $rs\ne1$, then
\be\label{eq:asymp of K geo HD all}
 \mathcal{K}_{M}^{a,r,s}(\alpha,\psi)
 \sim 
 \frac{1-s^{-2}}{1-r/s}\mathcal N^{r,s}(\alpha,\psi)  \rho ^M;
\ee

\item if $r>\max\{1,s\}$  and $rs\ne1$, then
\be\label{eq:asymp of K geo LD all}
 \mathcal{K}_{M}^{a,r,s}(\alpha,\psi)
 \sim 
 \frac{1-r^{-2}} {1-s/r}\mathcal N^{s,r}(-\infty,\psi) \rho ^M;
\ee

\item if $r=s >1$,  then
\be\label{eq:asymp of K geo CE all}
 \mathcal K_M^{a,r,r}(\alpha,\psi)
 \sim \frac{a(r^2-1)^2}{r^2(r-a)(1-ar)}r^{2\psi}
 M \rho ^M.
\ee

\item if $rs=1$, in particular when $r=s=1$, then
$\mathcal{K}_{M}^{a,r,s}(\alpha,\psi)=1$ for all $M\geq1$.
\end{enumerate} 

\end{theorem}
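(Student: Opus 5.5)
The plan follows the outline of Section \ref{subsec:outline and proof ideas}: a probabilistic argument in the boundary dominated region $\max\{r,s\}>1$ (cases (4)--(6)), an integral-formula argument in the fan region (cases (1)--(3)), and a direct martingale check for $rs=1$ (case (7)).

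\textbf{Case (7), $rs=1$.} Here the weight $(rs)^{\cdots}$ is identically $1$, so $\mathcal K_M^{a,r,s}(\alpha,\psi)=1$ trivially. I will do this first because it fixes normalizations.

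\textbf{Cases (4) and (5).} Take $s>\max\{1,r\}$. Write the weight through \eqref{eq:exponential functional developped Geo} as $(rs)^{-\min\{L_2(M)-\alpha,\,(L_1\otimes L_2)(M)-\psi\}}$. Then perform a change of measure on $L_2$: replace the step parameter $ar$ by $a/s$, tilting each increment by $(rs)^{-1}$. This produces the factor $\big(\tfrac{1-ar}{1-a/s}\big)^M$. Tilting $L_1$ from $as$ to $as$ is trivial, so the factor $(1-as)$ pairs with $(1-as_\star)$. Together these give exactly $\rho^M$ with $s_\star=s$.

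Under the new measure $\PP^{as,a/s}_{\Geo}$, the drift of $L_2-L_1$ is negative. Hence $\mathcal A_\infty(L_2,L_1)<\infty$ almost surely, and by a discrete Dufresne-type identity (an M/M/1 queue maximum) it is distributed as $\Geo(1/s^2)$ shifted appropriately.

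The remaining expectation is of $(rs)^{\max\{\alpha-\cdot,\ \psi+\cdot\}}$-type quantities evaluated at time $M$. Reversing time on $\llbracket 0,M\rrbracket$ turns the running maximum at time $M$ into a maximum started at $0$. This gives a quantity that converges almost surely and in $L^1$, which I will justify by uniform integrability using the geometric tails, since $rs<s^2$. The limit is an expectation over the geometric maximum, which is exactly $\tfrac{1-s^{-2}}{1-r/s}\mathcal N^{r,s}(\alpha,\psi)$ once the $\Geo(1/s^2)$ law is convolved with $(rs)^{\cdot}$.

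Case (5) is symmetric. One tilts $L_1$ instead, and in the limit $\alpha$ disappears because $L_2(M)-\alpha\to\infty$; this yields $\mathcal N^{s,r}(-\infty,\psi)$.

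\textbf{Case (6), $r=s>1$.} After the same tilt the walk $L_2-L_1$ has zero drift, so the geometric-maximum limit degenerates. Instead I will compute $\EE[(rs)^{\text{final position}\wedge\cdots}]$ by local limit and renewal considerations. I expect the prefactor $M$ to come from the expected running maximum/local time of a critical walk weighted by $r^{2(\cdot)}$.

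\textbf{Fan region, cases (1)--(3).} I will invoke the contour integral formula of \cite{Barraquand} for $\mathcal K_M^{a,r,s}$, the discrete analogue of \eqref{eq:integral expression for moments of geometric Brownian motion}. By Proposition \ref{prop:decomposition Geo} it suffices to treat general $(\alpha,\psi)$ through the $\mathcal N^{r,1}$ dependence. The integrand contains a factor of the form $\big(\tfrac{(1-az)(1-a/z)}{(1-a)^2}\big)^{-M}$ times rational factors in $z$ with poles at $z=r^{\pm1},s^{\pm1}$, integrated over the unit circle.

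For $r,s<1$, steepest descent at $z=1$ combined with a double zero of the measure factor $(1-z^2)(1-z^{-2})$ gives the exponent $M^{-3/2}$. When $s=1$ or $r=1$, one zero is cancelled by a pole, so only a simple zero remains and the exponent becomes $M^{-1/2}$. The stated constants then come from Gaussian integrals: $\int x^2e^{-cx^2}$ for case (1) and $\int e^{-cx^2}$ for cases (2)--(3), with $c$ determined by the second derivative, which is proportional to $a/(1-a)^2$.

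The dependence on $(\alpha,\psi)$ enters through an explicit factor. I will verify that this factor reduces to $s^\psi\mathcal N^{r,1}(\alpha,\psi)$ via the Schur expansion of $\mathcal N$ from Section \ref{subsubsec:schur preliminary}.

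\textbf{Main obstacle.} The hardest step is obtaining an integral formula valid for all $\alpha,\psi$ and throughout the fan region, rather than only for $r,s<1$ with $\alpha=\psi=0$. This requires an analytic continuation, as in Proposition \ref{prop:general formula for q_M^s for all phases}, that tracks residues crossing the contour. If that route stalls, I would first try to obtain cases (2)--(3) by a monotone limit in $s\uparrow1$, uniform in $M$.
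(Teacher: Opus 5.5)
Your cases (4), (5) and (7) follow the paper's argument: tilt one walk, apply Dufresne's identity, then use monotone or dominated convergence. One correction: time reversal is needed in case (5), not in case (4). After tilting $L_2$, the exponent $\alpha\vee(\psi+\mathcal A_M(L_2,L_1))$ is already nondecreasing in $M$. After tilting $L_1$, however, you must turn $L_1(M)-L_2(M)+\mathcal A_M(L_2,L_1)$ into $\mathcal A_M(L_1,L_2)$ by reversal.

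\textbf{Case (6) is a genuine gap.} The mechanism you describe is wrong.
\begin{itemize}
\item For $r=s$, after tilting $L_2$ from $\Geo(ar)$ to $\Geo(a/r)$, the walk $L_2-L_1$ has strictly negative drift. Zero drift holds only under the untilted measure, before $\rho^M$ has been extracted.
\item What actually degenerates is the exponential moment. Since $\mathcal A_\infty(L_2,L_1)\sim\Geo(r^{-2})$, we get $\EE[r^{2\mathcal A_\infty}]=\infty$ exactly on $r=s$. This is the Cram\'er-critical case: $r^{2(L_2(n)-L_1(n))}$ is a mean-one martingale.
\item So the factor $M$ is a critical-exponential-moment effect, not a zero-drift or local-time effect.
\item A ladder-height renewal argument under the Esscher tilt could in principle work, but you would have to compute the ladder-height law to get the constant $\frac{a(r^2-1)^2}{r^2(r-a)(1-ar)}r^{2\psi}$. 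You do not do this.
\end{itemize}
The missing idea is an exact one-step recursion. Condition on $\mathcal F_M$ and integrate out the last $\Geo(ar)$ increment of $L_2$ to get
\[
\mathcal K_{M+1}^{a,r,s}(\alpha,\psi)-\varrho\,\mathcal K_M^{a,r,s}(\alpha,\psi)=(1-\varrho)\Big(\frac as\Big)^{-\psi}\Big(\frac{1-as}{1-a^2}\Big)^M\mathcal K_M^{a,r,a}(\alpha,\psi),\qquad \varrho=\frac{1-ar}{1-a/s}.
\]
For $s=r$, the right side involves the parameters $(r,a)$, which lie in the strict low density phase. Case (5) then gives the limit of $\rho^{-(M+1)}\mathcal K_{M+1}^{a,r,r}-\rho^{-M}\mathcal K_M^{a,r,r}$, and Stolz--Ces\`aro yields $M\rho^M$ with the exact constant.

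\textbf{Fan region.} You name the right obstacle but leave it unresolved, so cases (1)--(3) are not proved. Proposition~\ref{prop:decomposition Geo} does not reduce the general $(\alpha,\psi)$ case to anything; it is where these asymptotics get used. The paper's route is as follows.
\begin{itemize}
\item The Schur-process representation gives $\mathcal K_M^{a,r,s}(\alpha,\psi)=(1-rs)(rs)^\psi\bigl(\frac{(1-ar)(1-as)}{(1-a)^2}\bigr)^M\sum_{l\geq d}r^lq^s_M(l)$ with $d=(\alpha-\psi)_+$. So $(\alpha,\psi)$ only truncates a sum over $l$.
\item The unit-circle formula for $q^s_M(l)$ persists at $s=1$, because the residue term vanishes there.
\item The saddle point gives $q^s_M(l)\sim c\,(l+1)M^{-3/2}$ for $s<1$ and order $M^{-1/2}$ at $s=1$, with uniform bounds of the same form.
\item Dominated convergence in $l$ then requires $r<1$.
\item For $r=1$, the paper uses the time-reversal symmetry $\mathcal Z^{a,r,s}_{\Schur,M}=\mathcal Z^{a,s,r}_{\Schur,M}$. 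This rewrites the sum as $\sum_{l\geq0}s^lq^1_M(l)$ minus $\sum_{l<d}q^s_M(l)=O(M^{-3/2})$.
\end{itemize}
Your fallback, a limit $s\uparrow1$ uniform in $M$, cannot work. The order jumps from $M^{-3/2}$ to $M^{-1/2}$, and the case (1) constant blows up like $(1-s)^{-2}$. Continuity in $s$ or $r$ is usable only at fixed $M$, to extend the contour formula; the saddle-point analysis must then be redone at $s=1$ or $r=1$. Finally, at $s=1$ the double zero at $z=1$ is cancelled completely, not reduced to a simple zero. That complete cancellation is why $\int e^{-cx^2}$ appears.
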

Theorem \ref{thm:Geo main thm} is proven in Section \ref{subsec:proof of the theorems Geo}. 

\subsection{The boundary dominated region}
\subsubsection{The high density phase}
Originally due to   \cite{Dufresne},  Dufresne's   identity is generalized to the process level in \cite{MatsumotoYor_Dufresne}.  
We start with a discrete, zero temperature counterpart, which is studied in  \cite{DraiefMairesseOConnell} in  the context of queueing theory.   

\begin{lemma}
\label{lem:geo Dufresne} 
Let $0<p<q<1$.  Let $A,S,Y$ be independent with
$$
 A\sim\PP_{\Geo}^{q},\qquad S\sim\PP_{\Geo}^{p},
 \qquad Y\sim\Geo(p/q).
$$ 
Then
\be\label{eq:joint dufresne}
 \left(\mathcal{T} ^{Y} (S,A), Y\right)
 \overset{\mathrm d}=
 \left( (A,S), \mathcal A_{\infty}(S,A)) \right).
\ee
As a consequence,
\begin{equation}\label{eq:process Dufresne}
 \left( \mathcal A_n(S,A)\right)_{n\geq0} 
 \overset{\mathrm d}=
 \left(Y\wedge\mathcal A_n(A,S)) \right)_{n\geq0},
\end{equation}
and
\begin{equation} \label{eq:Geo Dufresne}
 \mathcal A_M(S,A)\uparrow \mathcal A_{\infty}(S,A)\sim\Geo(p/q).
\end{equation}
\end{lemma}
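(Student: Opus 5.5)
The plan is to reduce everything to the single identity in law $\mathcal T^Y(S,A)\overset{\mathrm d}=(A,S)$. I would prove that identity by a one-step Burke-type argument with an auxiliary Markov state.

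\emph{Reduction.} Write $(D,R):=\mathcal T^Y(S,A)$. Apply the capping property \eqref{eq:capping Geo} of Proposition~\ref{prop:T properties Geo} with $L=(S,A)$. This gives $\mathcal A_n(R,D)=\mathcal A_n(A,S)\wedge Y$ for all $n$.

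Since $q>p$, the walk $A$ has strictly larger drift than $S$. By the law of large numbers, $A(j)-S(j-1)\to+\infty$ a.s., so $\mathcal A_\infty(A,S)=+\infty$ a.s. Letting $n\to\infty$ then gives $\mathcal A_\infty(R,D)=Y$ a.s.

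Hence the pair $\big((D,R),Y\big)$ is the image of $(D,R)$ under the deterministic measurable map $L\mapsto\big(L,\mathcal A_\infty(L_2,L_1)\big)$. The pair $\big((A,S),\mathcal A_\infty(S,A)\big)$ is the image of $(A,S)$ under the same map. So \eqref{eq:joint dufresne} follows once we know $(D,R)\overset{\mathrm d}=(A,S)$.

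The two consequences then follow directly. Identity \eqref{eq:process Dufresne} holds because $\mathcal A_n(S,A)$ on the right side corresponds to $\mathcal A_n(R,D)=Y\wedge\mathcal A_n(A,S)$ on the left. Identity \eqref{eq:Geo Dufresne} is the equality of the last coordinates in \eqref{eq:joint dufresne}, and monotonicity in $M$ is by definition.

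\emph{Main step: $(D,R)\overset{\mathrm d}=(A,S)$.} Introduce the residual capacity $V_n:=(Y-\mathcal A_n(A,S))_+$ and the excess $E_n:=(\mathcal A_n(A,S)-Y)_+$. Then $D=S+E$, $R=A-E$ and $V_0=Y$.

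Using the recursion $\mathcal A_{n}(A,S)=\mathcal A_{n-1}(A,S)\vee\big(A(n)-S(n-1)\big)$, I would express $(V_n,\Delta D_n,\Delta R_n)$ as a deterministic function of the inputs $\Delta A_n\sim\Geo(q)$ and $\Delta S_n\sim\Geo(p)$ together with a one-dimensional state. The state I would try is $V_{n-1}$ combined with the current ``queue length'' $\mathcal A_{n-1}(A,S)\wedge Y-(A(n-1)-S(n-1))$, adjusted if needed to the gap $\mathcal A_{n-1}(R,D)-(R(n-1)-D(n-1))$ of the output paths, which is a function of the output alone.

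I would then prove the one-step lemma. Suppose that, conditionally on the output up to time $n-1$, the state has a product law: $V_{n-1}\sim\Geo(p/q)$ on the event that the cap has not yet been reached (it is zero otherwise), and the queue part has its stationary conditional law. Then:
\begin{itemize}
\item the new output increments $(\Delta D_n,\Delta R_n)$ are independent $\Geo(q)\times\Geo(p)$ and independent of the past output;
\item the new state again has the same conditional law.
\end{itemize}
This is a finite computation with geometric probability generating functions. It mirrors the output theorem for the FIFO queue in \cite{DraiefMairesseOConnell}, and memorylessness of $\Geo(p/q)$ is what makes the cap reproduce itself. Induction on $n$ then gives that $(D,R)|_{\llbracket 0,N\rrbracket}$ has law $\PP_{\Geo,N}^{q,p}$ for every $N$.

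\emph{Main obstacle.} The delicate step is choosing the correct state variable so that the conditional law given the \emph{output} filtration is explicit and preserved. If the queue-based state becomes unwieldy, I would fall back on a direct computation. For fixed integer paths $(\ell_1,\ell_2)$, I would compute $\PP\big((D,R)|_{\llbracket0,N\rrbracket}=(\ell_1,\ell_2)\big)$ by inverting $\mathcal T^y$ on $\{\mathcal A_N\le y\}$, where it is the identity, and on $\{\mathcal A_N>y\}$, where the inverse is determined by the first crossing index $j_0$ as in the proof of \eqref{eq:capping Geo}. I would then sum the geometric weights over $y$ and $j_0$, checking that the sum telescopes to $\prod_i(1-q)q^{\Delta\ell_1(i)}(1-p)p^{\Delta\ell_2(i)}$.
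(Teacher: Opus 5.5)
Your reduction is the same as the paper's. The capping identity \eqref{eq:capping Geo} together with $\mathcal A_\infty(A,S)=+\infty$ a.s.\ gives $\mathcal A_\infty(R,D)=Y$ a.s. Then \eqref{eq:joint dufresne}, \eqref{eq:process Dufresne} and \eqref{eq:Geo Dufresne} follow by pushing $\mathcal T^Y(S,A)\overset{\mathrm d}{=}(A,S)$ through deterministic functionals.

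The paper does not prove that identity; it takes it from \cite[Theorem 4.4]{DraiefMairesseOConnell}. Your attempt to prove it yourself is not yet a proof, and the mechanism you describe would not work as stated.
\begin{itemize}
\item A state that is ``a function of the output alone'' cannot drive a Burke-type induction. Its conditional law given the past output is a point mass, whereas the induction needs a state that is \textit{independent} of the past output.
\item The pair $V_{n-1}$, $\mathcal A_{n-1}(A,S)\wedge Y-A(n-1)+S(n-1)$ is not a sufficient state. On $\{V_{n-1}=0\}$ it forgets the uncapped value $\mathcal A_{n-1}(A,S)$, and the next increment of $D$ depends on that value.
\item Your fallback is also flawed, because $\mathcal T^y$ is not injective on $\{\mathcal A_N>y\}$ even once $j_0$ is fixed. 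For $N=1$, $j_0=1$, every $(S(1),A(1))=(d-e,y+e)$ with $1\leq e\leq d$ is mapped to $(d,y)$. So the direct computation must sum over a genuine preimage set.
\end{itemize}

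The missing idea is the right one-dimensional state,
\[
W_n:=Y\vee\mathcal A_n(A,S)-A(n)+S(n),
\]
that is, your $V_n$ plus the \textit{uncapped} sojourn time $\mathcal A_n(A,S)-A(n)+S(n)$. Then $W_0=Y$, $D=A+W-Y$ and $R=S+Y-W$. Writing $a_n,\sigma_n$ for the increments of $A,S$,
\begin{equation*}
W_n=(W_{n-1}-a_n)_++\sigma_n,\qquad D(n)-D(n-1)=(a_n-W_{n-1})_++\sigma_n,\qquad R(n)-R(n-1)=a_n\wedge W_{n-1}.
\end{equation*}
Now take $W\sim\Geo(p/q)$, $a\sim\Geo(q)$, $\sigma\sim\Geo(p)$ independent.
\begin{itemize}
\item The joint mass of $(a\wedge W,W-a)$ at $(m,k)$ is proportional to $p^{m}(p/q)^{(k)_+}q^{(-k)_+}$. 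Hence $a\wedge W$ is independent of $W-a$.
\item The law of $\big((a-W)_++\sigma,(W-a)_++\sigma\big)$ at $(x,y)$ is proportional to $q^{x}(p/q)^{y}$, because $p=(p/q)\cdot q$.
\end{itemize}
So $(a-W)_++\sigma$, $a\wedge W$ and $(W-a)_++\sigma$ are independent with laws $\Geo(q)$, $\Geo(p)$ and $\Geo(p/q)$. Induction on $n$ then shows that the increments of $(D,R)$ up to time $n$ and $W_n$ are mutually independent with the correct laws. No conditioning on the output or on the cap event is needed. With this one-step lemma, or by simply citing \cite{DraiefMairesseOConnell} as the paper does, your argument is complete.
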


\begin{proof}
\cite[Theorem 4.4]{DraiefMairesseOConnell} gives the identity in law between processes:
\begin{equation}\label{eq:DMO}
   \mathcal{T}^Y(S,A)
 \overset{\mathrm d}=(A,S).
\end{equation}
Taking deterministic functional $ L\mapsto (\mathcal A_n( L_2,L_1))_{n\geq0}$ on both sides of \eqref{eq:DMO}, \eqref{eq:process Dufresne} follows by the property   \eqref{eq:capping Geo}.
Since $p<q$,  $\mathcal{A}_{n}(A,S)\to+\infty$ by strong law of large numbers,  hence $\PP_{\Geo}^{q}\times \PP_{\Geo}^{p} $-almost surely,
$$ \mathcal A_\infty
 \bigl([\mathcal{T}^Y(S,A)]_2,
 [\mathcal{T}^Y(S,A)]_1\bigr)=Y. $$
Taking deterministic functional $ L\mapsto (L, \mathcal A_{\infty}( L_2,L_1))$ on both sides of \eqref{eq:DMO}, \eqref{eq:joint dufresne} follows by the property of \eqref{eq:capping Geo}.
The second coordinate marginal of equality in distribution \eqref{eq:joint dufresne} gives 
\eqref{eq:Geo Dufresne}.  
\end{proof}

Let us prove the asymptotics of 
$\mathcal{K}_{M}^{a,r,s}(\alpha,\psi)$ 
in the high density phase.
\begin{proposition}
\label{prop:HD Geo} 
Assume \eqref{eq:parameter domain geo} and $s>\max\{1,r\}$. Then for all  
$\alpha\in\ZZ\cup\{-\infty\} $ and $\psi\in\ZZ$, 
\begin{equation} \label{eq:asymp of K HD Geo}
 \mathcal{K}_{M}^{a,r,s}(\alpha,\psi)
 \sim 
 \frac{1-s^{-2}}{1-r/s}\mathcal N^{r,s}(\alpha,\psi)  \left(\frac{1-ar}{1-a/s}\right)^M.
\end{equation} 
\end{proposition}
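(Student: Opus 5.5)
The plan is to remove the factor $(rs)^{-L_2(M)}$ by an exponential change of measure on the second walk. This turns $\mathcal K_M^{a,r,s}(\alpha,\psi)$ into the explicit factor $\bigl(\frac{1-ar}{1-a/s}\bigr)^M$ times an expectation that converges by monotonicity. We then identify the limit with Lemma \ref{lem:geo Dufresne}.

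\emph{Step 1 (tilting).} For $X\sim\Geo(ar)$ and $j\geq 0$ we have $(1-ar)(ar)^j(rs)^{-j}=(1-ar)(a/s)^j$. Hence
$$\EE\bigl[(rs)^{-X}\bigr]=\frac{1-ar}{1-a/s},$$
which is finite since $a/s<1$, and the tilted law of $X$ is $\Geo(a/s)$. The increments of $L_2$ are i.i.d., so applying this to each increment gives
$$\mathcal K_M^{a,r,s}(\alpha,\psi)=\Bigl(\frac{1-ar}{1-a/s}\Bigr)^M\,\EE^{as,a/s}_{\Geo,M}\Bigl[(rs)^{\alpha\vee(\mathcal A_M(L_2,L_1)+\psi)}\Bigr].$$
Here $L_1\sim\PP^{as}_{\Geo}$ and $L_2\sim\PP^{a/s}_{\Geo}$ are independent.

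\emph{Step 2 (limit).} Set $p=a/s$ and $q=as$. Since $s>1$ we have $p<q$, so Lemma \ref{lem:geo Dufresne} applies with $S=L_2$ and $A=L_1$. It gives $\mathcal A_M(L_2,L_1)\uparrow\mathcal A_\infty(L_2,L_1)\sim\Geo(1/s^2)$ almost surely.

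\begin{itemize}
\item If $rs\geq1$, the integrand $(rs)^{\alpha\vee(\mathcal A_M+\psi)}$ is nondecreasing in $M$, and monotone convergence applies.
\item If $rs<1$, the integrand is bounded by $(rs)^{\psi}$ (or by $(rs)^{\alpha}$ when $\alpha>-\infty$), and dominated convergence applies.
\end{itemize}

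For the limit to be finite we need $\EE[(rs)^{Y'}]<\infty$ for $Y'\sim\Geo(s^{-2})$. This holds exactly when $rs\cdot s^{-2}=r/s<1$, which is our hypothesis $s>r$. We conclude that
$$\mathcal K_M^{a,r,s}(\alpha,\psi)\Big/\Bigl(\frac{1-ar}{1-a/s}\Bigr)^{M}\longrightarrow \EE\bigl[(rs)^{\alpha\vee(Y'+\psi)}\bigr],\qquad Y'\sim\Geo(s^{-2}).$$

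\emph{Step 3 (identifying the constant).} It remains to show
$$\EE\bigl[(rs)^{\alpha\vee(Y'+\psi)}\bigr]=\frac{1-s^{-2}}{1-r/s}\,\mathcal N^{r,s}(\alpha,\psi).$$

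\begin{itemize}
\item For $\alpha=-\infty$ this is the geometric sum $(rs)^\psi\,\frac{1-s^{-2}}{1-r/s}$, which matches \eqref{eq:special case function N Geo}.
\item For $\alpha\in\ZZ$, reduce to $\psi=0$ by shifting $\alpha$ by $\psi$ and pulling out $(rs)^{\psi}$. Then split both sides at the threshold where the maximum switches: $j\leq \alpha-\psi$ on the left, and $k\leq\alpha-\psi$ in the sum defining $\mathcal N^{r,s}$ with $Y\sim\Geo(r/s)$.
\item Sum the resulting finite geometric series in $s^{-2}$ and in $r/s$ on the two sides, and compare them term by term.
\end{itemize}

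This algebraic matching is the step I expect to be the main obstacle, since it is where the specific form of $\mathcal N^{r,s}$, with $\max\{\alpha-Y,\psi\}$ and $Y\sim\Geo(r/s)$, has to come out of a $\Geo(s^{-2})$ variable.

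If the direct computation does not close, the fallback is a probabilistic route in the spirit of the time-reversal argument of \cite{HariyaYor}. First use the joint identity \eqref{eq:joint dufresne} to rewrite the limit in terms of the $\mathcal T$-transformed walks. Then perform a second exponential tilt by $(rs)^{Y}$ on the $\Geo(1/s^2)$ variable, which turns it into a $\Geo(r/s)$ variable with normalizing factor $\frac{1-s^{-2}}{1-r/s}$. This should produce exactly the expression $\EE[(rs)^{\max\{\alpha-Y,\psi\}}]$ defining $\mathcal N^{r,s}$.
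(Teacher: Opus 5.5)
Your proposal is correct and follows the paper's proof step for step: tilt $L_2$ from $\Geo(ar)$ to $\Geo(a/s)$, apply Lemma~\ref{lem:geo Dufresne} to get $\mathcal A_M(L_2,L_1)\uparrow\mathcal A_\infty(L_2,L_1)\sim\Geo(s^{-2})$, pass to the limit by monotone or dominated convergence according to the sign of $rs-1$, and then identify the constant. The identification you flagged as the main obstacle is immediate and the fallback is unnecessary: with $d=\alpha-\psi$, for $k\leq d$ one has $(r/s)^k(rs)^{d-k}=s^{-2k}(rs)^d$, and for $k>d$ one has $(r/s)^k=s^{-2k}(rs)^k$, so the two series agree term by term with $j=k$.
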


\begin{proof}
When $a/s<1$, changing the reference measure of $L_2$ from $\PP_{\Geo,M}^{ar}$ to $\PP_{\Geo,M}^{a/s}$, we have 
$$ \mathcal{K}_{M}^{a,r,s}(\alpha,\psi)
  =
 \left(\frac{1-ar}{1-a/s}\right)^M
 \EE_{\Geo,M}^{as,a/s}
 \left[(rs)^{\a\vee(\psi+\mathcal{A}_{M}(L_2,L_1))}\right]. 
  $$
Apply Lemma  \ref{lem:geo Dufresne} with
$A=L_1$, $S=L_2$, $p=a/s$, and $q=as$, we have 
$$\mathcal A_M(L_2,L_1)\uparrow \mathcal A_\infty (L_2,L_1)\sim\Geo(s^{-2}).$$ 
 Using monotone convergence when $rs>1$ and dominated convergence when $rs\leq 1$, we obtain \eqref{eq:asymp of K HD Geo} by noticing that, for $s>\max\{r,1\}$ and $G\sim\Geo(s^{-2})$, 
$$
  \EE 
 \left[(rs)^{\a\vee(\psi+ G  )}\right]=\frac{1-s^{-2}}{1-r/s}\mathcal N^{r,s}(\alpha,\psi) . 
 $$
\end{proof}

\subsubsection{The low density phase}
Inspired by the argument in the proof of \cite[Theorem 2.2 (iii)]{HariyaYor} which relies on a Dufresne's relation between Brownian motions with different drift, let us start with proving a simple time reversal lemma and use it to prove the asymptotics of the partition function in the low density phase.
\begin{lemma}[Time reversal]
\label{lem:time reversal Geo} 
Consider two independent random walks $A$ and $S$ in $\mathcal{P}$. 
Then  for all $M\geq 0$, we have 
\begin{equation}\label{eq:increments time reversal}
\bigl(\mathcal A_M(A,S),A(M)-S(M)\bigr) 
 \overset{\mathrm d}=\bigl(A(M)-S(M)+\mathcal A_M(S,A),A(M)-S(M)\bigr)
 .
\end{equation}
\end{lemma}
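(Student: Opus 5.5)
The plan is a deterministic path-reversal identity combined with the invariance in law of random walk increments under time reversal. Fix $M\geq 0$; the case $M=0$ is trivial, since both sides equal $(0,0)$. For a path $L\in\mathcal P$, define its reversal on $\llbracket 0, M \rrbracket$ by $\widehat L(j)=L(M)-L(M-j)$ for $0\leq j\leq M$. Then $\widehat L\in\mathcal P_M$, $\widehat L(M)=L(M)$, and the increments of $\widehat L$ are those of $L$ read in reverse order. Set $\widehat A$ and $\widehat S$ to be the reversals of $A$ and $S$.

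The first step is probabilistic. Because $A$ and $S$ are independent random walks, their increments on $\llbracket 0, M \rrbracket$ are i.i.d. within each walk and the two walks are independent. Reversing the order of finitely many i.i.d. increments does not change their joint law. Hence $(\widehat A,\widehat S)\overset{\mathrm d}=(A,S)|_{\llbracket 0, M \rrbracket}$ as elements of $\mathcal P_M^2$.

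The second step is a deterministic computation of $\mathcal A_M(\widehat S,\widehat A)$ from the definition \eqref{eq:exponential functional Geo}:
\begin{align*}
\mathcal A_M(\widehat S,\widehat A)
&=\max_{1\leq j\leq M}\Bigl(S(M)-S(M-j)-A(M)+A(M-j+1)\Bigr)\\
&=S(M)-A(M)+\max_{1\leq i\leq M}\bigl(A(i)-S(i-1)\bigr),
\end{align*}
where the second line uses the substitution $i=M-j+1$, which ranges over $\llbracket 1, M \rrbracket$. This gives the pathwise identity
\[
\mathcal A_M(A,S)=\widehat A(M)-\widehat S(M)+\mathcal A_M(\widehat S,\widehat A),\qquad A(M)-S(M)=\widehat A(M)-\widehat S(M).
\]

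To conclude, define the deterministic map $F(a,s)=\bigl(a(M)-s(M)+\mathcal A_M(s,a),\,a(M)-s(M)\bigr)$ on $\mathcal P_M^2$. The identity from the second step says that the left-hand side of \eqref{eq:increments time reversal} equals $F(\widehat A,\widehat S)$. The right-hand side of \eqref{eq:increments time reversal} is $F(A,S)$. By the first step, $F(\widehat A,\widehat S)\overset{\mathrm d}=F(A,S)$, which is the claimed equality in law.

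There is no real obstacle. The only points needing care are the index shift $i=M-j+1$, which matches the asymmetric offset $j$ versus $j-1$ in $\mathcal A$, and the fact that the argument uses only i.i.d. increments and the independence of $A$ and $S$. In particular, the increment laws of $A$ and $S$ need not coincide.
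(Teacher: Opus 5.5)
Your proposal is correct and takes the same approach as the paper: a pathwise reversal identity for $\mathcal A_M$, combined with the invariance in law of $(A,S)|_{\llbracket 0,M\rrbracket}$ under reversal of increments. The only cosmetic difference is that the paper uses $\mathcal A_M(\overleftarrow A,\overleftarrow S)=A(M)-S(M)+\mathcal A_M(S,A)$ to rewrite the right-hand side in terms of the reversed paths, whereas you rewrite the left-hand side; the two are equivalent because reversal is an involution.
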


\begin{proof}
Let $M\geq 1$. For $0\leq i\leq M$, define
$$\overleftarrow A(i)=A(M)-A(M-i),\quad \overleftarrow S(i)=S(M)-S(M-i).$$ 
Then 
$$
 \mathcal A_M(\overleftarrow A,\overleftarrow S)
 =A(M)-S(M)+\mathcal A_M(S,A).
$$
Since $A$ and $S$ are random walks in $\mathcal{P}$, they have  i.i.d. increments and start from $0$, hence 
$$\left(  A(i), S(i)\right)_{0\leq i\leq M}
\overset{\mathrm d}{=}
\left( \overleftarrow A(i), \overleftarrow S(i)\right)_{0\leq i\leq M}.
$$
In particular, 
$$\left(\mathcal A_M(A,S),A(M)-S(M)\right)
\overset{\mathrm d}{=}
\left(\mathcal  A_M( \overleftarrow A,\overleftarrow S),\overleftarrow A(M)-\overleftarrow S(M)\right),
$$
which is \eqref{eq:increments time reversal}.
\end{proof}
Let us prove the asymptotics of 
$\mathcal{K}_{M}^{a,r,s}(\alpha,\psi)$ 
in the low density phase.
\begin{proposition}
\label{prop:LD Geo} 
Assume  \eqref{eq:parameter domain geo} and $r>\max\{1,s\}$. Then for all 
$\alpha\in\ZZ\cup\{-\infty\}$ and $  \psi\in\ZZ$,
\begin{equation}
 \mathcal{K}_{M}^{a,r,s}(\alpha,\psi)
 \sim \frac{1-r^{-2}} {1-s/r}\mathcal N^{s,r}(-\infty,\psi) \left(\frac{1-as}{1-a/r}\right)^M
 \label{eq:asymp of K LD}
\end{equation} 
\end{proposition}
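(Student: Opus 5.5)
The plan mirrors the proof of Proposition~\ref{prop:HD Geo}. The difference is that the tilt must now be put on $L_1$, and to make that possible I first move the roles of the two paths around with Lemma~\ref{lem:time reversal Geo}.

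\textbf{Step 1: time reversal.} In the proof of Lemma~\ref{lem:time reversal Geo}, the reversed walks $(\overleftarrow{L_2},\overleftarrow{L_1})$ have the same endpoints as $(L_2,L_1)$. So the argument gives the joint identity in law, under $\PP^{as,ar}_{\Geo,M}$,
\[
\bigl(\mathcal A_M(L_2,L_1),L_1(M),L_2(M)\bigr)\overset{\mathrm d}{=}\bigl(L_2(M)-L_1(M)+\mathcal A_M(L_1,L_2),L_1(M),L_2(M)\bigr).
\]
Substituting this into \eqref{eq:generalized partition function geo} rewrites the exponent $-L_2(M)+\alpha\vee(\psi+\mathcal A_M(L_2,L_1))$ as
\[
-L_1(M)+\max\bigl\{\alpha+L_1(M)-L_2(M),\ \psi+\mathcal A_M(L_1,L_2)\bigr\}.
\]

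\textbf{Step 2: change of measure on $L_1$.} The factor $(rs)^{-L_1(M)}$ changes the reference law of $L_1$ from $\PP^{as}_{\Geo,M}$ to $\PP^{a/r}_{\Geo,M}$. This is legitimate because $a/r<1$ when $r>1$, and it produces the prefactor $\bigl(\frac{1-as}{1-a/r}\bigr)^M$. We obtain
\[
\mathcal K_M^{a,r,s}(\alpha,\psi)=\Bigl(\frac{1-as}{1-a/r}\Bigr)^M\,\EE^{a/r,ar}_{\Geo,M}\Bigl[(rs)^{\max\{\alpha+L_1(M)-L_2(M),\ \psi+\mathcal A_M(L_1,L_2)\}}\Bigr].
\]

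\textbf{Step 3: identifying the limit.} Apply Lemma~\ref{lem:geo Dufresne} with $S=L_1$, $A=L_2$, $p=a/r$ and $q=ar$; note $p<q$ since $r>1$. It gives $\mathcal A_M(L_1,L_2)\uparrow\mathcal A_\infty(L_1,L_2)\sim\Geo(r^{-2})$. By the strong law of large numbers, $L_1(M)-L_2(M)\to-\infty$ almost surely, because the drift of $L_2$ exceeds that of $L_1$. Hence the exponent converges almost surely to $\psi+G$ with $G\sim\Geo(r^{-2})$. Its expectation is
\[
\EE\bigl[(rs)^{\psi+G}\bigr]=(rs)^\psi\,\frac{1-r^{-2}}{1-s/r},
\]
which is finite since $rs\cdot r^{-2}=s/r<1$. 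By \eqref{eq:special case function N Geo} this equals $\frac{1-r^{-2}}{1-s/r}\mathcal N^{s,r}(-\infty,\psi)$, which is the claimed constant.

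\textbf{Step 4: justifying the limit interchange (main obstacle).} The delicate case is $rs>1$, since the $\alpha$-term inside the max is not monotone in $M$.
\begin{itemize}
\item The key deterministic observation is that $\mathcal A_M(L_1,L_2)\geq L_1(M)-L_2(M-1)\geq L_1(M)-L_2(M)$. Hence the exponent is at most $\max\{\alpha,\psi\}+\mathcal A_\infty(L_1,L_2)$, and $(rs)^{\max\{\alpha,\psi\}+\mathcal A_\infty}$ is integrable by the computation in Step 3. If $\alpha=-\infty$, the bound $\psi+\mathcal A_\infty$ works directly.
\item When $rs\leq 1$, the exponent is at least $\psi$, so the integrand is bounded by $(rs)^\psi$.
\end{itemize}
In both cases dominated convergence applies and yields \eqref{eq:asymp of K LD}.
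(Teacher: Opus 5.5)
Your proof is correct and is essentially the paper's argument. It uses the same ingredients: time reversal (Lemma~\ref{lem:time reversal Geo}), tilting $L_1$ from $\PP^{as}_{\Geo}$ to $\PP^{a/r}_{\Geo}$, Dufresne's identity (Lemma~\ref{lem:geo Dufresne}) giving $\mathcal A_\infty(L_1,L_2)\sim\Geo(r^{-2})$, and dominated convergence with the same bounds. Your bound $\max\{\alpha,\psi\}+\mathcal A_\infty$ is the paper's $\psi+d+\mathcal A_\infty$ with $d=(\alpha-\psi)_+$.

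The only difference is the order of the first two steps. You reverse time under $\PP^{as,ar}_{\Geo,M}$ before tilting, so you need the reversal identity that keeps both endpoints $L_1(M),L_2(M)$, which you correctly justify. The paper tilts first, after which the integrand depends only on $L_1(M)-L_2(M)$ and $\mathcal A_M(L_2,L_1)$, so the lemma as stated suffices.
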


\begin{proof}
Denote $d=(\alpha-\psi)_+$. Since $r>1$ and in particular $a/r<1$,   we have 
\begin{align*}
    \mathcal{K}_{M}^{a,r,s}(\alpha,\psi)
  &=
 (rs)^{\psi}
 \left(\frac{1-as}{1-a/r}\right)^M
 \EE_{\Geo,M}^{a/r,ar}
 \left[
 (rs)^{L_1(M)-L_2(M)+
 (d\vee\mathcal{A}_{M}(L_2,L_1))}
 \right]\\
 &=(rs)^{\psi}
 \left(\frac{1-as}{1-a/r}\right)^M
 \EE_{\Geo,M}^{a/r,ar}
 \left[
 (rs)^{(L_1(M)-L_2(M)+d) \vee\mathcal{A}_{M}(L_1,L_2)\}}
 \right],
\end{align*}
where we change the reference measure of $L_1$ from $\PP_{\Geo,M}^{as}$ to $\PP_{\Geo,M}^{a/r}$ in the first equality and apply  Lemma \ref{lem:time reversal Geo}   with
$A=L_2\sim\PP_{\Geo}^{ar} $, $S=L_1\sim\PP_{\Geo}^{a/r}$ in the second equality. 

Since $r>1$, $L_1(M)-L_2(M)\rightarrow-\infty$ almost surely. Hence by the Dufresne's identity \eqref{eq:Geo Dufresne}, we have 
$$(L_1(M)-L_2(M)+d) \vee\mathcal{A}_{M}(L_1,L_2) \xrightarrow[M\to\infty]{a.s.}\mathcal{A}_{\infty}(L_1,L_2)\sim\Geo(r^{-2}).$$
Observe that for all $M\geq 1$ we have
$$ 0\leq (L_1(M)-L_2(M)+d) \vee\mathcal{A}_{M}(L_1,L_2) 
\leq A_{\infty}(L_1,L_2)+d.
$$  
Using the left inequality for $rs\leq 1$ and   the right inequality for  $rs>1$,  
\eqref{eq:asymp of K LD} follows from the dominated convergence theorem and the fact that for 
 $G\sim\Geo(r^{-2})$, 
$$
(rs)^{\psi}\EE\left[(rs)^G\right]
=\mathcal{N}^{r,s}(-\infty,\psi)\frac{1-r^{-2}}{1-s/r}.
$$ 
\end{proof}

\subsubsection{The coexistence line}
In order to see the desired $M\rho^M$ behavior of the partition function on the coexistence line, we prove that the successive differences of the sequence $(\rho ^{-M}
\mathcal{K}_{M}^{a,r,r})_{M\geq1}$ converge to some constant. Let us first prove a preliminary lemma.

\begin{lemma} 
\label{lem:Geo K recursion} 
Assume \eqref{eq:parameter domain geo} and  $a/s<1$, denote  
$\varrho=\frac{1-ar}{1-a/s}$.  Then for all 
$\alpha\in\ZZ\cup\{-\infty\}, \psi\in\ZZ $  and
$M\geq 1$, we have 
\begin{equation}
 \mathcal{K}_{M+1}^{a,r,s}(\alpha,\psi)
 -
 \varrho\mathcal{K}_{M}^{a,r,s}(\alpha,\psi)
 =
 \left(1-
 \varrho\right)
 \left(\frac{a}{s}\right)^{-\psi}
 \left(\frac{1-as}{1-a^2}\right)^M
 \mathcal{K}_{M}^{a,r,a}(\alpha,\psi).
 \label{eq:Geo K recursion}
\end{equation}
\end{lemma}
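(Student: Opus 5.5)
The plan is a one-step (last increment) decomposition combined with the memorylessness of the geometric law. First I rewrite $\mathcal K_M^{a,r,s}(\alpha,\psi)$ under the reference measure used in the proof of Proposition \ref{prop:HD Geo}: changing the law of $L_2$ from $\PP_{\Geo,M}^{ar}$ to $\PP_{\Geo,M}^{a/s}$ (allowed since $a/s<1$) cancels the factor $(rs)^{-L_2(M)}$. This gives
\[
\mathcal K_M^{a,r,s}(\alpha,\psi)=\varrho^M\,\EE^{as,a/s}_{\Geo,M}\Big[(rs)^{\alpha\vee(\psi+\mathcal A_M(L_2,L_1))}\Big],
\]
and similarly for $M+1$. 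Hence the left-hand side of \eqref{eq:Geo K recursion} equals
\[
\varrho^{M+1}\,\EE^{as,a/s}_{\Geo,M+1}\Big[(rs)^{\alpha\vee(\psi+\mathcal A_{M+1})}-(rs)^{\alpha\vee(\psi+\mathcal A_{M})}\Big].
\]

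Next I condition on $\mathcal F_M$ and isolate the last increment $X:=L_2(M+1)-L_2(M)\sim\Geo(a/s)$, which is independent of $\mathcal F_M$; the increment of $L_1$ at step $M+1$ does not enter $\mathcal A_{M+1}(L_2,L_1)$. Set $\Delta:=L_2(M)-L_1(M)$ and $D:=\mathcal A_M(L_2,L_1)-\Delta\geq 0$. Then
\[
\mathcal A_{M+1}(L_2,L_1)=\mathcal A_M(L_2,L_1)\vee\bigl(\Delta+X\bigr)=\Delta+(D\vee X).
\]
The difference inside the expectation vanishes unless $X>D$. By memorylessness, $\PP(X>D\mid\mathcal F_M)=(a/s)^{D+1}$, and on this event $X=D+1+X'$ with $X'\sim\Geo(a/s)$ independent of $\mathcal F_M$. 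Consequently the conditional difference equals
\[
(a/s)^{D+1}\,\EE_{X'}\Big[(rs)^{\alpha\vee(\psi+\mathcal A_M+1+X')}-(rs)^{\alpha\vee(\psi+\mathcal A_M)}\Big].
\]

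I then reinterpret the prefactor $(a/s)^{D}=(a/s)^{\mathcal A_M-L_2(M)+L_1(M)}$ as a change of measure. The factor $(a/s)^{L_1(M)}$ turns $\PP^{as}_{\Geo,M}$ into $\PP^{a^2}_{\Geo,M}$ at cost $\bigl(\tfrac{1-as}{1-a^2}\bigr)^M$, which is exactly the factor on the right of \eqref{eq:Geo K recursion}. The remaining factors $(a/s)^{\mathcal A_M-L_2(M)}$, together with $\varrho^{M+1}$ and the exponentials in $rs$, should recombine with the law $\PP^{a/s}$ of $L_2$. Undoing the change of measure that produced $\varrho^M$, i.e. moving $L_2$ back to $\PP^{ar}$, should yield the weight $(ra)^{-L_2(M)+\mathcal A_M^{\alpha,\psi}(L_2,L_1)}$ defining $\mathcal K_M^{a,r,a}(\alpha,\psi)$. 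The shift $\psi\mapsto\psi$ carried with base $a/s$ produces the factor $(a/s)^{-\psi}$. The geometric sum over $X'$ of the increment $(rs)^{\cdot+1+X'}-(rs)^{\cdot}$, together with the prefactor $\varrho^{M+1}(a/s)$, should collapse to the constant $(1-\varrho)$ times $\varrho^M$. That $\varrho^M$ is then absorbed when returning $L_2$ to its original law.

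The main obstacle is the bookkeeping with $\alpha\vee(\cdot)$. When $\alpha>\psi+\mathcal A_M$, the increment is not a pure geometric series in $X'$. I would first verify the identity for $\alpha=-\infty$, where everything is an explicit geometric computation. I would then split according to whether $\psi+\mathcal A_M\geq\alpha$, using the identity $\alpha\vee(\psi+t)=\psi+(\alpha-\psi)_+\vee t$ as in the proof of Proposition \ref{prop:LD Geo}. The point to check is that the truncated sum in the region $\psi+\mathcal A_M<\alpha$ reproduces exactly the $\alpha$-truncation appearing in $\mathcal K_M^{a,r,a}(\alpha,\psi)$. If the direct matching fails, the fallback is to verify both sides of \eqref{eq:Geo K recursion} as explicit functions of $(D,\mathcal A_M,L_1(M),L_2(M))$ pointwise before taking expectations.
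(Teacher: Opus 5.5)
Your argument is correct and is essentially the paper's proof. The paper also conditions on $\mathcal F_M$, averages out the last increment of $L_2$, and tilts $L_1$ from $\Geo(as)$ to $\Geo(a^2)$; the difference is that you first tilt $L_2$ to $\Geo(a/s)$ and use memorylessness, whereas the paper stays under $\PP_{\Geo,M}^{as,ar}$ and uses \eqref{eq:cutting geo} to get $\EE\bigl[(rs)^{-L_2(M+1)+\mathcal A_{M+1}^{\alpha,\psi}(L_2,L_1)}\mid\mathcal F_M\bigr]=(rs)^{\alpha_M}\bigl(\varrho+(1-\varrho)(a/s)^{\alpha_M-\psi_M}\bigr)$, which absorbs $\alpha$ into $\alpha_M$ and needs no case split.

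The check you flag does close. Write $T=\psi+\mathcal A_M(L_2,L_1)$; in both cases $T\geq\alpha$ and $T<\alpha$ one gets $\EE_{X'}\bigl[(rs)^{\alpha\vee(T+1+X')}-(rs)^{\alpha\vee T}\bigr]=(ar)^{\alpha\vee T}(a/s)^{-T}\frac{rs-1}{1-ar}$. Since $(a/s)\frac{rs-1}{1-ar}=\frac{1-\varrho}{\varrho}$, your two changes of measure then give exactly the right-hand side of \eqref{eq:Geo K recursion}.
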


\begin{proof}
Let 
$\mathcal F_M= \sigma\Big( L_1(i),L_2(i): 0\leq i\leq M \Big)$.   
Apply  \eqref{eq:cutting geo} with  $N=M+1$ and $k=M$, we have 
$$
 L_2(M+1)-
 \mathcal{A}_{M+1}^{\alpha,\psi}(L_2,L_1)
 = \min\{L_2(M+1)-L_2(M)-\alpha_M,-\psi_M\} ,
$$
where 
$$
 \alpha_M=-\min\{ L_2(M)-\a, (L_1\otimes L_2)(M)-\psi\},
 \qquad \psi_M=\psi-L_1(M),
$$
are measurable with respect to $\mathcal{F}_M$. 
By Markovianity of the random walk,  $L_2(M+1)-L_2(M)\sim\Geo(ar)$ and  is independent
of $\mathcal F_M$. Hence  
\begin{align*}
\EE\left[
(rs)^{-L_2(M+1)+ \mathcal{A}_{M+1}^{\alpha,\psi}(L_2,L_1)}
\big|
\mathcal F_M\right]=(rs)^{\alpha_M}
\left(\varrho+(1-\varrho)
\left(\frac as\right)^{\a_M-\psi_M}
\right),
\end{align*}
Taking expectation with respect to $\EE_{\Geo,M}^{as,ar}$, we obtain
\begin{align*}
    \mathcal{K}_{M+1}^{a,r,s}(\alpha,\psi)
&=\EE_{\Geo,M}^{as,ar} \left[(rs)^{\alpha_M}
 \left(
 \varrho+(1-\varrho)
 \left(\frac as\right)^{\a_M-\psi_M}
 \right)\right].\\
 &=\varrho\mathcal{K}_{M}^{a,r,s}(\alpha,\psi) 
 +(1-\varrho)
 \left(\frac as\right)^{-\psi}
 \EE_{\Geo,M}^{as,ar} \left[
 \left(\frac as\right)^{L_1(M)}
 (ar)^{-L_2(M)+
 \mathcal{A}_{M}^{\alpha,\psi}(L_2,L_1)}
 \right].\\
 &=\varrho\mathcal{K}_{M}^{a,r,s}(\alpha,\psi) +
 (1-\varrho)
 \left(\frac{a}{s}\right)^{-\psi}
 \left(\frac{1-as}{1-a^2}\right)^M
 \mathcal{K}_{M}^{a,r,a}(\alpha,\psi).
\end{align*}
where we change the reference measure of $L_1$ from $\PP_{\Geo,M}^{as}$ to $\PP_{\Geo,M}^{a^2}$ in the last equality.  
\end{proof}
 
We are ready to prove the asymptotics of 
$\mathcal{K}_{M}^{a,r,s}(\alpha,\psi)$ 
on the coexistence line.
\begin{proposition}
\label{prop:Geo coexistence asymptotics} 
Assume \eqref{eq:parameter domain geo} and $r>1$. Then for all 
$\alpha\in\ZZ\cup\{-\infty\}$ and 
$ \psi\in\ZZ$,
\begin{equation}\label{eq:critical line asymptotics}
 \mathcal{K}_{M}^{a,r,r}(\alpha,\psi)
 \sim r^{2\psi}
 \frac{a(r^2-1)^2}{r^2(r-a)(1-ar)}
 M \left(\frac{1-ar}{1-a/r}\right)^M.
\end{equation} 
\end{proposition}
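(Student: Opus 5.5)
We apply Lemma~\ref{lem:Geo K recursion} with $s=r$. Since $r>1$ we have $a/r<1$, so the lemma holds with $\varrho=\frac{1-ar}{1-a/r}=\rho(a,r,r)$. This gives
\[
\rho^{-(M+1)}\mathcal K_{M+1}^{a,r,r}(\alpha,\psi)-\rho^{-M}\mathcal K_{M}^{a,r,r}(\alpha,\psi)
=\frac{1-\rho}{\rho}\Big(\frac ar\Big)^{-\psi}\Big(\frac{1-ar}{(1-a^2)\rho}\Big)^{M}\mathcal K_M^{a,r,a}(\alpha,\psi).
\]
The plan has three steps. First, show that the right-hand side converges to a constant $c$. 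Second, conclude by Cesàro summation (Stolz) that $\rho^{-M}\mathcal K_M^{a,r,r}\sim cM$. Third, identify $c$ with $r^{2\psi}\frac{a(r^2-1)^2}{r^2(r-a)(1-ar)}$.

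For the first step, note that $\frac{1-ar}{(1-a^2)\rho}=\frac{1-a/r}{1-a^2}$. The partition function $\mathcal K_M^{a,r,a}(\alpha,\psi)$ has parameters $(r,s)=(r,a)$ with $r>1>a$, so (when $ra\neq1$) it falls in the low density phase, Proposition~\ref{prop:LD Geo}. That result gives $\mathcal K_M^{a,r,a}\sim \frac{1-r^{-2}}{1-a/r}\mathcal N^{a,r}(-\infty,\psi)\big(\frac{1-a^2}{1-a/r}\big)^M$, where $\mathcal N^{a,r}(-\infty,\psi)=(ar)^\psi$ by \eqref{eq:special case function N Geo}. The exponential factors cancel exactly, so the right-hand side converges to
\[
c=\frac{1-\rho}{\rho}\Big(\frac ar\Big)^{-\psi}(ar)^{\psi}\frac{1-r^{-2}}{1-a/r}
=r^{2\psi}\,\frac{1-\rho}{\rho}\,\frac{1-r^{-2}}{1-a/r}.
\]
Next we simplify the constant. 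Since $1-\rho=\frac{ar-a/r}{1-a/r}=\frac{a(r^2-1)}{r(1-a/r)}$, we have $\frac{1-\rho}{\rho}=\frac{a(r^2-1)}{r(1-ar)}$. Also $\frac{1-r^{-2}}{1-a/r}=\frac{r^2-1}{r(r-a)}$. Multiplying, $c=r^{2\psi}\frac{a(r^2-1)^2}{r^2(r-a)(1-ar)}$, which matches \eqref{eq:critical line asymptotics}.

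In the degenerate case $ar=1$ (allowed here, since $ar\in(0,1)$ is required only for the strip parameters), item (7) of Theorem~\ref{thm:Geo main thm} gives $\mathcal K^{a,r,a}_M=1$. Proposition~\ref{prop:LD Geo} itself excludes only $rs\neq 1$ via its statement, but its proof does not use $rs\ne1$, so we may either invoke it directly or check that case by hand. Actually $ar<1$ is forced by \eqref{eq:parameter domain geo}, so $ar\neq 1$ automatically and no degenerate case arises.

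The final step is to telescope: $\rho^{-M}\mathcal K_M^{a,r,r}=\rho^{-1}\mathcal K_1^{a,r,r}+\sum_{m=1}^{M-1}d_m$, where $d_m\to c>0$. Hence $\rho^{-M}\mathcal K_M^{a,r,r}/M\to c$. The main obstacle is only verifying the hypotheses of Lemma~\ref{lem:Geo K recursion} and Proposition~\ref{prop:LD Geo} with the auxiliary parameter $s=a$. In particular, the pair $(a,r,a)$ must satisfy \eqref{eq:parameter domain geo}: this requires $a^2<1$ and $ar<1$, both of which hold. The rest is algebra.
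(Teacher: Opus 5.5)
Your proof is correct and follows the paper's argument exactly. You apply Lemma~\ref{lem:Geo K recursion} with $s=r$, then use the low-density asymptotics of Proposition~\ref{prop:LD Geo} at the auxiliary parameters $(r,s)=(r,a)$ to show the normalized increments converge to the stated constant, and conclude by Stolz--Ces\`aro; your explicit simplification of the constant checks out. The paragraph on the degenerate case $ar=1$ can simply be deleted, since $ar<1$ is already part of \eqref{eq:parameter domain geo}.
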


\begin{proof} 
Apply Lemma \ref{lem:Geo K recursion} with $s=r$. Recall that $\rho=\frac{1-ar}{1-a/r}$ on the coexistence line. 
Divide both sides of    \eqref{eq:Geo K recursion} by
$\rho^{M+1}$, we obtain \be\label{eq:Geo critical to LD} \rho ^{-(M+1)}
 \mathcal{K}_{M+1}^{a,r,r}(\alpha,\psi)
 - \rho ^{-M}
 \mathcal{K}_{M}^{a,r,r}(\alpha,\psi)
=\frac{a(r^2-1)}{r(1-ar)}
\left(\frac{a}{r}\right)^{-\psi}
\left(\frac{1-a^2}{1-a/r}\right)^{-M}
\mathcal{K}_{M}^{a,r,a}(\alpha,\psi).
\ee
Since $(r,a)$ is in the strict low-density phase, we can apply Proposition \ref{prop:LD Geo}  with
$(r,s)=(r,a)$. The right hand side of
\eqref{eq:Geo critical to LD} converges to the constant 
$$
 r^{2\psi}
 \frac{a(r^2-1)^2}{r^2(r-a)(1-ar)}.
$$ 
Asymptotics \eqref{eq:critical line asymptotics} follow by the  Stolz-Cesaro theorem.
\end{proof}

\subsection{The fan region}
\subsubsection{Preliminaries about Schur functions}
\label{subsubsec:schur preliminary}
We follow the notation in \cite[Section 2.1]{Barraquand}.
 For $n\geq1$, let
$$
 \Sign_n
 =\{\lambda=(\lambda_1,\ldots,\lambda_n)\in\ZZ^n:
 \lambda_1\geq\cdots\geq\lambda_n\},
 \qquad
 \Sign_n^+=\{\lambda\in\Sign_n:\lambda_n\geq0\}.
$$
For $\lambda,\mu\in\Sign_n$, we say $\la$ interlaces with $\mu$, denoted as $\lambda\prec\mu$ if 
$$
 \mu_1\geq\lambda_1\geq\mu_2\geq\lambda_2
 \geq\cdots\geq\mu_n\geq\lambda_n,
$$
For $\la \prec\mu$, the skew Schur polynomials are defined in a single variable $x\in\RR$   by  
$$
 s_{\mu/\lambda}(x)
 =\mathbf 1_{\{\lambda\prec\mu\}}x^{|\mu|-|\lambda|}.
$$
and in multiple variables $x_1,\ldots,x_m\in\RR$ by the branching rule
\be\label{eq:Schur branching} 
 s_{\mu/\lambda}(x_1,\ldots,x_m)
 =\sum_{\nu\in\Sign_n}
 s_{\mu/\nu}(x_1,\ldots,x_{m-1})s_{\nu/\lambda}(x_m).
\ee
For $\lambda\in\Sign_n^+$, write
$s_\lambda(x)=s_{\lambda/0^n}(x)$.  For 
$x=(x_1,\ldots,x_n)$ and $y=(y_1,\ldots,y_m)$, let
$$
\Pi(x;y)=\prod_{i=1}^n\prod_{j=1}^m\frac{1}{1-x_i y_j}.
$$
For specializations $x,y$ with at most $n$ variables and $\mu\in\Sign_n^+$, 
we have
\begin{itemize}
    \item  Cauchy identity
    \be\label{eq:Schur Cauchy}
 \sum_{\lambda\in\Sign_n^+}s_\lambda(x)s_\lambda(y)
 =\Pi(x;y).
\ee
\item Skew-Cauchy identity
\be\label{eq:Schur skew Cauchy}
 \sum_{\la\in\Sign_n^+}s_{ \lambda/\mu}(x)s_{\la}(y)
 =s_{\mu}(y)\Pi(x;y).
 \ee
\end{itemize} 
The function $\mathcal N^{r,s}(\a,\psi)$ defined in \eqref{eq:function N Geo} admits the following representation for $\alpha\in\ZZ\cup\{-\infty\},  \psi\in\ZZ$ and $ s\in[1,\frac{1}{r}) $:
\be\label{eq:function N geo as sum of Schur}
\mathcal N^{r,s}(\a,\psi)=\frac{(rs)^\psi}{\Pi(r;(s,1/s))} 
\sum_{l\in\ZZ} r^l \mathsf{1}_{l\geq (\a-\psi)_+}s_{(l,0)}(s,s^{-1}).
\ee

\subsubsection{The two-layer Schur process} 
 
In the fan region $rs<1$, the stationary measure  $\PP_{\Geo,N}^{a,r,s}$ \eqref{eq:Stationary measure strip Geo} has a Markovian description, which can be studied through   integral formulas developed in \cite{Barraquand}. 
\begin{definition}[Two-layer Schur process]
Let $N\in\NN$. Assume the parameters    $a,r,s$ satisfy \eqref{eq:parameter domain geo} and  $ rs<1$.   Define the   probability measure $\PP_{\Schur,N}^{a,r,s}$ on $\bl=(\la^0,\cdots,\la^N)\in(\Sign^+_2)^{N+1}$   by 
    \be
\PP_{\Schur,N}^{a,r,s}(\bl)
     = \frac{1}{\mathcal{Z}^{a,r,s}_{\Geo,N}}
    r^{\la^0_1-\la_2^0} \mathbf{1}_{\la_2^0=0}  \prod_{i=1}^{N} (1-a )^2 s_{\la^i/\la^{i-1}}(a ) 
     s^{\la_1^N-\la_2^N},
    \ee
    where   $\mathcal{Z}^{a,r,s}_{\Geo,N}$ is the normalizing constant.
\end{definition}
The two-layer Schur process is a Markov process. 
\begin{lemma}\cite[Proposition 2.10]{Barraquand}
\label{lem:Markovian description r,s<1} 
    Under $\PP_{\Schur,N}^{a,r,s}$, the process $(\la^x)_{0\leq x\leq N}$ is an inhomogeneous Markov chain on $\Sign^+_2$ with initial condition and transition probability given by 
    \be\label{eq:initial distribution and transition kernel r,s<1}
    \P^{r,s}_{N}(\la)=r^{\la_1-\la_2} \mathrm{1}_{\la_2=0}   h_{0,N}^{r,s}(\la_1-\la_2), \quad
    \U_{x,N}^{r,s}(\la,\mu)=(1-a)^2 s_{\mu/\la}(a) 
    \frac{h^{r,s}_{x+1,N}(\mu_1-\mu_2)}{h^{r,s}_{x,N}(\la_1-\la_2)},
    \ee
where the functions $(h_{x,N})_{0\leq x\leq N}: \ZZ_+\rightarrow \RR_{+}$ are defined by 
\be\label{eq:def of function h and q(l)}
h_{x,N}^{r,s}(l)=\frac{q^{s}_{N-x}(l)}{\sum_{l\geq 0}r^l q^{s}_N(l)}, \quad  \text{ with } \quad 
q_{N}^{s}(l)=(1-a)^{2N} \sum_{\la\in\Sign_2}s_{\la/(l,0)}((a)^N) 
    s^{\la_1-\la_2}.
\ee
\end{lemma}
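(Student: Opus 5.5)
This is the standard Doob $h$-transform description of a Schur process obtained by summing out the future, and I would prove it directly from the product form of $\PP_{\Schur,N}^{a,r,s}$.

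\textbf{Step 1: marginalise the future.} Fix $0\leq x\leq N$ and $\la^0,\dots,\la^x$. The weight is a product of consecutive factors, so I sum over $\la^{x+1},\dots,\la^N\in\Sign_2^+$ and use the branching rule \eqref{eq:Schur branching} repeatedly. This gives
\[
\sum_{\la^{x+1},\dots,\la^N}\prod_{i=x+1}^N (1-a)^2 s_{\la^i/\la^{i-1}}(a)\, s^{\la^N_1-\la^N_2}
=(1-a)^{2(N-x)}\sum_{\la} s_{\la/\la^x}\big((a)^{N-x}\big)s^{\la_1-\la_2}.
\]

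\textbf{Step 2: reduce to a function of $\la^x_1-\la^x_2$.} I check that this sum equals $q^s_{N-x}(\la^x_1-\la^x_2)$. Skew Schur polynomials in $\Sign_2$ are invariant under the simultaneous shift $\la\mapsto\la+(c,c)$, $\mu\mapsto\mu+(c,c)$: the interlacing constraints are preserved and $|\mu|-|\la|$ is unchanged. The factor $s^{\la_1-\la_2}$ is also shift-invariant. Shifting by $c=-\la^x_2$ therefore replaces $\la^x$ by $(l,0)$ with $l=\la^x_1-\la^x_2$. This is exactly why $q^s_N$ is defined with a sum over $\Sign_2$ rather than $\Sign_2^+$. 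The only subtlety is that the intermediate $\la^i$ range over $\Sign_2^+$ while $q$ sums over $\Sign_2$. This is harmless, because interlacing from $\la^x$ with $\la^x_2\geq 0$ forces every later $\la^i_2\geq\la^x_2\geq 0$; after the shift, the matching constraint is $\la_2\geq 0$ relative to $(l,0)$, which is automatic in $\Sign_2$.

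\textbf{Step 3: identify the Markov structure.} The joint law of $(\la^0,\dots,\la^x)$ becomes
\[
\frac{1}{\mathcal Z}\,r^{\la^0_1-\la^0_2}\mathbf 1_{\la^0_2=0}\prod_{i=1}^x(1-a)^2 s_{\la^i/\la^{i-1}}(a)\;q^s_{N-x}(\la^x_1-\la^x_2).
\]
Taking $x=0$ gives $\mathcal Z=\sum_{l\geq0}r^l q^s_N(l)$ and the stated initial law $\P^{r,s}_N$. Taking the ratio of the formulas at $x+1$ and $x$ gives the conditional law of $\la^{x+1}$ given $\la^0,\dots,\la^x$. It depends only on $\la^x$ and equals $\U^{r,s}_{x,N}$, which proves the Markov property. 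Stochasticity of $\U$ is the branching identity $\sum_\mu (1-a)^2s_{\mu/\la}(a)q_{N-x-1}(\mu_1-\mu_2)=q_{N-x}(\la_1-\la_2)$, which is just Step 1 again.

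\textbf{Main obstacle: finiteness of the normalisation.} I expect the only real difficulty to be showing $\mathcal Z<\infty$ when $rs<1$. I would bound $s_{\la/(l,0)}((a)^N)$ using the skew-Cauchy identity \eqref{eq:Schur skew Cauchy} with single-variable specialisations $(s,1/s)$. This yields a bound of the form $q^s_N(l)\lesssim C_N\, s_{(l,0)}(s,s^{-1})\sim C\, s^l$, assuming $as<1$ and $a/s<1$. Then $\sum_l r^l s^l<\infty$ exactly when $rs<1$.
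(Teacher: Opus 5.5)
Your Steps 1--3 are correct. Summing out $\la^{x+1},\dots,\la^N$ by the branching rule, using diagonal shift invariance to reduce to $q^s_{N-x}(\la^x_1-\la^x_2)$, and reading off the initial law and kernel as ratios is the standard Doob $h$-transform argument behind \cite[Proposition 2.10]{Barraquand}; the paper only cites that result and gives no proof of its own. At $x=N$ you implicitly use $s_{\la/\mu}(\varnothing)=\mathbf 1_{\la=\mu}$, so that $q^s_0(l)=s^l$ reproduces the terminal factor.

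The flawed part is your finiteness paragraph. For $s<1$ one has $s_{(l,0)}(s,s^{-1})=\sum_{k=0}^{l}s^{l-2k}\geq s^{-l}$. So your bound grows like $s^{-l}$, not $s^{l}$, and $\sum_l r^l s^{-l}<\infty$ requires $r<s$. The hypothesis $rs<1$ does not give this (e.g.\ $s=1/2$, $r=3/4$). Moreover, \eqref{eq:Schur skew Cauchy} with specialisation $(s,s^{-1})$ needs $a<s$, which is not part of \eqref{eq:parameter domain geo}. Your argument therefore only covers $s\geq 1$.

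The remaining cases are easy to close:
\begin{itemize}
\item For $s\leq 1$, use $s^{\la_1-\la_2}\leq 1\leq s_\la(1,1)$ and skew-Cauchy with $(1,1)$. This gives $q^s_N(l)\leq q^1_N(l)\leq l+1$, which is summable against $r^l$ when $r<1$.
\item For $s<1\leq r$, use the symmetry \eqref{eq:boundary parameter symmetry}. Its proof matches the unnormalised weights of $\bl$ and $\overleftarrow{\bl}$ term by term, so it holds in $[0,\infty]$ without presupposing finiteness. Swapping $r$ and $s$ puts you back in your case, now with $ar<1$ and $a/r<1$.
\end{itemize}
Alternatively, the Tonelli computation in Lemma \ref{lem:K and q geo} identifies $\sum_l r^l q^s_N(l)$ with a positive multiple of $\mathcal K^{a,r,s}_N$, which is finite by Proposition \ref{prop:Stationary measure strip Geo}.

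In any case, the paper builds finiteness of the normalising constant into the definition of $\PP^{a,r,s}_{\Schur,N}$. So finiteness is not really the main obstacle; the content of the lemma is your Steps 1--3.
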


When  $rs<1$, the two-layer Schur process  is related to the stationary measure $\PP_{\Geo,N}^{a,r,s}$ \eqref{eq:Stationary measure strip Geo} by \cite[Proposition 2.17]{BarraquandCorwinYang}: 
\be\label{eq:relation between reweighted rw and two layer Schur process}
     (L_1(x),L_2(x))_{0\leq x\leq N}\overset{d}{=}(\la^x_1-\la^0_1,\la^x_2-\la^0_2)_{0\leq x\leq N}, 
\ee
 where $\bl\sim\PP_{\Schur,N}^{a,r,s}$ and $\bL\sim \PP_{\Geo,N}^{a,r,s}$. 
Let us prove a relationship between the function  $q^{s}_{N}(l)$ in \eqref{eq:def of function h and q(l)}  and our  partition function  $\mathcal{K}_{M}^{a,r,s}(\alpha,\psi)$ \eqref{eq:generalized partition function geo}.  
\begin{lemma}  
\label{lem:K and q geo}
Assume \eqref{eq:parameter domain geo} and  $rs<1$. Then for all $N\geq 1$ and $\alpha\in\ZZ\cup\{-\infty\}, \psi\in\ZZ$, 
\be\label{eq:relation K q}
\mathcal{K}_{M}^{a,r,s}(\alpha,\psi)
  =(1-rs)(rs)^{\psi} 
 \left(\frac{(1-ar)(1-as)}{(1-a)^2}\right)^M
 \sum_{l\in\ZZ}r^l \mathsf{1}_{l\geq (\psi-\alpha)_+}q_M^s(l).
\ee 
\end{lemma}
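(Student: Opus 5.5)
The plan is to compute $q_M^s(l)$ directly as a sum over Gelfand--Tsetlin-type sequences. Each such sequence will be identified with a pair of nondecreasing integer paths whose exponential functional is constrained. Then I sum over $l$ with weight $r^l$ and use Fubini–Tonelli; everything is nonnegative and $rs<1$, so this is legitimate.

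\textbf{Step 1 (branching expansion).} Iterating the branching rule \eqref{eq:Schur branching} with single-variable skew Schur polynomials gives
$$
s_{\lambda/(l,0)}\bigl((a)^M\bigr)=\sum_{(l,0)=\lambda^0\prec\lambda^1\prec\cdots\prec\lambda^M=\lambda} a^{|\lambda^M|-|\lambda^0|}.
$$
To each chain I associate $L_1(x)=\lambda^x_1-l$ and $L_2(x)=\lambda^x_2$ for $0\le x\le M$. The interlacing conditions $\lambda^x_1\ge\lambda^{x-1}_1\ge\lambda^x_2\ge\lambda^{x-1}_2$ are then equivalent to two facts. First, $L_1,L_2\in\mathcal P_M\cap\ZZ$, i.e. both paths are nondecreasing from $0$. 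Second, $L_2(x)-L_1(x-1)\le l$ for all $1\le x\le M$, which by \eqref{eq:exponential functional Geo} is exactly $\mathcal A_M(L_2,L_1)\le l$.

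This is a bijection. Its weight is $a^{L_1(M)+L_2(M)}$, and the factor $s^{\lambda_1-\lambda_2}$ becomes $s^{l+L_1(M)-L_2(M)}$. Hence
$$
q_M^s(l)=(1-a)^{2M}s^l\sum_{(L_1,L_2)} (as)^{L_1(M)}\,(a/s)^{L_2(M)}\,\mathbf 1_{\{\mathcal A_M(L_2,L_1)\le l\}} .
$$

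\textbf{Step 2 (rewrite as an expectation).} I write $(a/s)^{L_2(M)}=(ar)^{L_2(M)}(rs)^{-L_2(M)}$ and recognise $\PP^{as,ar}_{\Geo,M}$, whose point masses are $(1-as)^M(1-ar)^M(as)^{L_1(M)}(ar)^{L_2(M)}$. This gives
$$
q_M^s(l)=s^l\Bigl(\tfrac{(1-a)^2}{(1-ar)(1-as)}\Bigr)^M\EE^{as,ar}_{\Geo,M}\Bigl[(rs)^{-L_2(M)}\mathbf 1_{\{\mathcal A_M(L_2,L_1)\le l\}}\Bigr].
$$
Only $as,ar<1$ from \eqref{eq:parameter domain geo} is used here; in particular $a/s<1$ is not needed.

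\textbf{Step 3 (sum over $l$).} I multiply by $r^l\mathbf 1_{\{l\ge c\}}$ and sum over $l$, exchanging sum and expectation by Tonelli. For fixed paths, with $A=\mathcal A_M(L_2,L_1)\ge 0$,
$$
\sum_{l\ge c\vee A}(rs)^l=\frac{(rs)^{c\vee A}}{1-rs}.
$$
The target is \eqref{eq:generalized partition function geo}, namely $(rs)^{\alpha\vee(\psi+A)}=(rs)^{\psi}(rs)^{(\alpha-\psi)\vee A}$. Since $A\ge0$, we have $(\alpha-\psi)\vee A=(\alpha-\psi)_+\vee A$. So the natural threshold is $c=(\alpha-\psi)_+$, consistent with the representation \eqref{eq:function N geo as sum of Schur}. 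I therefore expect the index in the stated indicator should read $(\alpha-\psi)_+$, or equivalently that the convention on $\alpha,\psi$ differs by a swap. I would check this sign carefully against the intended convention. With that threshold, multiplying by $(1-rs)(rs)^\psi\bigl((1-ar)(1-as)/(1-a)^2\bigr)^M$ yields \eqref{eq:relation K q}; the case $\alpha=-\infty$ corresponds to $c=0$.

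\textbf{Main obstacle.} The only delicate point is Step 1: getting the interlacing bookkeeping exactly right, so that the constraint comes out as $\mathcal A_M(L_2,L_1)\le l$ (with $L_2(j)$ against $L_1(j-1)$) rather than the reversed functional. The summability and interchange in Step 3 are routine because all terms are nonnegative and $rs<1$.
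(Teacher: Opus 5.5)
Your proposal is correct and follows the paper's own proof essentially step by step. Like the paper, you expand by the branching rule with the change of variables $\lambda^i=(l+L_1(i),L_2(i))$, rewrite $q_M^s(l)$ as an expectation under $\PP^{as,ar}_{\Geo,M}$ of $(rs)^{-L_2(M)}\mathbf 1_{\{\mathcal A_M(L_2,L_1)\le l\}}$, and then apply Tonelli in $l$.

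You are also right about the threshold. The paper's proof sets $d=(\psi-\alpha)_+$, but its final equality $\EE[(rs)^{-L_2(M)+d\vee\mathcal A_M(L_2,L_1)}]=(rs)^{-\psi}\mathcal K_M^{a,r,s}(\alpha,\psi)$ holds only for $d=(\alpha-\psi)_+$. This agrees with \eqref{eq:function N geo as sum of Schur}, and for $\alpha=-\infty$ the paper's version would give an empty sum. So the indicator in the statement should read $l\geq(\alpha-\psi)_+$.
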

\begin{proof}
Recall the definition of $q_M^s(l)$ in
\eqref{eq:def of function h and q(l)}.
Develop $s_{\lambda/(l,0)}((a)^M)$ using the branching rule
\eqref{eq:Schur branching} and consider the change of variables
$\lambda^i=(l+L_1(i),L_2(i))$ for $0\leq i\leq M$. We have
\begin{equation*}
q_M^s(l)
=\left(\frac{(1-a)^2}{(1-as)(1-ar)}\right)^M s^l \EE_{\Geo,M}^{as,ar}
\left[(rs)^{-L_2(M)}
\mathsf{1}_{ l\geq \mathcal A_M(L_2,L_1)  }\right].
\end{equation*}
Denote $d=(\psi-\alpha)_+$, by Tonelli's theorem we have,
\begin{align*}
\sum_{l=d}^{\infty} r^l q_M^s(l)
&=\left(\frac{(1-a)^2}{(1-as)(1-ar)}\right)^M
\EE_{\Geo,M}^{as,ar}\left[
(rs)^{-L_2(M)}
\sum_{l\geq d\vee\mathcal A_M(L_2,L_1)}(rs)^l\right]\\
&=\frac1{1-rs}
\left(\frac{(1-a)^2}{(1-as)(1-ar)}\right)^M
\EE_{\Geo,M}^{as,ar}\left[
(rs)^{-L_2(M)+(d\vee\mathcal A_M(L_2,L_1))}\right]\\
&=\frac{(rs)^{-\psi}}{1-rs}
\left(\frac{(1-a)^2}{(1-as)(1-ar)}\right)^M
\mathcal K_M^{a,r,s}(\alpha,\psi).
\end{align*} 
\end{proof}
 
In view of Lemma \ref{lem:K and q geo},  the asymptotics of the  partition function therefore depend on that of the function $q_M^s(l)$. Our starting point is its  integral representation  developed in \cite{Barraquand}.
For $R>0$, we denote $\gamma_R$ to be  a circle of radius $R$ around the origin. 

\begin{proposition}\cite[Proposition 2.12]{Barraquand} \label{prop:barraquands integral formula for q_M^s} 
    Let $a\in(0,1)$, $M\geq 1$ and $l\geq0$. When $s\in(0,1)$, 
 \be\label{eq:integral representation of q_M(l)}
q_M^s(l)=\oint_{\gamma_1} \frac{\d z}{2\pi i }  
F^s_{M,l}(z),
\ee
where 
\be\label{eq:F^s_M(z)}
F^s_{M,l}(z)=\frac{1}{2z}\Big( z^{l+1}-\frac{1}{z^{l+1}}\Big)
\Big(\frac{1}{z}-z\Big)
\frac{1}{1-sz}\frac{1}{1-s/z}
 \Big( \frac{(1-a)^2}{(1-az)(1-a/z)}\Big)^M.
\ee
When $s\geq1$, $q_M^s(l)$ is given by the analytic continuation of  \eqref{eq:integral representation of q_M(l)}.  
\end{proposition}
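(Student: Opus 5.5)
The statement to prove is Proposition \ref{prop:barraquands integral formula for q_M^s}: the contour integral representation of $q_M^s(l)$ for $s\in(0,1)$, together with analytic continuation in $s$ when $s\geq 1$.

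\textbf{Step 1: reduce to a sum of single Schur polynomials.} For $s\in(0,1)$, I would sum over $\la\in\Sign_2$ in the definition \eqref{eq:def of function h and q(l)}. Grouping by $k=\la_1-\la_2\geq 0$ gives
\[
q_M^s(l)=(1-a)^{2M}\sum_{k\geq0}s^k c_M(l,k),
\qquad c_M(l,k)=\sum_{\la:\,\la_1-\la_2=k} s_{\la/(l,0)}((a)^M).
\]

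\textbf{Step 2: evaluate $c_M(l,k)$ by a Fourier argument.} Only the difference $\la_1-\la_2$ matters, and summing over the total shift removes the centre-of-mass degree of freedom. I would then use the determinantal (Jacobi--Trudi) form of two-row skew Schur functions,
\[
s_{\la/\mu}=\det\big[h_{\la_i-\mu_j-i+j}\big]_{i,j=1,2},
\qquad \sum_{n\ge0}h_n((a)^M)w^n=(1-aw)^{-M}.
\]
Summing the $2\times2$ determinant over $\la_1+\la_2$ with $\la_1-\la_2=k$ fixed turns each entry into a Laurent coefficient. The result should be
\[
c_M(l,k)=\oint\frac{\dd z}{2\pi i z}\,\Big(z^{k-l}-z^{k+l+2}\Big)\big((1-az)(1-a/z)\big)^{-M},
\]
that is, a reflection-principle (Karlin--McGregor / $SU(2)$ character) formula. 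The antisymmetry $z\mapsto 1/z$ combined with the factor $(1/z-z)$ should reorganise this into the Weyl-character form $\frac{z^{l+1}-z^{-l-1}}{z-z^{-1}}$ appearing in \eqref{eq:F^s_M(z)}.

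\textbf{Step 3: sum over $k$ and symmetrise.} The geometric series $\sum_k s^k z^{k}$ converges on $|z|=1$ because $s<1$, and produces $1/(1-sz)$. I would then symmetrise under $z\leftrightarrow 1/z$. This should yield both factors $\frac{1}{(1-sz)(1-s/z)}$ and the prefactor $\frac{1}{2z}(z^{l+1}-z^{-l-1})(1/z-z)$. Exchanging the sum and the integral is justified by uniform convergence on $\gamma_1$.

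\textbf{Step 4: analytic continuation to $s\ge1$.} For $s\geq 1$, $q_M^s(l)$ is a priori a convergent series. Indeed, $s^{\la_1-\la_2}$ with $\la$ reachable from $(l,0)$ in $M$ steps involves only finitely many $\la$ (the differences are bounded by $l+M$). Hence $q_M^s(l)$ is a polynomial in $s$. The integral in \eqref{eq:integral representation of q_M(l)} is analytic in $s$ for $|s|<1$, and its continuation is obtained by deforming the contour: as $s$ crosses the unit circle, one picks up residues at $z=1/s$ and $z=s$. Since two functions analytic near $(0,1)$ that agree there agree on their common domain, the polynomial equals this continuation.

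\textbf{Main obstacle.} I expect the main difficulty to be the bookkeeping in Step 2: checking that the interlacing constraint with the bottom row $\la_2\ge0$ absent, since $\la\in\Sign_2$, gives exactly the two-term reflection formula and not extra boundary terms. To handle this I would first verify the case $M=1$ by hand, then induct on $M$ using the branching rule \eqref{eq:Schur branching}, checking that the integrand is multiplied by $(1-a)^2/((1-az)(1-a/z))$ at each step.
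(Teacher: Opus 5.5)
Your Steps 1--3 are correct and yield \eqref{eq:integral representation of q_M(l)} for $s\in(0,1)$.

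\begin{itemize}
\item With $\mu=(l,0)$, Jacobi--Trudi gives $s_{\lambda/(l,0)}=h_{\lambda_1-l}h_{\lambda_2}-h_{\lambda_1+1}h_{\lambda_2-l-1}$.
\item $\sum_j h_{j+m}h_j$ is the $z^m$-coefficient of $((1-az)(1-a/z))^{-M}$ on $a<|z|<1/a$. So summing over $\lambda_2$ at fixed $k$ gives exactly your formula for $c_M(l,k)$, after $z\mapsto 1/z$.
\item Summing over $k$ gives the integrand $\frac{1}{z}\frac{z^{-l}-z^{l+2}}{1-sz}\big(\frac{(1-a)^2}{(1-az)(1-a/z)}\big)^M$. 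Averaging with its image under $z\mapsto 1/z$ produces $F^s_{M,l}$, since $z^{-l}-z^{l+2}=-z(z^{l+1}-z^{-l-1})$ and $-\frac{z}{1-sz}+\frac{z^{-1}}{1-s/z}=\frac{z^{-1}-z}{(1-sz)(1-s/z)}$.
\end{itemize}

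The obstacle you anticipate does not arise. Along an interlacing chain started at $(l,0)$ the second row is nondecreasing, so $\lambda_2\geq 0$ automatically. Thus $s_{\lambda/(l,0)}$ is an ordinary skew Schur polynomial, Jacobi--Trudi applies as is, and no induction on $M$ is needed.

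The paper gives no proof of its own here; it quotes the statement from \cite{Barraquand}, so there is no in-paper argument to compare against. Note that $\frac{1}{2z}(z^{l+1}-z^{-l-1})(z^{-1}-z)$ is $s_{(l,0)}(z,z^{-1})$ times the $SU(2)$ Weyl density. So your computation can be recast as writing $s^{\lambda_1-\lambda_2}$ as the Haar pairing of $s_\lambda(z,z^{-1})$ with $\Pi(s;(z,z^{-1}))$, then summing over $\lambda$ with \eqref{eq:Schur skew Cauchy}.

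The genuine gap is in Step 4. It is false that only finitely many $\lambda$ are reachable. The interlacing condition $\mu_1\geq\lambda_1$ puts no upper bound on the first row: already for $M=1$, $s_{\lambda/(l,0)}(a)=a^{\lambda_1+\lambda_2-l}\mathbf{1}_{\lambda_1\geq l\geq\lambda_2\geq 0}$ with $\lambda_1$ arbitrary.

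So $q_M^s(l)=(1-a)^{2M}\sum_{k\geq0}c_M(l,k)s^k$ is a genuine power series, not a polynomial. Moreover $c_M(l,k)\geq a^{k-l}$ for $k\geq l$: jump the first row from $l$ to $k$ at the first step and freeze it afterwards. Hence $q_M^s(l)=+\infty$ for $s\geq 1/a$.

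As written, your argument would ``prove'' finiteness and the continuation identity for every $s\geq1$. This fails beyond $1/a$, where the continued function even has a pole, coming from the factor $(1-as)^{-M}$ in the residue term of Proposition \ref{prop:general formula for q_M^s for all phases}.

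What you need is analyticity of the series on a connected set containing $(0,1)$ and $[1,1/a)$, and your own formula supplies it.
\begin{itemize}
\item The integrand of $c_M(l,k)$ is holomorphic on $a<|z|<1/a$.
\item Deform the contour to $|z|=\rho$ with $a<\rho<1$, and use $|z^{k+l+2}|\leq|z^{k-l}|=\rho^{k-l}$. This gives $|c_M(l,k)|\leq C_\rho\,\rho^{k}$.
\item Hence the radius of convergence is at least $1/a$, and $q_M^s(l)$ is holomorphic on $|s|<1/a$.
\item The identity theorem then gives the continuation statement for $s\in[1,1/a)$.
\end{itemize}
That is precisely the range in which the paper uses it (Lemma \ref{lem:analytic continuation of q_M(l)}).
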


The integral formula \eqref{eq:integral representation of q_M(l)} allows to study  asymptotics for the function $q^s_M(l)$ for $s<1$ and hence those of  the  partition function in the maximal current phase, see \cite[Proposition 2.13] {Barraquand}. Based on this approach,  we first specify the analytic continuation of the function $q^s_M(l)$ in Section \ref{subsubsec:analytic continuation of q}, then study its  large $M$ limit in Section \ref{subsubsec:asymptotics of q} and ultimately arrive at the asymptotics of 
$\mathcal{K}_{M}^{a,r,s}(\alpha,\psi)
$  for all $(r,s)$ in the fan region  in Section \ref{subsubsec:asymptotics of K geo}.  
 
\subsubsection{Analytic continuation of $q^s_M(l)$}
\label{subsubsec:analytic continuation of q}

\begin{lemma}\label{lem:analytic continuation of q_M(l)} 
Let $a\in(0,1)$ and $s\in\RR$ such that $a< s<\frac{1}{a}$. Then for any  $R>0$ satisfying $\max(s,\frac{1}{s})<R<\frac{1}{a}$, we have
\begin{equation}\label{eq:analytic continuation of q_M(l) in s}
\begin{aligned}
q^s_M(l)& = \oint_{\gamma_R} \frac{\d z}{2\pi i }
F^{s}_{M,l}(z)+
\frac{1}{2s}(s^{l+1}-\frac{1}{s^{l+1}})
\Big( \frac{(1-a)^2}{(1-a/s)(1-as)} \Big)^M.
\end{aligned}
\end{equation}
\end{lemma}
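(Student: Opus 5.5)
The plan is to start from Proposition \ref{prop:barraquands integral formula for q_M^s}, valid for $s\in(a,1)$, and deform the contour from $\gamma_1$ to $\gamma_R$. Then we argue that the resulting expression is analytic in $s$ on the whole interval $(a,1/a)$, so it defines the continuation. The candidate continuation is the right-hand side of \eqref{eq:analytic continuation of q_M(l) in s}.

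\textbf{Pole structure.} Look at $F^s_{M,l}(z)$ in the annulus $a<|z|<1/a$. The factor $\frac{1}{2z}(z^{l+1}-z^{-l-1})(z^{-1}-z)$ is a Laurent polynomial, so its only possible pole is at $z=0$. The factor $\big(\frac{(1-a)^2}{(1-az)(1-a/z)}\big)^M$ is analytic in the annulus, since its poles are at $z=1/a$ and $z=a$. The remaining factor
\[
\frac{1}{(1-sz)(1-s/z)}=\frac{z}{(1-sz)(z-s)}
\]
has simple poles at $z=s$ and $z=1/s$. For $s\in(a,1)$ we have $s<1<1/s$. Enlarging $\gamma_1$ to $\gamma_R$, with $1/s<R<1/a$ (which is $\max(s,1/s)<R$ here), crosses only the pole at $z=1/s$. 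Note that $\max(s,1/s)<1/a$ holds precisely because $a<s<1/a$, so such an $R$ exists. By the residue theorem,
\[
q^s_M(l)=\oint_{\gamma_R}F^s_{M,l}\,\frac{\d z}{2\pi i}-\Res_{z=1/s}F^s_{M,l}.
\]

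\textbf{Residue computation.} Near $z=1/s$ we have $1-sz=-s(z-1/s)$. Evaluating the other factors at $z=1/s$ gives
\[
\frac{1}{1-s/z}=\frac{1}{1-s^2},\qquad \frac1z-z=s-\frac1s=-\frac{1-s^2}{s}.
\]
Multiplying these contributions, the residue simplifies to
\[
-\frac{1}{2s}\big(s^{l+1}-s^{-l-1}\big)\Big(\frac{(1-a)^2}{(1-a/s)(1-as)}\Big)^M.
\]
Signs are routine to track. One useful check is that the Laurent-polynomial prefactor is antisymmetric under $z\mapsto1/z$. This symmetry also shows that the residues at $s$ and at $1/s$ give the same correction term, consistent with the formula being symmetric in this sense. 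This establishes \eqref{eq:analytic continuation of q_M(l) in s} for $s\in(a,1)$.

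\textbf{Analytic continuation and the main obstacle.} Fix $R<1/a$, close to $1/a$. For $s$ in a complex neighbourhood of $(1/R,R)$, the poles $s$ and $1/s$ stay off $\gamma_R$. The integrand is jointly continuous in $(s,z)$ and analytic in $s$, so the contour integral is analytic in $s$ there. The explicit correction term is analytic for $s\neq0,a,1/a$.

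The main obstacle is that the pole $z=s$ crosses $\gamma_1$ when $s$ passes through $1$. So the original formula is not itself analytic across $s=1$. The $\gamma_R$ representation avoids this, since neither pole meets $\gamma_R$ while $\max(s,1/s)<R$.

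Independence of the choice of $R$ follows because there are no poles between two such circles. Letting $R\uparrow1/a$ then covers all $s\in(a,1/a)$. By uniqueness of analytic continuation, the right-hand side is the continuation of $q^s_M(l)$ referred to in Proposition \ref{prop:barraquands integral formula for q_M^s}.
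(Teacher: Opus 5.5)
Your proposal is correct and follows essentially the same route as the paper: for $s<1$ you enlarge $\gamma_1$ to $\gamma_R$, which picks up only the pole at $z=1/s$, and your residue computation agrees with the paper's; you then extend to $(1/R,R)$ by checking that the $\gamma_R$-integral (the poles $s$ and $1/s$ stay off $\gamma_R$) and the explicit rational correction term are both analytic there, and conclude by uniqueness of analytic continuation. The only loose point is the unneeded symmetry remark: the prefactor itself is not antisymmetric under $z\mapsto 1/z$ (it is the differential $F^s_{M,l}(z)\,\mathrm{d}z$ that changes sign), and what this yields is $\Res_{z=s}F^s_{M,l}=-\Res_{z=1/s}F^s_{M,l}$.
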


\begin{proof} 
We prove that for any fixed  $R$ satisfying $\max(s,\frac{1}{s})<R<\frac{1}{a}$,  
\be\label{eq:analytic continuation of q_M(l)}
q^t_M(l)=\oint_{\gamma_R} \frac{\d z}{2\pi i }
F^t_M(z)-\Res_{z=\frac{1}{t}} F^t_M(z)
\ee
for all $t\in(\frac{1}{R},R)$,  \eqref{eq:analytic continuation of q_M(l) in s} follows by taking $t=s$ and computing the residue  
\be
\Res_{z=\frac{1}{s}} F^{s}_{M,l}(z)
= -\frac{1}{2s}(s^{l+1}-\frac{1}{s^{l+1}})
\Big( \frac{(1-a)^2}{(1-a/s)(1-as)} \Big)^M .
\ee

 Recall the integral representation of $q^t_M(l)$ \eqref{eq:integral representation of q_M(l)}.  
    For $t\in(\frac{1}{R},1)$, \eqref{eq:analytic continuation of q_M(l)} holds by the residue theorem, see Figure \ref{fig:contour shrinking Geo}. 
    When $t\geq 1$, $q^t_M(l)$ is given by the analytic continuation of \eqref{eq:integral representation of q_M(l)} by Proposition \ref{prop:barraquands integral formula for q_M^s}. 
    By uniqueness of the analytic continuation, it remains to justify the two terms on the right hand side of \eqref{eq:analytic continuation of q_M(l)} are analytic for   $t\in(\frac{1}{R},R)$. 
    
    The residue term
    \be t\mapsto
\Res_{z=\frac{1}{t}} F^t_M(z)
= \frac{1}{2t}(\frac{1}{t^{l+1}}-t^{l+1})
\Big( \frac{(1-a)^2}{(1-a/t)(1-at)} \Big)^M
    \ee
    is a rational function of  $t$ without pole in $(a,1/a)$.  Hence it is real analytic on  $(a,1/a)$, and in particular on
$ (\frac{1}{R},R)$.

To argue the real analyticity of the function  $t\mapsto\oint_{\gamma_R} \frac{\d z}{2\pi i }F^t_M(z)$ 
for  $t\in (\frac{1}{R},R)$, it is convenient to regard it as a complex function and  prove it is holomorphic in the open annulus
$$
\mathcal A_R
=
\left\{
t\in\mathbb C:
\frac{1}{R}<|t|<R
\right\}\subset\C.
$$
Observe that   for each fixed $z\in\gamma_R$ , the function $t\mapsto F^{t}_M(z)$ is holomorphic in $t\in\mathcal A_R$: indeed,  the only $t$-dependent denominators are
$1-tz$ and $1-t/z$, and for all $z\in\gamma_R$,
$$
|1-t z|
\geq |t|R-1>0,
\qquad
\left|1-\frac{t}{z}\right|
\geq 1-\frac{|t|}{R}>0.
$$ 
 Hence, by holomorphic parameter dependence of contour integrals, see e.g. \cite[Theorem 1, Section 16]{SidorovFedoriukShabunin}, 
$
t\longmapsto
\oint_{\gamma_R}\frac{\d z}{2\pi i}\,F_M^t(z)
$
is holomorphic on $\mathcal A_R$, and  in particular real analytic on the real interval $(\frac{1}{R},R)$.  
\end{proof}

\begin{figure}
\centering
\begin{subfigure}[b]{0.48\textwidth}
    \centering
    \begin{tikzpicture}[>=stealth,scale=1]
        \draw (0,-0.09)--(0,0.09)
         node[below left,font=\scriptsize]   {$0$}; 
            \draw[->] (-2.55,0)--(2.65,0) node[right] {$\Re z$};
            \draw[thick] (0,0) circle (1.15);
            \draw[thick,dashed] (0,0) circle (2.05);
            \draw[->,thick] (-0.82,0.82)--(-1.42,1.42);
            \draw[thick,fill=white] (0.65,0) circle (1.6pt) 
             node[below,font=\scriptsize]   {$t$};
            \draw[thick,fill=black] (1.55,0) circle (1.6pt) 
             node[below,font=\scriptsize]  {$1/t$};
             \node[fill=white,inner sep=1pt] at (0.72,0.92) {$\gamma_1$};
            \node[fill=white,inner sep=1pt] at (1.48,1.48) {$\gamma_R$};
    \end{tikzpicture} 
    \caption{When $t\in(\frac{1}{R},1)$, enlarging
    $\gamma_1$ to $\gamma_R$ crosses the pole $z=1/t$.}
\label{fig:contour shrinking Geo}
\end{subfigure}
\hfill
\begin{subfigure}[b]{0.48\textwidth}
    \centering
    \begin{tikzpicture}[>=stealth,scale=1]
        \draw (0,-0.09)--(0,0.09)
         node[below left,font=\scriptsize]   {$0$};
            \draw[->] (-2.55,0)--(2.65,0) node[right] {$\Re z$};
            \draw[thick] (0,0) circle (2.05);
            \draw[thick,dashed] (0,0) circle (1.15);
            \draw[->,thick] (-1.42,1.42)--(-0.82,0.82); 
            \draw[thick,fill=white] (0.65,0) circle (1.6pt) 
             node[below,font=\scriptsize]   {$1/s$};
            \draw[thick,fill=black] (1.55,0) circle (1.6pt) 
             node[below,font=\scriptsize]  {$s$};
            \node[fill=white,inner sep=1pt] at (0.72,0.92) {$\gamma_1$};
            \node[fill=white,inner sep=1pt] at (1.48,1.48) {$\gamma_R$}; 
    \end{tikzpicture} 
    \caption{When $s\in(1,\frac{1}{a})$, shrinking
    $\gamma_R$ back to $\gamma_1$ crosses the pole $z=s$.}
\label{fig:contour shrinking back Geo}
\end{subfigure}
\caption{Pole crossed when shrinking the  contour.} 
\end{figure}

\begin{proposition}\label{prop:general formula for q_M^s for all phases} 
    Let $a\in(0,1)$ and  $s\in(0,\frac{1}{a})$. Then  for all $ M\geq 1 $ and $l \geq0 $ ,
    \begin{equation}\label{eq:q_M^s for s geq 1}
\begin{aligned}
q^s_M(l) 
&= \oint_{\gamma_1} \frac{\d z}{2\pi i }
F^{s}_{M,l}(z)
+\frac{1}{s}(s^{l+1}-\frac{1}{s^{l+1}})
\Big( \frac{(1-a)^2}{(1-a/s)(1-as)} \Big)^M\mathbf{1}_{s\geq 1}.
\end{aligned}
\end{equation} 
\end{proposition}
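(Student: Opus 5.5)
For $s\in(0,1)$ there is nothing to prove: the indicator vanishes and \eqref{eq:q_M^s for s geq 1} is exactly the integral representation \eqref{eq:integral representation of q_M(l)} of Proposition \ref{prop:barraquands integral formula for q_M^s}. It remains to treat $s\in[1,\frac1a)$. The plan is to start from Lemma \ref{lem:analytic continuation of q_M(l)}, which already expresses the analytic continuation as an integral over a large circle $\gamma_R$ plus an explicit term. We then shrink $\gamma_R$ back to $\gamma_1$ and keep track of the single pole crossed, as in Figure \ref{fig:contour shrinking back Geo}.

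Concretely, for $s\in(1,\frac1a)$, fix $R$ with $s<R<\frac1a$; then $\max(s,1/s)=s<R$, so Lemma \ref{lem:analytic continuation of q_M(l)} applies. We first locate the poles of $F^s_{M,l}$ from \eqref{eq:F^s_M(z)}. They are at $z=0$, at $z=1/s$ (from $1/(1-sz)$), at $z=s$ (from $1/(1-s/z)=z/(z-s)$), and at $z=a$ and $z=1/a$ (from the $M$-th power). In the closed annulus $1\le |z|\le R$ the only pole is $z=s$, since $1/s$ and $a$ lie inside $\gamma_1$ and $1/a$ lies outside $\gamma_R$. By the residue theorem,
\[
\oint_{\gamma_R}\frac{\d z}{2\pi i}F^s_{M,l}(z)=\oint_{\gamma_1}\frac{\d z}{2\pi i}F^s_{M,l}(z)+\Res_{z=s}F^s_{M,l}(z).
\]

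The key computation is this residue. Writing $\frac{1}{1-s/z}=\frac{z}{z-s}$ and evaluating the remaining factors at $z=s$ gives
\[
\frac1{2s}\Bigl(s^{l+1}-\frac1{s^{l+1}}\Bigr)\cdot\frac{1-s^2}{s}\cdot\frac{1}{1-s^2}\cdot s\cdot\Bigl(\frac{(1-a)^2}{(1-as)(1-a/s)}\Bigr)^M .
\]
The factors $\frac{1-s^2}{s}\cdot\frac{1}{1-s^2}\cdot s$ collapse to $1$. So the residue equals exactly the extra term $\frac1{2s}(s^{l+1}-s^{-l-1})(\cdots)^M$ in \eqref{eq:analytic continuation of q_M(l) in s}. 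Adding the two copies gives the coefficient $\frac1s$ claimed in \eqref{eq:q_M^s for s geq 1}.

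The only delicate point is the boundary case $s=1$, where the pole $z=s$ sits on $\gamma_1$ itself. Here I would check directly that nothing singular happens. At $s=1$ we have $(\frac1z-z)\frac1{(1-z)(1-1/z)}=-\frac{1+z}{1-z}$, a simple pole at $z=1$. This is cancelled by the zero of $z^{l+1}-z^{-l-1}$ at $z=1$, so $F^1_{M,l}$ is holomorphic near $\gamma_1$. Hence we may again deform $\gamma_R$ to $\gamma_1$ with no residue. Since the correction term $\frac1s(s^{l+1}-s^{-l-1})(\cdots)^M$ vanishes at $s=1$, this is consistent with the indicator $\mathbf 1_{s\ge1}$, and the formula holds for all $s\in(0,\frac1a)$.
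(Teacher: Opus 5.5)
Your proposal is correct and follows the paper's proof essentially step for step. For $s\in(1,\frac1a)$ you start from Lemma~\ref{lem:analytic continuation of q_M(l)} and shrink $\gamma_R$ to $\gamma_1$ across the single pole $z=s$. Its residue equals the extra term $\frac{1}{2s}(s^{l+1}-s^{-l-1})\bigl(\frac{(1-a)^2}{(1-a/s)(1-as)}\bigr)^M$, which doubles it. At $s=1$ you use that the singularity at $z=1$ is removable, so the contour deforms with no residue. The only addition over the paper is that you spell out the pole bookkeeping and the cancellation in the residue explicitly.
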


\begin{proof}

For $s\in(0,1)$, the residue term vanishes hence \eqref{eq:q_M^s for s geq 1} follows by Proposition  \ref{prop:barraquands integral formula for q_M^s}.

When $s\in(1,\frac{1}{a})$,  by Lemma \ref{lem:analytic continuation of q_M(l)}, for all $R\in(\max(s,\frac{1}{s}),\frac{1}{a})$, 
\be
q^s_M(l)=\oint_{\gamma_R} \frac{\d z}{2\pi i }F^{s}_{M,l}(z)-\Res_{z=1/s} F^{s}_{M,l}(z).
\ee
Shrinking the contour from $\ga_R$ back to $\ga_1$ crosses the pole at $z=s$, see Figure \ref{fig:contour shrinking back Geo} for illustration. Hence by the residue theorem, 
\be
q^s_M(l)=\oint_{\gamma_1} \frac{\d z}{2\pi i }
F^{s}_{M,l}(z)-\Res_{z=\frac{1}{s}} F^{s}_{M,l}(z) + \Res_{z=s} F^{s}_{M,l}(z)
\ee
The residue at $z=s$ is  
\be
\Res_{z=s} F^{s}_{M,l}(z)
= \frac{1}{2s}(s^{l+1}-\frac{1}{s^{l+1}})
\Big( \frac{(1-a)^2}{(1-a/s)(1-as)} \Big)^M = -\Res_{z=\frac{1}{s}} F^{s}_{M,l}(z).
\ee

For $s=1$, by Proposition \ref{lem:analytic continuation of q_M(l)}, we know that for any $R\in(1,\frac{1}{a})$, 
   $$q^1_M(l) =\oint_{\gamma_R} \frac{\d z}{2\pi i }
F^1_{M,l}(z). $$
We claim that the contour can be deformed back to $\ga_1$ without changing the value of the integral.
Indeed, when $s=1$, \eqref{eq:F^s_M(z)}  rewrites as  
\be
 F^1_{M,l}(z)=\frac{(z+1)^2}{2z^{l+2}}\Big(\sum_{i=0}^{l}z^{2i}\Big) 
\Big( \frac{(1-a)^2}{(1-a/z)(1-az)} \Big)^M,
\ee
which is analytic on the annulus  
$\{z: a<\abs{z}<\frac{1}{a}\} $  and in particular in a neighborhood of the closed annulus $\{z: 1\leq \abs{z}\leq R\}$. The claim therefore follows from the residue theorem.  This proves \eqref{eq:q_M^s for s geq 1}, whose residue  term   vanishes at $s=1$.
\end{proof}

Let us deduce the asymptotics of $q^s_M(l)$ using representations in Proposition \ref{prop:general formula for q_M^s for all phases}.

\subsubsection{Asymptotics of $q^s_M(l)$}
\label{subsubsec:asymptotics of q}
Fix $a\in(0,1)$ and $l\in\ZZ_{\geq0}$,  define a function $ G^s_{M,l}(x)$ for  $M\geq 1, s>0$ and $x\in\RR$ by
\begin{multline}
     G^s_{M,l}(x):=   
    \frac{\mathbf{1}_{\abs{x}\leq  \pi \sqrt{M} }   }{  \pi \sqrt{M}} \sin\left(\frac{(l+1)x}{\sqrt{M}}\right) 
     \sin\left(\frac{x}{\sqrt{M}}\right) \\
     \left((1-s)^2+ 2s\big(1-\cos\frac{x}{\sqrt{M}}\big)  \right)^{-1}
     \left( 1+\frac{2a\big(1-\cos\frac{x}{\sqrt{M}}\big)}{(1-a)^2}  \right)^{-M}.
\end{multline} 
Let us first prove some  upper bounds for later use in dominated convergence. 
\begin{lemma}\label{lem: upper bound for function G} 
Let $a\in(0,1)$. There exist constants $c=c(a), C=C(a)$ such that for all $ M\geq 1 $, $l \in\ZZ_{\geq0} $ and   $x\in \RR$,  
\be\label{eq:upper bound for function G s not 1}
 \abs{M^{3/2}G^s_{M,l}(x)}  \leq 
 \frac{(l+1)}{  \pi  (1-s)^2 }  
 C x^2e^{-cx^2}, \qquad s\neq 1,
\ee 
and
    \be\label{eq:upper bound for function G s=1}
\abs{\sqrt{M}G^1_{M,l}(x)}\leq 
\frac{\pi(l+1)}{2 }Ce^{-cx^2}. 
\ee
\end{lemma}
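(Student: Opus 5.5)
The plan is to bound each of the four factors of $G^s_{M,l}$ separately on the support $\abs{x}\leq \pi\sqrt M$, writing $\theta=x/\sqrt M\in[-\pi,\pi]$ throughout.

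\emph{The two sine factors.} We use $\abs{\sin((l+1)\theta)}\leq (l+1)\abs{\sin\theta}$, which follows by induction from the addition formula, together with $\abs{\sin\theta}\leq\abs{\theta}$. For $s\neq1$ this gives
\[
\abs{\sin((l+1)\theta)\sin\theta}\leq (l+1)\theta^2=(l+1)\frac{x^2}{M}.
\]
For $s=1$ we instead keep a single sine against the denominator and use $\abs{\sin((l+1)\theta)}\leq (l+1)\abs{\sin\theta}$ once more, so that the product is at most $(l+1)\sin^2\theta$.

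\emph{The rational factor.} For $s\neq1$ we use $(1-s)^2+2s(1-\cos\theta)\geq(1-s)^2$, since $s>0$; hence the inverse is at most $(1-s)^{-2}$. For $s=1$ the factor equals $(2(1-\cos\theta))^{-1}=(4\sin^2(\theta/2))^{-1}$. Combined with $\sin^2\theta=4\sin^2(\theta/2)\cos^2(\theta/2)\leq4\sin^2(\theta/2)$, this cancels and leaves at most $(l+1)$ times a bounded constant. The stated constant $\pi/2$ is absorbed into $C$; alternatively it can be obtained from $\abs{\sin\theta}/\abs{2\sin(\theta/2)}\leq1$ together with the Jordan-type bound $\abs{\theta}\leq\frac{\pi}{2}\abs{\sin(\theta/2)}\cdot 2$ on $[-\pi,\pi]$.

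\emph{The Gaussian decay factor.} This is the one step needing care, and we expect it to be the main obstacle. On $[-\pi,\pi]$ we have $1-\cos\theta\geq \frac{2}{\pi^2}\theta^2$, by concavity of $\sin(\theta/2)$ on $[0,\pi/2]$, i.e.\ Jordan's inequality $\sin(\theta/2)\geq\theta/\pi$. Set $\kappa=\frac{2a}{(1-a)^2}$. Then $1+\kappa(1-\cos\theta)\geq 1+\kappa'\theta^2$ with $\kappa'=2\kappa/\pi^2$, and $\kappa'\theta^2\leq\kappa'\pi^2=:K$ is bounded. Using $\log(1+y)\geq \frac{\log(1+K)}{K}\,y$ for $y\in[0,K]$, which holds by concavity, we obtain
\[
\Bigl(1+\kappa(1-\cos\theta)\Bigr)^{-M}\leq \exp\bigl(-cM\theta^2\bigr)=e^{-cx^2},
\qquad c=\kappa'\log(1+K)/K,
\]
and $c$ depends only on $a$.

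\emph{Assembling.} Multiplying the four bounds together with the prefactor $1/(\pi\sqrt M)$ gives, for $s\neq1$,
\[
\abs{G^s_{M,l}(x)}\leq \frac{(l+1)x^2}{\pi M^{3/2}(1-s)^2}\,e^{-cx^2},
\]
which is \eqref{eq:upper bound for function G s not 1}. For $s=1$ it gives $\abs{G^1_{M,l}(x)}\leq\frac{(l+1)}{\pi\sqrt M}C'e^{-cx^2}$, which yields \eqref{eq:upper bound for function G s=1} after adjusting $C$. Outside the support both sides are trivially ordered, since $G^s_{M,l}$ vanishes there. All constants depend only on $a$.
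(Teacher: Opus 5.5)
Your proposal is correct and follows essentially the same route as the paper: bound the sine factors elementarily, bound the rational factor by $(1-s)^{-2}$ when $s\neq 1$ (or cancel it against the sines when $s=1$), and control the last factor via $1-\cos\theta\geq 2\theta^2/\pi^2$ on $[-\pi,\pi]$. Your concavity argument for $\log(1+y)$ on $[0,K]$ supplies an explicit proof of the Gaussian-type bound \eqref{eq:Gaussian type bound}, which the paper only asserts.
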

\begin{proof}
       Using $\abs{\sin y}\leq \abs{y} $ and $1-\cos y\geq 2y^2/\pi^2$ for $y\in[-\pi,\pi]$, we have  
$$
 \abs{G^s_{M,l}(x)}  \leq 
    \frac{  \mathbf{1}_{\abs{x}\leq  \pi \sqrt{M} } }{  \pi \sqrt{M}(1-s)^2} \frac{(l+1)\abs{x}}{\sqrt{M}} \frac{\abs{x}}{\sqrt{M}}  
     \Big( 1+\frac{4ax^2}{(1-a)^2\pi^2M}\Big)^{-M},
$$
for $s\neq  1$, and
$$
\abs{G^1_{M,l}(x)}\leq 
\frac{\mathbf{1}_{\abs{x}\leq  \pi \sqrt{M} }  }{  \pi \sqrt{M}}  \frac{(l+1)\abs{x}}{\sqrt{M}}  
 \frac{ \abs{x}}{\sqrt{M}}
    \frac{M\pi^2}{2x^2}
\Big( 1+\frac{4ax^2}{(1-a)^2\pi^2M}\Big)^{-M}.
$$
There exist $c=c(a), C=C(a)$ such that for all $M\geq 1$ and all $x\in \RR$, 
\be\label{eq:Gaussian type bound}
\mathbf{1}_{\abs{x}\leq  \pi \sqrt{M} }\Big( 1+\frac{4ax^2}{(1-a)^2\pi^2M}\Big)^{-M}\leq  Ce^{-cx^2}. 
\ee
Bounds \eqref{eq:upper bound for function G s not 1} and \eqref{eq:upper bound for function G s=1} follow by implementing \eqref{eq:Gaussian type bound}. 
\end{proof}

\begin{proposition}
    Let $a\in(0,1)$. Then
\begin{itemize}
        \item For $s\in(0,1)$, there exists constant $C=C(a,s)>0$ such that for all $l\in\ZZ_{\geq0}$, 
   \be\label{eq:upper bound of q for s<1}
\abs{q_M^s(l)}\leq  \frac{C(l+1)}{M^{3/2}}, \qquad \forall M\geq 1,
   \ee
   and
\be\label{eq:asymptotics of q^s_M(l) for s<1}
q_M^s(l)
 \underset{M\to\infty}{\sim}
 \frac{ (l+1)}{2\sqrt{\pi}(1-s)^2}
\Big( \frac{(1-a)^2}{aM}\Big)^{3/2}.
\ee 

\item For $s=1$, there exists constant $C=C(a)>0$ such that for all $l\in\ZZ_{\geq0}$,
\be\label{eq:upper bound of q for s=1}
|q_M^1(l)| \leq \frac{C(l+1)}{M^{1/2}}, \qquad \forall M\geq 1,
\ee
and
\be\label{eq:asymptotics of q^s_M(l) for s=1}
q_M^1(l)
 \underset{M\to\infty}{\sim}
 \frac{(l+1)}{\sqrt{\pi}}\Big(\frac{(1-a)^2}{aM}\Big)^{1/2}.
\ee

\item For $s\in(1,1/a )$, there exists constant $C=C(a,s)>0$ such that for all $l\in\ZZ_{\geq0}$,
   \be\label{eq:upper bound of q for s>1}
 | q_M^s(l)  | 
\leq C(a,s) s^l
\Big( \frac{(1-a)^2}{(1-a/s)(1-as)} \Big)^M , \qquad \forall M\geq 1,
   \ee
and 
\be\label{eq:asymptotics of q^s_M(l) for s>1}
q_M^s(l)
 \underset{M\to\infty}{\sim}
 \frac{1}{s}
 \Big( s^{l+1}-\frac{1}{s^{l+1}} \Big)
\Big( \frac{(1-a)^2}{(1-a/s)(1-as)} \Big)^M.
\ee
    \end{itemize}
\end{proposition}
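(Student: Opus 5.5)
The plan is to make the contour integral in Proposition \ref{prop:general formula for q_M^s for all phases} explicit on the unit circle and show that it equals $\int_{\RR}G^s_{M,l}(x)\dd x$. Lemma \ref{lem: upper bound for function G} then supplies the domination needed for the three regimes.

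\emph{Step 1: parametrize $\gamma_1$.} Take $z=e^{i\theta}$ with $\theta\in[-\pi,\pi]$, so that $\frac{\d z}{2\pi i}=\frac{z\,\d\theta}{2\pi}$. One has
$$
\tfrac{1}{2z}\bigl(z^{l+1}-z^{-(l+1)}\bigr)\bigl(z^{-1}-z\bigr)=\tfrac{2}{z}\sin((l+1)\theta)\sin\theta,
\qquad
(1-sz)(1-s/z)=(1-s)^2+2s(1-\cos\theta),
$$
and similarly $(1-az)(1-a/z)=(1-a)^2+2a(1-\cos\theta)$. Hence
$$
\oint_{\gamma_1}\tfrac{\d z}{2\pi i}F^s_{M,l}(z)=\frac1\pi\int_{-\pi}^{\pi}\frac{\sin((l+1)\theta)\sin\theta}{(1-s)^2+2s(1-\cos\theta)}\Big(1+\tfrac{2a(1-\cos\theta)}{(1-a)^2}\Big)^{-M}\d\theta .
$$
For $s=1$ the integrand is still bounded near $\theta=0$, because $\sin((l+1)\theta)\sin\theta=O(\theta^2)$. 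Substituting $\theta=x/\sqrt M$ turns the right-hand side into exactly $\int_\RR G^s_{M,l}(x)\dd x$. Together with \eqref{eq:q_M^s for s geq 1} this gives
$$
q^s_M(l)=\int_\RR G^s_{M,l}(x)\dd x+\tfrac1s\bigl(s^{l+1}-s^{-(l+1)}\bigr)\Big(\tfrac{(1-a)^2}{(1-a/s)(1-as)}\Big)^M\mathbf 1_{s\geq1}.
$$

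\emph{Step 2: the cases $s<1$ and $s=1$.} Set $b=a/(1-a)^2$.

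For $s<1$, pointwise in $x$,
$$
M^{3/2}G^s_{M,l}(x)\to\frac{(l+1)x^2}{\pi(1-s)^2}e^{-bx^2},
$$
since $M\log\bigl(1+\tfrac{2a(1-\cos(x/\sqrt M))}{(1-a)^2}\bigr)\to bx^2$. The bound \eqref{eq:upper bound for function G s not 1} is integrable, so dominated convergence applies. Using $\int x^2e^{-bx^2}\dd x=\frac{\sqrt\pi}{2}b^{-3/2}$ gives \eqref{eq:asymptotics of q^s_M(l) for s<1}. Integrating the same bound gives \eqref{eq:upper bound of q for s<1}.

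For $s=1$, the denominator is $2(1-\cos(x/\sqrt M))\sim x^2/M$, so
$$
\sqrt M\,G^1_{M,l}(x)\to\frac{l+1}{\pi}e^{-bx^2}.
$$
With domination from \eqref{eq:upper bound for function G s=1} and $\int e^{-bx^2}\dd x=\sqrt{\pi/b}$, this yields \eqref{eq:asymptotics of q^s_M(l) for s=1}, and integrating the bound gives \eqref{eq:upper bound of q for s=1}.

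\emph{Step 3: the case $1<s<1/a$.} This is where the argument has to be organized slightly differently. The integral part is still $O\bigl((l+1)M^{-3/2}\bigr)$ by \eqref{eq:upper bound for function G s not 1}. The residue term is the larger one, because $s+1/s>2$ implies
$$
(1-a/s)(1-as)=1+a^2-a(s+s^{-1})<(1-a)^2,
$$
so the ratio $\frac{(1-a)^2}{(1-a/s)(1-as)}$ exceeds $1$. The residue term therefore grows exponentially in $M$, while the integral part decays polynomially. This gives \eqref{eq:asymptotics of q^s_M(l) for s>1}, since $s^{l+1}-s^{-(l+1)}>0$ for every $l\geq0$.

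For the uniform bound \eqref{eq:upper bound of q for s>1}, I would combine three facts: $l+1\leq C(s)s^l$; $M^{-3/2}\leq 1\leq \bigl(\frac{(1-a)^2}{(1-a/s)(1-as)}\bigr)^M$; and $\frac1s\bigl(s^{l+1}-s^{-(l+1)}\bigr)\leq s^l$.

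The only delicate point overall is the exact match between the contour integral and $\int G$, in particular the factor $z$ cancelling against $\frac{\d z}{2\pi i}$. Once that identity is in place, everything else is dominated convergence with the bounds already proved in Lemma \ref{lem: upper bound for function G}.
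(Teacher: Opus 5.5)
Your proposal is correct and follows the paper's proof essentially step for step. You use the substitution $z=e^{ix/\sqrt{M}}$ to identify $\oint_{\gamma_1}F^s_{M,l}$ with $\int_{\RR}G^s_{M,l}$, then dominated convergence via Lemma~\ref{lem: upper bound for function G} plus a Gaussian integral for $s\leq 1$, and exponential dominance of the residue term for $s\in(1,1/a)$; your explicit checks that $\frac{(1-a)^2}{(1-a/s)(1-as)}>1$ and your derivation of the uniform bound for $s>1$ fill in details the paper leaves implicit.
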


\begin{proof}
For all $s>0$, consider the change of variable $z=e^{ix/\sqrt{M}}$,  we have 
    \be 
\oint_{\gamma_1} \frac{\d z}{2\pi i }
F^{s}_{M,l}(z)= \int_{\RR}G^s_{M,l}(x) \d x.
    \ee
Take the constants $C,c$ from Lemma \ref{lem: upper bound for function G},  when $s\neq 1$, we have 
  \be\label{eq:inequality for unit cercle integral s not 1}
    \left| \oint_{\gamma_1} \frac{\d z}{2\pi i }
F^{s}_{M,l}(z)\right|
\leq \int_{\RR} \left| G^s_{M,l}(x)\right| \d x
\leq \int_{\RR}
\frac{(l+1)}{  \pi  (1-s)^2 M^{3/2}}  
 C x^2e^{-cx^2}  \d x 
 \leq \frac{C(a,s)(l+1)}{M^{3/2}} 
 \ee
for some constant $C(a,s)$. And when $s=1$, we  have 
 \be\label{eq:inequality for unit cercle integral s=1}
    \left| \oint_{\gamma_1} \frac{\d z}{2\pi i }
F^1_{M,l}(z)\right|
\leq \int_{\RR} \left| G^1_{M,l}(x)\right| \d x
\leq \int_{\RR}
\frac{\pi(l+1)}{2 M^{1/2}}Ce^{-cx^2}  \d x 
 \leq \frac{C(a)(l+1)}{M^{1/2}} 
 \ee
 for some constant $C(a)$. 
 
When $s\in (0,1)$, the only contribution in   \eqref{eq:q_M^s for s geq 1} is the integral term.  Hence \eqref{eq:upper bound of q for s<1} follows directly by inequality \eqref{eq:inequality for unit cercle integral s not 1}. 
Observe that when $s\neq1$, we have pointwise convergence   for all $x\in\RR$    
$$
M^{3/2}G^s_{M,l}(x)\xrightarrow[M\to\infty]{}
\frac{l+1}{\pi(1-s)^2} x^2e^{-\frac{ax^2}{(1-a)^2}}. 
$$
We can apply the dominated convergence theorem thanks to the bound \eqref{eq:upper bound for function G s not 1},  then    \eqref{eq:asymptotics of q^s_M(l) for s<1} follows by a   Gaussian integral.

When $s=1$, the only contribution in   \eqref{eq:q_M^s for s geq 1} is the integral term since the residue term is $0$.  Hence \eqref{eq:upper bound of q for s=1} follows directly by inequality \eqref{eq:inequality for unit cercle integral s=1}. 
Observe that when $s=1$, we have pointwise convergence   for all $x\in\RR$    
$$
 M^{1/2}G^s_{M,l}(x)\xrightarrow[M\to\infty]{} 
     \frac{l+1}{\pi}  e^{-\frac{ax^2}{(1-a)^2}}. 
$$
We can apply the dominated convergence theorem thanks to the bound \eqref{eq:upper bound for function G s=1},  then    \eqref{eq:asymptotics of q^s_M(l) for s=1} follows by a   Gaussian integral.

When $s\in(1,\frac{1}{a})$, the integral term is bounded by \eqref{eq:inequality for unit cercle integral s not 1},   hence  the contribution in  \eqref{eq:q_M^s for s geq 1} is dominated by the residue term which grows exponentially. 
\end{proof}

\subsubsection{Asymptotics of the  partition function }
\label{subsubsec:asymptotics of K geo}
Recall the relation between the function $q$ and  partition function $\mathcal{K}_{M}^{a,r,s}(\alpha,\psi)$ \eqref{eq:relation K q}. 
When $r<1$, the   sum in \eqref{eq:relation K q} is summable in $l$. 
When $r\geq 1$,  inspired by the arguments in the low density phase, we prepare the following lemma.  
 
\begin{lemma}[Time reversal] Assume \eqref{eq:parameter domain geo} and   $rs<1$. 
    Let $\bl \sim \mathbb P_{\mathrm{Schur},M}^{a,r,s}$. 
    Then 
    $\overleftarrow \bl \sim\mathbb P_{\mathrm{Schur},M}^{a,s,r}$, where for $0\leq i\leq N$, 
   \be\label{eq:time reversed two layer schur process}\overleftarrow \bl^i =\left( \la^M_1-\la_2^{M-i},   \la^M_1-\la_1^{M-i}\right).
   \ee
In particular,
\be\label{eq:boundary parameter symmetry}
 \mathcal Z_{\Schur,M}^{a,r,s} =\mathcal Z_{\Schur,M}^{a,s,r}.
\ee
\end{lemma}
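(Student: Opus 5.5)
The plan is a direct computation with the explicit weight of $\PP_{\Schur,M}^{a,r,s}$: I will show that the map $\bl\mapsto\overleftarrow\bl$ is a bijection of the support that sends the weight with parameters $(r,s)$ to the weight with parameters $(s,r)$, term by term.

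\textbf{Step 1: the reversal is a bijection of the support.} Recall that the weight of $\bl$ is $r^{\la^0_1}\mathbf 1_{\la^0_2=0}\prod_{i=1}^M(1-a)^2 s_{\la^i/\la^{i-1}}(a)\, s^{\la^M_1-\la^M_2}$. Each factor $s_{\la^i/\la^{i-1}}(a)$ forces the interlacing $\la^{i-1}\prec\la^i$, that is $\la^i_1\geq\la^{i-1}_1\geq\la^i_2\geq\la^{i-1}_2$. I will check three things for the reversed sequence.
\begin{itemize}
\item $\overleftarrow\bl^0_2=\la^M_1-\la^M_1=0$, so the indicator is preserved.
\item $\overleftarrow\bl^i\in\Sign_2^+$, because $\la^{M-i}_1\geq\la^{M-i}_2$ and $\la^M_1\geq\la^{M-i}_1$.
\item Interlacing is preserved. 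The condition $\overleftarrow\bl^{i-1}\prec\overleftarrow\bl^{i}$ reads
\[
\la^M_1-\la^{M-i}_2\geq \la^M_1-\la^{M-i+1}_2\geq \la^M_1-\la^{M-i}_1\geq \la^M_1-\la^{M-i+1}_1 .
\]
After cancelling $\la^M_1$, this is exactly $\la^{M-i+1}_1\geq\la^{M-i}_1\geq\la^{M-i+1}_2\geq\la^{M-i}_2$, i.e. $\la^{M-i}\prec\la^{M-i+1}$.
\end{itemize}
The reversal is an involution up to the normalisation $\la^0_2=0$: applying it twice gives $(\la^i_1-\la^0_2,\la^i_2-\la^0_2)=\la^i$. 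Hence it is a bijection of the support.

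\textbf{Step 2: the weight is transported.} Since $s_{\mu/\la}(a)=a^{|\mu|-|\la|}$, the product of the transition factors is $(1-a)^{2M}a^{|\la^M|-|\la^0|}$. For the reversed chain I compute
\[
|\overleftarrow\bl^M|-|\overleftarrow\bl^0| = (\la^M_1-\la^0_2+\la^M_1-\la^0_1)-(\la^M_1-\la^M_2+0)=|\la^M|-|\la^0|,
\]
so the middle product is unchanged. For the boundary factors:
\begin{itemize}
\item $\overleftarrow\bl^0_1-\overleftarrow\bl^0_2=\la^M_1-\la^M_2$;
\item $\overleftarrow\bl^M_1-\overleftarrow\bl^M_2=\la^0_1-\la^0_2=\la^0_1$.
\end{itemize}
Therefore $s^{\overleftarrow\bl^0_1-\overleftarrow\bl^0_2}\,r^{\overleftarrow\bl^M_1-\overleftarrow\bl^M_2}=s^{\la^M_1-\la^M_2}r^{\la^0_1}$. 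This shows that the unnormalised $(s,r)$-weight of $\overleftarrow\bl$ equals the unnormalised $(r,s)$-weight of $\bl$.

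\textbf{Step 3: conclusion.} Summing the weight identity of Step 2 over the bijection of Step 1 gives \eqref{eq:boundary parameter symmetry}. Both normalising constants are finite when $rs<1$. With equal normalisations, the weight identity yields $\overleftarrow\bl\sim\PP_{\Schur,M}^{a,s,r}$.

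The only delicate point is the interlacing bookkeeping in Step 1, in particular confirming that the order of the two coordinates is swapped correctly; the rest is cancellation.
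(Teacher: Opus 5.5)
Your proposal is correct and follows essentially the same route as the paper: show that $\bl\mapsto\overleftarrow\bl$ is an involution of the support $\{\la^0\prec\cdots\prec\la^M,\ \la^0_2=0\}$ that carries the $(r,s)$-weight to the $(s,r)$-weight, then sum over this bijection to get $\mathcal Z_{\Schur,M}^{a,r,s}=\mathcal Z_{\Schur,M}^{a,s,r}$ and hence the identity in law. The only cosmetic difference is in how the transition factors are matched: the paper does it one factor at a time via $s_{\overleftarrow\la^i/\overleftarrow\la^{i-1}}(a)=s_{\la^{M-i+1}/\la^{M-i}}(a)$, whereas you check interlacing separately (which the paper leaves implicit) and then telescope the product to $a^{|\la^M|-|\la^0|}$.
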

\begin{proof}
    Recall the definition of Schur function, for $0\leq i\leq M$ we have 
    $$s_{\overleftarrow\lambda^i/\overleftarrow\lambda^{i-1}}(a)
 =s_{\lambda^{M-i+1}/\lambda^{M-i}}(a).$$
 Hence 
 \begin{align*}
\mathbb P_{\mathrm{Schur},M}^{a,r,s}( \bl )
&=\frac{1}{ \mathcal Z_M^{a,r,s} }
r^{\la^0_1-\la_2^0} \mathsf{1}_{\la_2^0=0}
\prod_{j=1}^{M}(1-a)^2
s_{\lambda^j/\lambda^{j-1}}(a)
 s^{\la^M_1-\la_2^M}\\
&=\frac{1}{ \mathcal Z_M^{a,r,s} }
r^{\overleftarrow\la^M_1-\overleftarrow\la_2^M}
\prod_{i=1}^{M}(1-a)^2
s_{\overleftarrow\lambda^i/\overleftarrow\lambda^{i-1}}(a)
s^{\overleftarrow\la^0_1-\overleftarrow\la_2^0}
 \mathsf{1}_{\overleftarrow\la_2^0=0}
=\frac{\mathcal Z_M^{a,s,r}}{ \mathcal Z_M^{a,r,s} }
\mathbb P_{\mathrm{Schur},M}^{a,s,r}(\overleftarrow \bl ).
\end{align*}
To see the symmetry of $\mathcal Z_{\Schur,M}^{a,r,s} $ with respect to the boundary parameters, observe that $\bl\mapsto\overleftarrow\bl$ is an involution on the space 
$$\{\bl\in(\Sign_2^+)^{M+1} | \lambda^0\prec\lambda^1\prec\cdots\prec\lambda^M, \la^0_2=0\}$$
and in particular a bijection. As the probability vanishes outside the space, we have 
\begin{multline*}
1=\sum_{\bl\in(\Sign_2^+)^{M+1}}\mathbb P_{\mathrm{Schur},M}^{a,r,s}( \bl )
=\sum_{\bl\in(\Sign_2^+)^{M+1}}\frac{\mathcal Z_M^{a,s,r}}{ \mathcal Z_M^{a,r,s} }
\mathbb P_{\mathrm{Schur},M}^{a,s,r}(\overleftarrow \bl )\\
=\frac{\mathcal Z_M^{a,s,r}}{ \mathcal Z_M^{a,r,s} }\sum_{\overleftarrow\bl\in(\Sign_2^+)^{M+1}}
\mathbb P_{\mathrm{Schur},M}^{a,s,r}(\overleftarrow \bl )=\frac{\mathcal Z_M^{a,s,r}}{ \mathcal Z_M^{a,r,s} }.
\end{multline*}

 \end{proof}

We are now ready to prove the asymptotics of  $\mathcal{K}_{M}^{a,r,s}(\alpha,\psi)$  throughout the fan region.  
\begin{proposition}[Fan-region asymptotics]
\label{prop:fan region asymptotics Geo}
Assume \eqref{eq:parameter domain geo} and $rs<1$.  Fix $\alpha,\psi\in\ZZ_{\geq0}$ and put
$d=(\psi-\alpha)_+$.  
If $r,s<1$, then \eqref{eq:asymp of K geo MC all} holds.  
If $s=1$ and $r<1$, then  \eqref{eq:asymp of K geo r axis all}  holds. 
If $r=1$ and $s<1$, then \eqref{eq:asymp of K geo s axis all} holds.  
If $s>1$, then \eqref{eq:asymp of K geo HD all} holds. 
If $r>1$, then\eqref{eq:asymp of K geo LD all} holds. 
\end{proposition}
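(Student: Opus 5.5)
The starting point is Lemma \ref{lem:K and q geo}: for $rs<1$,
\[
\mathcal{K}_{M}^{a,r,s}(\alpha,\psi)=(1-rs)(rs)^{\psi}\Big(\frac{(1-ar)(1-as)}{(1-a)^2}\Big)^M\sum_{l\geq d}r^l q_M^s(l),
\]
so everything reduces to the large $M$ behaviour of $\Sigma_M:=\sum_{l\geq d}r^lq_M^s(l)$. Throughout I will use the proposition on the asymptotics of $q^s_M(l)$, combined with dominated convergence in $l$.

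\textbf{Case $r<1$.} The uniform bounds \eqref{eq:upper bound of q for s<1}, \eqref{eq:upper bound of q for s=1}, \eqref{eq:upper bound of q for s>1} give summable dominating sequences.
\begin{itemize}
\item For $s<1$ the bound is $C(l+1)r^l$. For $s>1$ it is $C s^l r^l$ after normalising by the exponential factor, and this is summable since $rs<1$. For $s=1$ it is $C(l+1)r^l$ again.
\item Dominated convergence then lets me sum the pointwise asymptotics term by term.
\item For $s<1$ this gives $\Sigma_M\sim\frac{(1-a)^3}{2\sqrt\pi(1-s)^2(aM)^{3/2}}\sum_{l\ge d}(l+1)r^l$.
\item For $s>1$ it gives $\Sigma_M\sim(\cdots)^M\sum_{l\ge d} r^l s^{-1}(s^{l+1}-s^{-l-1})$.
\end{itemize}
To match the stated constants I will use the Schur-sum representation \eqref{eq:function N geo as sum of Schur}. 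In two variables, $s_{(l,0)}(s,s^{-1})=(s^{l+1}-s^{-l-1})/(s-s^{-1})$, which becomes $l+1$ at $s=1$. So $\sum_{l\ge(\alpha-\psi)_+}r^l s_{(l,0)}(s,1/s)$ is exactly $\mathcal N^{r,s}(\alpha,\psi)$ up to explicit factors. One bookkeeping point needs care: the threshold here is $(\psi-\alpha)_+$, whereas $\mathcal N$ uses $(\alpha-\psi)_+$. I expect this to be reconciled by a sign convention in the statement, and I will check it against the low-density case.

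The prefactors should combine as follows:
\begin{itemize}
\item the factor $(1-as)(1-ar)/(1-a)^2$ times the residue base $(1-a)^2/((1-a/s)(1-as))$ gives $\rho^M$ for $s>1$;
\item for $s\le1$ the prefactor base itself equals $\rho$, since $s_\star=1$.
\end{itemize}
Verifying that the constants in \eqref{eq:asymp of K geo MC all}, \eqref{eq:asymp of K geo r axis all} and \eqref{eq:asymp of K geo HD all} come out correctly is then an algebraic check with $\Pi(r;(s,1/s))=1/((1-rs)(1-r/s))$.

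\textbf{Case $r\ge1$, hence $s<1$.} Here the $l$-sum is not absolutely controlled uniformly in $M$, which I expect to be the main obstacle. I will exploit the time-reversal lemma instead. It gives $\mathcal Z^{a,r,s}_{\Schur,M}=\mathcal Z^{a,s,r}_{\Schur,M}$, and $\mathcal Z$ is (up to the explicit geometric factors) $\mathcal K_M^{a,r,s}(0,0)$, or equivalently $\sum_l r^lq_M^s(l)$. More generally, the reversal map sends the boundary weight to the other end. I would first treat $\alpha=-\infty$ (so $d=0$ is replaced by the full sum), where the symmetry converts $\mathcal K^{a,r,s}_M$ into $\mathcal K^{a,s,r}_M$ times an explicit factor, and the case just proved (with the roles of $r,s$ swapped, now $s<1$ and $r\ge1$ on the appropriate side) yields the asymptotics. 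For general $\alpha,\psi$ I would write $\sum_{l\ge d}=\sum_{l\ge0}-\sum_{0\le l<d}$. The finite sum is $O(M^{-3/2})$ after normalisation by the bounds on $q$, which is negligible against $\rho^M$ in the low-density regime since $\rho$ then exceeds the fan base. For $r=1$ the same argument gives the $M^{-1/2}$ law with constant involving $\mathcal N^{s,1}(-\infty,\psi)$.

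Finally I will check consistency with Proposition \ref{prop:LD Geo} on overlapping parameters, and with \eqref{eq:boundary parameter symmetry} for the exact constants. This check should also confirm that the $\alpha$-dependence disappears when $r\ge 1$, as the statement asserts.
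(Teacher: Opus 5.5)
Your proposal is correct and follows the paper's own proof. For $r<1$ you use dominated convergence in $l$ on Lemma~\ref{lem:K and q geo}; for $r\ge1$ you use the symmetry \eqref{eq:boundary parameter symmetry} to rewrite the full sum as $\sum_{l\ge0}s^lq_M^r(l)$, and the finite correction $\sum_{l<d}r^lq_M^s(l)=O(M^{-3/2})$ is negligible.

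The threshold discrepancy you flagged is just a typo in the paper. Re-running the Tonelli step with $\alpha\vee(\mathcal A_M+\psi)=\psi+\bigl((\alpha-\psi)_+\vee\mathcal A_M\bigr)$, which holds since $\mathcal A_M\ge0$, gives $d=(\alpha-\psi)_+$, matching \eqref{eq:function N geo as sum of Schur}. Note that comparing with the low-density case cannot detect this, because $\alpha$ drops out there; check it in the $r<1$ cases instead.
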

\begin{proof} 
Recall the relation between $\mathcal{K}_{M}^{a,r,s}(\alpha,\psi)$ and $q_M^s(l)$ \eqref{eq:relation K q}. 
    When $rs<1 $ and $r<1$, thanks to the upper bound \eqref{eq:upper bound of q for s<1} \eqref{eq:upper bound of q for s=1} and \eqref{eq:upper bound of q for s>1}, by applying the dominated convergence theorem, the asymptocis of $\mathcal{K}_{M}^{a,r,s}(\alpha,\psi)$ from that of function  $q_M^s(l)$ \eqref{eq:asymptotics of q^s_M(l) for s<1}  \eqref{eq:asymptotics of q^s_M(l) for s=1} and \eqref{eq:asymptotics of q^s_M(l) for s>1}.

When $rs<1$ and $r\geq 1$,   using  the time reversal \eqref{eq:boundary parameter symmetry}, by Lemma \ref{lem:K and q geo} we have 
$$
\mathcal{K}_{M}^{a,r,s}(\alpha,\psi)
  =(1-rs)(rs)^{\psi} 
 \left(\frac{(1-ar)(1-as)}{(1-a)^2}\right)^M 
 \left(\sum_{l\geq 0} s^l q_M^r(l)- \sum_{l=0}^{d-1}r^l q_M^s(l)\right).
$$
Thanks to the upper bound \eqref{eq:upper bound of q for s<1}  and \eqref{eq:upper bound of q for s=1},  by applying the dominated convergence theorem, the asymptotics of the infinite sum follows from that of function  $q_M^r(l)$ \eqref{eq:asymptotics of q^s_M(l) for s<1}  and \eqref{eq:asymptotics of q^s_M(l) for s=1}. The finite substraction is    $ O(M^{-3/2}) $  due to bound \eqref{eq:upper bound of q for s<1}, hence negligeable compared to the  infinite sum, which is of order $M^{-1/2}$ for $r=1$ and exponentially large when $r>1$. 
\end{proof}

\subsubsection{Infinite two-layer Schur process} 
As a Markov process, the two-layer Schur process is expected to  have a well defined limit as $N\to\infty$  \cite[Remark 1.16]{Barraquand}.  We show that the limit does exist for certain $(r,s)$, and that the possible limits are described by a family of simple stochastic processes 
$\Big(\PP^{r,s}_{\Schur,\infty}\Big)_{s\geq 1, rs<1}$,
which we refer to as the infinite two-layer Schur processes.

\begin{definition}[Infinite two-layer Schur processes]
    Fix $a\in(0,1)$. For $r,s>0$ such that $s\in[1,1/a)$ and 
    $ rs<1$, we define $\PP^{r,s}_{\Schur,\infty}$ as  the distribution of the homogeneous Markov chain on $\Sign_2$ with initial distribution and transition kernel given by
    \be\label{eq:ic and transition kernel of infinite two layer Schur process}
    \P^{r,s}(\la)= r^{\la_1-\la_2} \mathrm{1}_{\la_2=0}
    h^{r,s}(\la_1-\la_2)  ,     \quad 
     \U^{r,s}(\la,\mu)=\frac{s_{\mu/\la}(a)}{\Pi(a;(s,1/s))} 
    \frac{h^{r,s}(\mu_1-\mu_2) }{h^{r,s}(\la_1-\la_2) },
    \ee
    where
    $$
    h^{r,s}(l)=\frac{s_{(l,0)}(s,s^{-1})}{\Pi(r;(s,1/s))}.
    $$
    The Cauchy identity \eqref{eq:Schur Cauchy} implies that $\P^{r,s}$ is a probability distribution and the skew Cauchy identity \eqref{eq:Schur skew Cauchy} implies that $\U^{s}$ is a stochastic kernel. 
\end{definition}

\begin{proposition}\label{prop:two layer Schur converges to infinite two layer Schur}
Let $a,r\in(0,1)$. We have finite-dimensional distributions convergence 
For $s\in(0,1]$, 
    \be
    \PP_{\Schur,N}^{r,s}\xRightarrow[N\to\infty]{}
    \PP_{\Schur,\infty}^{r,1}.
    \ee  
    For $s\in(1,\min(\frac{1}{a},\frac{1}{r}))$, 
    \be
    \PP_{\Schur,N}^{r,s}\xRightarrow[N\to\infty]{}
    \PP_{\Schur,\infty}^{r,s}.
    \ee  
\end{proposition}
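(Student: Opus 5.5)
Since the process is Markov, finite-dimensional convergence reduces to convergence of the initial law and of the transition kernels. Fix $k\geq 0$ and a configuration $\lambda^0\prec\lambda^1\prec\cdots\prec\lambda^k$ with $\lambda^0_2=0$. By Lemma \ref{lem:Markovian description r,s<1}, the ratios $h_{x+1,N}/h_{x,N}$ telescope, so
\[
\PP_{\Schur,N}^{a,r,s}\big((\la^0,\dots,\la^k)\big)=r^{\la^0_1}\prod_{i=1}^k (1-a)^2 s_{\la^i/\la^{i-1}}(a)\,\frac{q^s_{N-k}(\la^k_1-\la^k_2)}{\sum_{l\geq0}r^l q^s_N(l)}.
\]
The infinite chain has the analogous product form with the last factor replaced by $\Pi(a;(s,1/s))^{-k}\,s_{(l,0)}(s,s^{-1})/\Pi(r;(s,1/s))$ for the appropriate $s$. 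It therefore suffices to show that, for each fixed $l\geq 0$ and $k$,
\[
\frac{(1-a)^{2k}q^s_{N-k}(l)}{\sum_{l'\geq 0} r^{l'}q^s_N(l')}\longrightarrow \frac{s_{(l,0)}(\sigma,\sigma^{-1})}{\Pi(a;(\sigma,1/\sigma))^{k}\,\Pi(r;(\sigma,1/\sigma))},
\]
where $\sigma=1$ when $s\leq 1$ and $\sigma=s$ when $s\in(1,\min(1/a,1/r))$. We will use $s_{(l,0)}(\sigma,\sigma^{-1})=(\sigma^{l+1}-\sigma^{-l-1})/(\sigma-\sigma^{-1})$, which equals $l+1$ at $\sigma=1$, and $\Pi(a;(\sigma,1/\sigma))=1/((1-a\sigma)(1-a/\sigma))$.

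Case $s\in(1,\min(1/a,1/r))$. Use the asymptotics \eqref{eq:asymptotics of q^s_M(l) for s>1}, writing $\kappa_s=(1-a)^2/((1-a/s)(1-as))$. Then $q^s_{N-k}(l)\sim \kappa_s^{-k}\,s^{-1}(s^{l+1}-s^{-l-1})\kappa_s^N$. For the denominator, the bound \eqref{eq:upper bound of q for s>1} gives $r^{l'}|q^s_N(l')|\kappa_s^{-N}\leq C(rs)^{l'}$, which is summable because $rs<1$. Dominated convergence then yields
\[
\kappa_s^{-N}\sum_{l'} r^{l'}q^s_N(l')\to \sum_{l'} r^{l'}s^{-1}(s^{l'+1}-s^{-l'-1}).
\]
This sum equals $(s-s^{-1})\Pi(r;(s,1/s))/s$ by the Cauchy identity in one variable. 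The factor $(1-a)^{2k}\kappa_s^{-k}$ equals $\Pi(a;(s,1/s))^{-k}$, and the factors $(s-s^{-1})/s$ cancel, giving the claim.

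Case $s\leq 1$. For $s=1$, use \eqref{eq:asymptotics of q^s_M(l) for s=1} with \eqref{eq:upper bound of q for s=1}; for $s<1$, use \eqref{eq:asymptotics of q^s_M(l) for s<1} with \eqref{eq:upper bound of q for s<1}. In both cases $q^s_N(l)\sim c_s(l+1)N^{-\beta}$, with a dominating bound $C(l+1)N^{-\beta}$ that is summable against $r^{l}$ since $r<1$. Dominated convergence gives
\[
N^{\beta}\sum_{l'} r^{l'}q^s_N(l')\to c_s\sum_{l'}(l'+1)r^{l'}=c_s(1-r)^{-2}=c_s\,\Pi(r;(1,1)).
\]
The numerator ratio $q^s_{N-k}(l)/q^s_N(l)\to 1$, since $(N-k)^{-\beta}/N^{-\beta}\to1$. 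Meanwhile $(1-a)^{2k}=\Pi(a;(1,1))^{-k}$. The constant $c_s$ cancels, so the limit is $\PP^{r,1}_{\Schur,\infty}$ independently of $s$.

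The main delicate point is the justification of the limit in the denominator, i.e.\ the uniform-in-$l$ bounds, which are exactly those proved earlier. There is also a technicality in the $s>1$ case: the integral term is $O(N^{-3/2})$, whereas the residue term grows, since $\kappa_s>1$ for $s\neq1$ (because $(1-a/s)(1-as)<(1-a)^2$). This is already built into \eqref{eq:asymptotics of q^s_M(l) for s>1}. The pointwise convergence of finite-dimensional probabilities on the countable state space then gives finite-dimensional weak convergence.
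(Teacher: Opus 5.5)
Your proof is correct and follows essentially the same route as the paper. Both reduce to convergence of the Markov data, using the asymptotics of $q^s_M(l)$ to get $h^{r,s}_{x,N}(l)\to h^{r,1}(l)$ for $s\leq 1$ and $h^{r,s}_{x,N}(l)\to h^{r,s}(l)\big(\tfrac{(1-a)^2}{(1-a/s)(1-as)}\big)^{-x}$ for $s>1$. The differences are minor: you telescope the kernels into a single ratio, and you justify the convergence of the normalizing sum $\sum_{l\geq 0} r^l q^s_N(l)$ by dominated convergence with the uniform-in-$l$ bounds, a step the paper leaves implicit.
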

\begin{proof}
Since $\PP_{\Schur,N}^{r,s}$ \eqref{eq:initial distribution and transition kernel r,s<1} and $\PP_{\Schur,\infty}^{r,s}$ \eqref{eq:ic and transition kernel of infinite two layer Schur process} are distributions of Markov chains, it suffices to prove that the initial distribution $\P^{r,s}_{N} $ and the  transition kernel $\U_{x,N}^{r,s} $ converges to the corresponding limit for each  $x\geq0$. 

When $s< 1$ or $s=1$, we know the asymptotics   of $q^{s}_{M}(l)$ by  \eqref{eq:asymptotics of q^s_M(l) for s<1} and   \eqref{eq:asymptotics of q^s_M(l) for s=1} . Recall the definition of function $h_{x,N}^{r,s}(l)$   \eqref{eq:def of function h and q(l)}. In both case we have  
\be
h_{x,N}^{r,s}(l)=\frac{q^{s}_{N-x}(l)}{\sum_{l\geq 0}r^l q^{s}_N(l)}
\xrightarrow[N\to\infty]{}
(1-r)^2(l+1)=h^{r,1}(l). 
\ee
As a consequence,  initial distribution and transition kernel \eqref{eq:initial distribution and transition kernel r,s<1}  of 
$\PP_{\Schur,N}^{r,s}$ converges as $N\to\infty$. For all $\la,\mu\in\Sign_2$ and all $x\geq 0$,
\be
\P^{r,s}_{N}(\la)
=r^{\la_1-\la_2} \mathrm{1}_{\la_2=0}   h_{0,N}^{r,s}(\la_1-\la_2)
\xrightarrow[N\to\infty]{}
r^{\la_1-\la_2} \mathrm{1}_{\la_2=0}h^{r,1}(\la_1-\la_2)=\P^{r,1}(\la),
\ee
\be
\U_{x,N}^{s}(\la,\mu)
=(1-a)^2 s_{\mu/\la}(a)  
    \frac{h^{r,s}_{x+1,N}(\mu_1-\mu_2)}{h^{r,s}_{x,N}(\la_1-\la_2)}
    \xrightarrow[N\to\infty]{}
(1-a)^2 s_{\mu/\la}(a)
\frac{h^{r,1}(\mu_1-\mu_2) }{h^{r,1}(\la_1-\la_2) }=\U^{r,1}(\la,\mu),
\ee
as desired.

Similarly, when $s>1$, we know the asymptotics  of $q^{s}_{M}(l)$ by   \eqref{eq:asymptotics of q^s_M(l) for s>1}.  We have  
\be
h_{x,N}^{r,s}(l)=\frac{q^{s}_{N-x}(l)}{\sum_{l\geq 0}r^l q^{s}_N(l)}
\xrightarrow[N\to\infty]{}
h^{r,s}(l)
\Big( \frac{(1-a)^2}{(1-a/s)(1-as)} \Big)^{-x} . 
\ee
As a consequence,  for all $\la,\mu\in\Sign_2$ and all $x\geq 0$,
\be
\P^{r,s}_{N}(\la) 
\xrightarrow[N\to\infty]{}
\P^{r,s}(\la),\qquad
\U_{x,N}^{s}(\la,\mu)
 \xrightarrow[N\to\infty]{}
\U^{r,s}(\la,\mu),
\ee
as desired.
\end{proof}

\subsubsection{Relation with the Hariya--Yor process}\label{subsubsec:two layer Schur and HY relation}
Let $a\in (0,1)$ and $s\in(1,\frac{1}{a})$.  Let $\mu$ be a probability measure on $\RR_+$. Consider independent random variables 
    $$w_{11}\sim\mu,\  w_{2,1}=0,\ w_{2,2}\sim\Geo(r/s)
    \text{ and }
    w_{i1}\sim\Geo(as),\ w_{i2}\sim\Geo(a/s),\ i\geq 3.$$
For $m=1,2$ and $n\geq m$, define  $G^{\mu}_{r,s}(n,m)$ to be the last passage time from $(1,1)$ to $(n,m)$
    $$
    G^{\mu}_{r,s}(n,m)= \max_{\pi:(1,1)\rightarrow(n,m)}\sum_{(i,j)\in\pi}w_{i,j}, 
    $$

\begin{lemma}\label{lem:two row LPP time and infinite two layer Schur process}
    When $rs<1$ and $\mu=\Geo(rs)$,  
\be\label{eq:joint process is infinite two layer  Schur distributed}
    \Big( G^{\mu}_{r,s}(k+2,2), \sum_{\substack{ j=1,2\\1\leq i\leq k+2 }} w_{i,j}-G^{\mu}_{r,s}(k+2,2)  \Big)_{k\geq 0}\sim \PP^{r,s}_{\Schur,\infty}. 
\ee
\end{lemma}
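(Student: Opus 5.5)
The plan is to identify the pair in \eqref{eq:joint process is infinite two layer  Schur distributed} as the shape of the two-row RSK correspondence applied column by column to the array $(w_{i,j})$. I will then show that, because of the special boundary column, this shape process is exactly the homogeneous Markov chain with initial law $\P^{r,s}$ and kernel $\U^{r,s}$. Write $\la^k=(\la^k_1,\la^k_2)$, where $\la^k_1=G^{\mu}_{r,s}(k+2,2)$ and $\la^k_1+\la^k_2=\sum_{j\le 2,\, i\le k+2}w_{i,j}$. Greene's theorem for two rows gives $\la^k_1\ge \la^k_2\ge 0$, and it identifies $(\la^k_1,\la^k_2)$ with the RSK shape of the $2\times(k+2)$ array. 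Here the entry $(1,2)$ is excluded from admissible paths, and $w_{2,1}=0$.

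\textbf{Step 1 (initial law).} At $k=0$ the only admissible paths to $(2,2)$ pass through $(1,1),(2,1),(2,2)$. Hence $\la^0=(w_{1,1}+w_{2,2},0)$. With $w_{1,1}\sim\Geo(rs)$ and $w_{2,2}\sim\Geo(r/s)$ independent, a direct convolution gives
\[
\PP(\la^0_1=l)=(1-rs)(1-r/s)\sum_{j=0}^{l}(rs)^j(r/s)^{l-j}=r^l\,\frac{s_{(l,0)}(s,s^{-1})}{\Pi(r;(s,1/s))},
\]
which is $\P^{r,s}(\la^0)$. This step is routine.

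\textbf{Step 2 (dynamics).} For $k\ge1$, column $k+2$ has weights $w_{k+2,1}\sim\Geo(a\cdot s)$ and $w_{k+2,2}\sim\Geo(a\cdot s^{-1})$, i.e.\ a Schur specialization with column variable $a$ and row variables $(s,1/s)$. The LPP recursion
\[
G(n,1)=G(n-1,1)+w_{n,1},\qquad G(n,2)=\max\{G(n,1),G(n-1,2)\}+w_{n,2}
\]
shows that $Z_k=(G(k+2,1),G(k+2,2))$ is a Markov chain. The shape $\la^k$ is a function of $Z_k$ together with the running total. However, $\la^k$ alone is not obviously Markov, since the hidden coordinate $D_k=G(k+2,1)-\la^k_2$ (equivalently the first row of the $P$-tableau) is needed. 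I will use the Rogers--Pitman intertwining criterion. Define the kernel $K(\la,\cdot)$ giving the conditional law of the hidden coordinate given the shape: for geometric weights, the $P$-tableau given the shape is distributed proportionally to $s^{\#1}s^{-\#2}$ over semistandard tableaux of shape $\la$. Concretely,
\[
K(\la,d)\propto s^{d}s^{-(|\la|-d)}\mathbf 1_{\la_2\le\ \cdot\ \le \la_1},
\]
normalized by $s_\la(s,1/s)$. I then need to verify two things: (i) given $\la^0$, the hidden variable at time $0$ has law $K(\la^0,\cdot)$; (ii) the intertwining $\Pi K=K\,\U^{r,s}$ holds, where $\Pi$ is the transition of the full chain. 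Item (i) is immediate at $k=0$: the hidden split is $(w_{1,1},w_{2,2})$ given the sum $l$, which has weight $(rs)^j(r/s)^{l-j}\propto s^{j}s^{-(l-j)}$, exactly the tableau weight. Item (ii) is the standard column-insertion identity: summing RSK over the new column yields $\sum_\nu s_{\nu/\la}(a)$ times the tableau weights. The factor $h^{r,s}(\mu_1-\mu_2)/h^{r,s}(\la_1-\la_2)=s_\mu(s,1/s)/s_\la(s,1/s)$ together with $\Pi(a;(s,1/s))^{-1}$ then reproduces $\U^{r,s}$. The fact that $\U^{r,s}$ is stochastic follows from the skew Cauchy identity \eqref{eq:Schur skew Cauchy}.

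\textbf{Main obstacle.} I expect the main obstacle to be Step 2(ii), namely establishing the intertwining cleanly with the nonstandard initial data. The first approach I will try is to realize the whole array as ordinary geometric RSK, with the boundary column viewed as a single column of weights $(\Geo(rs),\Geo(r/s))$ placed at the bottom of column $1$. This reduces the claim to O'Connell's theorem that the RSK shape of a geometric array with specializations $x_i$, $(y_1,y_2)$ is Markov with kernel $s_{\mu/\la}(x_i)s_\mu(y)/(s_\la(y)\Pi(x_i;y))$: the first column has $x=r$, and the subsequent columns have $x=a$. Applied with $y=(s,1/s)$, this yields both $\P^{r,s}$ and $\U^{r,s}$ at once, with $h^{r,s}$ arising from $s_{(l,0)}(s,s^{-1})$. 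The condition $s<1/a$ ensures $as<1$, and $rs<1$ ensures that the weights are well defined.
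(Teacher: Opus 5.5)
Your proposal is correct and, in the form you settle on under ``Main obstacle'', it is the paper's argument. Merging the sites $(1,1),(2,1),(2,2)$ into a single first column with weights $(\Geo(rs),\Geo(r/s))$ gives a full-space two-row geometric array, and Greene's theorem with RSK (O'Connell, or Dimitrov--Yang as the paper cites) then makes the shape process a Schur process with specialization $(r,a,\ldots,a;s,1/s)$; the paper concludes by recognizing the finite-dimensional marginals of $\PP^{r,s}_{\Schur,\infty}$ as exactly this process, which your Step 1 computation and the identity $s_\mu(s,s^{-1})=s_{(\mu_1-\mu_2,0)}(s,s^{-1})$ make explicit. The Rogers--Pitman intertwining in your Step 2 is a more hands-on version of the same column-insertion identity and is not needed; if you do pursue it, the intertwining must read $K\,\mathrm{(full\ kernel)}=\U^{r,s}K$ rather than the order you wrote.
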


\begin{figure} 
\centering
\begin{subfigure}{0.48\textwidth}
\centering
\begin{tikzpicture}[scale=1.5,font=\small]
\foreach \y in {0,1}{
  \draw[gray!35] (2,\y)--(2.8,\y); 
} 
  \draw (-1,0)--(0,1)--(2,1)
        (-1,0)--(2,0);
  \foreach \x in {0,1,2}{
    \draw (\x,0)--(\x,1);
    \fill (\x,0) circle (1.5pt);
    \fill (\x,1) circle (1.5pt);
  }
  \fill (-1,0) circle (1.5pt);
  \node[below] at (-1,0) {$\mu$};
  \node[below] at (0,0) {$0$};
  \node[above] at (0,1) {$\Geo(r/s)$};
  \foreach \x in {1,2}{
    \node[above] at (\x,1) {$ \Geo(a/s)$};
    \node[below] at (\x,0) {$\Geo(as)$};
  }
\end{tikzpicture} 
\end{subfigure}
\hfill
\begin{subfigure}{0.48\textwidth}
\centering
\begin{tikzpicture}[scale=1.5,font=\small]
\foreach \y in {0,1}{
  \draw[gray!35] (2,\y)--(2.8,\y); 
} 
  \draw (0,0)--(2,0) (0,1)--(2,1);
  \foreach \x in {0,1,2}{
    \draw (\x,0)--(\x,1);
    \fill (\x,0) circle (1.5pt);
    \fill (\x,1) circle (1.5pt);
  }
  \node[below] at (0,0) {$\Geo(rs)$};
  \node[above] at (0,1) {$\Geo(r/s)$};
  \foreach \x in {1,2}{
    \node[above] at (\x,1) {$\Geo(a/s)$};
    \node[below] at (\x,0) {$\Geo(as)$};
  }
\end{tikzpicture} 
\end{subfigure}
\caption{When $\mu=\Geo(rs)$, $G^{\mu}_{r,s}$ corresponds to a two-row GLPP in full space.}
\label{fig:two row polymer}
\end{figure}

\begin{proof}
When $\mu\sim\Geo(rs)$ for $rs<1$, the passage times  
$\big(G^{\mu}_{r,s}(n,m)\big)_{n\geq 2, m=1,2}$ can be identified with those of a full space model with two rows, see Figure \ref{fig:two row polymer}. 
Then by Greene's theorem \cite{Greene} and the RSK correspondence, it is well known that the first $N$ marginal of the left hand side of \eqref{eq:joint process is infinite two layer  Schur distributed}   form a Schur process with specialization $(r,a,\ldots,a;\frac{1}{s},s)$, see e.g. \cite[Theorem 2.4]{DimitrovYang} for details. We conclude by recognizing that the first  $N$ marginal of $ \PP^{r,s}_{\Schur,\infty}$  \eqref{eq:ic and transition kernel of infinite two layer Schur process} form the Schur process with the same parameters. 
\end{proof}

\begin{lemma}\label{lem:two row LPP time and two layer HY process}
For any probability distribution $\mu$, we have equality in distribution 
\be
    \Big( G^{\mu}_{r,s}(k+2,2)-G^{\mu}_{r,s}(2,2), \sum_{\substack{ j=1,2\\1\leq i\leq k+2 }} w_{i,j}-G^{\mu}_{r,s}(k+2,2)  \Big)_{k\geq 0} =\Big(\mathbb{HY}^{a,r,s}_{\Geo}(k)\Big)_{k\geq 0}. 
\ee
\end{lemma}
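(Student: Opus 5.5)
The plan is to prove the identity pathwise: the left-hand side will be computed explicitly as a deterministic function of the weights, and then matched with the definition \eqref{eq:T transform Geo} of $\mathcal T$. Define the walks
\[
L_1(k)=\sum_{i=3}^{k+2}w_{i,1},\qquad L_2(k)=\sum_{i=3}^{k+2}w_{i,2},\qquad Y=w_{2,2},
\]
for $k\geq0$. They are independent, with $L_1\sim\PP_{\Geo}^{as}$, $L_2\sim\PP_{\Geo}^{a/s}$ and $Y\sim\Geo(r/s)$, and they are independent of $w_{1,1}\sim\mu$. So $\mathcal T^Y(L_2,L_1)$ has the law of $\mathbb{HY}^{a,r,s}_{\Geo}$ from \eqref{eq:HY process Geo}. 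It then suffices to show that the left-hand side equals $\mathcal T^Y(L_2,L_1)(k)$ for every $k$ and every realisation of the weights; in particular $w_{1,1}$ will cancel, which explains why $\mu$ is arbitrary.

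First compute the passage times. Since $w_{2,1}=0$, the first-row passage time is $G^\mu_{r,s}(n,1)=w_{1,1}+L_1(n-2)$ for $n\geq2$. The lattice has no site $(1,2)$, so a path to $(k+2,2)$ moves up at some column $j\in\{2,\ldots,k+2\}$. If $j=2$, the path collects $w_{1,1}+0+w_{2,2}+L_2(k)$. If $j=t+2$ with $1\leq t\leq k$, it collects $w_{1,1}+L_1(t)+L_2(k)-L_2(t-1)$. Taking the maximum over $j$ and comparing with \eqref{eq:exponential functional Geo} gives
\[
G^\mu_{r,s}(k+2,2)=w_{1,1}+L_2(k)+\bigl(Y\vee\mathcal A_k(L_1,L_2)\bigr),
\]
and in particular $G^\mu_{r,s}(2,2)=w_{1,1}+Y$, because $\mathcal A_0=0$.

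Now match the two coordinates. Subtracting $G^\mu_{r,s}(2,2)$ gives the first coordinate
\[
L_2(k)+Y\vee\mathcal A_k(L_1,L_2)-Y=L_2(k)+(\mathcal A_k(L_1,L_2)-Y)_+ .
\]
By \eqref{eq:T transform Geo}, applied to the pair $(L_2,L_1)$ (so that the functional appearing there is $\mathcal A_k(L_1,L_2)$), this is exactly $[\mathcal T^Y(L_2,L_1)]_1(k)$. The total weight in the first $k+2$ columns is $w_{1,1}+Y+L_1(k)+L_2(k)$. Subtracting $G^\mu_{r,s}(k+2,2)$ from it leaves $L_1(k)-(\mathcal A_k(L_1,L_2)-Y)_+$, which is $[\mathcal T^Y(L_2,L_1)]_2(k)$; alternatively this follows from the conservation property \eqref{eq:conservation Geo}.

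I expect the main obstacle to be bookkeeping rather than anything conceptual. One must get the admissible corner-turning columns right: no turn at column $1$, and the turn at column $2$ is the only path that collects $w_{2,2}$. One must also keep the index shift $t=j-2$ straight so that the argument of $\mathcal A_k$ has the correct order, namely $(L_1,L_2)$, which is the one required by $\mathcal T^Y(L_2,L_1)$.
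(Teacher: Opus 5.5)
Your proof is correct, and it takes a more direct route than the paper.

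The paper argues by reduction to a known result. It notes that the left-hand side does not depend on $w_{1,1}$ and specializes to $\mu=\delta_0$, so that $G^{\mu}_{r,s}$ becomes the process $G^{\stat}_{r,s}$ of \cite{BarraquandCorwin}. It then quotes \cite[Proposition 3.2]{BarraquandCorwin} to identify the first coordinate with $[\mathbb{HY}^{a,r,s}_{\Geo}]_1$. Finally, it deduces the second coordinate from the conservation law \eqref{eq:conservation Geo}.

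You instead enumerate the two-row paths by their turning column. This gives the closed formula $G^{\mu}_{r,s}(k+2,2)=w_{1,1}+L_2(k)+\bigl(Y\vee\mathcal A_k(L_1,L_2)\bigr)$, so the identity holds pathwise once the row weights are coupled with $(L_1,L_2,Y)$ as you do. Your route has three advantages:
\begin{itemize}
\item It is self-contained. It is essentially the two-row case of the cited proposition, reproved in a few lines.
\item The cancellation of $w_{1,1}$, and hence the freedom in $\mu$, is visible directly rather than argued.
\item It yields both coordinates jointly at once.
\end{itemize}
The last point makes explicit something the paper's conservation step tacitly needs. Conservation is a pathwise identity, so it transfers to the second coordinate only if the first-coordinate identity holds under the same coupling, not merely in law. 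Your computation exhibits exactly that coupling.

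The paper's version is shorter and ties the lemma to the stationary construction of \cite{BarraquandCorwin}, at the cost of depending on that reference.

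Two points you handle implicitly: you need $r<s$ so that $Y$ is finite, which holds in the application since $rs<1<s$; and you correctly treat the non-existent site $(1,2)$ as contributing nothing to the total weight.
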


\begin{proof} 
From the simple observation in \cite{BarraquandCorwin} that every up-right path goes through the point $(1,1)$, the left hand side does not depend on the random variable $w_{11}$. 
When $\mu=\delta_0$, $G^{\mu}_{r,s}=G^{\stat}_{r,s}$ defined in \cite{BarraquandCorwin}.  The process $\big( G^{\stat}_{r,s}(k+2,2)-G^{\stat}_{r,s}\big)_{k\geq 0}$ equals $[\mathbb{HY}_{\Geo}^{a,r,s}]_1$ by \cite[Proposition 3.2]{BarraquandCorwin}.
The equality between  second layer marginals follows from  \eqref{eq:conservation Geo}.  
\end{proof}

\begin{corollary}\label{cor:HY equals the increments of infinite two layer Schur process}
    When $rs<1$. Let $(\la^x)_{x\geq0} \sim \PP^{r,s}_{\Schur,\infty}$, we have 
    $$
\big(\la^x-\la^0\big)_{x\geq0}=\Big(\mathbb{HY}^{a,r,s}_{\Geo}(k)\Big)_{k\geq 0}
    $$
\end{corollary}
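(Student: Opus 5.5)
The corollary follows by gluing Lemma \ref{lem:two row LPP time and infinite two layer Schur process} and Lemma \ref{lem:two row LPP time and two layer HY process} with the choice $\mu=\Geo(rs)$. Both lemmas concern the same two-row last passage model, so we only need to check that the Schur process seen through its increments is the process of Lemma \ref{lem:two row LPP time and two layer HY process}.

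First, since $rs<1$, $\mu=\Geo(rs)$ is a genuine probability distribution. Lemma \ref{lem:two row LPP time and infinite two layer Schur process} then identifies the joint law of
\[
\lambda^k:=\Big(G^{\mu}_{r,s}(k+2,2),\ \Sigma_k-G^{\mu}_{r,s}(k+2,2)\Big)_{k\geq0},
\qquad \Sigma_k:=\sum_{j=1,2,\ 1\leq i\leq k+2}w_{i,j},
\]
with $\PP^{r,s}_{\Schur,\infty}$. Since equality in law of processes is preserved under deterministic measurable maps of the path, $(\lambda^x-\lambda^0)_{x\geq0}$ for $\bl\sim\PP^{r,s}_{\Schur,\infty}$ has the same law as $(\lambda^k-\lambda^0)_{k\geq0}$ built from the LPP.

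Second, we compute $\lambda^0$ on the LPP side. Every up-right path to $(2,2)$ passes through $(1,1)$ and $(2,2)$, and $w_{2,1}=0$. Hence $G^\mu_{r,s}(2,2)=w_{11}+w_{22}$ and $\Sigma_0=w_{11}+w_{21}+w_{22}=G^\mu_{r,s}(2,2)$, so $\lambda^0=(G^\mu_{r,s}(2,2),0)$. This is consistent with $\lambda^0_2=0$ under $\P^{r,s}$. Therefore
\[
\lambda^k-\lambda^0=\Big(G^{\mu}_{r,s}(k+2,2)-G^{\mu}_{r,s}(2,2),\ \Sigma_k-G^{\mu}_{r,s}(k+2,2)\Big),
\]
which is exactly the left-hand side of Lemma \ref{lem:two row LPP time and two layer HY process}. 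That lemma holds for every $\mu$, in particular for $\mu=\Geo(rs)$, and it identifies this process in law with $\mathbb{HY}^{a,r,s}_{\Geo}$. The statement is therefore understood as an equality in distribution of processes.

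The main point needing care is the second coordinate. We must check that the two lemmas use the same normalization, i.e.\ that the second layer in Lemma \ref{lem:two row LPP time and two layer HY process} is read without subtracting an initial value. This is consistent only because $\lambda^0_2=0$, which the observation $w_{21}=0$ guarantees.

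A second point is the range of $s$. The construction preceding Lemma \ref{lem:two row LPP time and infinite two layer Schur process} assumes $s\in(1,1/a)$, whereas the corollary is stated for all $rs<1$ where $\PP^{r,s}_{\Schur,\infty}$ is defined, including $s=1$. At $s=1$ the weights $\Geo(r/s)$, $\Geo(as)$ and $\Geo(a/s)$ remain well defined because $r<1$. Lemma \ref{lem:two row LPP time and infinite two layer Schur process} rests only on RSK and Greene's theorem, so it extends verbatim. Lemma \ref{lem:two row LPP time and two layer HY process} also extends to $s=1$, since \cite[Proposition 3.2]{BarraquandCorwin} covers $s\geq\max\{1,r\}$.
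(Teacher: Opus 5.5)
Your proof is correct and takes the same route as the paper: you combine Lemma \ref{lem:two row LPP time and infinite two layer Schur process} and Lemma \ref{lem:two row LPP time and two layer HY process} at $\mu=\Geo(rs)$, and use $\lambda^0_2=0$ to match the unshifted second coordinate (the paper reads this off the initial law $\P^{r,s}$, while you derive it from $w_{21}=0$, which is equivalent). Your remark that the construction, set up only for $s\in(1,1/a)$, must be extended to $s=1$ is also correct, and worth making: the paper later uses exactly that case in the proof of Proposition \ref{prop:Girsanov Geo}.
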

\begin{proof}
    Let $\mu=\Geo(rs)$. By Lemma \ref{lem:two row LPP time and infinite two layer Schur process}, we have 
    $$
    \big(\la^x \big)_{x\geq0}\overset{d}{=} \Big( G^{\mu}_{r,s}(k+2,k), \sum_{\substack{ j=1,2\\1\leq i\leq k+2 }} w_{i,j}-G^{\mu}_{r,s}(k+2,2)  \Big)_{k\geq 0}.  
    $$
    Hence by Lemma \ref{lem:two row LPP time and two layer HY process}, 
    $\big(\la^x_1-\la^0_1,\la^x_2 \big)_{x\geq 0}\overset{d}{=}\Big(\mathbb{HY}^{a,r,s}_{\Geo}(k)\Big)_{k\geq 0} $. 
    The statement follows from the observation that $\PP^{r,s}_{\Schur,\infty}$- almost surely, $\la_2^0=0$. 
\end{proof}

\subsection{Proof of the theorems}\label{subsec:proof of the theorems Geo}

\begin{proof}[Proof of Proposition \ref{prop:Girsanov Geo}]
Assume first that $s>1$.  
Let $Y_*\sim\Geo(s^{-2})$ be independent of $(A,S)$.  We have
\begin{align*}
    \EE\left[
 F\left(\mathcal{T}^Y(S,A)|_{\llbracket 0, N \rrbracket}
 \right)\right]
  &=\frac{1-r/s}{1-s^{-2}}
 \EE\left[F\left(\mathcal{T}^{Y_*}(S,A)|_{\llbracket 0, N \rrbracket}
 \right)(rs)^{Y_*}
 \right]\\
 &=\frac{1-r/s}{1-s^{-2}}
 \EE\left[ F\left((A,S)\big|_{\llbracket 0, N \rrbracket}\right)
 (rs)^{\mathcal{A}_{\infty}(S,A)}
 \right],
\end{align*}
 where we   change the distribution of the parameter of the  $\mathcal T$ transform from that of $Y$ to $Y_*$ in the first equality and we use  Dufresne's relation \eqref{eq:joint dufresne} in the second equality.  

Use \eqref{eq:cutting LG} and \eqref{eq:exponential functional developped Geo},  
we obtain
\begin{align}
 \mathcal{A}_{\infty}(S,A) 
  =\mathcal{A}_{N}(S,A)+\Big(\mathcal{A}_{\infty} (S^{(N)},A^{(N)})-\big(A(N)-S(N)+\mathcal{A}_{N}(S,A)\big)\Big)_+. 
\end{align}
By Markovianity of the random walk, $(A^{(N)},S^{(N)})$ is independent of $\sigma\left(A(i),S(i):0\leq i\leq N\right)$  and has the same law as $(A,S)$.  By the Dufresne's relation \eqref{eq:Geo Dufresne}
$$\mathcal{A}_{\infty}(S^{(N)},A^{(N)})
 \sim\Geo(s^{-2}). 
$$
Recall the definition of function $\mathcal{N}$
\eqref{eq:function N for two paths Geo}. We have
$$ 
\frac{1-r/s}{1-s^{-2}}
\EE\left[(rs)^{\mathcal{A}_{\infty}(S,A)}| \mathcal F_N\right] 
=\mathscr{N}_N^{r,s}(A,S), 
$$
which gives \eqref{eq:Geo Girsanov}.  

When $rs<1$, including in particular the case  $s=1$, we have  
\begin{multline*}
    \EE 
\left[
 F\left(\mathcal{T}^Y(S,A)|_{\llbracket 0, N \rrbracket}\right)
 \right]
\\
=\sum_{(\Sign_2^+)^{N+1}}
F\left((\lambda^k-\lambda^0)_{0\leq k\leq N}\right)
r^{\la^0_1-\la^0_2} \mathrm{1}_{\la^0_2=0}
h^{r,s}(\la^0_1-\la^0_2)\prod_{i=1}^{n}
\frac{s_{\la^i/\la^{i-1}}(a)}{\Pi(a;(s,1/s))} 
    \frac{h^{r,s}(\la^i_1-\la^i_2) }{h^{r,s}(\la^{i-1}_1-\la^{i-1}_2) }
 \\
=\sum_{A,S\in\mathcal P_{ N}}
F((A,S))
\PP_{\Geo,N}^{as}(A) \PP_{\Geo,N}^{a/s}(S) s^{S(N)-A(N)}  
\sum_{l\geq0}
r^l \mathsf{1}_{l\geq \mathcal A_N(S,A)}
h^{r,s}(l+A(N)-S(N)) \\ 
=\EE \left[
 F(A,S)\mathcal{N}_{N}^{r,s}(A,S)
 \right].
\end{multline*}
The the first equality follows from Corollary
\ref{cor:HY equals the increments of infinite two layer Schur process} and the definition of infinite two-layer Schur process    $\PP^{r,s}_{\Schur,\infty}$ \eqref{eq:ic and transition kernel of infinite two layer Schur process}. 
In the second equality, we expand Schur function and use the change of variables $\la^i_1=A(i)+l,\la^i_2=S(i)$. We evaluate the last sum using  \eqref{eq:function N geo as sum of Schur}.  
\end{proof}

\begin{proof}[Proof of Theorem \ref{thm:Geo main thm}]
The high density case \eqref{eq:asymp of K geo HD all} and the low density case \eqref{eq:asymp of K geo LD all} are proven in Proposition \ref{prop:HD Geo} and \ref{prop:LD Geo}.

The coexistence line case \eqref{eq:asymp of K geo CE all} is proven in Proposition \ref{eq:asymp of K geo CE all}.

The asympotics in the fan region is proven in Proposition \ref{prop:fan region asymptotics Geo}, which contains in particular the maximal current case \eqref{eq:asymp of K geo MC all} and the two axis cases \eqref{eq:asymp of K geo r axis all} and \eqref{eq:asymp of K geo s axis all}. 
 
Finally, when $rs=1$, which includes the case $r=s=1$, we have 
  $\mathcal K_M=1$   by definition \eqref{eq:generalized partition function geo}.  
\end{proof}

\begin{proof}[Proof of Corollary \ref{cor:HY theorem Geo}]
Let us first prove the convergence for   $r<1, rs<1$, in which case $s_{\star}=\max\{1,s\}$.   
For all $k\geq 0$ and all $M\geq k$, let 
$$\big(L_1^x(M),L_2^x(M)\big)_{0\leq x\leq M} \sim\PP_{\Geo,M}^{a,r,s}, \quad   (\la^x_1(M),\la^x_2(M) \big)_{0\leq x\leq M}\sim \PP_{\Schur,M}^{r,s} \quad  (\la^x)_{x\geq0}\sim \PP^{r,s}_{\Schur,\infty}.$$ 
Then we have 
\begin{multline*}
    \big(L_1^x(M),L_2^x(M)\big)_{0\leq x\leq k}\overset{d}{=}\big(\la^x_1(M)-\la^0_1(M),\la^x_2(M)-\la^0_2(M)\big)_{0\leq x\leq k}\\
    \xRightarrow[M\to\infty]{}
    \big(\la^x_1 -\la^0_1 ,\la^x_2 -\la^0_2 \big)_{0\leq x\leq k}
    \overset{d}{=}
    \Big(\mathbb{HY}^{r,s}_{\Geo}(x)\Big)_{0\leq x\leq k}. 
\end{multline*} 
The first equality in distribution  follows from the relation \eqref{eq:relation between reweighted rw and two layer Schur process}, the second equality in distribution follows from Corollary \ref{cor:HY equals the increments of infinite two layer Schur process} and the convergence follows from  Proposition \ref{prop:two layer Schur converges to infinite two layer Schur}.

Let us   prove the convergence for   the cases $r\geq \max\{s,1\} $ and $s>\max\{r,1\}$. We have  
\begin{multline*}
     \PP_{\Geo,M}^{a,r,s}
\Big((L_1,L_2)|_{[0,k]}=(\ell_1,\ell_2)\Big) 
 =\PP_{\Geo,k}^{as}(\ell_1)\PP_{\Geo,k}^{ar}(\ell_2)
 \frac{\mathcal K_{M-k}^{a,r,s}
   \big(-(\ell_1\otimes\ell_2)(k),-\ell_1(k)\big)}
  {\mathcal{K}_{M}^{a,r,s} }\\
  \xrightarrow[]{M\to\infty}\  \PP_{\Geo,k}^{as}(\ell_1)\PP_{\Geo,k}^{ar}(\ell_2)\rho^{-k}
  \begin{cases}
      (rs)^{\ell_1(k)},   &r\geq \max\{s,1\},\\
      \mathcal{N}^{r,s}(-(\ell_1\otimes\ell_2)(k),-\ell_1(k)), &s>\max\{r,1\},
  \end{cases}\\ 
  =\begin{cases}
      \mathbb{HY}_{\Geo}^{a,r,r} \quad  &r\geq \max\{s,1\},\\
      \mathbb{HY}_{\Geo}^{a,r,s} &s>\max\{r,1\}. 
  \end{cases}
\end{multline*}
The first equality is Proposition \ref{prop:decomposition Geo}. 
The second convergence follows from asymptotics  \eqref{eq:asymp of K geo s axis all}\eqref{eq:asymp of K geo LD all}\eqref{eq:asymp of K geo CE all} and \eqref{eq:asymp of K geo HD all} in Theorem \ref{thm:Geo main thm} respectively, together with the fact that 
$\mathcal{N}^{r,s}(0,0)=\mathcal{N}^{r,s}(-\infty,0)=1$ and 
the evaluation in \eqref{eq:special case function N Geo}. The marginal distribution of the Hariya Yor process is given by Proposition \ref{prop:Girsanov Geo}. And  for the first case in the third equality, we change the reference measure of $L_1$ from $\PP_{\Geo,k}^{as}$ to $\PP_{\Geo,k}^{a/r}$, and use the fact that   $ \mathbb{HY}_{\Geo}^{a,r,r}=\PP_{\Geo,k}^{a/r}\times \PP_{\Geo,k}^{ar}$. For the second case in the third equality, we change the reference measure of $L_2$ from $\PP_{\Geo,k}^{ar}$ to $\PP_{\Geo,k}^{a/s}$, and use the fact that 
$$
\mathcal{N}^{r,s}(-(\ell_1\otimes\ell_2)(k),-\ell_1(k))=(rs)^{l_2(k)}\mathcal N^{r,s}\Bigl(\mathcal A_k(l_2,l_1),l_2(k)-l_1(k)\Bigr).
$$ 
\end{proof}

\section{Hariya--Yor theorem for log-gamma polymers}
\label{sec:HY for LG}
\subsection{Two-layer $\mathscr{T}$ transform and  asymptotics of the   partition function}  
Let us start with proving some algebraic properties satisfied by the $\mathscr{T}$ transform in Definition \ref{def:functional and T transform Geo}.  
\begin{proposition} 
\label{prop:T properties LG}
For $L=(L_1,L_2)\in\mathscr{P}^2$ and $z \in\RR_{\geq0}$:
\begin{enumerate}
\item $\mathscr{T}^zL\in\mathscr{P}^2$.

\item For every $n\geq1$,
\be
 \frac1{\mathscr{A}_{n}([\mathscr{T}^zL]_2,
 [\mathscr{T}^zL]_1)}
 =\frac1{\mathscr{A}_{n}(L_2,L_1)}+z.
 \label{eq:LG capping}
\ee

\item For every $n\geq0$,
\be \label{eq:LG conservation}
 [\mathscr{T}^zL]_1(n)
 +[\mathscr{T}^zL]_2(n)=L_1(n)+L_2(n).
\ee

\item For all $z'\geq 0$,
\be
 \mathscr{T}^z\mathscr{T}^{z'}L
 =\mathscr{T}^{z+z'}L
 =\mathscr{T}^{z'}\mathscr{T}^zL.
 \label{eq:LG semigroup}
\ee
 In particular, recall the geometric  Pitman transform \eqref{eq:geometric Pitman transform}, we have 
\be\label{eq:communicate with geo Pitman transform LG}
 [\mathscr{T}^zL]_2\odot [\mathscr{T}^zL]_1=L_2\odot L_1,
\qquad
[\mathscr{T}^zL]_1\otimes [\mathscr{T}^zL]_2=L_1\otimes L_2.
\ee
\end{enumerate}
\end{proposition}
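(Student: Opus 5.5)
Throughout write $\mathscr A_n:=\mathscr A_n(L_2,L_1)=\sum_{j=1}^n e^{L_2(j)-L_1(j-1)}$ and $\Phi_n:=\log(1+z\mathscr A_n)$, so that $[\mathscr T^zL]_1(n)=L_1(n)+\Phi_n$ and $[\mathscr T^zL]_2(n)=L_2(n)-\Phi_n$. The plan follows the zero-temperature proof of Proposition \ref{prop:T properties Geo} line by line, replacing $(\max,+)$ by $(+,\times)$. Item (1) is immediate: $\mathscr P$ imposes no monotonicity, and $\Phi_0=\log(1+z\cdot 0)=0$ gives $\mathscr T^zL(0)=(0,0)$. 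Item (3) is also immediate, since $\Phi_n$ enters the two coordinates with opposite signs.

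The core step is the capping identity \eqref{eq:LG capping}. Each summand of the transformed functional is
$$
e^{[\mathscr T^zL]_2(j)-[\mathscr T^zL]_1(j-1)}=\frac{e^{L_2(j)-L_1(j-1)}}{(1+z\mathscr A_j)(1+z\mathscr A_{j-1})}=\frac{\mathscr A_j-\mathscr A_{j-1}}{(1+z\mathscr A_j)(1+z\mathscr A_{j-1})}.
$$
This equals
$$
\frac{\mathscr A_j}{1+z\mathscr A_j}-\frac{\mathscr A_{j-1}}{1+z\mathscr A_{j-1}},
$$
which can be checked by putting the difference over a common denominator. Summing over $1\le j\le n$ telescopes, and using $\mathscr A_0=0$ we get
$$
\mathscr A_n\bigl([\mathscr T^zL]_2,[\mathscr T^zL]_1\bigr)=\frac{\mathscr A_n}{1+z\mathscr A_n}.
$$
Inverting this is exactly \eqref{eq:LG capping}, valid whenever $\mathscr A_n>0$, which always holds for $n\ge1$. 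Finding this telescoping is the only real step; the rest is bookkeeping.

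For the semigroup property \eqref{eq:LG semigroup}, I compute the first coordinate of $\mathscr T^{z}\mathscr T^{z'}L$ at $n$ using \eqref{eq:LG capping} with parameter $z'$:
$$
L_1(n)+\log(1+z'\mathscr A_n)+\log\Bigl(1+\frac{z\mathscr A_n}{1+z'\mathscr A_n}\Bigr)=L_1(n)+\log\bigl(1+(z+z')\mathscr A_n\bigr).
$$
The second coordinate then follows from \eqref{eq:LG conservation}. Since the resulting expression is symmetric in $z$ and $z'$, both equalities in \eqref{eq:LG semigroup} hold.

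Finally, for \eqref{eq:communicate with geo Pitman transform LG}, I use the limit \eqref{eq:renormalization to get geometric Pitman transform}: $\mathscr T^w(\cdot)+(-\log w,\log w)\to(L_2\odot L_1,L_1\otimes L_2)$ as $w\to\infty$. Applied to $\mathscr T^zL$, the semigroup property gives $\mathscr T^w\mathscr T^zL=\mathscr T^{w+z}L$. Moreover, $\log(w+z)-\log w\to0$, so the renormalized limit is the same whether it is taken along $w$ or along $w+z$. Hence the Pitman pair of $\mathscr T^zL$ coincides with that of $L$.

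Alternatively, one can verify \eqref{eq:communicate with geo Pitman transform LG} directly. Expanding the definitions gives $L_2\odot L_1(k)=L_1(k)+\log\mathscr A_k$. On the transformed paths this becomes
$$
L_1(k)+\Phi_k+\log\frac{\mathscr A_k}{1+z\mathscr A_k}=L_1(k)+\log\mathscr A_k,
$$
which is unchanged. The $\otimes$ identity then follows from conservation \eqref{eq:LG conservation}, provided one checks that $L_1\otimes L_2+L_2\odot L_1=L_1+L_2$.
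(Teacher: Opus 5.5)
Your proposal is correct and follows the paper's proof essentially step by step: items (1) and (3) are read off from the definition, the capping identity comes from the same telescoping of $\frac{\mathscr A_j}{1+z\mathscr A_j}-\frac{\mathscr A_{j-1}}{1+z\mathscr A_{j-1}}$, and the semigroup property comes from the same first-coordinate computation followed by conservation. For the Pitman identities you use the same limiting argument; your observation that $\log(w+z)-\log w\to 0$ spells out a step the paper leaves implicit, and your direct verification is a valid alternative check.
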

\begin{proof}
    The properties $[\mathscr{T}^zL]_1(0)=[\mathscr{T}^zL]_2(0)=0$ and \eqref{eq:LG conservation} follows from definition. 

Let us prove \eqref{eq:LG capping}. For $0\leq i\leq n$, denote $\mathscr{A}_i=\mathscr{A}_{i}(L_2,L_1)$ for short. Then  
    \begin{align*}
        \mathscr{A}_{n}([\mathscr{T}^zL]_2,
 [\mathscr{T}^zL]_1) &=\sum_{i=1}^{n}e^
 {L_2(i)-\log(1+z\mathscr{A}_i)-L_1(i-1)-\log(1+z\mathscr{A}_{i-1})}\\
&=\sum_{i=1}^{n}\frac{\mathscr{A}_{i}-\mathscr{A}_{i-1}}{(1+z\mathscr{A}_i)(1+z\mathscr{A}_{i-1})}=\sum_{i=1}^{n}\frac{\mathscr{A}_{i}}{1+z\mathscr{A}_i}-\frac{\mathscr{A}_{i-1}}{1+z\mathscr{A}_{i-1}}  \\
 &=\frac{\mathscr{A}_{n}}{1+z\mathscr{A}_n}. 
    \end{align*} 
    
To prove \eqref{eq:LG semigroup}, for the first coordinate we have for   $n\geq0$,
\begin{align*}
 [\mathscr{T}^{z'}\mathscr{T}^zL]_1
    (n)&= [\mathscr{T}^zL]_1(n)+\log\left(1+z'\mathscr{A}_{n}([\mathscr{T}^zL]_2,
 [\mathscr{T}^zL]_1)\right)\\
 &= L_1(n)+\log(1+z\mathscr{A}_{n}(L_2,L_1))+
 \log\left(1+z'\frac{\mathscr{A}_{n}(L_2,L_1)}{1+z\mathscr{A}_{n}(L_2,L_1)}\right)\\
 &= L_1(n)+\log(1+(z+z')\mathscr{A}_{n}(L_2,L_1))
 =[\mathscr{T}^{z+z'}L]_1,
\end{align*}
where we use \eqref{eq:LG capping} in the second equality. 
By \eqref{eq:LG conservation}, we also have $[\mathscr{T}^{z'}\mathscr{T}^zL]_2=[\mathscr{T}^{z+z'}L]_2$.  
Finally, \eqref{eq:communicate with geo Pitman transform LG} follows by
 letting $z'\to\infty$ in \eqref{eq:LG semigroup} and using
\eqref{eq:renormalization to get geometric Pitman transform}.  
\end{proof}
 
Recall the extended exponential functional $\mathscr{A}_{M}^{ a,\xi}(L_1,L_2)$ \eqref{eq:affine extended exponential functional LG}.
For $k\geq0$ and a path $L\in\mathscr{P}$, define $L^{(k)}\in\mathscr{P}$ to be the translated path $
  L ^{(k)}(n)=L (k+n)-L (k) 
$
for $n\geq 0. $

\begin{lemma}
\label{lem:cutting LG}
Let $N\geq 0$, $a\geq0$, and $\xi>0$.  For every
$L_1,L_2\in\mathscr{P}$,
\be \label{eq:exponential functional developped LG}
 L_2(N)-\log\mathscr{A}_{N}^{a,\xi}(L_2,L_1)
 =-\log\left(
 a e^{-L_2(N)}
 +\xi e^{-(L_1\otimes L_2)(N)}\right). 
\ee
Moreover, for $0\leq k\leq N$, we have
\be
 \label{eq:cutting LG}
 L_2(N)-\log\mathscr{A}_{N}^{a,\xi}(L_2,L_1)
 =L_2^{(k)}(N-k)
 -\log\mathscr{A}_{N-k}^{a_k,\xi_k}
 (L_2^{(k)},L_1^{(k)}),
\ee
where
$$
 a_k
 =a e^{-L_2(k)}
 +\xi e^{-(L_1\otimes L_2)(k)},
 \qquad
 \xi_k=\xi e^{-L_1(k)}.
$$
\end{lemma}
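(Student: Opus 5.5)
The plan is to mirror the proof of Lemma \ref{lem:cutting}, with the max-plus algebra replaced by ordinary sums and products. For \eqref{eq:exponential functional developped LG}, we work with the definition of $L_1\otimes L_2$ in \eqref{eq:geometric Pitman transform}, which gives
$$
e^{-(L_1\otimes L_2)(N)}=\sum_{j=1}^N e^{-L_1(j-1)-L_2(N)+L_2(j)}=e^{-L_2(N)}\mathscr{A}_N(L_2,L_1).
$$
Multiplying by $\xi$ and adding $a e^{-L_2(N)}$ yields $e^{-L_2(N)}\mathscr{A}_N^{a,\xi}(L_2,L_1)$. Taking $-\log$ then gives \eqref{eq:exponential functional developped LG}. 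For $N=0$ the empty sum convention makes $(L_1\otimes L_2)(0)=+\infty$, so the identity still holds, with $\mathscr{A}_0=0$.

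For \eqref{eq:cutting LG}, we split the sum defining $\mathscr{A}_N(L_2,L_1)$ at index $k$:
$$
\mathscr{A}_N(L_2,L_1)=\mathscr{A}_k(L_2,L_1)+\sum_{j=k+1}^N e^{L_2(j)-L_1(j-1)}.
$$
In the tail we substitute $L_i(k+m)=L_i(k)+L_i^{(k)}(m)$. This turns the tail into $e^{L_2(k)-L_1(k)}\mathscr{A}_{N-k}(L_2^{(k)},L_1^{(k)})$. We then rewrite the first part using the identity just proved at level $k$: $a+\xi\mathscr{A}_k(L_2,L_1)=e^{L_2(k)}a_k$. Hence
$$
\mathscr{A}_N^{a,\xi}(L_2,L_1)=e^{L_2(k)}\Bigl(a_k+\xi e^{-L_1(k)}\mathscr{A}_{N-k}(L_2^{(k)},L_1^{(k)})\Bigr)=e^{L_2(k)}\mathscr{A}_{N-k}^{a_k,\xi_k}(L_2^{(k)},L_1^{(k)}).
$$
Taking logarithms and using $L_2(N)-L_2(k)=L_2^{(k)}(N-k)$ gives \eqref{eq:cutting LG}.

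There is no real obstacle here. The only care needed is bookkeeping of the indices in $\otimes$, namely checking that $j-1$ and $j$ are placed correctly, and handling the degenerate cases $k=0$ and $k=N$. For $k=0$ we get $a_0=a$, since the convention $e^{-(L_1\otimes L_2)(0)}=0$ gives $\mathscr{A}_0=0$, and $\xi_0=\xi$. For $k=N$ the tail sum is empty. Both cases are consistent with the formulas above.
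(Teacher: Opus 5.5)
Your proof is correct and follows the paper's argument: you verify \eqref{eq:exponential functional developped LG} directly from the definition of $\otimes$, then split the sum defining $\mathscr{A}_N(L_2,L_1)$ at $k$ and use the level-$k$ identity $a+\xi\mathscr{A}_k(L_2,L_1)=e^{L_2(k)}a_k$. You also treat the empty-sum conventions at $N=0$, $k=0$ and $k=N$ explicitly, which the paper leaves implicit.
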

\begin{proof}
  The   identity
\eqref{eq:exponential functional developped LG} follows from the definition of the extended exponential functional \eqref{eq:affine extended exponential functional LG} and the geometric Pitman transform \eqref{eq:geometric Pitman transform}. Hence  
$$ 
 \mathscr A_N^{a,\xi}(L_2,L_1)
 = a+\xi\mathscr A_k(L_2,L_1)
+\xi e^{L_2(k)-L_1(k)}
\mathscr A_{N-k}(L_2^{(k)},L_1^{(k)}) 
=e^{L_2(k) }\mathscr A_{N-k}^{a_k,\xi_k}(L_2^{(k)},L_1^{(k)}),
$$
where we split the sum in the exponential functional at $k$ in the first equality and use \eqref{eq:exponential functional developped LG} for $L_1^{(k)},L^{(k)}_2$ in the second equality.  
\end{proof}
 
Recall the   partition function $\mathscr{K}_{M}^{\alpha,u,v}(  a,\xi)$ in  \eqref{eq:generalized partition function LG}.

\begin{proposition}
\label{prop:decomposition LG}
Let $M\geq1$. For $0\leq k\leq M$ and
$ (\ell_1,\ell_2)\in\mathscr{P}^2$, we have
\be
\PP_{\LG,M}^{\alpha,u,v}\big((L_1,L_2)|_{[0,k]}\in\d(\ell_1,\ell_2)\big)=\dd\PP_{\LG,k}^{\alpha+v,\alpha+u}(\ell_1,\ell_2)
 \frac{\mathscr{K}_{M-k}^{\alpha,u,v}
 (e^{-(\ell_1\otimes \ell_2)(k)},e^{-\ell_1(k)})}
 {\mathscr{K}_{M}^{\alpha,u,v} }.
 \label{eq:decomposition LG}
\ee
\end{proposition}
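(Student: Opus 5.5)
We mirror the proof of Proposition \ref{prop:decomposition Geo}, replacing the cutting identity \eqref{eq:cutting geo} by its positive-temperature analogue \eqref{eq:cutting LG}. Let $\mathcal F_k=\sigma(L_1(i),L_2(i):0\leq i\leq k)$. Under $\PP_{\LG,M}^{\alpha+v,\alpha+u}$, the increments of $L_1$ and $L_2$ are i.i.d. log-gamma. Hence the shifted paths $(L_1^{(k)},L_2^{(k)})$ are independent of $\mathcal F_k$ and have law $\PP_{\LG,M-k}^{\alpha+v,\alpha+u}$. By definition \eqref{eq:Stationary measure LG}, the density of $\PP_{\LG,M}^{\alpha,u,v}$ with respect to the reference measure is
$$\frac{1}{\mathscr K_M^{\alpha,u,v}}\exp\Bigl((u+v)\bigl(L_2(M)-\log\mathscr A_M^{0,1}(L_2,L_1)\bigr)\Bigr),$$
since $\mathscr A_M=\mathscr A_M^{0,1}$.

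Next, apply \eqref{eq:cutting LG} with $N=M$, $a=0$ and $\xi=1$. The exponent then rewrites as $(u+v)\bigl(L_2^{(k)}(M-k)-\log\mathscr A_{M-k}^{a_k,\xi_k}(L_2^{(k)},L_1^{(k)})\bigr)$, where $a_k=e^{-(L_1\otimes L_2)(k)}$ and $\xi_k=e^{-L_1(k)}$. The term $ae^{-L_2(k)}$ disappears because $a=0$. Both $a_k\geq 0$ and $\xi_k>0$ are $\mathcal F_k$-measurable.

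We then compute the conditional expectation given $\mathcal F_k$. Freezing $(a_k,\xi_k)$ and integrating over the independent shifted paths gives, by the definition \eqref{eq:generalized partition function LG},
$$\EE\Bigl[e^{(u+v)(L_2(M)-\log\mathscr A_M(L_2,L_1))}\,\Big|\,\mathcal F_k\Bigr]=\mathscr K_{M-k}^{\alpha,u,v}\bigl(e^{-(L_1\otimes L_2)(k)},e^{-L_1(k)}\bigr).$$
This step uses the standard freezing lemma for independent variables, applied to the nonnegative measurable function involved.

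Finally, the $\PP_{\LG,M}^{\alpha,u,v}$-law of $(L_1,L_2)|_{[0,k]}$ is the $\PP_{\LG,k}^{\alpha+v,\alpha+u}$-law weighted by this conditional expectation divided by $\mathscr K_M^{\alpha,u,v}$. This is exactly \eqref{eq:decomposition LG}.

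The only real subtlety is integrability. The parameter domain \eqref{eq:parameter domain LG} ensures that $\mathscr K_M^{\alpha,u,v}$ is finite (Proposition \ref{prop:Stationary measure strip LG}). The integrand is nonnegative, so Tonelli's theorem justifies the conditioning. Finiteness of $\mathscr K_{M-k}^{\alpha,u,v}(a_k,\xi_k)$ then holds almost surely, and it is in fact not needed for the identity of measures.
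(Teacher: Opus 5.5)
Your proposal is correct and follows essentially the same route as the paper: condition on $\mathcal F_k$, use the Markov property of the log-gamma walks together with the cutting identity \eqref{eq:cutting LG} at $a=0$, $\xi=1$, and recognize the conditional expectation as $\mathscr K_{M-k}^{\alpha,u,v}\bigl(e^{-(L_1\otimes L_2)(k)},e^{-L_1(k)}\bigr)$. Your explicit use of the freezing lemma and of Tonelli for the nonnegative integrand only spells out what the paper leaves implicit.
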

\begin{proof} 
Let $\mathcal F_k=\sigma(L_1(i),L_2(i):0\leq i\leq k)$.
By Markovianity of the random walk,  the paths $(L_1^{(k)},L_2^{(k)})$ are independent
of $\mathcal F_k$ and have distribution
$\PP_{\LG,M-k}^{\alpha+v,\alpha+u}$. Applying
\eqref{eq:cutting LG} with $ a=0,\xi= 1 $ and $N=M$, we have 
\begin{multline*}
      \EE_{\LG,M}^{\alpha+v,\alpha+u}
\left[e^{(u+v)L_2 (M )}
\bigl(\mathscr A_{M } 
(L_2  ,L_1 )\bigr)^{-(u+v)}
\mid\mathcal F_k\right] \\
 =
\EE_{\LG,M}^{\alpha+v,\alpha+u}
\left[
e^{(u+v)L_2^{(k)}(M-k)}
\bigl(\mathscr A_{M-k}^{a_k,\xi_k}
(L_2^{(k)},L_1^{(k)})\bigr)^{-(u+v)}
\mid\mathcal F_k\right] =\mathscr K_{M-k}^{\alpha,u,v}
\left(e^{-(L_1\otimes L_2)(k)},e^{-L_1(k)}\right).
\end{multline*} 
Together with \eqref{eq:Stationary measure LG} proves 
\eqref{eq:decomposition LG}.
\end{proof}

The main goal of this section is the following asymptotics for $\mathscr{K}_{M}^{\alpha,u,v}(a,\xi)$. Recall the definition of function $\varrho=\varrho(\a,u,v)$ in \eqref{eq:rho and chi LG} and $\mathscr N^{u,v}(  a,\xi)$ in \eqref{eq:function N KPZ}  
\begin{theorem}
\label{thm:LG main thm}
Fix $a\geq0$ and $\xi>0$. Under \eqref{eq:parameter domain LG}, as
$M\to\infty$:
\begin{enumerate}

\item If $u,v>0$, then
\be 
\label{eq:asymp of K LG MC all}
 \mathscr{K}_{M}^{\alpha,u,v}(a,\xi)
 \sim{} 
 \frac{\Gamma(u)^2\Gamma(v)^2\xi^{-v}}
 {2\sqrt\pi\Gamma(u+v) \psi_1 (\a) ^{3/2}}
   \mathscr{N}^{u,0}(a,\xi) M^{-3/2}\varrho^M. 
\ee

\item If $u>0$ and $v=0$, then
\be 
\label{eq:asymp of K LG u axis all}
 \mathscr{K}_{M}^{\alpha,u,0}(a,\xi)
 \sim 
 \frac{\Gamma(u)}{\sqrt{\pi\psi_1(\alpha) }}
 \mathscr{N}^{u,0}(a,\xi)
 M^{-1/2}\varrho^M.
\ee

\item If $u=0$ and $v>0$, then
\be
\label{eq:asymp of K LG v axis all}
 \mathscr{K}_{M}^{\alpha,0,v}(a,\xi)
 \sim 
 \frac{\Gamma(v)}{\sqrt{\pi\psi_1(\alpha) }}
 \mathscr{N}^{v,0}(0,\xi) M^{-1/2}\varrho^M.
\ee

\item If $v<\min\{0,u\}$, then
\be
\label{eq:asymp of K HD LG all}
 \mathscr{K}_{M}^{\alpha,u,v}(a,\xi)
 \sim 
 \frac{\Gamma(u-v)}{\Gamma(-2v)}
 \mathscr{N}^{u,v}(a,\xi)\varrho^M.
\ee

\item If $u<\min\{0,v\}$, then
\be
\label{eq:asymp of K LD LG all}
 \mathscr{K}_{M}^{\alpha,u,v}(a,\xi)
 \sim 
 \frac{\Gamma(v-u)}{\Gamma(-2u)}
 \mathscr{N}^{v,u}(0,\xi)\varrho^M.
\ee

\item If $u=v<0$, then
\be
\label{eq:asymp of K CE LG all} 
 \mathscr{K}_{M}^{\alpha,u,u}(a,\xi)
 \sim{}
  \frac{\psi_0(\alpha-u)-\psi_0(\alpha+u)}
 {\Gamma(-2u)} \xi^{-2u} M\varrho^M. 
\ee

\item If $u+v=0$, in particular when $u=v=0$, then 
$  \mathscr{K}_{M}^{\alpha,u,v}(a,\xi)=1 $ for all $M\geq 1$. 
\end{enumerate}
\end{theorem}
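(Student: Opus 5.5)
The plan is to transpose the proof of Theorem \ref{thm:Geo main thm} to positive temperature. The boundary dominated cases (iv)--(vi) will be handled probabilistically, the fan region by integral formulas. Case (vii) is immediate: if $u+v=0$ the integrand in \eqref{eq:generalized partition function LG} equals $1$.

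\textbf{High density ($v<\min\{0,u\}$).} I first change the reference measure of $L_2$ from $\PP_{\LG}^{\a+u}$ to $\PP_{\LG}^{\a-v}$. Since $e^{-\theta \log x}$ tilts $\Gammainv(\theta_0)$ to $\Gammainv(\theta_0+\theta)$ with ratio $\Gamma(\theta_0+\theta)/\Gamma(\theta_0)$, the factor $e^{(u+v)L_2(M)}$ is absorbed and produces exactly $\varrho^M=\big(\Gamma(\a+v)\Gamma(\a-v)/(\Gamma(\a+u)\Gamma(\a+v))\big)^M$. It remains to evaluate $\EE^{\a+v,\a-v}_{\LG}\big[(a+\xi\mathscr A_M(L_2,L_1))^{-(u+v)}\big]$.

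I will need a log-gamma Dufresne identity, the analogue of Lemma \ref{lem:geo Dufresne}: for $A\sim\PP^{\a+v}_{\LG}$, $S\sim\PP^{\a-v}_{\LG}$ and $Y\sim\Gammainv(-2v)$, we have $(\mathscr T^{1/Y}(S,A),Y)\overset{d}{=}((A,S),\mathscr A_\infty(S,A))$. This follows from the log-gamma Burke/queue identity of \cite{OConnellYor,BarraquandCorwin} together with \eqref{eq:LG capping}, because $\mathscr A_n(A,S)\to\infty$ when $v<0$. In particular $\mathscr A_M(S,A)\uparrow\mathscr A_\infty\sim\Gammainv(-2v)$.

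Monotone or dominated convergence (according to the sign of $u+v$) then gives the limit $\EE[(a+\xi Y)^{-(u+v)}]$ with $Y\sim\Gammainv(-2v)$. Tilting $\Gammainv(-2v)$ by $Y^{-(u+v)}$ into $\Gammainv(u-v)$ and inverting $Y$ gives
\[
\EE[(a+\xi Y)^{-(u+v)}]=\frac{\Gamma(u-v)}{\Gamma(-2v)}\,\mathscr N^{u,v}(a,\xi),
\]
which is \eqref{eq:asymp of K HD LG all}.

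\textbf{Low density ($u<\min\{0,v\}$).} Following Proposition \ref{prop:LD Geo}, I change the measure of $L_1$ to $\PP^{\a-u}_{\LG}$ and apply the time reversal of Lemma \ref{lem:time reversal Geo}, which holds verbatim for $\mathscr A$. This rewrites $e^{(u+v)(L_2-L_1)(M)}(a+\xi\mathscr A_M(L_2,L_1))^{-(u+v)}$ as $\big(ae^{(L_1-L_2)(M)}+\xi\mathscr A_M(L_1,L_2)\big)^{-(u+v)}$ under the reversed walks. The first term tends to $0$, and Dufresne gives $\mathscr A_\infty(L_1,L_2)\sim\Gammainv(-2u)$, yielding \eqref{eq:asymp of K LD LG all}. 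Domination again splits by the sign of $u+v$, using bounds of $\xi\mathscr A_M$ below and above by $\xi\mathscr A_\infty+a\sup_M e^{(L_1-L_2)(M)}$; the sup has finite moments because the drift is negative.

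\textbf{Coexistence line ($u=v<0$).} I will prove an analogue of Lemma \ref{lem:Geo K recursion} by conditioning on $\mathcal F_M$ via \eqref{eq:cutting LG}. The last increment enters through $\EE_{\omega\sim\Gammainv(\a+u)}\big[\omega^{2u}(a_M/\omega+\xi_M)^{-2u}\big]$, which is no longer a two-term expression. So instead of an exact recursion I will aim at $\varrho^{-M-1}\mathscr K_{M+1}-\varrho^{-M}\mathscr K_M\to c$, with the increment expressed through a low density partition function such as $\mathscr K_M^{\a,u,v'}$ after a measure change. The constant $\psi_0(\a-u)-\psi_0(\a+u)$ suggests differentiating in the parameter: $\partial_v$ of the HD constant $\Gamma(u-v)/\Gamma(-2v)$ at $v=u$, or the mean drift of $L_1-L_2$. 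Stolz--Ces\`aro then concludes. This is the main obstacle I expect. If no clean recursion appears, my fallback is to write $\mathscr K_M$ as $\varrho^M\EE[(\cdots)^{-2u}]$ under $\PP^{\a-u,\a-u}$ (zero drift, so neither Dufresne limit applies) and use a ladder-time decomposition.

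\textbf{Fan region ($u+v>0$).} I mirror Lemma \ref{lem:K and q geo}. Via the two-layer Whittaker process of \cite{Barraquand}, $\mathscr K_M$ equals an integral against $e^{-u\ell}$ of a function $Q_M^v(\ell)$ having a contour-integral representation with Gamma kernels $\Gamma(v\pm z)\Gamma(\a\pm z)^M$. The steps are:
\begin{enumerate}
\item Analytically continue in $v$ across $0$, picking up a residue at $z=\pm v$ as in Proposition \ref{prop:general formula for q_M^s for all phases}.
\item Apply a Laplace/steepest-descent analysis at $z=0$ with scaling $z=ix/\sqrt M$. This gives $M^{-3/2}$ decay for $v>0$ and $M^{-1/2}$ at $v=0$, with $\psi_1(\a)$ arising from the second derivative of $\log\Gamma(\a\pm z)$; for $v<0$ the residue dominates.
\item Obtain uniform bounds for dominated convergence in $\ell$.
\end{enumerate}
The case $u\le 0$ is handled by the time-reversal symmetry $u\leftrightarrow v$.
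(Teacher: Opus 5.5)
Your treatment of (iv), (v), (vii) and of the fan region follows the paper. That is: change of reference measure plus the log-gamma Dufresne identity (Burke property and \eqref{eq:LG capping}), time reversal in the low density phase, and the contour integral for $Q_M^v$ with $z=ix/\sqrt M$ and the $u\leftrightarrow v$ reversal. Three side remarks:
\begin{itemize}
\item Your low-density domination via $a\sup_M e^{(L_1-L_2)(M)}+\xi\mathscr A_\infty(L_1,L_2)$ works. That supremum has moments of all orders below the Cram\'er exponent $-2u$, and you only need order $-(u+v)<-2u$.
\item For the theorem you only need the contour analysis for $u,v\geq 0$. Cases (iv) and (v) are already covered probabilistically whatever the sign of $u+v$.
\item In (iii) with $a>0$, the reversal symmetry only handles the $a=0$ part. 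You must still show the $a$-dependent remainder is $O(M^{-3/2})$.
\end{itemize}

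The genuine gap is the coexistence line (vi), which you leave open. First a slip: by \eqref{eq:cutting LG}, the conditional expectation given $\mathcal F_M$ is $\phi(a_M,\xi_M)$ with $\phi(x,y)=\EE[(x/\omega+y)^{-2u}]$ and $\omega\sim\Gammainv(\alpha+u)$. There is no extra factor $\omega^{2u}$.

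An exact recursion does exist. The missing idea is to subtract the $y=0$ term. Since $-2u>0$, $\phi(a_M,0)=a_M^{-2u}\EE[\omega^{2u}]=\varrho\,a_M^{-2u}$, and $\EE[a_M^{-2u}]=\mathscr K_M^{\alpha,u,u}(a,\xi)$ by \eqref{eq:exponential functional developped LG}. Hence
\[
\mathscr K_{M+1}^{\alpha,u,u}(a,\xi)-\varrho\,\mathscr K_{M}^{\alpha,u,u}(a,\xi)=\EE_{\LG,M}^{\alpha+u,\alpha+u}\bigl[\mathscr M(a_M,\xi_M)\bigr],\qquad \mathscr M(x,y)=\EE\bigl[(y+x/\omega)^{-2u}-(x/\omega)^{-2u}\bigr].
\]
The remainder is not a partition function $\mathscr K_M^{\alpha,u,v'}$ as in the geometric case, but it is of low density type:
\begin{itemize}
\item $\mathscr M$ is homogeneous of degree $-2u$, so $\mathscr M(a_M,\xi_M)=e^{2uL_1(M)}\mathscr M(e^{L_1(M)}a_M,\xi)$.
\item Moving $L_1$ to $\PP_{\LG}^{\alpha-u}$ turns $e^{2uL_1(M)}$ into $\varrho^M$.
\item Time reversal turns $e^{L_1(M)}a_M$ into $ae^{(L_1-L_2)(M)}+\xi\mathscr A_M(L_1,L_2)$, which converges to $\xi G$ with $G\sim\Gammainv(-2u)$.
\end{itemize}
With the bound $\mathscr M(x,\xi)\leq C(1+x^{(-2u-1)_+})$ and your domination, you get $\varrho^{-M-1}\mathscr K_{M+1}-\varrho^{-M}\mathscr K_M\to\varrho^{-1}\EE[\mathscr M(\xi G,\xi)]$. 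This limit is the stated constant: with $B=G/\omega$, it is the limit as $s\uparrow -2u$ of $\EE[(1+B)^s]-\EE[B^s]$. These are two Beta-type moments, each with a simple pole at $s=-2u$, and their difference produces $\psi_0(\alpha-u)-\psi_0(\alpha+u)$. Stolz--Ces\`aro then concludes.

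Your heuristics do not replace this argument:
\begin{itemize}
\item The HD constant $\Gamma(u-v)/\Gamma(-2v)$ is singular at $v=u$, so it cannot be differentiated there. The digamma difference is rather the difference of the $v$-derivatives of the HD and LD exponential rates at $v=u$. Making that rigorous would need an analytic continuation across the shock region, which is not available here.
\item Under the natural measure change the walk is not driftless. Dufresne applies, but $\EE[\mathscr A_\infty^{-2u}]=\infty$ because the exponent is exactly critical. This is precisely why the factor $M$ appears, and why you must control increments rather than values.
\end{itemize}
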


Theorem \ref{thm:LG main thm} is proven in Section  \ref{subsec:proof of the theorems LG}.

\subsection{The boundary dominated region}

\subsubsection{The high density phase}
Following the same approach as in the GLPP case,  we prove in this section  a process level Dufresne's identity for a pair of log-gamma random walks and use it to deduce the asymptotics of the partition function in the   high density phase.

\begin{lemma}
\label{lem:LG Dufresne}
Let $0<q<p$.  Let $A,S,Y$ be independent  with
$$
 A\sim\PP_{\LG}^{q},\qquad
 S\sim\PP_{\LG}^{p},\qquad
 Y\sim\Gav(p-q).
$$
Then
\be
 \left(\mathscr{T}^{\frac{1}{Y}}(S,A),\frac{1}{Y}\right)
 \overset{\mathrm d}=
 \left((A,S),\mathscr{A}_{\infty}(S,A)^{-1}\right).
 \label{eq:joint dufresne LG}
\ee
As a consequence,
\be
 \label{eq:process Dufresne LG}
 \left(\mathscr{A}_{n}(S,A)^{-1}\right)_{n\geq1}
 \overset{\mathrm d}=
 \left(\frac{1}{Y}+\mathscr{A}_{n}(A,S)^{-1}\right)_{n\geq1},
\ee
and
\be
 \label{eq:LG Dufresne}
 \mathscr{A}_{M}(S,A)\uparrow
 \mathscr{A}_{\infty}(S,A),
 \qquad
 \mathscr{A}_{\infty}(S,A)^{-1}\sim\Ga(p-q).
\ee
\end{lemma}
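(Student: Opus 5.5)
The proof will copy the structure of Lemma \ref{lem:geo Dufresne}. The one new ingredient is the positive-temperature analogue of \eqref{eq:DMO}: for independent $A\sim\PP_{\LG}^{q}$, $S\sim\PP_{\LG}^{p}$ and $Y\sim\Gav(p-q)$, we need
\be\label{eq:LG DMO plan}
\mathscr{T}^{1/Y}(S,A)\overset{\mathrm d}{=}(A,S).
\ee
This is the log-gamma (geometric Burke) version of the queueing identity of \cite{DraiefMairesseOConnell}. Once \eqref{eq:LG DMO plan} is available, the rest of the lemma follows deterministically from Proposition \ref{prop:T properties LG}.

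To prove \eqref{eq:LG DMO plan}, I would first interpret $\mathscr{T}^{1/Y}(S,A)$ as a two-row log-gamma polymer. Put a boundary weight $Y$ at the corner and weights $e^{A(i)-A(i-1)}$, $e^{S(i)-S(i-1)}$ on the two rows. This is the polymer analogue of Lemma \ref{lem:two row LPP time and two layer HY process}, now with $\mu=\delta$ at the corner and the geometric variables replaced by inverse-gamma ones.

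Unwinding the definitions, $[\mathscr{T}^{1/Y}(S,A)]_1(n)=S(n)+\log(1+\mathscr{A}_n(A,S)/Y)$. The increments of this process are ratios of consecutive partition functions in a stationary log-gamma polymer. The Burke property for the log-gamma polymer (Seppäläinen; see also \cite{OConnellYor,CorwinOConnellSeppalainenZygouras}) then does the work. Processing one column at a time, the triple (incoming boundary weight $\sim\Gav(p-q)$, bulk weight) is mapped to (outgoing weights), and the outgoing weights are again independent with the swapped laws. An induction on $n$ then gives \eqref{eq:LG DMO plan} jointly for all $n$, and this is where the condition $q<p$ and the parameter $p-q$ of $Y$ enter.

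I expect this single-step Burke verification to be the main obstacle. It can be checked directly as a change of variables on $(\Gav(p-q),\Gav(q),\Gav(p))$ densities, or simply cited if a ready-made reference exists.

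With \eqref{eq:LG DMO plan} in hand, the remaining steps are mechanical. Applying the functional $L\mapsto(\mathscr{A}_n(L_2,L_1)^{-1})_n$ to both sides of \eqref{eq:LG DMO plan} and using \eqref{eq:LG capping} gives \eqref{eq:process Dufresne LG}.

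For the limit, note that $A(n)-S(n)$ has drift $\EE[-\log \Gav(q)]$-type difference, equal to $\psi_0(p)-\psi_0(q)>0$. Hence $\mathscr{A}_n(A,S)\to\infty$ almost surely by the strong law. Consequently the capped functional $\mathscr{A}_\infty$ of the transformed pair equals $Y$ almost surely.

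Applying the map $L\mapsto(L,\mathscr{A}_\infty(L_2,L_1)^{-1})$ to \eqref{eq:LG DMO plan} then yields \eqref{eq:joint dufresne LG}. Monotonicity of $\mathscr{A}_M(S,A)$ is clear because its summands are positive. Finally, the second marginal of \eqref{eq:joint dufresne LG} gives $\mathscr{A}_\infty(S,A)^{-1}\overset{d}{=}1/Y\sim\Ga(p-q)$, which is \eqref{eq:LG Dufresne}.
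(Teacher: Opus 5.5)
Your proposal is correct and follows the paper's proof essentially step by step. The paper takes $\mathscr{T}^{1/Y}(S,A)\overset{\mathrm d}{=}(A,S)$ directly from \cite[Theorem 3.3]{Seppalainen}, then derives \eqref{eq:process Dufresne LG} via \eqref{eq:LG capping}, the almost sure identification of the capped functional with $Y$ via the strong law, and \eqref{eq:joint dufresne LG} exactly as you do; your column-by-column induction, whose one-step beta--gamma change of variables preserves the independent $\Gav(q)$, $\Gav(p)$, $\Gav(p-q)$ laws, is a valid self-contained substitute for that citation.
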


\begin{proof}
\cite[Theorem 3.3]{Seppalainen} gives the identity in law between processes:
\begin{equation}\label{eq:Sep}
   \mathscr{T}  ^{\frac{1}{Y}}(S,A)
 \overset{\mathrm d}=(A,S).
\end{equation}
Taking deterministic functional $ L\mapsto (\mathscr A_n( L_2,L_1)^{-1})_{n\geq0}$ on both sides of \eqref{eq:Sep}, \eqref{eq:process Dufresne LG} follows by the property   \eqref{eq:LG capping}.
Since $q<p$,  $\mathscr{A}_{n}(A,S)\to+\infty$ by strong law of large numbers,  hence $\PP_{\LG}^{q}\times \PP_{\LG}^{p} $-almost surely,
$$ \mathscr A_\infty
 \bigl([\mathscr{T}^{\frac{1}{Y}}(S,A)]_2,[\mathscr{T}^\frac{1}{Y}(S,A)]_1\bigr)^{-1}=\frac{1}{Y}. $$
Taking deterministic functional $ L\mapsto (L, \mathscr A_{\infty}( L_2,L_1)^{-1})$ on both sides of \eqref{eq:Sep}, \eqref{eq:joint dufresne LG} follows by the property of \eqref{eq:LG capping}.
The second coordinate marginal of equality in distribution \eqref{eq:joint dufresne LG} gives 
\eqref{eq:LG Dufresne}.    
\end{proof}  

\begin{remark}
\label{rmk:relation with BW dufresne}
Following the discussions in \cite[Section 1.2.3]{BarraquandCorwin} and \cite[Remark 2.3]{BrycWesolowskiLG},     
we may define the following quantities for $k\geq1$ under the same assumptions of Lemma \ref{lem:LG Dufresne},  
$$
\zeta(k)= \frac{e^{S(k+1)-S(k)}}{e^{A(k)-A(k-1)}} , \quad 
r(k) =e^{S(1)}\prod_{j=1}^{k-1}\zeta(j)
=\mathscr{A}_{k}(S,A)-\mathscr{A}_{k-1}(S,A),\quad 
R(k) =1 +\sum_{j=1}^{k}\prod_{i=1}^{j}\zeta(i).
$$
Then the random variables $\zeta(k)$  are i.i.d. with Beta prime distribution  $\Bt'(q,p)$ (i.e. distribution of $G_p/G_q$ for independent $G_p\sim\Ga(p)$ and $G_q\sim\Ga(q)$). The process $(r(k))_{k\geq1}$ is a  $\Bt(p,q)$ multiplicative random walk starting from an independent $e^{S(1)}\sim\Gav(p)$, and  $(R(k))_{k\geq1}$ is a Markov process known as perpetuities and satisfies what's called Kesten's recursion \cite{Kesten}. 
The process $R(k)$ is related with our exponential functional by
$$
\mathscr{A}_k(S,A)=e^{S(1)}R(k-1),
$$
hence in view of  Lukacs' theorem, the convergence in the Dufresne's relation \eqref{eq:LG Dufresne} is   equivalent to the perpetuities convergence   $R(k)\uparrow
 R(\infty)\sim 1+\Bt'(p-q,q) $  considered in \cite[Section 2]{BrycWesolowskiLG}.
\end{remark}
  
\begin{proposition}
\label{prop:HD LG}
Assume \eqref{eq:parameter domain LG} and $v<\min\{0,u\}$. Then for all
  $ a\geq0$ and $\xi>0$, we have 
\be
 \label{eq:asymp of K HD LG}
\mathscr{K}_{M}^{\alpha,u,v}(a,\xi)
 \sim 
 \frac{\Gamma(u-v)}{\Gamma(-2v)}
 \mathscr{N}^{u,v}(a,\xi)\left(\frac{\Gamma(\a-v)}{\Gamma(\a+u)}\right)^{M}.
\ee
\end{proposition}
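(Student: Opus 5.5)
Following the proof of Proposition \ref{prop:HD Geo}, I will change the reference measure of $L_2$ from $\PP_{\LG,M}^{\a+u}$ to $\PP_{\LG,M}^{\a-v}$. This is allowed since $\a-v>0$. For a single increment $X=L_2(i)-L_2(i-1)$ with $e^{X}\sim\Gav(\theta)$, the density of $X$ is proportional to $e^{-\theta x}e^{-e^{-x}}$. Hence
$$\EE^{\a+u}\bigl[e^{(u+v)X}f(X)\bigr]=\frac{\Gamma(\a-v)}{\Gamma(\a+u)}\,\EE^{\a-v}[f(X)].$$
Taking the product over the $M$ increments gives
$$\mathscr K_M^{\a,u,v}(a,\xi)=\Bigl(\frac{\Gamma(\a-v)}{\Gamma(\a+u)}\Bigr)^M\EE^{\a+v,\a-v}_{\LG,M}\Bigl[\bigl(a+\xi\mathscr A_M(L_2,L_1)\bigr)^{-(u+v)}\Bigr].$$

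Next I apply Lemma \ref{lem:LG Dufresne} with $A=L_1$, $S=L_2$, $q=\a+v$ and $p=\a-v$. Because $v<0$ we have $q<p$. The lemma gives $\mathscr A_M(L_2,L_1)\uparrow\mathscr A_\infty(L_2,L_1)$ almost surely, with $\mathscr A_\infty(L_2,L_1)^{-1}=:G\sim\Ga(-2v)$.

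To pass to the limit I split on the sign of $u+v$. If $u+v\ge0$, the integrand decreases monotonically in $M$. It is bounded by $(\xi\mathscr A_1)^{-(u+v)}$, and this bound is integrable because $\mathscr A_1(L_2,L_1)=e^{L_2(1)}$ has inverse-gamma law with parameter $\a-v$, so its negative moments are finite. Dominated convergence therefore applies. If $u+v<0$, the integrand increases, and monotone convergence applies directly. In that case I still need the limit to be finite. It equals $\EE[(a+\xi/G)^{-(u+v)}]$, and since $-(u+v)>0$ this requires $\EE[G^{u+v}]<\infty$, that is $-2v+u+v>0$, i.e.\ $u>v$. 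This holds by assumption.

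It remains to identify the limit constant. With $G\sim\Ga(-2v)$ the density is $g^{-2v-1}e^{-g}/\Gamma(-2v)$, so
$$\EE\bigl[(a+\xi/G)^{-(u+v)}\bigr]=\frac{1}{\Gamma(-2v)}\int_0^\infty(ag+\xi)^{-(u+v)}g^{u-v-1}e^{-g}\,\dd g.$$
The integral is exactly $\Gamma(u-v)\,\EE[(aW+\xi)^{-(u+v)}]$ with $W\sim\Ga(u-v)$. Writing $W=1/Y$ with $Y\sim\Gav(u-v)$, this equals $\Gamma(u-v)\mathscr N^{u,v}(a,\xi)$, which is the constant claimed in the proposition.

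The only delicate point is the domination and finiteness in the two sign cases, which is settled by the moment conditions checked above.
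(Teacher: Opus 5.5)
Your proof is correct and follows the paper's argument essentially step for step. You use the same change of reference measure for $L_2$ to $\PP_{\LG,M}^{\a-v}$, the same application of Lemma \ref{lem:LG Dufresne} with $p=\a-v$, $q=\a+v$, monotone convergence for $u+v<0$ and dominated convergence for $u+v\geq 0$, and the same evaluation of the inverse-gamma moment. Your explicit checks go slightly beyond the paper, which leaves them implicit: the dominating function $(\xi e^{L_2(1)})^{-(u+v)}$, and the finiteness condition $u>v$ in the monotone case.
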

\begin{proof}
    Since $\a-v>0$, we can change the reference measure of $L_2$ from $\PP_{\LG,M}^{\a+u}$ to $\PP_{\LG,M}^{\a-v}$ and obtain  
    $$
    \mathscr{K}_{M}^{\alpha,u,v}(a,\xi)
 =\left(\frac{\Gamma(\a-v)}{\Gamma(\a+u)}\right)^{M}\EE_{\LG,M}^{\alpha+v,\alpha-v}
 \left[ 
 \big(a+\xi\mathscr{A}_{M}(L_2,L_1)\big)^{-(u+v)}\right].
    $$
    Apply Lemma \ref{lem:LG Dufresne} with $A=L_1, S=L_2, p=\a-v$ and $ q=\a+v$, we have 
    $$\mathscr{A}_{M}(L_2,L_1)\uparrow
   \mathscr{A}_{\infty}(L_2,L_1) \sim\Gav(-2v).$$
   Using monotone convergence when $u+v<0$ and dominated convergence when $u+v\geq 0$, we obtain \eqref{eq:asymp of K HD LG} by noticing for $v<\min\{0,u\}$ and $G\sim\Gammainv(-2v)$, 
$$
   \EE\left[ 
 \big(a+\xi G\big)^{-(u+v)}\right]=\frac{\Gamma(u-v)}{\Gamma(-2v)}
\mathscr{N}^{u,v}(a,\xi).
 $$
\end{proof}

\subsubsection{The low density phase}
As for the GLPP case, we prove in this section  a time reversal lemma  and use it to deduce the asymptotics of 
$\mathscr{K}_{M}^{\alpha,u,v}(a,\xi)$ in the the low density phase.
\begin{lemma}
\label{lem:time reversal LG}
Consider two independent random walks $A$ and $S$ in $\mathscr{P}$. Then  for all $M\geq 0$, 
\be\label{eq:time reversal LG}
 \bigl(\mathscr{A}_{M}(A,S),A(M)-S(M)\bigr)
 \overset{\mathrm d}=
 \bigl(e^{A(M)-S(M)}\mathscr{A}_{M}(S,A),
       A(M)-S(M)\bigr).
\ee
\end{lemma}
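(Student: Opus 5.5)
We follow the proof of Lemma \ref{lem:time reversal Geo} verbatim, with the max-plus functional replaced by its positive-temperature version. Fix $M\geq 1$; the case $M=0$ is trivial since both sides equal $(0,0)$. Introduce the time-reversed paths
$$\overleftarrow A(i)=A(M)-A(M-i),\qquad \overleftarrow S(i)=S(M)-S(M-i),\qquad 0\leq i\leq M.$$
Since $A$ and $S$ are independent random walks started at $0$ with i.i.d.\ increments, reversing the order of the increments preserves the joint law. Hence
$$\bigl(A(i),S(i)\bigr)_{0\leq i\leq M}\overset{\mathrm d}{=}\bigl(\overleftarrow A(i),\overleftarrow S(i)\bigr)_{0\leq i\leq M}.$$
We also note that $\overleftarrow A(M)-\overleftarrow S(M)=A(M)-S(M)$.

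The only computation is a deterministic identity for the functional \eqref{eq:exponential functional LG}. For $1\leq j\leq M$ we have
$$\overleftarrow A(j)-\overleftarrow S(j-1)=A(M)-S(M)+S(M-j+1)-A(M-j).$$
Substituting $j'=M-j+1$ and summing the exponentials over $j$ gives
$$\mathscr{A}_M(\overleftarrow A,\overleftarrow S)=e^{A(M)-S(M)}\sum_{j'=1}^{M}e^{S(j')-A(j'-1)}=e^{A(M)-S(M)}\mathscr{A}_M(S,A).$$
This is the exact multiplicative analogue of the additive identity used in the geometric case.

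Now apply the deterministic measurable map $(L_1,L_2)\mapsto\bigl(\mathscr{A}_M(L_1,L_2),L_1(M)-L_2(M)\bigr)$ to both sides of the equality in law. On the original pair it gives the left-hand side of \eqref{eq:time reversal LG}. On the reversed pair it gives $\bigl(e^{A(M)-S(M)}\mathscr{A}_M(S,A),A(M)-S(M)\bigr)$, which is the right-hand side.

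None of these steps is a real obstacle. The only care needed is the index bookkeeping in the reindexing $j'=M-j+1$, in particular the shift between $j$ and $j-1$ in the definition of $\mathscr{A}$, and this is a direct check.
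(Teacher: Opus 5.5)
Your proposal is correct and follows the paper's proof: it uses the same time-reversed paths, the same deterministic identity $\mathscr{A}_M(\overleftarrow A,\overleftarrow S)=e^{A(M)-S(M)}\mathscr{A}_M(S,A)$, and the invariance of the joint increment law under reversal. Your explicit reindexing $j'=M-j+1$ and the separate treatment of the trivial case $M=0$ fill in details that the paper leaves implicit.
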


\begin{proof}
Let $M\geq 1$. For $0\leq i\leq M$, define
$$\overleftarrow A(i)=A(M)-A(M-i),\quad \overleftarrow S(i)=S(M)-S(M-i).$$ 
Then 
$$
 \mathscr{A}_M(\overleftarrow A,\overleftarrow S)
 =e^{A(M)-S(M)}\mathscr{A}_M(S,A).
$$
Since $A$ and $S$ are random walks in $\mathcal{P}$, they have  i.i.d. increments and start from $0$, hence 
$$\left(  A(i), S(i)\right)_{0\leq i\leq M}
\overset{\mathrm d}{=}
\left( \overleftarrow A(i), \overleftarrow S(i)\right)_{0\leq i\leq M}.
$$
In particular, 
$$\left(\mathscr{A}_M(A,S),A(M)-S(M)\right)
\overset{\mathrm d}{=}
\left(\mathscr{A}_M( \overleftarrow A,\overleftarrow S),\overleftarrow A(M)-\overleftarrow S(M)\right),
$$
which is \eqref{eq:time reversal LG}. 
\end{proof}


\begin{proposition}
\label{prop:LD LG}
Assume \eqref{eq:parameter domain LG} and $u<\min\{0,v\} $. Then for all
  $ a\geq0$ and $\xi>0$, we have
\be
 \label{eq:asymp of K LD LG}
 \mathscr{K}_{M}^{\alpha,u,v}(a,\xi)
 \sim 
 \frac{\Gamma(v-u)}{\Gamma(-2u)}
 \mathscr{N}^{v,u}(0,\xi)\left(\frac{\Gamma(\a-u)}{\Gamma(\a+v)}\right)^M.
\ee
\end{proposition}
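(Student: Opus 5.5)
The plan mirrors the proof of Proposition \ref{prop:LD Geo}. First, since $\alpha-u>0$, I change the reference measure of $L_1$ from $\PP_{\LG,M}^{\alpha+v}$ to $\PP_{\LG,M}^{\alpha-u}$. Each increment of $L_1$ carries the density ratio
\[
\frac{\Gamma(\alpha-u)}{\Gamma(\alpha+v)}\,e^{(u+v)(L_1(i)-L_1(i-1))},
\]
so
\[
\mathscr K_M^{\alpha,u,v}(a,\xi)=\Big(\frac{\Gamma(\alpha-u)}{\Gamma(\alpha+v)}\Big)^M\,
\EE_{\LG,M}^{\alpha-u,\alpha+u}\Big[e^{(u+v)(L_2(M)-L_1(M))}\big(a+\xi\mathscr A_M(L_2,L_1)\big)^{-(u+v)}\Big].
\]
I then rewrite the integrand as
\[
\big(a e^{L_1(M)-L_2(M)}+\xi e^{L_1(M)-L_2(M)}\mathscr A_M(L_2,L_1)\big)^{-(u+v)}.
\]

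Next I apply Lemma \ref{lem:time reversal LG} with $A=L_1\sim\PP_{\LG}^{\alpha-u}$ and $S=L_2\sim\PP_{\LG}^{\alpha+u}$. It replaces the pair $\big(e^{L_1(M)-L_2(M)}\mathscr A_M(L_2,L_1),\,L_1(M)-L_2(M)\big)$ by $\big(\mathscr A_M(L_1,L_2),\,L_1(M)-L_2(M)\big)$ in law. The expectation therefore becomes
\[
\EE\Big[\big(a e^{L_1(M)-L_2(M)}+\xi\mathscr A_M(L_1,L_2)\big)^{-(u+v)}\Big].
\]
Since $u<0$, we have $\alpha+u<\alpha-u$, so $L_1(M)-L_2(M)\to-\infty$ almost surely by the strong law of large numbers (log-inverse-gamma increments have mean $-\psi_0(\theta)$, which is decreasing in $\theta$). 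Hence $ae^{L_1(M)-L_2(M)}\to0$. Lemma \ref{lem:LG Dufresne}, with $S=L_1$ of parameter $p=\alpha-u$, $A=L_2$ of parameter $q=\alpha+u$, and $p-q=-2u$, gives $\mathscr A_M(L_1,L_2)\uparrow\mathscr A_\infty(L_1,L_2)\sim\Gav(-2u)$. So the quantity inside the bracket converges almost surely to $\xi G$ with $G\sim\Gav(-2u)$.

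The remaining step, which I expect to be the only delicate one, is justifying the passage to the limit. I split on the sign of $u+v$.

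\textbf{Case $u+v>0$.} The integrand is bounded above by $(\xi\mathscr A_M(L_1,L_2))^{-(u+v)}\le(\xi\mathscr A_1(L_1,L_2))^{-(u+v)}=\xi^{-(u+v)}e^{-(u+v)L_1(1)}$. This is integrable because $e^{-L_1(1)}\sim\Ga(\alpha-u)$ has all positive moments, so dominated convergence applies.

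\textbf{Case $u+v<0$.} The integrand is bounded by $\big(a\sup_M e^{L_1(M)-L_2(M)}+\xi\mathscr A_\infty\big)^{|u+v|}$. Here I would bound $\sup_M e^{L_1(M)-L_2(M)}$ by a constant multiple of $\mathscr A_\infty(L_1,L_2)$ plus the increment term: $e^{L_1(M)-L_2(M)}$ equals $e^{L_1(M)-L_2(M-1)}e^{-(L_2(M)-L_2(M-1))}$, and the first factor is a single term of $\mathscr A_\infty(L_1,L_2)$. The difficulty is that the second factor is unbounded; I would handle it by Hölder together with a union or maximal bound. Alternatively, the supremum of a random walk with negative drift has exponential moments up to the Cramér exponent. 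Integrability then needs $|u+v|<-2u$, i.e. $u<\min\{0,v\}$ with $v>u$ (which in this case gives $|u+v|=-u-v<-2u$). This matches the finiteness condition for $\EE[G^{-(u+v)}]$, and I would verify it via Cramér's bound on $\EE[\sup_M e^{\beta(L_1(M)-L_2(M))}]$ for $\beta<-2u$.

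\textbf{Case $u+v=0$.} This is trivial.

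Finally, I identify the limit using $\EE[(\xi G)^{-(u+v)}]=\xi^{-(u+v)}\Gamma(v-u)/\Gamma(-2u)$, which equals $\frac{\Gamma(v-u)}{\Gamma(-2u)}\mathscr N^{v,u}(0,\xi)$ by \eqref{eq:special case function N LG}.
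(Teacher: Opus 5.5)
Your argument is correct and follows the paper's route: change of measure of $L_1$ to $\PP_{\LG}^{\alpha-u}$, then the time reversal Lemma~\ref{lem:time reversal LG}, Dufresne's identity \eqref{eq:LG Dufresne}, and dominated convergence split on the sign of $u+v$. One small slip: the per-increment density ratio is $\frac{\Gamma(\alpha-u)}{\Gamma(\alpha+v)}e^{-(u+v)(L_1(i)-L_1(i-1))}$, with a minus sign, although your displayed formula is already correct.

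Your dominating functions are more careful than the paper's. The paper asserts $\xi\le ae^{L_1(M)-L_2(M)}+\xi\mathscr A_M(L_1,L_2)\le (a+\xi)\mathscr A_\infty(L_1,L_2)$, but the lower bound fails, since only $\mathscr A_M(L_1,L_2)\ge e^{L_1(1)}$ holds. The upper bound fails for $a>0$, for exactly the reason you flag.

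Your Cram\'er step closes in one line. For $\beta=-(u+v)\in(0,-2u)$,
\[
\EE\bigl[e^{\beta(L_1(1)-L_2(1))}\bigr]=\frac{\Gamma(\alpha-u-\beta)\,\Gamma(\alpha+u+\beta)}{\Gamma(\alpha-u)\,\Gamma(\alpha+u)}<1 ,
\]
by strict log-convexity of $\Gamma$, since the ratio equals $1$ at $\beta=0$ and at $\beta=-2u$. Hence $\EE\bigl[\sup_{M}e^{\beta(L_1(M)-L_2(M))}\bigr]\le\sum_{M\ge0}\EE\bigl[e^{\beta(L_1(1)-L_2(1))}\bigr]^M<\infty$.
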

\begin{proof}
    Since $u<0$, in particular $a-u>0$,  
\begin{align*}
    \mathscr{K}_{M}^{\a,u,v}(a,\xi)
  &=
 \left(\frac{\Gamma(\a-u)}{\Gamma(\a+v)}\right)^M
 \EE_{\Geo,M}^{a-u,a+u}
 \left[e^{(u+v)\big(L_1(M)-L_2(M)\big)}\Big(
  a+\xi\mathscr{A}_{M}(L_2,L_1)\Big)^{-(u+v)}
 \right]\\
 &=\left(\frac{\Gamma(\a-u)}{\Gamma(\a+v)}\right)^M
 \EE_{\Geo,M}^{a-u,a+u}
 \left[\Big(
  ae^{L_1(M)-L_2(M)}+\xi\mathscr{A}_{M}(L_1,L_2)\Big)^{-(u+v)}
 \right],
\end{align*}
where we change the reference measure of $L_1$ from $\PP_{\LG,M}^{\a+v}$ to $\PP_{\LG,M}^{\a-u}$ in the first equality and apply  Lemma \ref{lem:time reversal LG}   with
$A=L_2\sim\PP_{\LG,M}^{\a+u} $, $S=L_1\sim\PP_{\LG,M}^{\a-u}$ in the second equality.

Since $u<0$, $L_1(M)-L_2(M)\rightarrow-\infty$ almost surely. By Dufresne's identity \eqref{eq:LG Dufresne}, we have 
\be\label{eq:a convergence to inverse gamma}
ae^{L_1(M)-L_2(M)}+\xi\mathscr{A}_{M}(L_1,L_2) \xrightarrow[M\to\infty]{a.s.}\xi\mathscr{A}_{\infty}(L_1,L_2)\sim\xi\Gav(-2u).
\ee
Observe that for all $M\geq 1$, we have 
$$\xi\leq ae^{L_1(M)-L_2(M)}+\xi\mathscr{A}_{M}(L_1,L_2)\leq (a+\xi)\mathscr{A}_{\infty}(L_1,L_2).$$
Using the left inequality for $u+v>0$ and   the right inequality for  $u+v\leq 0$,  
\eqref{eq:asymp of K LD LG} follows from the dominated convergence theorem and the fact that for $G\sim\Gammainv(-2u)$, 
$$\xi^{-(u+v)}\EE[G^{-(u+v)}]= \mathscr{N}^{v,u}(0,\xi)\frac{\Gamma(v-u)}{\Gamma(-2u)}.
$$ 
\end{proof}

\subsubsection{The coexistence line}  
We take the same approach as in  the GLPP case and start with the following lemma.
\begin{lemma}
    Let $\alpha>0$, $\alpha\pm u>0$, and $\alpha\pm v>0$. For $a,\xi>0$, define
$$
\mathscr{M}^{\alpha,u,v}(a,\xi)
:=\EE\left[(\xi+\frac{a}{Y})^{-(u+v)}
-(\frac{a}{Y})^{-(u+v)}\right],
$$
where $Y\sim \Gav(\a+u)$.
Then, for  all  $M\geq1$,  $a\geq0$ and $\xi>0$, we have 
\begin{multline}\label{eq:LG K recursion} 
    \mathscr K_{M+1}^{\alpha,u,v}(a,\xi)
-\left(\frac{\Gamma(\alpha-v)}{\Gamma(\alpha+u)}\right)\mathscr K_M^{\alpha,u,v}(a,\xi) \\
= 
\left(\frac{\Gamma(\alpha-u)}{\Gamma(\alpha+v)}\right)^M
\EE_{\LG,M}^{\alpha-u,\alpha+u}
\left[\mathscr{M}^{\alpha,u,v}
\left(a e^{L_1(M)-L_2(M)}
+\xi\mathscr A_M(L_1,L_2),\xi\right)\right].
\end{multline}
\end{lemma}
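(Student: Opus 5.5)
The plan is to mimic the proof of Lemma \ref{lem:Geo K recursion}: condition on $\mathcal F_M=\sigma(L_1(i),L_2(i):0\leq i\leq M)$ and integrate out the last step of $L_2$. First apply \eqref{eq:cutting LG} with $N=M+1$ and $k=M$. Write $X=L_2(M+1)-L_2(M)$, so that $Y:=e^{X}\sim\Gav(\a+u)$ is independent of $\mathcal F_M$. Since $\mathscr A_1^{a_M,\xi_M}(L_2^{(M)},L_1^{(M)})=a_M+\xi_M e^{X}$, this gives
\[
e^{(u+v)L_2(M+1)}\big(\mathscr A^{a,\xi}_{M+1}(L_2,L_1)\big)^{-(u+v)}=\Big(\xi_M+\frac{a_M}{Y}\Big)^{-(u+v)},
\]
where $a_M=ae^{-L_2(M)}+\xi e^{-(L_1\otimes L_2)(M)}$ and $\xi_M=\xi e^{-L_1(M)}$ are $\mathcal F_M$-measurable. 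Both are strictly positive for $M\geq1$, so $\mathscr M^{\a,u,v}(a_M,\xi_M)$ is well defined.

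Next, add and subtract $(a_M/Y)^{-(u+v)}$ inside the conditional expectation. This splits it as $a_M^{-(u+v)}\EE[Y^{u+v}]+\mathscr M^{\a,u,v}(a_M,\xi_M)$. The inverse gamma moment is $\EE[Y^{u+v}]=\Gamma(\a-v)/\Gamma(\a+u)$, which is finite because $\a-v>0$. By \eqref{eq:exponential functional developped LG}, $a_M^{-(u+v)}=e^{(u+v)L_2(M)}(\mathscr A_M^{a,\xi}(L_2,L_1))^{-(u+v)}$. Taking $\EE^{\a+v,\a+u}_{\LG,M}$ of this first piece therefore produces exactly $\frac{\Gamma(\a-v)}{\Gamma(\a+u)}\mathscr K^{\a,u,v}_M(a,\xi)$, the subtracted term on the left of \eqref{eq:LG K recursion}.

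For the remainder, I would use that $\mathscr M^{\a,u,v}$ is homogeneous of degree $-(u+v)$. Together with $a_M=e^{-L_2(M)}\big(a+\xi\mathscr A_M(L_2,L_1)\big)$, this gives
\[
\mathscr M^{\a,u,v}(a_M,\xi_M)=e^{(u+v)L_1(M)}\,\mathscr M^{\a,u,v}\Big(e^{L_1(M)-L_2(M)}\big(a+\xi\mathscr A_M(L_2,L_1)\big),\,\xi\Big).
\]
The factor $e^{(u+v)L_1(M)}$ is absorbed by changing the reference measure of $L_1$ from $\PP^{\a+v}_{\LG,M}$ to $\PP^{\a-u}_{\LG,M}$. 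At the level of one increment, the log-gamma density ratio is $e^{(u+v)x}e^{-(\a+v)x}=e^{-(\a-u)x}$, and this change produces the factor $(\Gamma(\a-u)/\Gamma(\a+v))^M$.

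Finally, apply Lemma \ref{lem:time reversal LG} with $A=L_1\sim\PP^{\a-u}_{\LG}$ and $S=L_2\sim\PP^{\a+u}_{\LG}$, which are independent. It gives the joint identity in law $\big(e^{L_1(M)-L_2(M)}\mathscr A_M(L_2,L_1),L_1(M)-L_2(M)\big)\overset{d}{=}\big(\mathscr A_M(L_1,L_2),L_1(M)-L_2(M)\big)$. Replacing the first argument by $ae^{L_1(M)-L_2(M)}+\xi\mathscr A_M(L_1,L_2)$ then yields the right side of \eqref{eq:LG K recursion}.

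The main obstacle is justifying the split into the two expectations. I need $\EE[(a_M/Y)^{-(u+v)}]<\infty$, which holds because $\a-v>0$, and I need the $\mathscr M$-term to be integrable. If $u+v<0$, I would check integrability against the positive moments of $\Gavinv$-type variables; if $u+v>0$, the bound $(\xi+a/Y)^{-(u+v)}\leq\xi^{-(u+v)}$ suffices. I would also verify carefully that the time-reversal identity holds jointly with $L_1(M)-L_2(M)$, which the lemma as stated already provides.
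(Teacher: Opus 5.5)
Your proposal is correct and follows the paper's proof essentially step by step. Both arguments cut at $k=M$ via \eqref{eq:cutting LG}, split off the inverse gamma moment $\Gamma(\alpha-v)/\Gamma(\alpha+u)$ to recover $\mathscr K_M^{\alpha,u,v}(a,\xi)$, use the degree $-(u+v)$ homogeneity of $\mathscr M^{\alpha,u,v}$, change the reference measure of $L_1$ to $\PP^{\alpha-u}_{\LG,M}$, and conclude with the time reversal Lemma \ref{lem:time reversal LG}; your remarks on the integrability of the two pieces, which the paper leaves implicit, are a harmless supplement.
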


\begin{proof}
Let 
$\mathcal F_M= \sigma\Big( L_1(i),L_2(i): 0\leq i\leq M \Big)$.   
Apply  \eqref{eq:cutting LG} with  $N=M+1$ and $k=M$, we have 
\begin{equation*}
L_2(M+1)-\log\mathscr A_{M+1}^{a,\xi}(L_2,L_1)
=-\log\left(
a_M e^{-(L_2(M+1)-L_2(M))}+\xi_M\right).
\end{equation*}
where 
$$
a_M=a e^{-L_2(M)}
+\xi e^{-(L_1\otimes L_2)(M)},
\qquad \xi_M=\xi e^{-L_1(M)}.
$$
are measurable with respect to $\mathcal{F}_M$. 
By Markovianity of the random walk,  $e^{(L_2(M+1)-L_2(M))}\sim\Gav(\alpha+u)$ and  is independent
of $\mathcal F_M$. Hence   
$$\EE\left[
e^{(u+v)L_2(M+1)}
\left(\mathscr A_{M+1}^{a,\xi}(L_2,L_1)\right)^{-(u+v)}
\mathrel{\big|}\mathcal F_M\right]
=\EE\left[\left(\frac{a_M}{Y}+\xi_M\right)^{-(u+v)}\right].
$$
for $Y\sim\Gav(\a+u).$
Taking expectation with respect to $\EE_{\LG,M}^{\alpha+v,\alpha+u}$,
we obtain
\begin{align*}
\mathscr K_{M+1}^{\alpha,u,v}(a,\xi)
&= \EE_{\LG,M}^{\alpha+v,\alpha+u}\left[ \mathscr{M}^{\alpha,u,v}(a_M,\xi_M)\right]
+\frac{\Gamma(\a-v)}{\Gamma(\a+u)}\EE_{\LG,M}^{\alpha+v,\alpha+u}\left[ (a_M)^{-(u+v)} \right]\\
&=\EE_{\LG,M}^{\alpha+v,\alpha+u}
\left[e^{(u+v)L_1(M)}
\mathscr M^{\alpha,u,v}
\left(e^{L_1(M)}a_M,\xi\right)
\right]
+\frac{\Gamma(\a-v)}{\Gamma(\a+u)} \mathscr K_{M }^{\alpha,u,v}(a,\xi)\\
&=\left(\frac{\Gamma(\alpha-u)}{\Gamma(\alpha+v)}\right)^M\EE_{\LG,M}^{\alpha-u,\alpha+u}\left[ \mathscr{M}^{\alpha,u,v}\left(e^{L_1(M)}a_M,\xi\right)\right]
+\frac{\Gamma(\a-v)}{\Gamma(\a+u)} \mathscr K_{M }^{\alpha,u,v}(a,\xi),
\end{align*}
where we evaluate the moment of inverse gamma random variable in the first equality, we 
use  $\mathscr{M}^{\a,u,v}(ca,c\xi)=c^{-(u+v)}\mathscr{M}^{\a,u,v}(a,\xi)$ 
in the second equality,  
and change the reference measure of $L_1$ from
$\PP_{\LG,M}^{\alpha+v}$ to $\PP_{\LG,M}^{\alpha-u}$ in the last equality. Finally, \eqref{eq:LG K recursion} follows from the time reversal \eqref{eq:time reversal LG}:
$$
e^{L_1(M)}a_M=e^{L_1(M)-L_2(M)}
\left(a+\xi\mathscr A_M(L_2,L_1)\right)\overset{\mathrm d}=
a e^{L_1(M)-L_2(M)}
+\xi\mathscr A_M(L_1,L_2).
$$ 
\end{proof}

\begin{proposition}
\label{prop:CE LG}
Let $u=v<0$ and $\alpha+u>0$.  For fixed
$a\geq0$ and $\xi>0$,
\be\label{eq:critical line asymptotics LG}
\mathscr{K}_{M}^{\alpha,u,u}(a,\xi)
 \sim{}
  \frac{\psi_0(\alpha-u)-\psi_0(\alpha+u)}
 {\Gamma(-2u)} \xi^{-2u} M\varrho^M.
\ee
\end{proposition}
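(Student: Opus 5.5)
The plan mirrors the proof of Proposition \ref{prop:Geo coexistence asymptotics}: show that the successive differences of $\varrho^{-M}\mathscr K_M^{\alpha,u,u}(a,\xi)$ converge to a constant, then conclude with Stolz--Cesàro. On the line $u=v<0$ we have $v_\star=u$, so $\varrho=\Gamma(\alpha-u)/\Gamma(\alpha+u)$. This is exactly the ratio appearing on the left of the recursion \eqref{eq:LG K recursion} when $v=u$.

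\textbf{Step 1: normalise the recursion.} Dividing \eqref{eq:LG K recursion} (with $v=u$) by $\varrho^{M+1}$ gives
\[
\varrho^{-(M+1)}\mathscr K_{M+1}^{\alpha,u,u}(a,\xi)-\varrho^{-M}\mathscr K_M^{\alpha,u,u}(a,\xi)
=\varrho^{-1}\,\EE_{\LG,M}^{\alpha-u,\alpha+u}\Bigl[\mathscr M^{\alpha,u,u}\bigl(a e^{L_1(M)-L_2(M)}+\xi\mathscr A_M(L_1,L_2),\xi\bigr)\Bigr].
\]
The prefactor $(\Gamma(\alpha-u)/\Gamma(\alpha+u))^M$ on the right of \eqref{eq:LG K recursion} cancels exactly against $\varrho^{-M}$.

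\textbf{Step 2: limit of the right-hand side.} Under $\PP^{\alpha-u,\alpha+u}_{\LG}$ with $u<0$, we have $L_1(M)-L_2(M)\to-\infty$ almost surely. Lemma \ref{lem:LG Dufresne} (with $S=L_1$, $A=L_2$, $p=\alpha-u$, $q=\alpha+u$) gives $\mathscr A_M(L_1,L_2)\uparrow\mathscr A_\infty(L_1,L_2)\sim\Gav(-2u)$. This is the same convergence \eqref{eq:a convergence to inverse gamma} used in Proposition \ref{prop:LD LG}. Hence the argument of $\mathscr M$ converges almost surely to $\xi G$ with $G\sim\Gav(-2u)$.

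\textbf{Step 3: domination (the main obstacle).} Here $-(u+v)=-2u>0$, so $\mathscr M^{\alpha,u,u}(b,\xi)=\EE[(\xi+b/Y)^{-2u}-(b/Y)^{-2u}]$ is a difference of positive powers. The mean value theorem bounds it by a constant times $\xi\,\EE[(\xi+b/Y)^{-2u-1}]$ when $-2u\geq1$, and by $\xi^{-2u}$ when $-2u<1$. So $\mathscr M(b,\xi)\leq C(1+b)^{(-2u-1)_+}$, using that $\EE[Y^{-k}]<\infty$ for $Y\sim\Gav(\alpha+u)$, $k\geq0$. The argument $b_M$ satisfies $b_M\leq(a+\xi)\mathscr A_\infty(L_1,L_2)$, as in Proposition \ref{prop:LD LG}. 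Its inverse-gamma tail has moments of order strictly less than $-2u$, while the bound above only needs the moment of order $-2u-1$, which is finite. Dominated convergence therefore applies. Continuity of $\mathscr M$ in $b$ then gives convergence of the right-hand side to $\varrho^{-1}\EE[\mathscr M^{\alpha,u,u}(\xi G,\xi)]$.

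\textbf{Step 4: identify the constant.} By homogeneity,
\[
\EE[\mathscr M(\xi G,\xi)]=\xi^{-2u}\,\EE\bigl[(1+G/Y)^{-2u}-(G/Y)^{-2u}\bigr],
\]
with $G\sim\Gav(-2u)$ and $Y\sim\Gav(\alpha+u)$ independent. Writing $G/Y=\Gamma_{\alpha+u}/\Gamma_{-2u}$ as a ratio of gamma variables, I would compute this expectation through the beta-prime density. The difference integrand $(1+x)^{-2u}-x^{-2u}$ should produce a digamma difference. The target is $\varrho\,(\psi_0(\alpha-u)-\psi_0(\alpha+u))/\Gamma(-2u)$, which I would confirm by differentiating a beta-integral identity in the exponent. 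This computation, together with the domination in Step 3, is where the real work lies. Once the difference limit is identified, Stolz--Cesàro yields \eqref{eq:critical line asymptotics LG}.
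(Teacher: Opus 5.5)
Your route is the paper's: apply \eqref{eq:LG K recursion} with $v=u$, cancel $\varrho^M$, pass to the limit with \eqref{eq:a convergence to inverse gamma}, identify the constant, and conclude by Stolz--Ces\`aro. Your Step~4 target is the right value. The step that fails as written is the pathwise domination in Step~3, $b_M\leq(a+\xi)\mathscr A_\infty(L_1,L_2)$, where $b_M=ae^{L_1(M)-L_2(M)}+\xi\mathscr A_M(L_1,L_2)$. This bound comes from the geometric case, where $L_2$ is nondecreasing, so $L_1(M)-L_2(M)\leq L_1(M)-L_2(M-1)\leq\mathcal A_M(L_1,L_2)$. Log-gamma paths are not monotone, and for $a>0$ the inequality is false with positive probability. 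Already for $M=1$, after dividing by $e^{L_1(1)}$ it reads $ae^{-L_2(1)}\leq a+(a+\xi)e^{-L_2(1)}\mathscr A'$. Here $\mathscr A'=\mathscr A_\infty(L_1^{(1)},L_2^{(1)})\sim\Gav(-2u)$ is independent of $e^{-L_2(1)}\sim\Ga(\alpha+u)$. The inequality fails whenever $\mathscr A'<a/(2(a+\xi))$ and $e^{-L_2(1)}>2$. (The paper's proof invokes ``the same dominance'' as in Proposition~\ref{prop:LD LG}, so it shares this weak point.)

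The gap is easy to close.
\begin{itemize}
\item If $-2u\leq1$, subadditivity of $x\mapsto x^{-2u}$ gives $0\leq\mathscr M^{\alpha,u,u}(b,\xi)\leq\xi^{-2u}$, so bounded convergence suffices.
\item If $-2u>1$, use uniform integrability instead of a pathwise bound. For $0\leq k<-2u$ (which also ensures $k<\alpha-u$),
\[
\EE\bigl[e^{k(L_1(M)-L_2(M))}\bigr]=\Bigl(\frac{\Gamma(\alpha-u-k)\,\Gamma(\alpha+u+k)}{\Gamma(\alpha-u)\,\Gamma(\alpha+u)}\Bigr)^M\leq1,
\]
since the base equals $1$ at $k=0$ and at $k=-2u$ and is log-convex in $k$. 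Combine this with $\mathscr A_M\leq\mathscr A_\infty$ and $\EE[\mathscr A_\infty^k]<\infty$. Taking $k=(-2u-1)(1+\delta)<-2u$, you get $\sup_M\EE[(1+b_M)^k]<\infty$. Hence $\mathscr M^{\alpha,u,u}(b_M,\xi)\leq C(1+b_M)^{-2u-1}$ is uniformly integrable, and the limit you want follows.
\end{itemize}
For Step~4, keep the difference under one integral. Both $\EE[(1+X)^{-2u}]$ and $\EE[X^{-2u}]$ diverge for $X=G/Y$, whose density is proportional to $x^{\alpha+u-1}(1+x)^{-(\alpha-u)}$. Inserting $(1+x)^{-\epsilon}$ gives $\Gamma(\epsilon)\bigl(\frac{\Gamma(\alpha+u)}{\Gamma(\alpha+u+\epsilon)}-\frac{\Gamma(\alpha-u)}{\Gamma(\alpha-u+\epsilon)}\bigr)$. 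This tends to $\psi_0(\alpha-u)-\psi_0(\alpha+u)$ by monotone convergence. Dividing by the beta function $B(\alpha+u,-2u)$ yields exactly your target.
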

\begin{proof}
 Recall that on the coexistence line,
$
\varrho=\frac{\Gamma(\alpha-u)}{\Gamma(\alpha+u)}
$. 
Apply \eqref{eq:LG K recursion} with $v=u$ and divide both sides, and use the convergence \eqref{eq:a convergence to inverse gamma} and the same dominance in the proof of Proposition \ref{prop:LD LG}, we have for $G\sim\Gammainv(-2u)$,
\begin{multline*}
\frac{1}{\varrho^{M+1}} 
\left(\mathscr{K}_{M+1}^{\alpha,u,u}(a,\xi)-\varrho\mathscr{K}_{M }^{\alpha,u,u}(a,\xi)
\right)\\
=\frac{1}{\varrho} 
\EE_{\LG,M}^{\alpha-u,\alpha+u}
\left[\mathscr{M}^{\alpha,u,u}
\left(a e^{L_1(M)-L_2(M)}
+\xi\mathscr A_M(L_1,L_2),\xi\right)\right]\\
\xrightarrow []{M\to\infty}
\frac{1}{\varrho} \EE
\left[\mathscr{M}^{\alpha,u,u}
\left(\xi G,\xi\right)\right]= \xi^{-2u}\frac{\psi_0(\alpha-u)-\psi_0(\alpha+u)}
 {\Gamma(-2u)}, 
\end{multline*}
where $\psi_0$ is the digamma function. For the last equality, see e.g.   \cite[\href{https://dlmf.nist.gov/5.9.E16}{(5.9.16)}]{NIST:DLMF}.
Asymptotics \eqref{eq:critical line asymptotics LG} follows by the Stolz-Cesaro theorem.    
\end{proof}

\subsection{The fan region}

\subsubsection{Preliminaries about Whittaker functions} 
\label{subsubsec:whittaker preliminary} 
We follow the notations in \cite{CorwinOConnellSeppalainenZygouras} and  \cite[Section 3.1]{Barraquand}. For $\alpha\in\C$ , $x,y\in\RR^n$ and $z\in\RR^{n-1}$, define the kernels
$$
 \Psi_\alpha^{(n)}(x/y)
=\exp\left\{-\alpha\sum_{i=1}^{n}(x_i-y_i)
-\sum_{i=1}^{n}e^{-(x_i-y_i)}
-\sum_{i=1}^{n-1}e^{-(y_i-x_{i+1})}\right\}.
$$ 
and 
$$
\Psi_\alpha^{(n,n-1)}(x/z)
=\exp\bigg\{-\alpha\left(\sum_{i=1}^{n}x_i-
\sum_{i=1}^{n-1}z_i\right)
-\sum_{i=1}^{n-1} e^{-(x_i-z_i)}
-\sum_{i=1}^{n-1}
e^{-(z_i-x_{i+1})} \bigg\}.
$$

For $n,k\geq1$, the   Whittaker function and the skew Whittaker function can defined through the branching rule respectively as: 
$$
\Psi_{\alpha_1,\ldots,\alpha_n}(x)
=\int_{\RR^{n(n-1)/2}}
\prod_{i=1}^{n}\Psi_{\alpha_i}^{(i,i-1)}(x^i/x^{i-1})
\dd x^1\cdots\dd x^{n-1}, 
$$
and
\begin{equation}
\label{eq:skew Whittaker branching rule}
\Psi_{\alpha_1,\ldots,\alpha_k}^{(n)}(x/y)
=\int_{\RR^{n(k-1)}}
\prod_{i=1}^{k}\Psi_{\alpha_i}^{(n)}(x^i/x^{i-1})
\dd x^1\cdots\dd x^{k-1},
\end{equation}
where  $x^k=x$ and $x^0=\varnothing$ (resp. $x^0=y$).

As the Schur case, the (skew) Whittaker functions satisfy a Cauchy and skew-Cauchy identity. For our purpose, we  focus only on the case $n=2$, $k=1$, $\a_1=-\a_2=z$ and we omit the superscript $(n)$ in the skew Whittaker function for simplicity. For a more complete background on   Whittaker functions, we refer to \cite[Section 3.1 and Lemma 3.4]{Barraquand} and references therein. We have  
 \begin{itemize}
     \item Cauchy identity
     \be
 \label{eq:Whittaker Cauchy}
 \int_\RR e^{-\theta y}\Psi_{(z,-z)}(y,0)\dd y
 =\Gamma(\theta\pm z),
 \qquad  \theta> |\mathfrak{R} z|,
\ee
     \item Skew Cauchy identity
     \be \label{eq:Whittaker skew Cauchy}
 \int_{\RR^2}\Psi_\alpha(\mu/\lambda)
 \Psi_{(z,-z)}(\mu_1, \mu_2) \dd\mu
 =\Gamma(\alpha\pm z) 
 \Psi_{(z,-z)}(\lambda_1 ,\lambda_2) ,
 \qquad  \a> |\mathfrak{R} z|.
\ee
 \end{itemize}
 The function $\Psi_{(z,-z)}(y,0)$ is related to the K-Bessel function by $\Psi_{(z,-z)}(y,0)
 =2K_{2z}(2e^{-y/2})$.  
Using a representation for the Gamma function held for $\ta, S>0$:
\be\label{eq:gamma function integral representation}
 \Gamma(\theta)=S^{ \theta}\int_\RR e^{-\theta l-Se^{-l}}\dd l,
\ee
the function $\mathscr N^{u,v}(  a,\xi) $ defined in \eqref{eq:function N KPZ} admit the following representation for $a\geq 0, \xi>0$ and $v\in(-u, 0]$ :
\be\label{eq:function N LG as integral of Whittaker}
   \mathscr N^{u,v}(  a,\xi) = \frac{\xi^{-(u+v)}}{\Gamma(u\pm v) }
\int_{\RR}e^{-ul-( a/\xi)e^{-l}}\Psi_{(v,-v)}(l,0)\dd l.
\ee
    
\subsubsection{The two-layer Whittaker process}
In the fan region $u+v>0 $, the stationary measure  
$\PP_{\LG,M}^{\alpha,u,v}$  \eqref{eq:Stationary measure LG} has a Markovian description, which can be studied through the integral formulas developed in \cite{Barraquand}.

\begin{definition}[Two-layer Whittaker process] 
Let $N\in\ZZ_{\geq1}$.  For parameters $\a,u,v$ satisfying \eqref{eq:parameter domain LG} with $u+v>0$, 
define the probability measure  $\PP_{\Whit,N}^{\alpha,u,v}$ on
$\bl =(\lambda^0,\ldots,\lambda^N)\in(\RR^2)^{N+1}$ with density 
\be
 \PP_{\Whit,N}^{\alpha,u,v}(\dd\lambda)
 =\frac1{\mathcal Z_{\Whit,N}^{\alpha,u,v}}
 e^{-u(\lambda_1^0-\lambda_2^0)}
 e^{-v(\lambda_1^N-\lambda_2^N)}
 \prod_{j=1}^N\Psi_\alpha(\lambda^j/\lambda^{j-1})\delta_0(\dd\lambda_2^0)
 \dd\lambda_1^0\prod_{j=1}^N\dd\lambda^j,
\ee
where $\mathcal Z_{\Whit,N}^{\alpha,u,v}$ is the normalizing constant.
\end{definition}

\begin{lemma}\cite[Proposition 3.9]{Barraquand}
Under $\PP_{\Whit,N}^{\alpha,u,v}$, the process
$(\lambda^x)_{0\leq x\leq N}$ is an inhomogeneous Markov chain
on $\RR^2$
with initial distribution and transition kernel given by
\be
\begin{aligned}
 \mathsf P_N^{u,v}(\dd\lambda)
 &:=e^{-u(\lambda_1-\lambda_2)}
  \delta_0(\dd\lambda_2)
  H_{0,N}^{u,v}(\lambda_1-\lambda_2)
  \dd\lambda_1,\\
 \mathsf U_{x,N}^{u,v}(\lambda,\dd\mu)
 &:=\frac{\Psi_\alpha(\mu/\lambda)}{\Gamma(\alpha)^2}
 \frac{H_{x+1,N}^{u,v}(\mu_1-\mu_2)}
 {H_{x,N}^{u,v}(\lambda_1-\lambda_2)}\dd\mu,
 \qquad 0\leq x<N.
\end{aligned}
\label{eq:initial distribution and transition kernel u,v>0}
\ee
Here the functions
$(H_{x,N}^{u,v})_{0\leq x\leq N}:\RR\to\RR_+$ are defined by
\be 
 H_{x,N}^{u,v}(l)
  =\frac{Q_{N-x}^{v}(l)}
 {\int_{\RR}\d \ell e^{-u\ell}Q_N^{v}(\ell) }, \quad \text{with} \quad 
 Q_N^{v}(\ell)
 =\frac1{\Gamma(\alpha)^{2N}}
 \int_{\RR^2}\dd\la\Psi_{(\alpha)^N}(\la/(l,0))
 e^{-v(\la_1-\la_2)}.
 \label{eq:def of function H and Q(l)}
\ee
\end{lemma}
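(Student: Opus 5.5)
The plan is to mirror the Schur case: prove the Markov property directly from the product structure of the density, by a Doob $h$-transform argument. The density of $\PP_{\Whit,N}^{\alpha,u,v}$ is a product of an initial weight $e^{-u(\lambda_1^0-\lambda_2^0)}\delta_0(\dd\lambda_2^0)$, nearest-neighbour kernels $\Psi_\alpha(\lambda^j/\lambda^{j-1})$, and a terminal weight $e^{-v(\lambda^N_1-\lambda^N_2)}$. Such a measure is automatically an inhomogeneous Markov chain. Its transition kernel from time $x$ to $x+1$ is $\Psi_\alpha(\mu/\lambda)\,\widetilde H_{x+1}(\mu)/\widetilde H_x(\lambda)$, where we define the backward functions
\[
\widetilde H_x(\lambda):=\int \prod_{j=x+1}^N\Psi_\alpha(\lambda^j/\lambda^{j-1})\,e^{-v(\lambda^N_1-\lambda^N_2)}\prod_{j>x}\dd\lambda^j ,\qquad \lambda^x=\lambda .
\]
The initial law is then the initial weight times $\widetilde H_0$, divided by $\mathcal Z_{\Whit,N}^{\alpha,u,v}$.

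The key steps, in order, are as follows.

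First, by the branching rule \eqref{eq:skew Whittaker branching rule}, the iterated integral defining $\widetilde H_x(\lambda)$ collapses to
\[
\widetilde H_x(\lambda)=\int_{\RR^2}\Psi_{(\alpha)^{N-x}}(\mu/\lambda)\,e^{-v(\mu_1-\mu_2)}\dd\mu .
\]

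Second, I use translation invariance: $\Psi_\alpha(\mu+c/\lambda+c)=\Psi_\alpha(\mu/\lambda)$ for $c\in\RR$, since every term of the kernel involves only differences. Shifting by $c=-\lambda_2$ shows that $\widetilde H_x(\lambda)$ depends on $\lambda$ only through $\lambda_1-\lambda_2$. It equals $\Gamma(\alpha)^{2(N-x)}Q^v_{N-x}(\lambda_1-\lambda_2)$.

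Third, I compute the normalising constant:
\[
\mathcal Z_{\Whit,N}^{\alpha,u,v}=\int e^{-u\ell}\,\widetilde H_0(\ell,0)\,\dd\ell=\Gamma(\alpha)^{2N}\int e^{-u\ell}Q_N^v(\ell)\,\dd\ell .
\]
Substituting into the $h$-transform formulas, the powers of $\Gamma(\alpha)$ combine into the factor $\Gamma(\alpha)^{-2}$ in the kernel, and the normalisation is absorbed into $H_{x,N}$. This gives exactly \eqref{eq:initial distribution and transition kernel u,v>0}. That the kernels are stochastic is then automatic from the recursion $\widetilde H_x(\lambda)=\int\Psi_\alpha(\mu/\lambda)\widetilde H_{x+1}(\mu)\dd\mu$, which is again the branching rule.

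The main obstacle is finiteness and positivity of $Q^v_{N}$ and of $\int e^{-u\ell}Q^v_N(\ell)\dd\ell$, that is, $\mathcal Z_{\Whit,N}^{\alpha,u,v}<\infty$, since $v$ may be negative in the fan region. I would obtain this from the identification with $\mathscr K^{\alpha,u,v}_N$. By the change of variables $\lambda^i=(\ell+L_1(i),L_2(i))$, the Whittaker kernels become the log-gamma increment densities times $e^{-\sum_i e^{-(\lambda^{i-1}_1-\lambda^i_2)}}$ factors. Integrating over $\ell$ against $e^{-u\ell}$ then produces $\Gamma(u+v)$ times $\mathscr K_N^{\alpha,u,v}$, up to explicit constants. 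This uses \eqref{eq:gamma function integral representation} with $\theta=u+v>0$, and $\mathscr K_N^{\alpha,u,v}$ is finite by Proposition \ref{prop:Stationary measure strip LG}. Tonelli justifies all interchanges since the integrands are positive. Positivity of $Q$ is clear from positivity of the kernels.
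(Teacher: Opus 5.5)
Your argument is correct. The paper gives no proof of this lemma; it imports it from \cite[Proposition 3.9]{Barraquand}. Your Doob $h$-transform derivation is the standard way such Markov descriptions are established: you use the product form of the density, collapse the backward functions via the branching rule \eqref{eq:skew Whittaker branching rule}, and reduce to the variable $\lambda_1-\lambda_2$ by diagonal translation invariance. Your finiteness step, identifying $\int e^{-u\ell}Q^v_N(\ell)\,\dd\ell$ with $\Gamma(u+v)\mathscr K^{\alpha,u,v}_N$ up to explicit factors, is exactly the computation of Lemma \ref{lem:Relation K Q} with $a=0$, $\xi=1$.

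One small point needs tightening. Integrability of $\ell\mapsto e^{-u\ell}Q^v_{N-x}(\ell)$ (your argument with $N-x$ in place of $N$) only gives $Q^v_{N-x}(\ell)<\infty$ for almost every $\ell$. The statement, however, asserts that $H^{u,v}_{x,N}$ is finite everywhere. This follows from the same change of variables, which gives $e^{vl}Q^v_M(l)=\bigl(\Gamma(\alpha+u)\Gamma(\alpha+v)/\Gamma(\alpha)^2\bigr)^M\EE_{\LG,M}^{\alpha+v,\alpha+u}\bigl[e^{(u+v)L_2(M)-e^{-l}\mathscr A_M(L_2,L_1)}\bigr]$. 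The right-hand side is nondecreasing in $l$, so finiteness at some $l_0$ implies finiteness at every $l\leq l_0$. Since such $l_0$ can be taken arbitrarily large, $Q^v_M$ is finite everywhere.
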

Let us prove a time-reversal lemma for later use. 
\begin{lemma} 
\label{lem:time reversal partition function LG}
Let $\a,u,v$ be parameters satisfying \eqref{eq:parameter domain LG}  with $u+v>0$. 
    Let $\bl \sim \PP_{\Whit,M}^{\alpha,u,v}$. 
    Then 
    $\overleftarrow \bl \sim\PP_{\Whit,M}^{\alpha,v,u}$, where for $0\leq i\leq M$, 
   \be\label{eq:time reversed two layer schur process LG}\overleftarrow \bl^i =\left( \la^M_1-\la_2^{M-i},   \la^M_1-\la_1^{M-i}\right).
   \ee
In particular,
\be\label{eq:boundary parameter symmetry LG}
 \mathcal Z_{\Whit,M}^{\alpha,u,v} =\mathcal Z_{\Whit,M}^{\alpha,v,u}.
\ee
\end{lemma}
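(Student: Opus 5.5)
The approach mirrors the Schur time-reversal lemma proved earlier for $\PP_{\Schur,M}^{a,r,s}$: a direct change of variables in the density, followed by a normalization argument.

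\textbf{Step 1: the single-step identity.} The key input is the invariance of the one-step kernel $\Psi_\alpha(\mu/\lambda)$ under the reversal map. For $\lambda=\lambda^{M-i}$ and $\mu=\lambda^{M-i+1}$, set $\overleftarrow\lambda=(c-\mu_2,c-\mu_1)$ and $\overleftarrow\mu=(c-\lambda_2,c-\lambda_1)$ with $c=\lambda_1^M$. We check that
\[
\Psi_\alpha(\overleftarrow\mu/\overleftarrow\lambda)=\Psi_\alpha(\mu/\lambda).
\]
From the definition of $\Psi^{(2)}_\alpha$, the linear term becomes
\[
-\alpha\bigl[(\mu_2-\lambda_2)+(\mu_1-\lambda_1)\bigr],
\]
which is unchanged. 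The terms $e^{-(\overleftarrow\mu_1-\overleftarrow\lambda_1)}=e^{-(\mu_2-\lambda_2)}$ and $e^{-(\overleftarrow\mu_2-\overleftarrow\lambda_2)}=e^{-(\mu_1-\lambda_1)}$ swap with each other. The interlacing term is
\[
e^{-(\overleftarrow\lambda_1-\overleftarrow\mu_2)}=e^{-(\lambda_1-\mu_2)},
\]
which is itself unchanged. This is the continuous analogue of $s_{\overleftarrow\lambda^i/\overleftarrow\lambda^{i-1}}(a)=s_{\lambda^{M-i+1}/\lambda^{M-i}}(a)$.

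\textbf{Step 2: the boundary weights.} Note that $\overleftarrow\lambda^0=(\lambda_1^M-\lambda_2^M,0)$, so $\overleftarrow\lambda^0_2=0$ automatically. Also $\overleftarrow\lambda^0_1-\overleftarrow\lambda^0_2=\lambda^M_1-\lambda^M_2$, and $\overleftarrow\lambda^M=(\lambda_1^M,\lambda_1^M-\lambda_1^0)$, so that, using $\lambda^0_2=0$, we get $\overleftarrow\lambda^M_1-\overleftarrow\lambda^M_2=\lambda^0_1-\lambda^0_2$. Hence
\[
e^{-u(\lambda^0_1-\lambda^0_2)}e^{-v(\lambda^M_1-\lambda^M_2)}=e^{-v(\overleftarrow\lambda^0_1-\overleftarrow\lambda^0_2)}e^{-u(\overleftarrow\lambda^M_1-\overleftarrow\lambda^M_2)},
\]
which is exactly the boundary weight of $\PP_{\Whit,M}^{\alpha,v,u}$.

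\textbf{Step 3: the reference measure.} Unlike the discrete case, we must also check that the reference measure $\delta_0(\dd\lambda^0_2)\,\dd\lambda^0_1\prod_{j}\dd\lambda^j$ on $\RR^{2M+1}$ is preserved. The map $\bl\mapsto\overleftarrow\bl$, restricted to $\{\lambda_2^0=0\}$, is an affine involution of $\RR^{2M+1}$ with coordinates $(\lambda_1^0,\lambda^1,\dots,\lambda^M)$. The coordinates $\overleftarrow\lambda^i$ for $1\le i\le M-1$ are negated swapped coordinates shifted by $\lambda^M_1$. Meanwhile $\overleftarrow\lambda^M$ and $\overleftarrow\lambda^0_1$ are built from $\lambda^M_1$, $\lambda^0_1$ and $\lambda^M_2$. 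This gives a triangular structure in which the Jacobian has absolute value $1$. That the map is an involution follows by direct substitution.

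\textbf{Step 4: conclusion.} Combining Steps 1--3, the density of $\overleftarrow\bl$ equals
\[
\frac{\mathcal Z^{\alpha,v,u}_{\Whit,M}}{\mathcal Z^{\alpha,u,v}_{\Whit,M}}\,\PP_{\Whit,M}^{\alpha,v,u}.
\]
Integrating both sides, and using the bijectivity from Step 3, gives $\mathcal Z^{\alpha,u,v}_{\Whit,M}=\mathcal Z^{\alpha,v,u}_{\Whit,M}$, which is \eqref{eq:boundary parameter symmetry LG}. This in turn yields the law statement.

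We note that finiteness of both normalizing constants holds since $u+v>0$ and the parameters satisfy \eqref{eq:parameter domain LG} symmetrically in $(u,v)$. The main care point is the Jacobian and the bookkeeping of the $\delta_0$ factor in Step 3, rather than the kernel identity itself.
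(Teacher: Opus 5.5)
Your proposal is correct and follows the paper's argument: invariance of $\Psi_\alpha$ under the reversal, exchange of the two boundary weights, and the fact that $\bl\mapsto\overleftarrow\bl$ is an involution of $\{\lambda_2^0=0\}$, which gives equality of the normalizing constants. Your Step 3 also makes explicit something the paper only states as ``a bijection'': the map is an affine involution with Jacobian of absolute value $1$, so it preserves Lebesgue measure on the hyperplane. Your ratio $\mathcal Z^{\alpha,v,u}_{\Whit,M}/\mathcal Z^{\alpha,u,v}_{\Whit,M}$ in Step 4 is the correct one.
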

\begin{proof}
    Recall the definition of the Whittaker function, for $0\leq i\leq M$ we have 
$$
 \Psi_\alpha(\overleftarrow\lambda^j/
                    \overleftarrow\lambda^{j-1})
 =\Psi_\alpha(\lambda^{M-j+1}/\lambda^{M-j}).
$$
 Hence 
 $\PP_{\Whit,M}^{\alpha,u,v}(\bl)=\frac{Z_{\Whit,M}^{\alpha,u,v}}{\mathcal Z_{\Whit,M}^{\alpha,v,u}}\PP_{\Whit,M}^{\alpha,v,u}$.  
The symmetry of  $\mathcal Z_{\Whit,M}^{\alpha,u,v} $ with respect to the boundary parameters follows from the observation that $\bl\mapsto\overleftarrow\bl$ is an involution on the space 
$\{\bl\in(\RR^2)^{M+1} |  \la^0_2=0\}$, 
and in particular a bijection.
\end{proof}

When $u+v>0$, the two-layer Whittaker process is related with  the stationary measure $\PP_{\LG,M}^{\alpha,u,v}$    by \cite[Proposition 3.14]{BarraquandCorwinYang}:
\be\label{eq:relation between reweighted rw and two layer Whittaker process}
 (\lambda_1^j-\lambda_1^0,
   \lambda_2^j-\lambda_2^0)_{0\leq j\leq M}
 \overset{\mathrm d}=(L_1(j),L_2(j))_{0\leq j\leq M},  
\ee
where $\bl \sim \PP_{\Whit,M}^{\alpha,u,v} $ and $\bL\sim \PP_{\LG,M}^{\alpha,u,v}$. 
Let us prove a relationship between the function  $Q^{v}_{N}(l)$ in \eqref{eq:def of function H and Q(l)}  and our   partition function  $ \mathscr{K}_{M}^{\alpha,u,v}(a,\xi)$ \eqref{eq:generalized partition function LG}.  

\begin{lemma} 
\label{lem:Relation K Q}
Assume \eqref{eq:parameter domain LG} and $u+v>0$,  we have 
\be 
 \mathscr{K}_{M}^{\alpha,u,v}(a,\xi)
 = \frac{\Gamma(\alpha)^{2M}\xi^{-(u+v)}}
 {\Gamma(u+v)
  \big(\Gamma(\alpha+u)\Gamma(\alpha+v)\big)^{M}} 
 \int_\RR
 e^{-ul-(a/\xi)e^{-l}}
 Q_M^{v}(l)\dd l. 
\label{eq:relation K q LG}
\ee
\end{lemma}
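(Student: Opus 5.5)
The plan is to follow the proof of Lemma \ref{lem:K and q geo} line by line, with sums replaced by integrals and the geometric tail sum replaced by the Gamma integral \eqref{eq:gamma function integral representation}.

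\textbf{Step 1: probabilistic form of $Q_M^v$.} Expand $\Psi_{(\alpha)^M}(\lambda/(l,0))$ by the branching rule \eqref{eq:skew Whittaker branching rule} into $M$ one-step kernels $\Psi_\alpha(\lambda^j/\lambda^{j-1})$, with $\lambda^0=(l,0)$. Then change variables to $\lambda^j=(l+L_1(j),L_2(j))$. The one-step kernel factorizes as
\[
e^{-\alpha(\mu_1-\lambda_1)-e^{-(\mu_1-\lambda_1)}}\,e^{-\alpha(\mu_2-\lambda_2)-e^{-(\mu_2-\lambda_2)}}\,e^{-e^{-(\lambda_1-\mu_2)}}.
\]
The first two factors, divided by $\Gamma(\alpha)^2$, are the densities of $\PP_{\LG}^{\alpha}$ increments for $L_1$ and $L_2$. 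The cross terms multiply to $\exp\{-e^{-l}\mathscr{A}_M(L_2,L_1)\}$, since $\lambda^{j-1}_1-\mu_2=l+L_1(j-1)-L_2(j)$.

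Next, absorb $e^{-v(\lambda_1^M-\lambda_2^M)}$ by tilting the reference measures: $L_1$ to $\PP_{\LG}^{\alpha+v}$ and $L_2$ to $\PP_{\LG}^{\alpha+u}$. This uses $\EE[e^{-\theta\Delta}]=\Gamma(\alpha+\theta)/\Gamma(\alpha)$ under a $\Gammainv(\alpha)$ increment $e^{\Delta}$. Tilting $L_2$ by $e^{+uL_2}$ requires $\alpha+u>0$, and the factor $e^{(u+v)L_2(M)}$ is left over. This should give
\[
Q_M^v(l)=\frac{(\Gamma(\alpha+u)\Gamma(\alpha+v))^M}{\Gamma(\alpha)^{2M}}\,e^{-vl}\,\EE^{\alpha+v,\alpha+u}_{\LG,M}\Big[e^{(u+v)L_2(M)}\exp\big(-e^{-l}\mathscr{A}_M(L_2,L_1)\big)\Big].
\]
I will check the exponent bookkeeping ($-v(l+L_1-L_2)$ versus the tilts) carefully, since it fixes the powers of $\Gamma$ in front.

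\textbf{Step 2: integrate in $l$.} Multiply by $e^{-ul-(a/\xi)e^{-l}}$ and integrate over $l\in\RR$. By Tonelli (all terms are nonnegative), the expectation and the $l$-integral can be exchanged. The inner integral is
\[
\int_\RR e^{-(u+v)l-e^{-l}(a/\xi+\mathscr{A}_M)}\dd l=\Gamma(u+v)\,\big(a/\xi+\mathscr{A}_M(L_2,L_1)\big)^{-(u+v)}.
\]
This follows from \eqref{eq:gamma function integral representation} with $\theta=u+v>0$ and $S=a/\xi+\mathscr{A}_M>0$, and it is exactly where $u+v>0$ is needed. Pulling out $\xi^{u+v}$ turns $(a/\xi+\mathscr{A}_M)^{-(u+v)}$ into $\xi^{u+v}\big(\mathscr{A}_M^{a,\xi}(L_2,L_1)\big)^{-(u+v)}$. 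Comparing with \eqref{eq:generalized partition function LG} and rearranging the constants gives \eqref{eq:relation K q LG}.

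The main obstacle is Step 1. The change of variables requires matching the Whittaker interaction terms exactly to $\mathscr{A}_M(L_2,L_1)$, so the index shift $L_1(j-1)$ against $L_2(j)$ must come out right, and the Gamma prefactors from the two tilts must be correct. If the signs differ from my expectation, I would instead recheck the claim against the $M=0$ case, where $Q_0^v(l)=e^{-vl}$ by a degenerate convention, and against the known normalization $\mathscr{K}_M(0,1)$ with $u+v=0$ formally.
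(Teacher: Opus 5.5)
Your proposal is correct and follows the paper's proof essentially verbatim. You expand $Q_M^v(l)$ by the branching rule and change variables $\lambda^i=(l+L_1(i),L_2(i))$, which gives the expectation representation with prefactor $(\Gamma(\alpha+u)\Gamma(\alpha+v)/\Gamma(\alpha)^2)^M e^{-vl}$; you then integrate in $l$ via Tonelli and \eqref{eq:gamma function integral representation} with $\theta=u+v$ and $S=a/\xi+\mathscr{A}_M(L_2,L_1)$. (One small wording slip: the tilt taking $L_2$ from $\PP_{\LG}^{\alpha}$ to $\PP_{\LG}^{\alpha+u}$ is by $e^{-uL_2(M)}$, not $e^{+uL_2(M)}$, which is exactly why the factor $e^{(u+v)L_2(M)}$ is left over; your displayed formula is nonetheless right.)
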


 \begin{proof}
Recall the definition of $Q_M^v(l)$
\eqref{eq:def of function H and Q(l)}.
Develop $\Psi_{(\alpha)^M}(\lambda/(l,0))$ using the branching rule
\eqref{eq:skew Whittaker branching rule} and consider the change of variables
$
\lambda^i=(l+L_1(i),L_2(i))$ for $0\leq i\leq M.$
We have
\be 
Q_M^v(l)= 
\left(\frac{\Gamma(\alpha+u)\Gamma(\alpha+v)}
{\Gamma(\alpha)^2}
\right)^M e^{-vl} \EE_{\LG,M}^{\alpha+v,\alpha+u}
\left[
e^{(u+v)L_2(M)
-e^{-l}\mathscr{A}_{M}(L_2,L_1)}
\right]. 
\ee
Hence  by Tonelli's theorem,
\begin{align*}
&\int_\RR
e^{-ul-(a/\xi)e^{-l}}Q_M^v(l)\dd l\\
&=
\left(
\frac{\Gamma(\alpha+u)\Gamma(\alpha+v)}
{\Gamma(\alpha)^2}
\right)^M
\EE_{\LG,M}^{\alpha+v,\alpha+u}
\left[
e^{(u+v)L_2(M)}
\int_\RR
e^{-(u+v)l-e^{-l}
(a/\xi+\mathscr{A}_{M}(L_2,L_1))}
\dd l
\right]\\
&=
\Gamma(u+v)\xi^{u+v}
\left(
\frac{\Gamma(\alpha+u)\Gamma(\alpha+v)}
{\Gamma(\alpha)^2}
\right)^M
\EE_{\LG,M}^{\alpha+v,\alpha+u}
\left[
e^{(u+v)L_2(M)}
\big( a+\xi\mathscr{A}_{M}(L_2,L_1)\big)^{-(u+v)}
\right]\\
&=
\Gamma(u+v)\xi^{u+v}
\left(
\frac{\Gamma(\alpha+u)\Gamma(\alpha+v)}
{\Gamma(\alpha)^2}
\right)^M
\mathscr{K}_{M}^{\alpha,u,v}( a,\xi),
\end{align*}
where we integrate out $l$ in the second equality using the identity \eqref{eq:gamma function integral representation} 
with $\ta=u+v$ and $S=  a/\xi+\mathscr{A}_{M}(L_2,L_1)$. 
\end{proof} 

To arrive at the asymptotics analysis of the   partition function, our  starting point is the following integral representation  for the function $Q_M^{v}(y)$ developed in \cite{Barraquand}.

\begin{proposition}\cite[Proposition 3.11]{Barraquand}\label{prop:prop:barraquands integral formula for Q} 
Let  $\alpha>0$, $M\geq1$, and $l\in\RR$. When $v>0$, there holds the integral representation:
\be
 Q_M^{v}(l)
 =\int_{i\RR}\frac{\dd z}{2\pi i}\,
 \mathcal F_{M,l}^{v}(z), 
 \label{eq:integral representation of Q_M(l)}
\ee
where 
\be\label{eq:F^v_M(z) LG}
\mathcal F_{M,l}^{v}(z)
 =\Psi_{(z,-z)}(l,0)\frac{\Gamma(v\pm z) }
 {2\Gamma(\pm 2z) }\left(\frac{\Gamma(\alpha\pm z) }{\Gamma(\alpha)^2}\right)^M.
\ee
When $v\leq 0$, $Q_M^{v}(l)$ is given by the analytic continuation of \eqref{eq:integral representation of Q_M(l)}.
\end{proposition}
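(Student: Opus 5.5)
The plan is to derive the contour integral from the Whittaker--Plancherel theory for $GL_2$. The two inputs are the Cauchy identity \eqref{eq:Whittaker Cauchy} and the skew Cauchy identity \eqref{eq:Whittaker skew Cauchy}.

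\textbf{Step 1: reduction to one variable.} The parameters of $\Psi_{(z,-z)}$ sum to zero, so $\Psi_{(z,-z)}(\mu_1,\mu_2)$ depends only on $\mu_1-\mu_2$. The weight $e^{-v(\lambda_1-\lambda_2)}$ in the definition \eqref{eq:def of function H and Q(l)} of $Q_M^v$ is likewise a function of $y=\lambda_1-\lambda_2$ only. By \eqref{eq:Whittaker Cauchy} with $\theta=v>0$, the Whittaker transform of $g(y)=e^{-vy}$ is
\[
\int_{\RR} e^{-vy}\Psi_{(z,-z)}(y,0)\dd y=\Gamma(v\pm z).
\]
The Whittaker inversion formula with Sklyanin measure $\frac{\dd z}{2\pi i}\frac{1}{2\Gamma(\pm 2z)}$ on $i\RR$ should then give
\[
e^{-v(\lambda_1-\lambda_2)}=\int_{i\RR}\frac{\dd z}{2\pi i}\,\frac{\Gamma(v\pm z)}{2\Gamma(\pm 2z)}\,\Psi_{(z,-z)}(\lambda_1,\lambda_2).
\]

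\textbf{Step 2: integrate out the $M$ layers.} Substitute this expansion into $Q_M^v(l)$ and interchange the $z$-integral with the $\dd\lambda$ integral. Expand $\Psi_{(\alpha)^M}(\lambda/(l,0))$ by the branching rule \eqref{eq:skew Whittaker branching rule}. Then integrate the layers one at a time, starting from the top layer, applying \eqref{eq:Whittaker skew Cauchy} at each step; this requires $\alpha>|\mathfrak R z|=0$, which holds. Each application produces a factor $\Gamma(\alpha\pm z)$ and lowers the argument by one layer. After $M$ steps we obtain $\Gamma(\alpha\pm z)^M\Psi_{(z,-z)}(l,0)$, and dividing by $\Gamma(\alpha)^{2M}$ gives exactly $\mathcal F^v_{M,l}(z)$.

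\textbf{Main obstacle: analytic justification.} Two points need care: the inversion formula applied to $e^{-vy}$, which is not in $L^2$, and the use of Fubini.

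\emph{Inversion.} I would regularize, replacing $e^{-vy}$ by $e^{-vy}\mathbf 1_{y\ge -R}$ or by $e^{-vy-\varepsilon e^{-y}}$, which lies in $L^2$. For these, Plancherel holds by the $GL_2$ theory of \cite{CorwinOConnellSeppalainenZygouras}. Then let the regularization vanish, using the fact that $\Psi_{(z,-z)}(y,0)=2K_{2z}(2e^{-y/2})$ decays doubly exponentially as $y\to-\infty$.

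\emph{Fubini.} On $z=it$, Stirling's formula gives $|\Gamma(v\pm it)|\asymp |t|^{2v-1}e^{-\pi|t|}$, $1/|\Gamma(\pm 2it)|\asymp |t|e^{2\pi|t|}$, $|\Gamma(\alpha\pm it)|^M\asymp |t|^{M(2\alpha-1)}e^{-\pi M|t|}$ and $|K_{2it}(x)|\lesssim e^{-\pi|t|}$ (locally uniformly in $x$). The full integrand therefore decays like $e^{-\pi M|t|}$ times a polynomial, so the $z$-integral converges absolutely for every $M\ge1$. Bounding each $\Psi_\alpha$ kernel together with $|\Psi_{(z,-z)}|\le\Psi_{(0,0)}$ then yields a dominating function that is integrable over $\lambda$, which justifies the interchange.

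I expect the inversion step for the non-$L^2$ function to be the delicate point. If the regularization is awkward, a fallback is to verify the identity through Laplace transforms in $l$: integrate both sides against $e^{-\theta l}$ with $\theta>0$ large, and check equality using \eqref{eq:Whittaker Cauchy} together with the Mellin--Barnes evaluation $\int_{i\RR}\frac{\dd z}{2\pi i}\frac{\Gamma(v\pm z)\Gamma(\theta\pm z)}{2\Gamma(\pm2z)}=\Gamma(v+\theta)$, which is Barnes' first lemma. Uniqueness of the Laplace transform would then conclude.
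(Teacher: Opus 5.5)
The paper gives no proof of this statement. It quotes it from \cite[Proposition 3.11]{Barraquand}, so your argument has to stand on its own.

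\textbf{The gap: the interchange in Step~2.} Your formal computation is right. $\Psi_{(z,-z)}$ depends only on $\lambda_1-\lambda_2$, the measure $\frac{\dd z}{2\pi i}\frac{1}{2\Gamma(\pm 2z)}$ is the correct Kontorovich--Lebedev normalisation, and iterating \eqref{eq:Whittaker skew Cauchy} from the top row does produce $\Gamma(\alpha\pm z)^M\Psi_{(z,-z)}(l,0)$. The problem is moving the $z$-integral outside the $\lambda$-integrals: the dominating function you describe does not exist.
\begin{itemize}
\item On $z=it$ one has $\bigl|\Gamma(v\pm it)/(2\Gamma(\pm 2it))\bigr|\asymp |t|^{2v}e^{\pi|t|}$. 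So once you bound $|\Psi_{(z,-z)}|\le\Psi_{(0,0)}$, the $t$-integral diverges exponentially.
\item Keeping the true size of $K_{2it}(x)$ does not help. It is $|t|^{-1/2}e^{-\pi|t|}$ times an oscillating factor, so for each fixed $\lambda$ the integrand of your Step~1 expansion of $e^{-v(\lambda_1-\lambda_2)}$ has size $|t|^{2v-1/2}$ times an oscillating factor.
\item Hence that expansion is never absolutely convergent, and for $v>1/4$ it is not even convergent as an improper integral. The absolute double integral in $(z,\lambda)$ is infinite, so Fubini's hypothesis fails outright.
\item The decay $e^{-\pi M|t|}$ of $\mathcal F^v_{M,l}$ is created by the $\lambda$-integrations, each contributing a factor $\Gamma(\alpha\pm z)$. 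It cannot be used to justify the interchange that produces it.
\end{itemize}
Your regularisation does not close the gap as stated. Plancherel inverts $g_\varepsilon$ only as an $L^2$ limit, and the doubly exponential decay of $\Psi_{(z,-z)}(y,0)$ in $y$ says nothing about integrability in $t$.

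\textbf{The fallback is not viable either.}
\begin{itemize}
\item The identity $\int_{i\RR}\frac{\dd z}{2\pi i}\frac{\Gamma(v\pm z)\Gamma(\theta\pm z)}{2\Gamma(\pm 2z)}=\Gamma(v+\theta)$ is not Barnes' first lemma. Its left side diverges, since the integrand is $\asymp|t|^{2v+2\theta-1}$. It is what your scheme gives at $M=0$, where $\int_\RR e^{-(\theta+v)l}\dd l=\infty$.
\item For $M\ge1$, integrating $l$ out of the right side via \eqref{eq:Whittaker Cauchy} leaves the extra factor $(\Gamma(\alpha\pm z)/\Gamma(\alpha)^2)^M$.
\item The Laplace transform of the left side is $\Gamma(\alpha)^{-2M}\mathcal Z^{\alpha,\theta,v}_{\Whit,M}$, which you have no independent way to evaluate for $M\ge2$. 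Only $M=1$ can be checked this way; there both sides equal $\Gamma(\alpha)^{-2}\Gamma(\alpha+v)\Gamma(\alpha+\theta)\Gamma(v+\theta)$.
\end{itemize}

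\textbf{A clean repair: transform $Q^v_M$ itself rather than $e^{-vy}$.} For $v>0$ and $M\ge1$, $l\mapsto Q^v_M(l)$ is bounded and integrable, hence in $L^2$.
\begin{itemize}
\item The involution $\iota(x_1,x_2)=(-x_2,-x_1)$ satisfies $\Psi_\alpha(\mu/\lambda)=\Psi_\alpha(\iota\lambda/\iota\mu)$ and $\Psi_{(z,-z)}\circ\iota=\Psi_{(z,-z)}$. This is the symmetry behind Lemma~\ref{lem:time reversal partition function LG}.
\item It gives the reflected identity $\int_{\RR^2}\Psi_\alpha(\mu/\lambda)\Psi_{(z,-z)}(\lambda)\dd\lambda=\Gamma(\alpha\pm z)\Psi_{(z,-z)}(\mu)$.
\item Use diagonal translation invariance to move the constraint $\lambda^0_2=0$ to the top row. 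Then integrate the rows from the bottom up with the reflected identity and finish with \eqref{eq:Whittaker Cauchy}. This yields $\int_\RR Q^v_M(l)\Psi_{(z,-z)}(l,0)\dd l=\Gamma(v\pm z)\Gamma(\alpha\pm z)^M/\Gamma(\alpha)^{2M}$.
\item Tonelli is legitimate here: on $i\RR$ the absolute integrand is dominated by the $z=0$ one, whose integral is $\Gamma(v)^2<\infty$.
\item Plancherel inversion then returns $\int_{i\RR}\frac{\dd z}{2\pi i}\mathcal F^v_{M,l}(z)$. That integral converges absolutely, so the $L^2$ identity holds pointwise by continuity.
\end{itemize}
Alternatively, you can keep your top-down computation if you pair an $L^2$ regularisation of $e^{-vy}$ with $y\mapsto\int_\RR\Psi_{(\alpha)^M}((y+c,c)/(l,0))\dd c$ through Parseval's identity, rather than through Fubini.

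\textbf{The clause for $v\le0$ is missing.} It requires $v\mapsto Q^v_M(l)$ to be finite and analytic on $(-\alpha,\infty)$, which follows from holomorphy of the defining integral on $\mathfrak R v>-\alpha$. This is exactly the property the paper invokes in Lemma~\ref{lem:analytic continuation of Q_M(l)}.
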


Let us specify the analytic continuation of function $Q_M^{v}(l)$. 

\subsubsection{Analytic continuation of   $Q_M^v(y)$}
Compared with  the GLPP case in Section \ref{subsubsec:analytic continuation of q}, the arguments in this section may appear more technical due to the existence of multiple poles, which requires repeated uses of the the identity theorem for analytic functions. We emphasize that the proof strategies for both models remain completely in parallel.

Recall the definition of function $Q_M^v(y)$ in
\eqref{eq:def of function H and Q(l)} and the function $\mathcal F_{M,l}^{v}(z)$ \eqref{eq:F^v_M(z) LG}. For $\a>0$ and  integers $0\leq k<\a$, let us denote
\be
R_{M,l}^v(k)
=\frac{v+k}
 {k! \ \Gamma(1-2v-k)}
 \Psi_{(v+k,-v-k)}(l,0)
 \left(\frac{\Gamma(\alpha\pm(v+k))}{\Gamma(\alpha)^2}\right)^M. 
\ee 

\begin{lemma}\label{lem:analytic continuation of Q_M(l)}
For $\alpha>0$, $M\geq1$ and $l\in\RR$. Let $n\geq0$ be an integer
with $n<\alpha$, and fix  constant $c$ such that 
\be\label{eq:condition on c}
0<c<\min\{1/4,(\alpha-n)/2\}.
\ee
Assume that for all $v\in (-n,-n+\frac{c}{2})$, we have 
\be\label{eq:Q before translation}
Q_M^v(l)=\int_{i\RR}\frac{\d z}{2\pi i}\mathcal F_{M,l}^{v}(z)
-2\sum_{k=0}^{n-1}R_{M,l}^v(k).
\ee
Then, for all $v\in (-n-\frac{c}{2},-n+\frac{c}{2})$, we have 
\be\label{eq:local analytic continuation of Q}
Q_M^v(l)=\int_{c+i\RR}\frac{\d z}{2\pi i}\mathcal F_{M,l}^{v}(z)
-R_{M,l}^v(n)
-2\sum_{k=0}^{n-1}R_{M,l}^v(k).
\ee
\end{lemma}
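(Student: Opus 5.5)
The plan is to derive \eqref{eq:local analytic continuation of Q} from \eqref{eq:Q before translation} in two steps. First, for $v\in(-n,-n+\frac c2)$, shift the contour from $i\RR$ to $c+i\RR$ and pick up the one residue that is crossed. Second, extend the resulting identity to the full interval $(-n-\frac c2,-n+\frac c2)$ by uniqueness of analytic continuation, in the same way as in Lemma \ref{lem:analytic continuation of q_M(l)} for the GLPP case.

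\textbf{Step 1 (contour shift).} Fix $v\in(-n,-n+\frac c2)$. The poles of $z\mapsto \mathcal F^v_{M,l}(z)$ come from $\Gamma(v\pm z)$ and $\Gamma(\alpha\pm z)$. The function $\Psi_{(z,-z)}(l,0)=2K_{2z}(2e^{-l/2})$ is entire in $z$, and $1/\Gamma(\pm 2z)$ is entire.
\begin{itemize}
\item The poles of $\Gamma(\alpha\pm z)$ sit at $|\Re z|\geq \alpha>c$, so they lie outside the strip $0\leq \Re z\leq c$.
\item The poles of $\Gamma(v-z)$ are at $z=v+k$, $k\geq0$. For $k<n$ we have $v+k<0$. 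For $k=n$ the pole $v+n\in(0,\frac c2)$ lies strictly inside the strip. For $k>n$ we have $v+k>1-\frac c2>c$.
\item The poles of $\Gamma(v+z)$ are at $z=-v-k$. For $k\leq n-1$ these satisfy $-v-k\geq 1-\frac c2>c$. For $k\geq n$ they are negative.
\end{itemize}
Hence exactly one pole, $z=v+n$, lies between $i\RR$ and $c+i\RR$. By the residue theorem applied to rectangles truncated at height $\pm T$, with the orientation giving $\int_{i\RR}-\int_{c+i\RR}=-\Res_{z=v+n}\mathcal F^v_{M,l}$, it remains to compute this residue. Using $\Res_{z=v+n}\Gamma(v-z)=-(-1)^n/n!$ together with $\Gamma(2v+n)$ and the reflection formula applied to $\Gamma(-2z)\Gamma(2z)$ at $z=v+n$, I expect $\Res_{z=v+n}\mathcal F^v_{M,l}=R^v_{M,l}(n)$. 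This is a routine but sign-sensitive check, and it yields \eqref{eq:local analytic continuation of Q} on $(-n,-n+\frac c2)$.

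\textbf{Main obstacle: decay on the horizontal segments.} The step I expect to need the most care is showing that the horizontal segments vanish as $T\to\infty$, and more generally that $\mathcal F^v_{M,l}$ is integrable, uniformly for $\Re z\in[0,c]$ and $v$ in compact sets. I would use Stirling's formula. It gives $|\Gamma(x+iy)|\asymp |y|^{x-1/2}e^{-\pi|y|/2}$. It follows that:
\begin{itemize}
\item $\Gamma(v\pm z)$ contributes a factor $e^{-\pi|y|}$;
\item $1/\Gamma(\pm2z)$ contributes $e^{+2\pi|y|}$;
\item $\Gamma(\alpha\pm z)^M$ contributes $e^{-\pi M|y|}$;
\end{itemize}
each up to polynomial factors. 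For the Bessel factor I would use the bound $|K_{2z}(x)|\lesssim C(x)\,|y|^{C}e^{-\pi|y|}$ for large imaginary order, uniformly for $\Re z$ bounded; this follows from the integral $K_\nu(x)=\int_0^\infty e^{-x\cosh t}\cosh(\nu t)\,dt$ after rotating the $t$-contour. Multiplying these, the integrand is $O(|y|^{C}e^{-\pi M|y|})$ with $M\geq1$. This is integrable, and the bound is locally uniform in $v$.

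\textbf{Step 2 (analytic continuation).} Both sides of \eqref{eq:local analytic continuation of Q} are real analytic in $v\in(-n-\frac c2,-n+\frac c2)$.
\begin{itemize}
\item The left side is real analytic by Proposition \ref{prop:prop:barraquands integral formula for Q}.
\item For the contour integral on the right: for $v$ in a small complex neighbourhood of this interval, no pole $\pm(v+k)$ meets the line $\Re z=c$. Indeed $|\Re(v+n)|<\frac c2$, and all other poles of $\Gamma(v\pm z)$ and $\Gamma(\alpha\pm z)$ stay at distance bounded below from that line, by \eqref{eq:condition on c}. So the uniform domination above and holomorphic dependence on parameters (\cite[Theorem 1, Section 16]{SidorovFedoriukShabunin}) give holomorphy in $v$.
\item Each $R^v_{M,l}(k)$, $0\leq k\leq n$, is analytic: $1/\Gamma(1-2v-k)$ is entire, $w\mapsto\Psi_{(w,-w)}(l,0)$ is entire, and $\Gamma(\alpha\pm(v+k))$ is analytic since $\alpha-|v+k|>0$ for $k\leq n<\alpha$ and $v$ near $-n$.
\end{itemize}
Since the two sides agree on the subinterval $(-n,-n+\frac c2)$ by Step 1, the identity theorem extends the equality to all of $(-n-\frac c2,-n+\frac c2)$.
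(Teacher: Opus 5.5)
Your proposal is correct and follows the paper's proof. You apply the residue theorem on $(-n,-n+\frac c2)$, where the only pole crossed is $z=v+n$ and $\Res_{z=v+n}\mathcal F^v_{M,l}=R^v_{M,l}(n)$, and then extend to $(-n-\frac c2,-n+\frac c2)$ by the identity theorem, after checking that the shifted contour integral and the terms $R^v_{M,l}(k)$, $0\leq k\leq n$, are holomorphic in $v$; your explicit residue computation and your Stirling-plus-Bessel decay bound (which really needs the $e^{-\pi|y|}$ decay of $K_{2z}$ when $M=1$) supply details of the contour shift and of the parameter-dependence argument that the paper leaves implicit.
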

\begin{proof}
 Recall the contour integral representation of the function $ Q_M^{v}(l)$ \eqref{eq:integral representation of Q_M(l)}. Since $1/\Gamma$ and $\Psi$ are entire on $\C$, for $|\mathfrak R z|<\a$, the only possible poles 
 $ z= \pm(k+v), \ k\in\ZZ_{\geq 0}$
 come from the factors $\Gamma(v\pm z)$. By the choice of $c$ \eqref{eq:condition on c}, for $v\in(-n,-n+\frac{c}{2})$ and $0\leq k< n$, we have
$$
\mathfrak R (v+n)\in (0,c),\quad \mathfrak R(-v-n)<0,\quad 
\mathfrak R (v+k)<-1+\frac{c}{2}<0, \quad \mathfrak R (-v-k)>1-\frac{c}{2}>c. 
$$ 
Recall the hypothesis \eqref{eq:Q before translation},  \eqref{eq:local analytic continuation of Q} holds on $(-n,-n+\frac{c}{2})$ by   residue theorem with the fact that 
$ R_{M,l}^v(k)=\Res_{z=v+k}\mathcal F_{M,l}^{v}(z)
$,  
see Figure \ref{fig:contour shift LG} for illustration. 

Since $Q_M^v(l)$ is analytic on $(-\a,\infty)$ by Proposition \ref{prop:prop:barraquands integral formula for Q}, it remains to  justify the two terms on the right hand side of \eqref{eq:local analytic continuation of Q} are analytic for $v\in (-n- \frac{c}{2},-n+\frac{c}{2})$. 
The residue term is analytic since there is no pole coming from the Gamma function on the numerator:  for $0\leq k\leq n$, we have 
$|\mathfrak R (v+k)|\leq n-k+|\mathfrak R (v+n)|< n+\frac{c}{2}<\a$.
To argue the real analicity of function  $v\mapsto  \int_{c+i\RR}\frac{\d z}{2\pi i}\mathcal F_{M,l}^{v}(z)$ 
for   $v\in (-n-\frac{c}{2},-n+\frac{c}{2})$, it is convenient to regard it as a comlex function and  prove it is holomorphic in the open strip  
$$
\mathbb S_{n,c}
=
\left\{
v\in\mathbb C: \mathfrak R v \in  \left(-n-\frac{c}{2},-n+\frac{c}{2}\right)
\right\}\subset\C.
$$
Observe that   for each fixed $z\in c+i\RR$ , the function $v\mapsto  \int_{c+i\RR}\frac{\d z}{2\pi i}\mathcal F_{M,l}^{v}(z)$ is holomorphic in $v\in\mathbb S_{n,c}$: indeed,  for all $z\in c+i\RR$, by the choice of $c$ \eqref{eq:condition on c}, we have 
   $$\Re(v+z)\in\left(-n+\frac{c}{2},-n+\frac{3c}{2}\right),\quad \Re(v-z)\in\left(-n-\frac{3c}{2},-n-\frac{c}{2}\right).$$
   Neither of the intervals contains an integer. Hence, by holomorphic parameter dependence of contour integrals, see e.g.\cite[Lemma 1.1, Section XV]{Lang}, $v\mapsto  \int_{c+i\RR}\frac{\d z}{2\pi i}\mathcal F_{M,l}^{v}(z)$  is holomorphic on $\mathbb S_{n,c}$, and in particular real analytic on the real interval $ (-n-\frac{c}{2},-n+\frac{c}{2})$.  
\end{proof}

\begin{figure}
\centering
\begin{subfigure}[b]{0.48\textwidth}
    \centering
    \begin{tikzpicture}[>=stealth,scale=1.2]
       \draw[->] (-1.4,0)--(4.05,0) node[right] {$\mathfrak R z$};
\draw[->,thick] (0,-1.1)--(0,1.4);
\draw[->,thick,dashed] (1.65,-1.1)--(1.65,1.4);
\node[above] at (0,1.4) {$i\RR$};
\node[above] at (1.65,1.4) {$c+i\RR$};
\draw[thick,fill=white] (-0.7,0) circle (1.6pt) 
 node[below,font=\scriptsize]  {$-v-n$}; 
\draw[thick,fill=black] (0.7,0) circle (1.6pt)
node[below,font=\scriptsize] at (0.7,0) {$v+n$};
\draw[thick,fill=white] (3.1,0) circle (1.6pt) 
  node[below,font=\scriptsize]   {$1-v-n$};
\draw[->,thick] (0.16,0.75)--(1.49,0.75);
    \end{tikzpicture} 
    \caption{When $v\in(-n,-n+\frac{c}{2})$, shifting contour from $i\RR $ to $i\RR+c$ crosses the pole $z=v+n$.}
\label{fig:contour shift LG}
\end{subfigure}
\hfill
\begin{subfigure}[b]{0.48\textwidth}
    \centering
    \begin{tikzpicture}[>=stealth,scale=1.2]
        \draw[->] (-1.4,0)--(4.05,0) node[right] {$\mathfrak R z$};
\draw[->,thick,dashed] (0,-1.1)--(0,1.4);
\draw[->,thick] (1.65,-1.1)--(1.65,1.4);
\node[above] at (0,1.4) {$i\RR$};
\node[above] at (1.65,1.4) {$c+i\RR$};
\draw[thick,fill=white] (-0.7,0) circle (1.6pt) 
 node[below,font=\scriptsize] {$v+n$};
\draw[thick,fill=black] (0.7,0) circle (1.6pt)
node[below,font=\scriptsize] {$-v-n$};
\draw[thick,fill=white] (3.1,0) circle (1.6pt)  
node[below,font=\scriptsize]  {$1+v+n$};
\draw[->,thick] (1.49,0.75)--(0.16,0.75);
    \end{tikzpicture} 
    \caption{When $v\in(-n-\frac{c}{2},-n)$, shifting the contour  back to $i\RR$ crosses the pole $z=-v-n$.}
\label{fig:contour shift back LG}
\end{subfigure}
\caption{Poles crossed when shifting the  contour.} 
\end{figure}

\begin{proposition}
\label{prop:general formula for Q_M^s for all phases LG}
Fix $\alpha>0$, $M\geq1$, and $l\in\RR$. Let $v>-\alpha$. Then 
\be  \label{eq:Q_M^v for all v}
 Q_M^v(l)
 =\int_{i\RR}\frac{\dd z}{2\pi i}\,
 \mathcal F_{M,l}^{v}(z)
 +\sum_{k\geq0:\,v+k<0}
 \frac{-2(v+k)}
 {k!  \Gamma(1-2v-k)}
 \Psi_{(v+k,-v-k)}(l,0)
 \left(\frac{\Gamma(\alpha\pm(v+k))}{\Gamma(\alpha)^2}\right)^M.
\ee 
\end{proposition}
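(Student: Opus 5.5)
Write $R_{M,l}^v(k)$ as in Lemma \ref{lem:analytic continuation of Q_M(l)}. The summand in \eqref{eq:Q_M^v for all v} is exactly $-2R_{M,l}^v(k)$. The plan is an induction on $n\geq 0$ using Lemma \ref{lem:analytic continuation of Q_M(l)} to cross the points $v=-n$ one at a time. The induction hypothesis $(H_n)$ is that \eqref{eq:Q before translation} holds for all $v\in(-n,-n+1)\cap(-\alpha,\infty)$, with the convention that for $n=0$ it means \eqref{eq:Q_M^v for all v} with empty sum for $v>0$. The base case is Proposition \ref{prop:prop:barraquands integral formula for Q}. In the statement, for $v\in(-n,-n+1)$ the indices with $v+k<0$ are exactly $k=0,\dots,n-1$; for $v=-n$ they are $k<n$, and the $k=n$ term has factor $v+n=0$ anyway. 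So \eqref{eq:Q_M^v for all v} on $(-\alpha,\infty)$ is equivalent to $(H_n)$ for all $n<\alpha$, plus the endpoint values $v=-n$.

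Inductive step. Assume $(H_n)$ with $n<\alpha$. Fix $c$ as in \eqref{eq:condition on c} and apply Lemma \ref{lem:analytic continuation of Q_M(l)}. This gives \eqref{eq:local analytic continuation of Q} on $(-n-\frac c2,-n+\frac c2)$. For $v\in(-n-\frac c2,-n)$, shift the contour $c+i\RR$ back to $i\RR$ (Figure \ref{fig:contour shift back LG}). Now $v+n<0$ and $-v-n\in(0,\frac c2)$, so the only pole crossed is $z=-(v+n)$, coming from $\Gamma(v+z)$. The other poles $\pm(v+k)$ with $k\neq n$ have real parts at distance at least $1-\frac c2>c$ from $0$ on the relevant side, and the poles of $\Gamma(\alpha\pm z)$ are at $|\mathfrak R z|\geq\alpha>c$. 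One computes $\Res_{z=-(v+n)}\mathcal F^v_{M,l}=-R^v_{M,l}(n)$, using that $\mathcal F^v_{M,l}$ is even in $z$, since $\Psi_{(z,-z)}$ is symmetric in $z\mapsto -z$. Moving the contour leftward contributes minus this residue (orientation as in the GLPP case), so
\[
\int_{c+i\RR}\mathcal F=\int_{i\RR}\mathcal F-R^v_{M,l}(n).
\]
Therefore $Q_M^v(l)=\int_{i\RR}\mathcal F-2\sum_{k=0}^{n}R^v_{M,l}(k)$ on $(-n-\frac c2,-n)$.

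To extend this identity to all of $(-n-1,-n)\cap(-\alpha,\infty)$, I use the identity theorem. Both sides are real analytic there: $Q^v_M(l)$ by Proposition \ref{prop:prop:barraquands integral formula for Q}; the $i\RR$-integral because no pole $\pm(v+k)$ lies on $i\RR$ when $v\notin\ZZ$ (same holomorphic-parameter argument as in Lemma \ref{lem:analytic continuation of Q_M(l)}); and the $R$ terms because $|v+k|<\alpha$ there. This establishes $(H_{n+1})$ whenever $n+1<\alpha$, and when $n+1\geq\alpha$ it covers the final interval $(-\alpha,-n)$. At $v=-n$ itself, \eqref{eq:local analytic continuation of Q} holds and the argument at $v=-n$ is the same: $R^{-n}_{M,l}(n)=0$, and the contour can be moved from $c+i\RR$ to $i\RR$ because the double pole at $z=0$ cancels against $1/\Gamma(\pm2z)$. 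Alternatively, continuity of both sides in $v$ gives the value at $v=-n$.

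The main obstacle is justifying the contour shifts on the infinite vertical lines. I need decay of $\mathcal F^v_{M,l}(z)$ as $|\mathfrak I z|\to\infty$, uniformly for $\mathfrak R z\in[0,c]$, both for the horizontal segments and for the holomorphic dependence on $v$. I would obtain it from Stirling's formula:
\[
|\Gamma(v\pm z)\Gamma(\alpha\pm z)^M/\Gamma(\pm2z)|\lesssim |\mathfrak I z|^{C}e^{-\pi M|\mathfrak I z|},
\]
together with the bound $|\Psi_{(z,-z)}(l,0)|\le \Psi_{(\mathfrak R z,-\mathfrak R z)}(l,0)$ from the integral representation. I am assuming this was already implicit in Lemma \ref{lem:analytic continuation of Q_M(l)}. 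The remaining care is the bookkeeping of signs and orientations of residues.
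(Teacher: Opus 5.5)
Your proof is the paper's proof: induction on $n$ via Lemma \ref{lem:analytic continuation of Q_M(l)}, then shifting $c+i\RR$ back to $i\RR$ across $z=-(v+n)$ with residue $-R^v_{M,l}(n)$, extending by the identity theorem, and handling $v=-n$ via $R^{-n}_{M,l}(n)=0$ and the absence of poles of $\mathcal F^{-n}_{M,l}$ in $0\le\mathfrak R z\le c$. One correction to your decay remark, which the paper leaves implicit: by Stirling the Gamma ratio alone only decays like $e^{-\pi(M-1)|\mathfrak I z|}$, because $1/\Gamma(\pm2z)$ grows like $e^{2\pi|\mathfrak I z|}$; so for $M=1$ the bound $|\Psi_{(z,-z)}(l,0)|\le\Psi_{(\mathfrak R z,-\mathfrak R z)}(l,0)$ is too lossy, and you must use the $e^{-\pi|\mathfrak I z|}$ decay of $\Psi_{(z,-z)}(l,0)=2K_{2z}(2e^{-l/2})$ in the imaginary part of the order.
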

\begin{proof}
Let us prove by induction. For $n\geq 0$, let $H_n$ be the statement  
$$\eqref{eq:Q_M^v for all v}
    \text{ holds for all } v>\max(-\a,-n).
 $$
For $n= 0$, the sum of residues is empty and $H_0$ follows by Proposition  \ref{prop:prop:barraquands integral formula for Q}. 
    
Assume that $H_{n}$ holds. Let us prove that \eqref{eq:Q_M^v for all v} holds for $ \max(-\a,-n-1)<v\leq- n$. 
Let $c$ be chosen according to \eqref{eq:condition on c}. 
Then  \eqref{eq:local analytic continuation of Q} holds for 
$v\in (-n-\frac{c}{2},-n+\frac{c}{2})$ 
by induction hypothesis and Lemma \ref{lem:analytic continuation of Q_M(l)}, since 
 $ (-n,-n+\frac{c}{2})\subset (\max(-\a,-n),\infty)$.

When $v\in(-n-\frac{c}{2},-n)\subset(-n-\frac{c}{2},-n+\frac{c}{2})$, shifting the contour from $c+i\RR$ back to $i\RR$ crosses the pole at $z=-v-n$, see Figure \ref{fig:contour shift back LG} for illustration. Hence \eqref{eq:Q_M^v for all v} follows by the residue theorem, with the fact that $\Res_{z=-v-n}\mathcal F_{M,l}^{v}(z)=-R_{M,l}^v(n)$.  
    To extend \eqref{eq:Q_M^v for all v} to the desired interval $ (\max(-\a,-n-1),-n)$,  we again observe that both the integral term and residue term in \eqref{eq:Q_M^v for all v} are holomorphic on the same interval. 

It remains to prove  \eqref{eq:Q_M^v for all v} for     $v=-n$. By \eqref{eq:local analytic continuation of Q}, it suffices to prove that 
    $$
    \int_{c+i\RR}\frac{\d z}{2\pi i}\mathcal F_{M,l}^{ -n }(z)
-R_{M,l}^{-n}(n)
-2\sum_{k=0}^{n-1}R_{M,l}^{-n}(k)=\int_{i\RR}\frac{\dd z}{2\pi i}\,
 \mathcal F_{M,l}^{-n}(z)
-2\sum_{k=0}^{n}R_{M,l}^{-n}(k).
    $$
Recall the function $ \mathcal F_{M,l}^{v}(z)$ \eqref{eq:F^v_M(z) LG}, when $v=-n$ we have
$$
\frac{\Gamma(-n\pm z)}{2\Gamma(\pm2z)}
=\frac{2\cos(\pi z)}{\prod_{j=1}^n(j^2-z^2)}, 
$$
whose poles are $\{\pm1,\ldots, \pm n\}$. This implies that  $z\mapsto \mathcal F_{M,l}^{-n}(z)$ is holomorphic in a domain slightly larger than $\{ z:0\leq \mathfrak Rz\leq c \}$. Hence we can shift the contour back to $i\RR$ without changing the value of the integral. We conclude by noticing that $R_{M,l}^{-n}(n)=0$.   The induction is complete.   
\end{proof}

As a side result, we explain how the analytic continuation of the function $Q^v_M(l)$ can be used to   deduce that of   $\mathscr{K}_{M}^{\alpha,u,v}$ in the fan region.
\begin{corollary} 
\label{cor:analytic continuation for partition function LG}
For all parameters $\a,u,v$ satisfying  \eqref{eq:parameter domain LG} and $u+v>0$,  we have for all $M\geq 1$:
\begin{multline*}
    \mathscr{K}_{M}^{\alpha,u,v}=\frac{1}{\Gamma(u+v)\left(\Gamma(\a+u)\Gamma(\a+v)\right)^M}
    \Biggl(
\int_{i\RR}\frac{\dd z}{2\pi i}\frac{\Gamma(u\pm z)\Gamma(v\pm z)}{2\Gamma( \pm 2z)}\Gamma(\a\pm z)^{M}\\ +\mathbb{D}_M(\a,u,v)+\mathbb{D}_M(\a,v,u) 
    \Biggl), 
\end{multline*} 
where 
$$\mathbb{D}_M(\a,u,v)=\sum_{k\geq0:\,v+k<0}\frac{-2(v+k)}{k!  \Gamma(1-2v-k)}
 \Gamma(u\pm (v+k))\Gamma\left(\alpha\pm(v+k)\right)^M.$$
\end{corollary}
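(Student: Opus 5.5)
We start from Lemma \ref{lem:Relation K Q} with $a=0$, $\xi=1$, which gives
\[
\mathscr{K}_{M}^{\alpha,u,v}=\frac{\Gamma(\alpha)^{2M}}{\Gamma(u+v)\big(\Gamma(\alpha+u)\Gamma(\alpha+v)\big)^{M}}\int_\RR e^{-ul}Q_M^{v}(l)\dd l .
\]
We then substitute the representation of $Q_M^v(l)$ from Proposition \ref{prop:general formula for Q_M^s for all phases LG}. This representation is valid for all $v>-\alpha$, so it covers the whole fan region. The factor $\Gamma(\alpha)^{2M}$ cancels against the $\Gamma(\alpha)^{-2M}$ in $\mathcal F_{M,l}^v$ and in the residue terms.

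\textbf{Case $u>0$.} The integrals in $l$ can be computed term by term using the Cauchy identity \eqref{eq:Whittaker Cauchy} with $\theta=u$. The contour term produces
\[
\int_\RR e^{-ul}\Psi_{(z,-z)}(l,0)\dd l=\Gamma(u\pm z),
\]
valid on $z\in i\RR$ since $u>|\mathfrak R z|=0$. Each residue term with $v+k<0$ produces $\Gamma(u\pm(v+k))$. The Cauchy identity applies here because $u>|v+k|=-(v+k)$ is equivalent to $u+v+k>0$, which holds since $u+v>0$. Interchanging $\int_\RR\dd l$ with the contour integral requires Fubini. We justify it by bounding $|\Psi_{(z,-z)}(l,0)|\le\Psi_{(0,0)}(l,0)=2K_0(2e^{-l/2})$ for $z\in i\RR$; this bound is integrable against $e^{-ul}$ when $u>0$. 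We also need integrability in $z$ of $|\Gamma(v\pm z)\Gamma(\alpha\pm z)^M/\Gamma(\pm2z)|$ on $i\RR$, which follows from Stirling's formula (exponential decay for $M\ge1$). This yields the displayed formula with $\mathbb D_M(\alpha,u,v)$. Moreover $\mathbb D_M(\alpha,v,u)=0$ here, because $u>0$ means no $k$ has $u+k<0$.

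\textbf{Case $u\le0$, hence $v>0$.} The Cauchy identity with $\theta=u$ fails, so we do not integrate directly. Instead we use the symmetry \eqref{eq:boundary parameter symmetry LG}. Since $\mathscr K_M^{\alpha,u,v}$ is, up to the explicit factor $(\Gamma(\alpha+u)\Gamma(\alpha+v))^{-M}$, the normalising constant $\mathcal Z_{\Whit,M}^{\alpha,u,v}$, that factor and the prefactor $\Gamma(u+v)^{-1}$ are symmetric in $(u,v)$. Hence $\mathscr K_M^{\alpha,u,v}=\mathscr K_M^{\alpha,v,u}$. To confirm this, we check the identity $\mathscr K_M=\mathrm{const}\cdot\mathcal Z_{\Whit,M}$ by comparing \eqref{eq:relation between reweighted rw and two layer Whittaker process} with the density of $\PP_{\Whit,M}$, or we read it off from \eqref{eq:relation K q LG} together with the normalisation in $H_{0,N}$.

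We then apply the previous case with the roles of $u$ and $v$ exchanged, which is legitimate since $v>0$. This gives the contour integral, whose integrand is symmetric in $(u,v)$, plus $\mathbb D_M(\alpha,v,u)$, while $\mathbb D_M(\alpha,u,v)=0$ because $v>0$. The combined formula therefore holds in all cases.

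The main obstacle is the dominated-convergence and Fubini justification on $i\RR$. In particular we need uniform bounds on the Whittaker function $\Psi_{(z,-z)}$ for imaginary $z$ and decay of the Gamma ratio, and we must check integrability near $z=0$, where $1/\Gamma(\pm2z)$ vanishes and so causes no problem.
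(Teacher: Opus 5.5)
Your route is the paper's: Lemma \ref{lem:Relation K Q} with $a=0,\xi=1$, then the representation in Proposition \ref{prop:general formula for Q_M^s for all phases LG}, then the Cauchy identity \eqref{eq:Whittaker Cauchy} for $u>0$, and the boundary symmetry \eqref{eq:boundary parameter symmetry LG} for $u\le 0$. The paper phrases the third step via \eqref{eq:function N LG as integral of Whittaker}, which is the same identity. The one step that fails as written is your Fubini domination.

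For $z=iy$, Stirling's formula together with $1/\Gamma(\pm 2iy)=2|y|\sinh(2\pi|y|)/\pi$ gives
\[
\left|\frac{\Gamma(v\pm iy)}{2\Gamma(\pm 2iy)}\,\Gamma(\alpha\pm iy)^M\right|\asymp |y|^{2v+M(2\alpha-1)}\,e^{-\pi(M-1)|y|}.
\]
So there is exponential decay only for $M\ge 2$. At $M=1$ the integrand behaves like $|y|^{2(\alpha+v)-1}$, and since $\alpha+v>0$ it is not integrable. The crude bound $|\Psi_{(iy,-iy)}(l,0)|\le\Psi_{(0,0)}(l,0)$ discards exactly the decay you need.

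The fix is to keep some decay from the Whittaker factor. Write $\Psi_{(iy,-iy)}(l,0)=2K_{2iy}(2e^{-l/2})$ and $K_{i\nu}(x)=\frac12\int_\RR e^{-x\cosh t+i\nu t}\dd t$. Shifting $t\mapsto t+i\eta\,\mathrm{sgn}(\nu)$ with $0<\eta<\pi/2$ gives $|K_{i\nu}(x)|\le e^{-\eta|\nu|}K_0(x\cos\eta)$. Hence
\[
\int_\RR e^{-ul}\bigl|\Psi_{(iy,-iy)}(l,0)\bigr|\dd l\le (\cos\eta)^{-2u}\,\Gamma(u)^2\,e^{-2\eta|y|}.
\]
This makes the double integral finite for every $M\ge 1$. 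The same bound, with $u$ replaced by $v$, covers the swapped case.

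With this replacement your argument is complete. Note that the paper itself only invokes Fubini without details, and its own bounds on the integrand are stated only for $M\ge 5$.
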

\begin{proof}
When    $u+v>0$ with $u>0$, $\mathbb{D}_M(\a,v,u)=0$. Corollary \ref{cor:analytic continuation for partition function LG} follows from Proposition \ref{prop:general formula for Q_M^s for all phases LG} by applying Lemma \ref{lem:Relation K Q} with $a=0,\xi=1$ and a Fubini’s Theorem, where we integrate out the $l$ variable using \eqref{eq:function N LG as integral of Whittaker}, which holds only when $u>0$.

When   $u+v>0$ but $u\leq 0$, we have $v>0$. Corollary \ref{cor:analytic continuation for partition function LG} follows from the time reversal  Lemma \ref{lem:time reversal partition function LG} and the previous case.
\end{proof}

\begin{remark}
\label{rmk:connection with BW analytic continuation}
\cite[Section 5.2]{BrycWesolowskiLG} performs an analytic continuation for the partition  function $\mathscr{K}_{M}^{\alpha,u,v}$ (up to a factor $\left(\Gamma(\a+u)\Gamma(\a+v)\right)^M$ due to different conventions used) for all $u,v$ satisfying \eqref{eq:parameter domain LG} using a quite different method. 
We instead perform an analytic continuation for the function $Q^v_M$, following the indications in \cite{Barraquand}. Corollary \ref{cor:analytic continuation for partition function LG} coincides with the result \cite[Proposition 4.6]{BrycWesolowskiLG} in the fan region. 
It would require additional work to arrive at the analytic continuation in the shock region, which we do not pursue as we do not use it. 
\end{remark}

\subsubsection{Asymptotics of $Q_M^v(l)$ and the partition function} 
Let us first deduce the asymptotics of  $Q_M^v(l)$ using representations in Proposition \ref{prop:general formula for Q_M^s for all phases LG}. 

Fix $\alpha>0$, $l\in\RR$, and $v>-\alpha$. For $M\geq1$ and
$x\in\RR$, define
\be
 \mathcal G_{M,l}^{v}(x)
 :=\frac1{2\pi\sqrt M}
 \Psi_{(ix/\sqrt M,-ix/\sqrt M)}(l,0)
 \frac{\Gamma(v\pm ix/\sqrt M)}
 {2\Gamma(\pm2ix/\sqrt M)}
 \left(
 \frac{\Gamma(\alpha\pm ix/\sqrt M)}{\Gamma(\alpha)^2}
 \right)^M. 
\ee 

\begin{lemma} 
Let $\a>0$. There exists constants $c=c(\alpha)>0$ and $C=C(\alpha)>0$ such that the  for all $M\geq 5$, $x,l\in\RR$ and $v>-\a$.
\be\label{eq:upper bound for function G v not 0}
 \left|M^{3/2}\mathcal G_{M,l}^{v}(x)\right|
 \leq C\Gamma(v)^2\Psi_{(0,0)}(l,0)
e^{-c|x|}
 \quad  v \notin\ZZ_{\leq0},
\ee
and 
\be\label{eq:upper bound for function G v=0}
 \left|M^{1/2}\mathcal G_{M,l}^{-n}(x)\right|
 \leq \frac{C}{(n!)^2}\Psi_{(0,0)}(l,0)
e^{-c|x|}
\quad 
 v=-n\in\ZZ_{\leq0}.
\ee
\end{lemma}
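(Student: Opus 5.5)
The plan is to substitute $t=x/\sqrt M$ and bound each of the four factors of $\mathcal G_{M,l}^{v}(x)$ separately and uniformly. The bound on the Whittaker-function factor uses no decay from it. The power of $M$ and the exponential decay both come from the factor $\Gamma(\alpha\pm it)^M$, which must also absorb the exponential growth of $1/\Gamma(\pm 2it)$.

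\emph{Step 1 (Whittaker factor).} I would use $\Psi_{(it,-it)}(l,0)=2K_{2it}(2e^{-l/2})$ together with the integral representation $K_\nu(y)=\int_0^\infty e^{-y\cosh s}\cosh(\nu s)\,\dd s$. For purely imaginary $\nu$ we have $|\cosh(\nu s)|=|\cos(\Im(\nu) s)|\le 1$. This gives $|\Psi_{(it,-it)}(l,0)|\le \Psi_{(0,0)}(l,0)$ for all $t,l\in\RR$.

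\emph{Step 2 ($v$-factor).} Since $v$ is real, $\Gamma(v\pm it)=|\Gamma(v+it)|^2$. For $v>0$, the integral $\Gamma(v+it)=\int_0^\infty s^{v-1+it}e^{-s}\dd s$ gives $|\Gamma(v+it)|\le\Gamma(v)$. For $v<0$ with $v\notin\ZZ_{\le0}$, I would induct downward via $\Gamma(v+it)=\Gamma(v+1+it)/(v+it)$, using $|v+it|\ge|v|$; this keeps $|\Gamma(v+it)|\le|\Gamma(v)|$. Hence $|\Gamma(v\pm it)|\le\Gamma(v)^2$. For $v=-n$, I would instead combine this factor with $1/(2\Gamma(\pm 2z))$ as in the proof of Proposition \ref{prop:general formula for Q_M^s for all phases LG}. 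At $z=it$ this gives $2\cosh(\pi t)/\prod_{j=1}^n(j^2+t^2)$, whose modulus is at most $2\cosh(\pi t)/(n!)^2$.

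\emph{Step 3 (the $\Gamma(\pm2z)$ factor).} For $v\notin\ZZ_{\le0}$, I would use the reflection formula $|\Gamma(iy)|^2=\pi/(y\sinh \pi y)$. This gives $1/|\Gamma(\pm 2it)|=2t\sinh(2\pi t)/\pi\le C t^2e^{2\pi|t|}$. This is the source of the extra $t^2=x^2/M$, and it is why the exponent is $3/2$ rather than $1/2$.

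\emph{Step 4 (main factor, the key step).} I would write $|\Gamma(\alpha+it)/\Gamma(\alpha)|^2=\prod_{k\ge0}(1+t^2/(\alpha+k)^2)^{-1}$. From this I would derive two bounds.
\begin{itemize}
\item For $|t|\le1$: $|\Gamma(\alpha+it)/\Gamma(\alpha)|^{2}\le e^{-c_1t^2}$.
\item For $|t|\ge1$: $|\Gamma(\alpha+it)/\Gamma(\alpha)|^{2}\le C_\alpha(1+|t|)^{\kappa}e^{-\pi|t|}$ for some $\kappa=\kappa(\alpha)$, which is Stirling with uniform constants. Additionally, the ratio is at most some $\eta(\alpha)<1$ for $|t|\ge1$.
\end{itemize}
On $|t|\le1$, the product of the $M$-th power with $e^{2\pi|t|}$ (or $\cosh\pi t$) is at most $C e^{-c_1x^2}$. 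On $|t|\ge1$, I would spend five of the $M\ge5$ factors to beat $e^{2\pi|t|}$ and the polynomial growth, since $e^{-5\pi|t|+2\pi|t|}$ still decays. The remaining $M-5$ factors give $\eta^{M-5}$, which controls $x^2=Mt^2$ through $\sup_M M\eta^{M}<\infty$ after using $t^2e^{-3\pi|t|}\le C$. Collecting the bounds yields $C e^{-c|x|}$. The decisive point is exactly this region $|t|\ge 1$: the growth $e^{2\pi|t|}$ from $1/\Gamma(\pm2it)$ must be beaten, which is where $M\ge5$ enters. I expect checking the uniform Stirling constant there to be the main obstacle.

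\emph{Step 5 (assembly).} For $v\notin\ZZ_{\le0}$, we have $M^{3/2}\cdot\frac{1}{2\pi\sqrt M}\cdot t^2=\frac{x^2}{2\pi}$. Since $x^2e^{-c_1x^2}\le Ce^{-c|x|}$ and $x^2e^{-c|x|}\le Ce^{-c|x|/2}$, this gives \eqref{eq:upper bound for function G v not 0}. For $v=-n$, the prefactor $M^{1/2}\cdot\frac{1}{2\pi\sqrt M}$ is constant and no $t^2$ appears, which gives \eqref{eq:upper bound for function G v=0}.
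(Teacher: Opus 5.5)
Your Steps 1--3 are correct and coincide with the paper's factor bounds. The gap is in the assembly on the region $|t|\ge 1$ in Step 4. Write $\varphi(t)=\Gamma(\alpha\pm it)/\Gamma(\alpha)^2$. Spending five factors via Stirling and bounding the remaining $M-5$ only by $\eta$ gives
\[
M t^2 e^{2\pi|t|}\varphi(t)^M\le C\, M\eta^{M-5}\, t^2(1+|t|)^{5\kappa}e^{-3\pi|t|}.
\]
This is bounded, and it decays in $M$ and in $|t|$ separately. It is not $O(e^{-c|x|})$ with $|x|=\sqrt M\,|t|$, however. Along $|t|=M\to\infty$ your right-hand side is $e^{-O(M)}$, while the target $e^{-c|x|}$ is $e^{-cM^{3/2}}$. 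The same loss occurs in the case $v=-n$, where $\cosh(\pi t)$ replaces $t^2e^{2\pi|t|}$.

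The underlying reason is that the constant $\eta$ only produces decay $e^{-cM}$, and $M$ is much smaller than $|x|$ once $|t|>\sqrt M$. For large $|t|$ you need a per-factor bound that is exponentially small in $|t|$, applied to a number of factors proportional to $M$. So the decisive point is not the uniformity of the Stirling constant you flagged, but making the exponent scale like $M\min(t^2,|t|)$.

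Your ingredients suffice for the repair. Using $\varphi\le\eta$ on $[1,T_0]$ and Stirling on $[T_0,\infty)$ gives $\varphi(t)\le e^{-c_2|t|}$ for all $|t|\ge 1$. Three Stirling factors already absorb $e^{2\pi|t|}$, since $\varphi(t)^3e^{2\pi|t|}\le C(1+|t|)^{3\kappa}e^{-\pi|t|}\le C$. The remaining $M-3\ge 2M/5$ factors then give $e^{-\frac{2c_2}{5}M|t|}\le e^{-\frac{2c_2}{5}|x|}$, because $M|t|=\sqrt M\,|x|\ge |x|$. This also swallows the factor $x^2=Mt^2$.

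The paper avoids Stirling and the split at $|t|=1$ altogether. From the product formula, $\log(1+u)\ge u/(1+u)$, and comparison of the sum with an integral, it obtains
\[
\log\bigl(\Gamma(\alpha)^2/\Gamma(\alpha\pm iy)\bigr)\ge |y|\arctan(|y|/\alpha)\ge \max\bigl(\tfrac{\pi}{2}|y|-\alpha,\ \tfrac{y^2}{\alpha+|y|}\bigr).
\]
Four factors with the first bound cancel $e^{2\pi|y|}$ at cost $e^{4\alpha}$. The other $M-4\ge M/5$ factors with the second bound give $e^{-x^2/(5(\alpha+|x|))}\le e^{\alpha/5}e^{-|x|/5}$.
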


\begin{proof}
For  $\a>0, y\in\RR $, we have 
\begin{multline*}
    \log\Big(\frac{\Gamma(\a)^2}{\Gamma(\a\pm iy)}\Big)
    =\sum_{k\geq 0}\log \Big(1+\frac{y^2}{(\a+k)^2}\Big)\geq \sum_{k\geq 0}\frac{y^2}{(\a+k)^2+y^2}\geq \\
    \int_{\RR_+}\d s \frac{y^2}{(\a+s)^2+y^2}=|y|\arctan(\frac{|y|}{\a})\geq \max\left( \frac{\pi}{2}|y|-\a,  \frac{y^2}{\a+|y|}
 \right),
\end{multline*}
for the first equality, see e.g.   
\cite[\href{http://dlmf.nist.gov/5.8.E3}{(5.8.3)}]{NIST:DLMF}, and we use respectively $\log (1+u)\geq u/(1+u)$ and $u/(1+u)\leq \arctan u\leq u$  in the second and  last inequality. Using these two lower bounds, we have for  $y=\frac{x}{\sqrt{M}}\in \RR$
\be\label{eq:some bounds}
e^{2\pi|y|}
 \left(\frac{\Gamma(\alpha\pm iy)}{\Gamma(\alpha)^2}\right)^M
 \leq e^{4\alpha}e^{-(M-4)\frac{y^2}{\alpha+|y|}} 
 \leq e^{4\alpha}e^{- \frac{x^2}{5(\alpha+|x|)}} 
 \leq e^{21\alpha/5}e^{-|x|/5},
\ee
where we used $M-4\geq M/5$, $|y|\leq|x|$ in the second inequality and  
$\frac{x^2}{\alpha+|x|} \geq|x|-\alpha$ in the last inequality.

For all $x,y\in\RR $ and $ \la,\nu\in\RR^2$, we have 
$
|\Gamma(x+iy)|\leq |\Gamma(x)|, 
|\Psi_{\la+i\nu}(x,y)|\leq |\Psi_{\la}(x,y)|,
$
 and 
 $$ 
\frac{1}{2\Gamma(\pm 2iy)} =\frac{|y|\sinh(2\pi  |y|)}{\pi}  \leq 2y^2e^{2\pi|y|}, \quad  
\frac{\Gamma(-n\pm iy)}{2\Gamma(\pm2iy)}
  =\frac{2\cosh(\pi y)}{\prod_{j=1}^n(j^2+y^2)}
 \leq\frac{2e^{2\pi|y|}}{(n!)^2},
 $$
 These bounds together with inequality  \eqref{eq:some bounds} prove desired bounds \eqref{eq:upper bound for function G v not 0} and \eqref{eq:upper bound for function G v=0}. 
\end{proof}
  
\begin{proposition} 
Let $\alpha>0$. Then   
\begin{itemize}
\item For $v>0$ and    all $l\in\RR$,  there exists $C=C(\alpha,v)>0$ such that
\be
 |Q_M^{v}(l)|
 \leq\frac{C \Psi_{(0,0)}(l,0)}{M^{3/2}}, 
 \quad \forall M\geq 5.  
 \label{eq:upper bound of Q for v>0}
\ee
Moreover, as $M\to\infty$, 
\be
 Q_M^{v}(l)
 \sim\frac{\Gamma(v)^2\Psi_{(0,0)}(l,0)}
 {2\sqrt\pi(\psi_1(\alpha)M)^{3/2}}.
 \label{eq:asymptotics of Q^s_M(l) for v>0}
\ee

\item For $v=0$ and   $l\in\RR$, there exists $C=C(\alpha)>0$  such that, 
\be
 |Q_M^{0}(l)|
 \leq\frac{C \Psi_{(0,0)}(l,0)}{M^{1/2}},
  \quad \forall M\geq 5.
 \label{eq:upper bound of Q for v=0}
\ee
Moreover, as $M\to\infty$,  
\be \label{eq:asymptotics of Q^s_M(l) for v=0}
 Q_M^{0}(l)
 \sim\frac{\Psi_{(0,0)}(l,0)}{\sqrt{\pi\psi_1(\alpha)M}}.
\ee

\item For $-\alpha<v<0$, 
there exists constant $C =C (\alpha,v)>0$ such that for all $l\in\RR$,
\be\label{eq:upper bound of Q for v<0}
 |Q_M^{v}(l)|
 \leq C  \Psi_{(v,-v)}(l,0)
 \left(\frac{\Gamma(\alpha\pm v)}{\Gamma(\alpha)^2}\right)^M,
  \quad \forall M\geq 5. 
\ee
Moreover, as $M\to\infty$,   
\be \label{eq:asymptotics of Q^s_M(l) for v<0}
 Q_M^{v}(l)
 \sim\frac{\Psi_{(v,-v)}(l,0)}{\Gamma(-2v)}
 \left(\frac{\Gamma(\alpha\pm v)}{\Gamma(\alpha)^2}\right)^M.
\ee
\end{itemize}
\end{proposition}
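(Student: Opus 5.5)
The plan mirrors the proof of the GLPP analogue. Start from the representation of Proposition \ref{prop:general formula for Q_M^s for all phases LG}: $Q_M^v(l)$ equals the integral of $\mathcal F^v_{M,l}$ over $i\RR$, plus, when $v<0$, a finite sum of residue terms indexed by $k\geq0$ with $v+k<0$. Substituting $z=ix/\sqrt M$ turns the contour integral into $\int_\RR \mathcal G^v_{M,l}(x)\dd x$. The preceding lemma gives the domination bounds \eqref{eq:upper bound for function G v not 0} and \eqref{eq:upper bound for function G v=0}. Integrating them in $x$ yields the upper bounds \eqref{eq:upper bound of Q for v>0} and \eqref{eq:upper bound of Q for v=0} directly, because for $v\geq0$ no residue terms appear.

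For the asymptotics I would compute pointwise limits and then apply dominated convergence.
\begin{itemize}
\item Expansion of the Gamma factor: $\log\Gamma(\alpha\pm iy)-2\log\Gamma(\alpha)=-\psi_1(\alpha)y^2+O(y^4)$, so $\bigl(\Gamma(\alpha\pm ix/\sqrt M)/\Gamma(\alpha)^2\bigr)^M\to e^{-\psi_1(\alpha)x^2}$.
\item Whittaker factor: by continuity, $\Psi_{(ix/\sqrt M,-ix/\sqrt M)}(l,0)\to\Psi_{(0,0)}(l,0)$.
\item Reflection factor: $1/(2\Gamma(\pm2iy))=|y|\sinh(2\pi|y|)/\pi\sim 2y^2$.
\end{itemize}
For $v>0$ one has $\Gamma(v\pm iy)\to\Gamma(v)^2$. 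Then $M^{3/2}\mathcal G^v_{M,l}(x)\to\frac{1}{2\pi}\Gamma(v)^2\Psi_{(0,0)}(l,0)\,2x^2e^{-\psi_1(\alpha)x^2}$. Since $\int x^2e^{-\psi_1 x^2}\dd x=\frac{\sqrt\pi}{2}\psi_1^{-3/2}$, this gives \eqref{eq:asymptotics of Q^s_M(l) for v>0}. For $v=0$ the factor $\Gamma(\pm iy)/(2\Gamma(\pm2iy))=2\cosh(\pi y)\to2$, so $M^{1/2}\mathcal G^0_{M,l}\to\frac{1}{\pi}\Psi_{(0,0)}(l,0)e^{-\psi_1x^2}$, which integrates to \eqref{eq:asymptotics of Q^s_M(l) for v=0}.

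For $-\alpha<v<0$, I would bound the integral term by $O(M^{-3/2})$ or $O(M^{-1/2})$, using the lemma in the non-integer and integer cases respectively. The residue terms are indexed by $k$ with $v+k<0$, i.e.\ $k<-v$, and each has base $\Gamma(\alpha\pm(v+k))/\Gamma(\alpha)^2$. Since $t\mapsto\Gamma(\alpha+t)\Gamma(\alpha-t)$ is even, log-convex and strictly increasing in $|t|$, with value $\Gamma(\alpha)^2$ at $t=0$, the term $k=0$ strictly dominates the others exponentially and exceeds $1$. That term is $\frac{-2v}{\Gamma(1-2v)}\Psi_{(v,-v)}(l,0)(\cdots)^M=\frac{\Psi_{(v,-v)}(l,0)}{\Gamma(-2v)}(\cdots)^M$, which is \eqref{eq:asymptotics of Q^s_M(l) for v<0}.

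The main obstacle is the uniform bound \eqref{eq:upper bound of Q for v<0}, which must hold for all $l$ with the single factor $\Psi_{(v,-v)}(l,0)$. The integral term carries $\Psi_{(0,0)}(l,0)$, and the terms $k\geq1$ carry $\Psi_{(v+k,-v-k)}(l,0)$. I would compare both with $\Psi_{(v,-v)}$ through the K-Bessel form $\Psi_{(z,-z)}(l,0)=2K_{2z}(2e^{-l/2})$. Since $K_\nu(x)$ is even in $\nu$ and increasing in $|\nu|$, we get $K_0\leq K_{2v}$. Whenever $|v+k|\leq|v|$, as holds for $k$ in the range (for instance $-1<v<0$ has only $k=0$), the same monotonicity gives $K_{2(v+k)}\leq K_{2v}$. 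The prefactors $\Gamma(\alpha\pm(v+k))$ are at most the $k=0$ base, and finitely many terms absorb into $C(\alpha,v)$. If some residue index had $|v+k|>|v|$ the comparison would fail, but with $k\leq\lceil -v\rceil-1$ one has $|v+k|\leq|v|$, so the argument goes through.
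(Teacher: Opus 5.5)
Your proposal is correct and follows essentially the same route as the paper. Like the paper, you use the representation of Proposition \ref{prop:general formula for Q_M^s for all phases LG}, the substitution $z=ix/\sqrt M$, the domination lemma with dominated convergence and a Gaussian integral for $v\geq 0$, and exponential dominance of the $k=0$ residue via strict log-convexity of $\Gamma$ for $v<0$. Your comparison $K_0\leq K_{2(v+k)}\leq K_{2v}$, from monotonicity of $K_\nu$ in $|\nu|$, is a useful addition: it makes explicit the uniform-in-$l$ bound \eqref{eq:upper bound of Q for v<0}, which the paper only asserts.
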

\begin{proof}
    For $v>-\a$, using the change of variables $z=ix/\sqrt M$  we have 
\be
 \int_{i\RR}\frac{\dd z}{2\pi i}
 \mathcal F_{M,l}^{v}(z)  
 =\int_\RR\mathcal G_{M,l}^{v}(x)\dd x. 
\ee 

When $v>0$, the only contribution in \eqref{eq:Q_M^v for all v} is the integral term. Hence \eqref{eq:upper bound of Q for v>0} follows directly by inequality \eqref{eq:upper bound for function G v not 0}. Observe that when $v\notin\ZZ_{\leq 0}$, we have pointwise convergence for all $x\in\RR $
$$
M^{3/2}\mathcal G_{M,l}^{v}(x)\xrightarrow[M\to\infty]{}
\frac{\Gamma(v)^2}{\pi}\Psi_{(0,0)}(l,0)x^2e^{-\psi_1(\a)x^2}.
$$
Applying the dominated convergence theorem using bound \eqref{eq:upper bound for function G v not 0}, the asymptotics   \eqref{eq:asymptotics of Q^s_M(l) for v>0} follows by a Gaussian integral.

When $v=0$, the only contribution in \eqref{eq:Q_M^v for all v} is the integral term since the residue term is $0$. Hence \eqref{eq:upper bound of Q for v=0} follows directly by inequality \eqref{eq:upper bound for function G v=0}. Observe that when $v\in\ZZ_{\leq 0}$, we have pointwise convergence for all $x\in\RR $
$$
M^{1/2}\mathcal G_{M,l}^{-n}(x)\xrightarrow[M\to\infty]{}
\frac{1}{\pi(n!)^2}\Psi_{(0,0)}(l,0)e^{-\psi_1(\a)x^2}.
$$
Applying the dominated convergence theorem using the bound \eqref{eq:upper bound for function G v=0}, the asymptotics   \eqref{eq:asymptotics of Q^s_M(l) for v=0} follows by a Gaussian integral.

When $v<0$, the integral term is $O(M^{-3/2})$ or $O(M^{-1/2})$ using bounds \eqref{eq:upper bound for function G v not 0} or \eqref{eq:upper bound for function G v=0},   hence  the contribution in  \eqref{eq:Q_M^v for all v} is dominated by the largest residue term which grows exponentially due to the strict convexity of the function $\log\Gamma$ .  
\end{proof}
 
We are ready to deduce the asymptotics of
the partition function throughout the fan region.

\begin{proposition}[Fan region asymptotics] 
\label{prop:fan region asymptotics LG}
Assume \eqref{eq:parameter domain LG} and  $u+v>0$. 
Fix $\mathfrak a\geq0$ and $\xi>0$.
If $u,v>0$, then \eqref{eq:asymp of K LG MC all} holds.
If $u>0$ and $v=0$, then \eqref{eq:asymp of K LG u axis all} holds.
If $u=0$ and $v>0$, then \eqref{eq:asymp of K LG v axis all} holds.
If $v<0<u+v$, then \eqref{eq:asymp of K HD LG all} holds.
If $u<0<u+v$, then \eqref{eq:asymp of K LD LG all} holds.
\end{proposition}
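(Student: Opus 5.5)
The proof mirrors Proposition \ref{prop:fan region asymptotics Geo}, with Lemma \ref{lem:Relation K Q} playing the role of Lemma \ref{lem:K and q geo}. By \eqref{eq:relation K q LG},
\[
\mathscr{K}_{M}^{\alpha,u,v}(a,\xi)=\frac{\Gamma(\alpha)^{2M}\xi^{-(u+v)}}{\Gamma(u+v)\big(\Gamma(\alpha+u)\Gamma(\alpha+v)\big)^{M}}\int_\RR e^{-ul-(a/\xi)e^{-l}}Q_M^{v}(l)\dd l,
\]
so everything reduces to the large-$M$ behaviour of the weighted integral $\int_\RR e^{-ul-(a/\xi)e^{-l}}Q_M^v(l)\dd l$.

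\textbf{Case $u>0$.} This covers $u,v>0$, $v=0<u$, and $v<0<u+v$. We use the pointwise asymptotics \eqref{eq:asymptotics of Q^s_M(l) for v>0}, \eqref{eq:asymptotics of Q^s_M(l) for v=0} and \eqref{eq:asymptotics of Q^s_M(l) for v<0}. The uniform bounds \eqref{eq:upper bound of Q for v>0}, \eqref{eq:upper bound of Q for v=0} and \eqref{eq:upper bound of Q for v<0} show that, after dividing by the respective rate, the integrand is dominated by $C e^{-ul-(a/\xi)e^{-l}}\Psi_{(v',-v')}(l,0)$, where $v'=0$ or $v'=v$.

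This dominating function must be integrable, and checking that is the main point. As $l\to-\infty$, the K-Bessel representation $\Psi_{(z,-z)}(l,0)=2K_{2z}(2e^{-l/2})$ gives double-exponential decay. As $l\to+\infty$, it behaves like $e^{|v'|l}$ (times $l$ when $v'=0$). Integrability therefore requires $u>|v'|$. When $v'=0$ this is simply $u>0$. When $v'=v<0$ it is $u>-v$, i.e. exactly $u+v>0$. So dominated convergence applies.

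The limit integral is then evaluated by \eqref{eq:function N LG as integral of Whittaker}, which is valid because $v'\in(-u,0]$. This gives $\int e^{-ul-(a/\xi)e^{-l}}\Psi_{(v',-v')}(l,0)\dd l=\Gamma(u\pm v')\xi^{u+v'}\mathscr N^{u,v'}(a,\xi)$.

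It remains to track the constants. With $\Gamma(\alpha)^{2M}$ and the rate $(\Gamma(\alpha\pm v)/\Gamma(\alpha)^2)^M$ combined, the exponential rate becomes $\varrho^M$ with $v_\star=\min\{0,v\}$. The factor $\Gamma(u)^2/\Gamma(u+v)$ together with $\Gamma(v)^2$ and $\xi^{-v}$ reproduces \eqref{eq:asymp of K LG MC all}, \eqref{eq:asymp of K LG u axis all} and \eqref{eq:asymp of K HD LG all}. In the last of these, $\Gamma(u\pm v)/(\Gamma(u+v)\Gamma(-2v))=\Gamma(u-v)/\Gamma(-2v)$.

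\textbf{Case $u\leq 0$.} Then $v>0$, and the representation above is useless because $e^{-ul}$ grows. Instead we use the time reversal Lemma \ref{lem:time reversal partition function LG}, exactly as in the Geo case.

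For $a=0$ and $\xi=1$, the symmetry \eqref{eq:boundary parameter symmetry LG} together with \eqref{eq:relation between reweighted rw and two layer Whittaker process} gives $\mathscr K_M^{\alpha,u,v}=\mathscr K_M^{\alpha,v,u}$, up to the symmetric prefactor. Since $v>0$, the roles are now swapped into the previous case.

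For general $(a,\xi)$, scaling gives $\mathscr K_M^{\alpha,u,v}(0,\xi)=\xi^{-(u+v)}\mathscr K_M^{\alpha,u,v}(0,1)$. We then control the difference $\mathscr K_M^{\alpha,u,v}(a,\xi)-\mathscr K_M^{\alpha,u,v}(0,\xi)$, which is an integral against $\big(e^{-(a/\xi)e^{-l}}-1\big)e^{-ul}Q_M^v(l)$. The factor $e^{-(a/\xi)e^{-l}}-1$ is bounded by $\min(1,(a/\xi)e^{-l})$. This kills the region $l\to+\infty$, so the integral is integrable against $\Psi_{(0,0)}$ whenever $u>-1$ on that side. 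A cleaner route is to bound $|e^{-x}-1|\leq x^{\epsilon}$ and use integrability of $e^{-(u+\epsilon)l}\Psi_{(0,0)}$ for a small $\epsilon$ with $u+\epsilon>0$. Either way, the difference is of size $M^{-3/2}\varrho_0^M$, where $\varrho_0^M$ denotes the rate attached to $Q_M^v$ for $v>0$.

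This difference is negligible next to the main term, which is of order $M^{-1/2}\varrho^M$ when $u=0$ and exponentially larger when $u<0$ (by strict convexity of $\log\Gamma$). That gives \eqref{eq:asymp of K LG v axis all} and \eqref{eq:asymp of K LD LG all}, where the $a$-dependence disappears and only $\mathscr N^{v,u}(0,\xi)$ survives.

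The main obstacle is making this negligibility estimate uniform; it is the analogue of the finite subtraction in the Geo proof.
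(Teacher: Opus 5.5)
Your argument is the paper's proof step for step: dominated convergence with the bounds on $Q_M^v$ and evaluation via \eqref{eq:function N LG as integral of Whittaker} when $u>0$; for $u\le 0$, the symmetry \eqref{eq:boundary parameter symmetry LG} for the $a=0$ part plus a subtracted $a$-correction. Your constant bookkeeping is right.

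There is, however, a genuine gap in the negligibility of the correction
\[
D_M:=\int_\RR e^{-ul}\bigl(1-e^{-(a/\xi)e^{-l}}\bigr)Q_M^v(l)\dd l
\]
when $a>0$ and $u\le -1$. This range does occur in the fan region as soon as $\alpha>1$ (e.g.\ $\alpha=2$, $u=-3/2$, $v=2$).

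With the envelope $\Psi_{(0,0)}(l,0)\sim l$, the integrand behaves like $(a/\xi)\,l\,e^{-(u+1)l}$ at $+\infty$, so it is not integrable. Your $x^{\epsilon}$ variant needs $\epsilon\in(-u,1]$, so it buys nothing beyond $u>-1$; $\epsilon$ is not small once $u<0$.

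In fact the claimed size $O(M^{-3/2})$ is false there. Since $Q_M^v\ge 0$ and $1-e^{-x}\ge c\,x$ on $[0,a/\xi]$, you get $D_M\ge c\,\frac{a}{\xi}\int_0^\infty e^{-(u+1)l}Q_M^v(l)\dd l$. By \eqref{eq:boundary parameter symmetry LG} and Lemma \ref{lem:Relation K Q} at boundary parameters $(u+1,v)$, one has $\int_\RR e^{-(u+1)l}Q_M^v(l)\dd l=\int_\RR e^{-vl}Q_M^{u+1}(l)\dd l$. This is of order $M^{-1/2}$ for $u=-1$ and grows like $(\Gamma(\alpha\pm(u+1))/\Gamma(\alpha)^2)^M$ for $u<-1$, while the part over $l<0$ is only $O(M^{-3/2})$.

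The paper's written proof asserts the same $O(M^{-3/2})$ bound and has the same hole. Also, the obstacle is not the "uniformity" you name but integrability in $l$.

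The conclusion survives, and you can repair it in either of two ways.

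\textbf{Sharper comparison.} The same comparison with $1-e^{-x}\le x$ gives
\[
D_M\le\frac{a}{\xi}\int_\RR e^{-vl}Q_M^{u+1}(l)\dd l=O\Bigl(M^{-1/2}+\bigl(\Gamma(\alpha\pm(u+1))/\Gamma(\alpha)^2\bigr)^M\Bigr)
\]
by \eqref{eq:upper bound of Q for v=0} and \eqref{eq:upper bound of Q for v<0}, using $v>|u+1|$. Since $w\mapsto\Gamma(\alpha+w)\Gamma(\alpha-w)$ is strictly increasing in $|w|$ and $|u+1|<|u|$, this is exponentially smaller than the main term $\asymp(\Gamma(\alpha\pm u)/\Gamma(\alpha)^2)^M$.

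\textbf{Cite the boundary-dominated result.} More simply, $u<0<u+v$ forces $u<\min\{0,v\}$. So Proposition \ref{prop:LD LG} (time reversal plus Dufresne) already yields \eqref{eq:asymp of K LD LG all} for every $a\ge 0$, just as Proposition \ref{prop:HD LG} covers $v<0<u+v$. The Whittaker-side subtraction is then needed only at $u=0$, where $u>-1$ and your envelope bound is fine.
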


\begin{proof}
Recall the relation between $\mathscr{K}_{M}^{\alpha,u,v}$ and
$Q_M^{v}$ in \eqref{eq:relation K q LG}.   When $u+v>0 $ and $u>0$, thanks to the upper bound \eqref{eq:upper bound of Q for v>0} \eqref{eq:upper bound of Q for v=0} and \eqref{eq:upper bound of Q for v<0}, by applying the dominated convergence theorem, the asymptocis of $\mathcal{K}_{M}^{a,r,s}(\alpha,\psi)$ from that of function  $Q_M^v(l)$ \eqref{eq:asymptotics of Q^s_M(l) for v>0}  \eqref{eq:asymptotics of Q^s_M(l) for v=0} and \eqref{eq:asymptotics of Q^s_M(l) for v<0}, where we use \eqref{eq:function N LG as integral of Whittaker} to evaluate the integral in $l$.

When $u+v>0$ and $u\leq0$,  using  the time reversal \eqref{eq:boundary parameter symmetry LG},  we have 
\begin{multline*}
    \mathscr K_M^{\alpha,u,v}(  a,\xi)
 =
 \frac{\xi^{-(u+v)}}{\Gamma(u+v)}
 \left(
 \frac{\Gamma(\alpha)^2}
 {\Gamma(\alpha+u)\Gamma(\alpha+v)}
 \right)^M \\ 
 \left(
 \int_\RR e^{-vl}Q_M^u(l)\dd l
 -
 \int_\RR e^{-ul}
 \left(1-e^{-(  a/\xi)e^{-l}}\right)
 Q_M^v(l)\dd l
 \right). 
\end{multline*}
Thanks to the upper bounds
\eqref{eq:upper bound of Q for v=0} and
\eqref{eq:upper bound of Q for v<0},  by applying the dominated convergence theorem, the asymptotics of the infinite sum follows from that of function $Q_M^v(l)$ \eqref{eq:asymptotics of Q^s_M(l) for v=0} and
\eqref{eq:asymptotics of Q^s_M(l) for v<0}.  The finite substraction is    $ O(M^{-3/2}) $  due to bound \eqref{eq:upper bound of Q for v>0}, hence negligeable compared to the  infinite sum, which is of order $M^{-1/2}$ for $u=0$ and exponentially large when $u<0$. 
\end{proof}

\subsubsection{Infinite two-layer Whittaker process} 
In this section, we show that  as $N\to\infty$, the two-layer Whittaker process  have a well defined limit, described by a family of simple stochastic processes 
$\Big(\PP^{u,v}_{\Whit,\infty}\Big)_{v\leq 0, u+v>0}$,
which we refer to as the infinite two-layer Whittaker processes. 
\begin{definition}[Infinite two-layer Whittaker processes]
Fix $\alpha>0$. For $u,v\in\RR$ such that $-\alpha<v\leq0$ and
$u+v>0$, define $\PP_{\Whit,\infty}^{\alpha,u,v}$ as the distribution of the
homogeneous Markov chain on $\RR^2$ with initial distribution and
transition kernel
\be\label{eq:ic and transition kernel of infinite two layer Whittaker process} 
\P^{u,v}(\dd\lambda)
 =e^{-u(\lambda_1-\lambda_2)}  
 \delta_0( \lambda_2)
H^{u,v}(\lambda_1-\lambda_2)\dd\lambda, \quad 
\U^{u,v}(\lambda,\dd\mu)
 =\frac{\Psi_{\a}(\mu/\lambda)}
{\Gamma(\alpha\pm v )}
\frac{H^{u,v}(\mu_1-\mu_2)}
{H^{u,v}(\lambda_1-\lambda_2)}\dd\mu, 
\ee
where 
$$
H^{u,v}(l)=\frac{\Psi_{(v,-v)}(l,0)}{\Gamma(u\pm v)}.
$$
The Cauchy and skew-Cauchy identities \eqref{eq:Whittaker Cauchy}  \eqref{eq:Whittaker skew Cauchy}
imply that $\P^{u,v}$ is a probability distribution and $\U^{u,v}$ is a
stochastic kernel.  
\end{definition}

\begin{proposition}\label{prop:two layer Whittaker converges to infinite two layer Whittaker}
Let $\a,u>0$. We have finite-dimensional distributions convergence  
For $v\geq 0$   
    \be
    \PP_{\Whit,N}^{\a,u,v}\xRightarrow[N\to\infty]{}
    \PP_{\Whit,\infty}^{\a,u,0}.
    \ee  
    For $v\in(-\min\{\a,u\},0)$,  
    \be
    \PP_{\Whit,N}^{\a,u,v}\xRightarrow[N\to\infty]{}
    \PP_{\Whit,\infty}^{\a,u,v}.
    \ee  
\end{proposition}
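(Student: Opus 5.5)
Both $\PP_{\Whit,N}^{\a,u,v}$ and $\PP_{\Whit,\infty}^{\a,u,v}$ are laws of Markov chains, so I will prove the statement by showing that the initial distribution $\P_N^{u,v}$ and each transition kernel $\U_{x,N}^{u,v}$ converge to $\P^{u,v}$ and $\U^{u,v}$ (with $v$ replaced by $0$ when $v\geq0$). Everything reduces to the limit of the ratios $H_{x,N}^{u,v}(l)$ in \eqref{eq:def of function H and Q(l)}. The argument is the Whittaker analogue of Proposition \ref{prop:two layer Schur converges to infinite two layer Schur}. To get convergence of finite-dimensional distributions, and not only pointwise convergence of densities, I will use Scheffé's lemma at each step: the limiting kernels are genuine probability densities by the Cauchy and skew-Cauchy identities \eqref{eq:Whittaker Cauchy}, \eqref{eq:Whittaker skew Cauchy}.

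\emph{Step 1: the denominator.} I need the asymptotics of $\int_\RR e^{-u\ell}Q_N^v(\ell)\dd\ell$. Since $u>0$, dominated convergence applies: the bounds \eqref{eq:upper bound of Q for v>0}, \eqref{eq:upper bound of Q for v=0}, \eqref{eq:upper bound of Q for v<0} give envelopes $e^{-u\ell}\Psi_{(0,0)}(\ell,0)$ or $e^{-u\ell}\Psi_{(v,-v)}(\ell,0)$, and these are integrable by \eqref{eq:Whittaker Cauchy} because $u>|v|$ (this is where $v>-u$ is used). The pointwise asymptotics \eqref{eq:asymptotics of Q^s_M(l) for v>0}, \eqref{eq:asymptotics of Q^s_M(l) for v=0}, \eqref{eq:asymptotics of Q^s_M(l) for v<0} then give the following. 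For $v>0$ the integral is asymptotic to $\Gamma(u)^2\Gamma(v)^2/(2\sqrt\pi(\psi_1(\a)N)^{3/2})$. For $v=0$ it is asymptotic to $\Gamma(u)^2/\sqrt{\pi\psi_1(\a)N}$. For $v<0$ it is asymptotic to $\Gamma(u\pm v)\Gamma(-2v)^{-1}\bigl(\Gamma(\a\pm v)/\Gamma(\a)^2\bigr)^N$.

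\emph{Step 2: the ratio.} Dividing the pointwise asymptotics of $Q_{N-x}^v(l)$ by Step 1, the polynomial prefactors cancel, since $(N-x)/N\to1$. For every fixed $l$ and $x$ this gives the following limits. If $v\geq0$, then $H_{x,N}^{u,v}(l)\to\Psi_{(0,0)}(l,0)/\Gamma(u)^2=H^{u,0}(l)$. If $v<0$, then $H_{x,N}^{u,v}(l)\to H^{u,v}(l)\bigl(\Gamma(\a\pm v)/\Gamma(\a)^2\bigr)^{-x}$. Substituting into \eqref{eq:initial distribution and transition kernel u,v>0}, the factor $\Gamma(\a)^{-2}$ combines with the ratio $\bigl(\Gamma(\a\pm v)/\Gamma(\a)^2\bigr)^{-(x+1)+x}$ to give exactly $\Gamma(\a\pm v)^{-1}$, which is the normalisation in \eqref{eq:ic and transition kernel of infinite two layer Whittaker process}. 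When $v\geq0$ the ratio factor is trivial and $\Gamma(\a)^{-2}=\Gamma(\a\pm0)^{-1}$. So the densities converge pointwise.

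\emph{Step 3: upgrading to finite-dimensional distributions.} The joint density of $(\la^0,\dots,\la^k)$ under $\PP_{\Whit,N}$ is a product of the initial density and $k$ kernel densities. By Step 2 it converges pointwise to the joint density under $\PP_{\Whit,\infty}$, which integrates to one by the Cauchy and skew-Cauchy identities. Scheffé's lemma then gives convergence in total variation, hence in distribution. The main point needing care is the uniformity of Step 1 in the case $v>0$. There the bound \eqref{eq:upper bound of Q for v>0} holds only for $M\geq5$, with constant $C(\a,v)$ uniform in $l$ times $\Psi_{(0,0)}(l,0)$. This suffices because $\int e^{-u\ell}\Psi_{(0,0)}(\ell,0)\dd\ell=\Gamma(u)^2<\infty$, and the finitely many $N<5$ are irrelevant.
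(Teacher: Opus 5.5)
Your proposal is correct and follows the paper's route. You reduce to convergence of the initial law and transition kernels through the ratios $H_{x,N}^{u,v}$, using the pointwise asymptotics of $Q_M^v(l)$ in each of the cases $v>0$, $v=0$, $v<0$. Your dominated-convergence treatment of the denominator $\int_\RR e^{-u\ell}Q_N^v(\ell)\dd\ell$ and the Scheffé step, which upgrades pointwise convergence of densities to convergence of finite-dimensional laws, are sound details that the paper leaves implicit.
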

\begin{proof}
    Since $\PP_{\Whit,N}^{\alpha,u,v}$ \eqref{eq:initial distribution and transition kernel u,v>0} and $\PP_{\Whit,\infty}^{\alpha,u,v}$ \eqref{eq:ic and transition kernel of infinite two layer Whittaker process} are distributions of Markov chains, it suffices to prove that the initial distribution $\P^{u,v}_{N} $ and the  transition kernel $\U_{x,N}^{u,v} $ converges to the corresponding limit for each  $x\geq0$.

When $v>0$ or $v=0$, the asymptotics of $Q_N^v(l)$ are given by
\eqref{eq:asymptotics of Q^s_M(l) for v>0} and
\eqref{eq:asymptotics of Q^s_M(l) for v=0}. Recall the definition of function  $H_{x,N}^{u,v}(l)$ in
\eqref{eq:def of function H and Q(l)}. In both cases  we have  
\be
H_{x,N}^{u,v}(l)
=\frac{Q_{N-x}^v(l)}
{\displaystyle\int_\RR e^{-u\ell}Q_N^v(\ell)\dd\ell}
\xrightarrow[N\to\infty]{}
\frac{\Psi_{(0,0)}(l,0)}{\Gamma(u)^2}=H^{u,0}(l),
\ee
As a consequence,   the initial and transition densities in
\eqref{eq:initial distribution and transition kernel u,v>0} converge to those of
$\P^{u,0}$ and $\U^0$  as $N\to\infty$. For all $\la,\mu\in\RR^2$ and all $x\geq 0$,
 
$$
\P^{u,v}_{N}(\la) 
\xrightarrow[N\to\infty]{} 
\P^{u,0}(\la),\qquad
\U_{x,N}^{u,v}(\la,\mu)
 \xrightarrow[N\to\infty]{} 
\U^{u,0}(\la,\mu),
$$
as desired.

When $v\in(-\min\{\a,u\},0)$,  the asymptotics of $Q_M^v(l)$
are given by
\eqref{eq:asymptotics of Q^s_M(l) for v<0}.  We have  
$$
H_{x,N}^{u,v}(l)
=
 \frac{Q_{N-x}^v(l)}
 {\displaystyle\int_\RR e^{-u\ell}Q_N^v(\ell)\dd\ell}
 \xrightarrow[N\to\infty]{}
 H^{u,v}(l)
 \left(
 \frac{\Gamma(\alpha\pm v) }
 {\Gamma(\alpha)^2}
 \right)^{-x}.
$$
As a consequence,  for all $\la,\mu\in\RR^2$ and all $x\geq 0$,
$$
\P^{u,v}_{N}(\la) 
\xrightarrow[N\to\infty]{} 
\P^{u,v}(\la),\qquad
\U_{x,N}^{u,v}(\la,\mu)
 \xrightarrow[N\to\infty]{} 
\U^{u,v}(\la,\mu),
$$
as desired.
\end{proof}

Recall the relation between the infinite two-layer Schur process and the Hariya--Yor process in Corollary \ref{cor:HY equals the increments of infinite two layer Schur process}, we have an analogous 
relation between the infinite two-layer Whittaker process and corresponding Hariya--Yor process. We omit its proof as it is completely parallel to the arguments in Section \ref{subsubsec:two layer Schur and HY relation} and all  proof ingredients are known in  the literature.

\begin{lemma}\cite{BarraquandCorwin}\label{lem:HY equals the increments of infinite two layer Whittaker process}
    When $u+v>0$. Let $(\la^x)_{x\geq0} \sim \PP^{\a,u,v}_{\Whit,\infty}$, we have 
    $$
\big(\la^x-\la^0\big)_{x\geq0}=\Big(\mathbb{HY}^{\a,u,v}_{\LG}(k)\Big)_{k\geq 0}.
    $$
\end{lemma}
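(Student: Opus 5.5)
We follow the geometric case, Corollary \ref{cor:HY equals the increments of infinite two layer Schur process}, step by step. The idea is to realize $\PP^{\a,u,v}_{\Whit,\infty}$ as a two-row log-gamma polymer in full space, and then read off the Hariya--Yor process from the half-space construction of \cite{BarraquandCorwin}.

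Let $\mu$ be a probability measure on $\RR_{>0}$. Consider independent weights
$$\omega_{1,1}\sim\mu,\quad \omega_{2,1}=1,\quad \omega_{2,2}\sim\Gammainv(u-v),\quad \omega_{i,1}\sim\Gammainv(\a+v),\quad \omega_{i,2}\sim\Gammainv(\a-v)\quad (i\geq3).$$
Let $Z^\mu(n,m)$ denote the point-to-point polymer partition function from $(1,1)$ to $(n,m)$, for $m=1,2$. The plan has three steps.

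\textbf{Step 1 (analogue of Lemma \ref{lem:two row LPP time and two layer HY process}).} Every up-right path passes through $(1,1)$. Hence the process
$$\Big(\log Z^\mu(k+2,2)-\log Z^\mu(2,2),\ \sum_{j=1,2}\sum_{i\leq k+2}\log\omega_{i,j}-\log Z^\mu(k+2,2)\Big)_{k\geq0}$$
does not depend on $\omega_{1,1}$. With $\mu=\delta_1$, the first coordinate is the stationary half-space construction of \cite[Theorem 1.8]{BarraquandCorwin}, namely $[\mathbb{HY}^{\a,u,v}_{\LG}]_1=[\mathscr T^{1/Y}(L_2,L_1)]_1$ with $Y=\omega_{2,2}$. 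This is the two-row geometric Pitman identity
$$\log Z(k+2,2)-\log Z(2,2)=L_1(k)+\log\bigl(1+Y^{-1}\mathscr A_k(L_2,L_1)\bigr)$$
up to the shift by $\log Y$. The second coordinate then follows from the conservation law \eqref{eq:LG conservation}.

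\textbf{Step 2 (analogue of Lemma \ref{lem:two row LPP time and infinite two layer Schur process}).} Take $\mu=\Gammainv(u+v)$, which requires $u+v>0$. The weights then form a genuine two-row full-space log-gamma array. The row parameters are $\{-v,\,v\}$, the column parameters are $\{u\}$ for the first column and $\{\a\}$ for columns $i\geq 3$, and the cell $(2,1)$ is merged into the corner. By the geometric RSK correspondence \cite{CorwinOConnellSeppalainenZygouras}, the shapes $\lambda^x=(\log Z(x+2,2)\text{-type},\dots)$ form a Whittaker process with specialization $(u,\a,\dots,\a;\,v,-v)$. We write its finite-dimensional densities via \eqref{eq:skew Whittaker branching rule}. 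We then compare them with the products of \eqref{eq:ic and transition kernel of infinite two layer Whittaker process}: the $H^{u,v}$ factors telescope, leaving
$$e^{-u\lambda_1^0}\delta_0(\lambda^0_2)\,\prod_x\Psi_\a(\lambda^{x+1}/\lambda^x)\,\Psi_{(v,-v)}(\lambda^N)\big/\bigl(\Gamma(u\pm v)\Gamma(\a\pm v)^N\bigr).$$
This is exactly the Whittaker measure, normalized by the Cauchy identity \eqref{eq:Whittaker Cauchy} and the skew Cauchy identity \eqref{eq:Whittaker skew Cauchy}. For this step, $\Psi_{(v,-v)}(l,0)=\Psi_{(-v,v)}(l,0)$ must make sense even when $v\leq0$; it does, being even in $v$.

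\textbf{Step 3.} Combine Steps 1 and 2 as in the proof of Corollary \ref{cor:HY equals the increments of infinite two layer Schur process}: $\lambda^x-\lambda^0$ is the process of Step 1, and $\lambda^0_2=0$ almost surely.

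The main obstacle is Step 2. It requires correctly matching the boundary-row conventions of the Whittaker process with the two-row polymer, in particular the sign of $v$ and the placement of $e^{-u(\lambda_1-\lambda_2)}$ versus $\omega_{1,1}$. If the direct identification is awkward, the fallback is to verify the Markov property directly: compute the conditional law of the next column of the two-row polymer given the past, using the Burke-type property of \cite{Seppalainen}, and check that it coincides with $\U^{u,v}$.
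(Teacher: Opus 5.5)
Your proposal is correct and follows the paper's route: the paper's proof just transposes the GLPP argument of Section~\ref{subsubsec:two layer Schur and HY relation}, and your Steps 1--3 are exactly that. The ingredients are the two-row polymer with $\omega_{2,1}=1$, independence from $\omega_{1,1}$, the choice $\mu=\delta_1$ giving the Hariya--Yor process, and $\mu=\Gammainv(u+v)$ giving a full-space Whittaker process via geometric RSK.

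One small slip in Step 1: write $L_1,L_2$ for the walks built from the row-one and row-two weights. The direct computation then gives $Z^{\delta_1}(k+2,2)=e^{L_2(k)}\bigl(Y+\mathscr A_k(L_1,L_2)\bigr)$. Hence $\log Z(k+2,2)-\log Z(2,2)=L_2(k)+\log\bigl(1+Y^{-1}\mathscr A_k(L_1,L_2)\bigr)=[\mathscr T^{1/Y}(L_2,L_1)]_1(k)$ exactly. This has $L_1,L_2$ swapped relative to your display, and no extra shift by $\log Y$ is needed.
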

\begin{proof}
    Follow the exactly parallel argument in Section \ref{subsubsec:two layer Schur and HY relation}.
\end{proof}

\subsection{Proof of the theorems}\label{subsec:proof of the theorems LG}

\begin{proof}[Proof of Proposition \ref{prop:Girsanov LG}]
    Assume first that $v<0$ and $u>v$.
Let $Y_*\sim\Ga(-2v)$ be independent of $(A,S)$. We have
$$
\begin{aligned}
\EE\left[F\left(\mathscr{T}^{1/Y}(S,A)|_{\llbracket 0,N\rrbracket}\right)\right]
&=\frac{\Gamma(-2v)}{\Gamma(u-v)}
\EE\left[F\left(\mathscr{T}^{Y_*}(S,A)|_{\llbracket 0,N\rrbracket}\right)
Y_*^{u+v}\right]\\
&=\frac{\Gamma(-2v)}{\Gamma(u-v)}
\EE\left[F\left((A,S)|_{\llbracket 0,N\rrbracket}\right)
\mathscr{A}_{\infty}(S,A)^{-(u+v)}\right],
\end{aligned}
$$
where we change the distribution of the parameter of the $T$ transform from
that of $1/Y$ to that of  $Y_*$ in the first equality and use the Dufresne's
relation \eqref{eq:joint dufresne LG} in the second equality. 

Using \eqref{eq:cutting LG}, we obtain
$$
\mathscr{A}_{\infty}(S,A)
=\mathscr{A}_{N}(S,A)+e^{S(N)-A(N)}
\mathscr{A}_{\infty}(S^{(N)},A^{(N)}).
$$
By Markovianity of the random walks, $(A^{(N)},S^{(N)})$ is
independent of $\mathcal{F}_N=\sigma(A(i),S(i):0\leq i\leq N)$
and has the same distribution as $(A,S)$. By the Dufresne's relation
\eqref{eq:LG Dufresne},
$$
\mathscr{A}_{\infty}(S^{(N)},A^{(N)}) \sim\Gav(-2v).
$$
Recall the definition of function $\mathscr{N}$
\eqref{eq:function N for two paths LG}. We have
$$ 
\frac{\Gamma(-2v)}{\Gamma(u-v)}
\EE\left[\mathscr{A}_{\infty}(S,A)^{-(u+v)}
\mathrel{\big|}\mathcal F_N\right] 
=\mathscr{N}_N^{u,v}(A,S), 
$$
which gives \eqref{eq:LG Girsanov}.

When $u+v>0$, including in particular $v=0$, we have 
\begin{multline*}
    \EE 
\left[
 F\left(\mathscr{T}^Y(S,A)|_{\llbracket 0, N \rrbracket}\right)
 \right] 
\\
=\int_{(\RR^2)^{N+1}}
F\left((\lambda^k-\lambda^0)_{0\leq k\leq N}\right)
e^{-u(\lambda^0_1-\lambda^0_2)}  
 \delta_0( \lambda^0_2)
H^{u,v}(\lambda^0_1-\lambda^0_2)\dd\lambda^0\\
\prod_{i=1}^{n}
\frac{\Psi_{\a}(\lambda^{i}/\lambda^{i-1})}
{\Gamma(\alpha\pm v )}
\frac{H^{u,v}(\lambda^{i}_1-\lambda^{i}_2)}
{H^{u,v}(\lambda^{i-1}_1-\lambda^{i-1}_2)}\dd\lambda^{i}
 \\ 
=\int_{A,S\in\RR^{N}}
F((A,S))
\PP_{\LG,N}^{\a+v}(A) \PP_{\LG,N}^{\a-v}(S) e^{-v(S(N)-A(N))}  \\
\int_{l\in\RR}
e^{-ul-e^{-l\mathscr{A}_N(S,A)}}
H^{r,s}(l+A(N)-S(N)) \\ 
=\EE \left[
 F(A,S)\mathscr{N}_{N}^{u,v}(A,S)
 \right].
\end{multline*}
The the first equality follows from Lemma 
\ref{lem:HY equals the increments of infinite two layer Whittaker process} and the definition of infinite two-layer Schur process    $\PP^{u,v}_{\Whit,\infty}$ \eqref{eq:ic and transition kernel of infinite two layer Whittaker process}. 
In the second equality, we expand Schur function and use the change of variables $\la^i_1=A(i)+l,\la^i_2=S(i)$. We evaluate the last integral using  \eqref{eq:function N LG as integral of Whittaker}.  
\end{proof}

\begin{proof}[Proof of Theorem \ref{thm:LG main thm}]
The high density case \eqref{eq:asymp of K HD LG all} and the low density case \eqref{eq:asymp of K LD LG all} are proven in Proposition \ref{prop:HD LG} and \ref{prop:LD LG}.

The coexistence line case \eqref{eq:asymp of K CE LG all} is proven in Proposition \ref{prop:CE LG}.

The asympotics in the fan region is proven in Proposition \ref{prop:fan region asymptotics LG}, which contains in particular the maximal current case \eqref{eq:asymp of K LG MC all} and the two axis cases \eqref{eq:asymp of K LG u axis all} and \eqref{eq:asymp of K LG v axis all}. 
 
Finally, when $u+v=0$, which includes the case $u=v=0$, we have 
  $\mathscr K_M=1$   by definition \eqref{eq:generalized partition function LG}.  
\end{proof}

\begin{proof}[Proof of Corollary \ref{cor:HY theorem LG}]

Let us first prove the convergence for   $u>0, u+v>0$, in which case $v_{\star}=\min\{0,v\}$.   
For all $k\geq 0$ and all $M\geq k$, let 
$$\big(L_1^x(M),L_2^x(M)\big)_{0\leq x\leq M} \sim\PP_{\LG,M}^{\alpha,u,v}, \quad   (\la^x_1(M),\la^x_2(M) \big)_{0\leq x\leq M}\sim \PP_{\Whit,M}^{\alpha,u,v} \quad  (\la^x)_{x\geq0}\sim \PP_{\Whit,\infty}^{\alpha,u,v}.$$ 
Then we have 
\begin{multline*}
 \big(L_1^x(M),L_2^x(M)\big)_{0\leq x\leq k}\overset{d}{=}\big(\la^x_1(M)-\la^0_1(M),\la^x_2(M)-\la^0_2(M)\big)_{0\leq x\leq k}\\
    \xRightarrow[M\to\infty]{}
    \big(\la^x_1 -\la^0_1 ,\la^x_2 -\la^0_2 \big)_{0\leq x\leq k}
    \overset{d}{=}
    \Big(\mathbb{HY}^{r,s}_{\LG}(x)\Big)_{0\leq x\leq k}. 
\end{multline*} 
The first equality in distribution  follows from the relation \eqref{eq:relation between reweighted rw and two layer Whittaker process}, the second equality in distribution follows from Lemma \ref{lem:HY equals the increments of infinite two layer Whittaker process} and the convergence follows from  Proposition \ref{prop:two layer Whittaker converges to infinite two layer Whittaker}. 

Let us then  prove the convergence for the cases $u\leq \min\{0,v\} $ and $v<\min\{u,0\}$. We have  
\begin{multline*}
     \PP_{\LG,M}^{\a,u,v}
\Big((L_1,L_2)|_{[0,k]}=(\ell_1,\ell_2)\Big) 
 =\PP_{\LG,k}^{\a+v}(\ell_1)\PP_{\LG,k}^{\a+u}(\ell_2)
 \frac{\mathscr{K}_{M-k}^{\a,u,v}
   \big(e^{-(L_1\otimes L_2)(k)},e^{-L_1(k)}\big)}
  {\mathscr{K}_{M}^{\a,u,v} }\\
  \xrightarrow[]{M\to\infty}\  \PP_{\LG,k}^{\a+v}(\ell_1)\PP_{\LG,k}^{\a+u}(\ell_2)\varrho^{-k}
  \begin{cases}
      e^{(u+v)\ell_1(k)},   &u\leq \min\{0,v\},\\
      \mathcal{N}^{r,s}\left(e^{-(L_1\otimes L_2)(k)},e^{-L_1(k)}\right), &v<\min\{u,0\},
  \end{cases}\\ 
  =\begin{cases}
      \mathbb{HY}_{\LG}^{a,r,r} \quad  &u\leq \min\{0,v\},\\
      \mathbb{HY}_{\LG}^{a,r,s} &v<\min\{u,0\}. 
  \end{cases}
\end{multline*} 
The first equality is Proposition \ref{prop:decomposition LG}. 
The second convergences follows from asymptotics  \eqref{eq:asymp of K LG v axis all}\eqref{eq:asymp of K LG u axis all}\eqref{eq:asymp of K CE LG all} and \eqref{eq:asymp of K HD LG all} in Theorem \ref{thm:LG main thm} respectively, together with the fact that 
$\mathscr{N}^{u,v}(0,1)=\mathscr{N}^{u,v}(0,1)=1$ and 
the evaluation in \eqref{eq:special case function N LG}. The marginal distribution of the Hariya Yor process is given by Proposition \ref{prop:Girsanov LG}. And  for the first case in the third equality, we change the reference measure of $L_1$ from $\PP_{\LG,k}^{\a+v}$ to $\PP_{\LG,k}^{\a-v}$, and use the fact that  
$\mathbb{HY}_{\LG}^{\a,v,v}=\PP_{\LG,k}^{\a-v}\times \PP_{\LG,k}^{\a+v}$. For the second case in the third equality, we change the reference measure of $L_2$ from $\PP_{\LG,k}^{\a+u}$ to $\PP_{\LG,k}^{\a-v}$, and use the fact that 
$$
\mathcal{N}^{r,s}
\left(e^{-(L_1\otimes L_2)(k)},e^{-L_1(k)}\right)= e^{(u+v)\ell_2(k)} 
\mathcal N^{r,s}\Bigl(\mathcal A_k(l_2,l_1),e^{l_2(k)-l_1(k)}\Bigr).
$$
\end{proof}

\renewcommand{\emph}[1]{\textit{#1}}
\bibliography{hariyayor}
\bibliographystyle{goodbibtexstyle.bst}
\end{document}